\numberwithin{equation}{section} 
\theoremstyle{plain}
\newtheorem{thm}{Theorem}[section]
\newtheorem{lem}[thm]{Lemma}
\newtheorem{pro}[thm]{Proposition}
\newtheorem{cor}[thm]{Corollary}
\newtheorem{de}[thm]{Definition}
\newtheorem{rem}[thm]{Remark}
\def\P{{\mathcal P}}
\def\R {{\Bbb R}}
\def\N {{\Bbb N}}
\def\Z {{\Bbb Z}}
\def\M {{\mathcal M}}
\def\P {{\mathcal P}}
\def\I {{\mathcal I}}
\def\A {{\mathcal A}}
\def\B {{\mathcal B}}
\def\E{{\bf E}}
\def\ba{{\bf a}}
\begin{document}
\baselineskip 13.7pt
\title[Dimension of invariant measures for affine IFS]{Dimension of invariant measures for affine iterated function systems}

\author{De-Jun Feng}
\address{
Department of Mathematics\\ The Chinese University of Hong Kong\\ Shatin,  Hong Kong\\ } \email{djfeng@math.cuhk.edu.hk}

\keywords{Iterated function systems, self-affine sets and measures, invariant measures, exact dimensionality, Hausdorff dimension, packing dimension}
\thanks {
2000 {\it Mathematics Subject Classification}: 28A80, 37C45}

\date{}

\begin{abstract}
  Let $\{S_i\}_{i\in \Lambda}$ be a finite contracting affine iterated function system (IFS) on $\R^d$. Let $(\Sigma,\sigma)$ denote the two-sided full shift over the alphabet $\Lambda$, and $\pi:\Sigma\to \R^d$ be the  coding map associated with the IFS.  We prove that the projection of an ergodic $\sigma$-invariant measure on $\Sigma$ under $\pi$ is always exact dimensional, and its Hausdorff dimension satisfies a Ledrappier-Young type formula. Furthermore, the result extends  to average contracting affine IFSs. This completes several previous results and answers a folklore open question in the community of fractals.  Some applications are given to the dimension of self-affine sets and measures.
\end{abstract}

\maketitle
\section{Introduction}\label{S-1}
\subsection{Motivation and the main result}

Let ${\rm Mat}_d(\R)$ denote the set of real $d\times d$ matrices.  By an {\it affine iterated function system} (affine IFS) on $\R^d$  we mean a finite  family ${\mathcal S}=\{S_j\}_{j\in \Lambda}$ of affine mappings from $\R^d$ to $\R^d$,
taking  the form
\begin{equation}
\label{e-form}
S_j(x)=M_jx+a_j,\qquad j\in \Lambda,
\end{equation}
where $M_j\in {\rm Mat}_d(\R)$ and $a_j\in \R^d$. Here, in contrast to the usual definition of affine IFS,   we  do not assume that $M_j$ are invertible or contracting (in the sense that $\|M_j\|<1$ where $\|\cdot\|$ is the matrix operator norm).   We say that   ${\mathcal S}$ is {\it   contracting}  if all $M_j$ are contracting. It is  well-known that if $\mathcal S$ is contracting, there  exists a unique non-empty compact set $K\subset \R^d$ such that
$$
K=\bigcup_{j\in \Lambda} S_j(K).
$$
We call $K$ the {\em self-affine set} generated by ${\mathcal S}$. In particular, if all the maps in ${\mathcal S}$ are contracting similitudes, we call $K$ a {\em self-similar set}.  As usual, a contracting ${\mathcal S}$ is said to satisfy the {\em open set condition} if there exists a non-empty open set $U\subset \R^d$ such that $S_j(U)$, $j\in \Lambda$, are disjoint subsets of $U$; moreover,  ${\mathcal S}$ is said to satisfy the {\em strong separation condition} if $S_j(K)$, $j\in \Lambda$, are disjoint.

Let $(\Sigma,\sigma)$ be the two-sided full shift over the alphabet $\Lambda$, i.e.~$\Sigma=\Lambda^\Z$ and $\sigma:\Sigma\to \Sigma$ is  the left shift map.  Endow $\Sigma$ with the  product topology and let $\M_\sigma(\Sigma)$ denote the space of $\sigma$-invariant Borel probability measures on $\Sigma$.

\begin{de}
\label{de-1.1}
Let $m\in \M_\sigma(\Sigma)$. An affine IFS ${\mathcal S}=\{M_jx+a_j\}_{j\in \Lambda}$  is said to be average contracting with respect to $m$ if,
for $m$-a.e.~$x=(x_n)_{n=-\infty}^\infty\in\Sigma$, the top Lyapunov exponent $\lambda(x)$ defined by
\begin{equation*}
\label{e-lyapunov}
\lambda(x)=\lim_{n\to \infty} \frac{1}{n}\log \|M_{x_{0}}\cdots M_{x_{n-1}}\|
\end{equation*}
is strictly negative.
\end{de}

We remark that  the above limit in defining $\lambda(x)$  exists and takes values in $[-\infty, \infty)$ for $m$-a.e.~$x$. This follows from the Furstenberg-Kesten theorem \cite{FurstenbergKesten1960} or  Kingman's sub-additive ergodic theorem \cite{Kingman1968}.

Now let $m\in \M_\sigma(\Sigma)$ and suppose that ${\mathcal S}$ is average contracting with respect to $m$. The canonical coding map $\pi: \Sigma\to \R^d$, given by
\begin{equation}
\label{e-pi1.4}
\begin{split}
\pi(x)&=\lim_{n\to \infty} S_{x_0}\circ S_{x_1}\circ \cdots\circ S_{x_{n}}(0)\\
&=\lim_{n\to \infty} \left(a_{x_0}+M_{x_0}a_{x_1}+\cdots +M_{x_{0}}\cdots M_{x_{n-1}}a_{x_n}\right),
\end{split}
\end{equation}
is well-defined on $\Sigma$ up to a set of zero $m$-measure (\cite{Brandt1986, BougerolPicard1992}); see Section \ref{S-3} for a self-contained proof.  The push-forward  $\pi_*m$ of $m$ by $\pi$, given by
$$
(\pi_*m)(F)=m(\pi^{-1}(F)) \quad \mbox{ for any Borel set }F\subset\R^d,
$$
is called an {\it invariant measure} or {\it stationary measure} for  ${\mathcal S}$.  When $m$ is ergodic,
$\pi_*m$ is called an {\it ergodic invariant measure} for $\mathcal S$.  Moreover if $m$ is a Bernoulli product measure, $\pi_*m$ is called a {\it self-affine measure} generated by ${\mathcal S}$; if in addition, ${\mathcal S}$ consists of similarities,  then  $\pi_*m$ is called a {\em self-similar measure}.

The main purpose of this paper is to study the dimension of invariant measures for affine IFSs.  Recall that for a probability measure $\eta$ on a metric space $X$,  the {\it local upper and lower dimensions} of $\eta$ at $x\in X$ are defined respectively by
$$\overline{\dim}_{\rm loc}(\eta, x)=\limsup_{r\to 0}\frac{\log \eta (B(x,r))}{\log r},\quad \underline{\dim}_{\rm loc}(\eta, x)=\liminf_{r\to 0}\frac{\log \eta (B(x,r))}{\log r},$$
 where $B(x,r)$ stands for  the closed ball of radius $r$ centered at $x$. If $$\overline{\dim}_{\rm loc}(\eta, x)=\underline{\dim}_{\rm loc} (\eta, x),$$ the common value is denoted as $\dim_{\rm loc}(\eta,x)$ and is called the {\it local dimension} of $\eta$ at $x$. We say that  $\eta$  is  {\it exact dimensional}
if there exists a constant $C$ such that the  local dimension
$\dim_{\rm loc}(\eta, x)$
exists and equals $C$ for $\eta$-a.e.~$x\in X$. It is well-known that if $\eta$ is an exact dimensional measure in ${\Bbb R}^d$, the Hausdorff and  packing
dimensions of $\eta$  coincide and are equal to the involved constant $C$, and so are some other notions of dimension (e.g.~ entropy dimension); see \cite{Young1982, Falconer1997}. Recall that the Hausdorff and packing dimensions of $\eta$ are defined by
\begin{align*}
{\dim}_{\rm H}\eta &=\inf \{\dim_{\rm H}F:\; \eta(F)>0\mbox{ and $F$ is a Borel set}\},\\
{\dim}_{\rm P}\eta &=\inf \{\dim_{\rm P}F:\; \eta(\R^d\setminus F)=0\mbox{ and $F$ is a Borel set}\},
\end{align*}
 where  $\dim_{\rm H}F, \dim_{\rm P}F$ stand for the Hausdorff and packing dimensions of $F$, respectively (cf.~\cite{Falconer2003}).

A folklore open problem in fractal geometry asks whether every ergodic invariant measure for an affine IFS is  exact dimensional. As the main result of this paper, we give the following affirmative answer.

\begin{thm}
\label{thm-1.0}
Let ${\mathcal S}=\{M_jx+a_j\}_{j\in \Lambda}$  be an affine IFS on $\R^d$ and   $m\in \M_\sigma(\Sigma)$. Suppose that  ${\mathcal S}$ is average contracting with respect to $m$. Let $\mu=\pi_*m$. Then \begin{itemize}
\item[(i)] $\dim_{\rm loc}(\mu, x)$ exists for $\mu$-a.e.~$x\in \R^d$.
\item[(ii)] Assume furthermore that $m$ is ergodic. Then $\mu$ is exact dimensional and $\dim_{\rm H}\mu$ satisfies a Ledrappier-Young type dimension formula.
\end{itemize}
\end{thm}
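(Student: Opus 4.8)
\emph{Strategy.} The plan is to transport the problem to the symbolic space and to control, for $m$-typical $x$, the quantity $\mu(B(\pi x,r))=m(\pi^{-1}B(\pi x,r))$ by comparing Euclidean balls with the images $S_{x_0}\circ\cdots\circ S_{x_{n-1}}(K)$. First I would introduce the linear cocycle $A\colon\Sigma\to\R^{d\times d}$, $A(x)=M_{x_0}$, over $(\Sigma,\sigma)$, and, using $m\in\M_\sigma(\Sigma)$ together with the Furstenberg--Kesten theorem \cite{FurstenbergKesten1960} and the multiplicative ergodic theorem, extract the Lyapunov exponents $0>\chi_1\ge\chi_2\ge\cdots\ge\chi_d\ge-\infty$ (listed with multiplicity; all negative since $\mathcal S$ is average contracting with respect to $m$) and the associated measurable Oseledets structure. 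Since the $M_j$ need not be invertible, this is a decreasing \emph{filtration} $\R^d=V_1(x)\supseteq V_2(x)\supseteq\cdots$ of Oseledets subspaces rather than a splitting, and some $\chi_i$ may equal $-\infty$; carrying these two degeneracies through the argument is a recurring technical burden (on the finite-exponent part a splitting can still be recovered from the two-sided/natural-extension structure). A preliminary reduction disposes of the merely \emph{average} contracting hypothesis: if some symbol $j$ with $M_j=0$ has positive $m$-measure then $\pi(x)$ depends only on the prefix of $x$ preceding its first occurrence, so $\mu$ is purely atomic and both conclusions are immediate; otherwise one restricts to the sub-alphabet of nonzero matrices and replaces generation-$n$ cylinders by stopping-time families $\{w\,:\,\|M_{w_0}\cdots M_{w_{|w|-1}}\|\approx r\}$, whose stopping times $\tau_r(x)$ satisfy $\tau_r(x)/\log r\to 1/\chi_1$ by Kingman's theorem \cite{Kingman1968}, reducing the subsequent estimates to the setting of uniformly small cylinders.

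On this basis I would run a Ledrappier--Young induction along the Oseledets flag. One introduces the nested family of conditional measures of $\mu$ along the random cosets of the subspaces $V_i(x)$ --- equivalently a decreasing sequence of ``quotient coding maps'' made consistent along $\sigma$-orbits, which is exactly where the two-sided shift enters --- and the associated conditional entropies $H_i$ of $m$, with $H_0=0$ and $H_i$ nondecreasing to $h_m(\sigma)$. Induction on $i$ via the two Ledrappier--Young mechanisms --- the conditional entropy increment $H_i-H_{i-1}$ equals $|\chi_i|$ times the increment $\gamma_i$ in the dimension of the conditional measures, and $\gamma_i$ is bounded above by $(H_i-H_{i-1})/(-\chi_i)$ because entropy controls dimension and below by the same quantity because no mass escapes --- yields that the conditional measures along the successive quotients, and ultimately $\mu$ itself, are exact dimensional, with
\[
\dim_{\rm H}\mu\;=\;\sum_{i}\gamma_i\;=\;\sum_{i}\frac{H_i-H_{i-1}}{-\chi_i},
\]
a level with $\chi_i=-\infty$ contributing $0$ and merely capping the value. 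The delicate points here are to make the conditional measures intrinsic despite the $x$-dependence of $V_i(x)$, to propagate the induction without invertibility of $A$, and to identify the transverse conditional entropies $H_i$ as the correct invariants (the projection-entropy circle of ideas).

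The last, and in my view hardest, part is to upgrade ``dimension along a subsequence of scales'' to genuine exact dimensionality, i.e.\ $\overline{\dim}_{\rm loc}(\mu,\pi x)=\underline{\dim}_{\rm loc}(\mu,\pi x)$ for $m$-a.e.\ $x$. Along the scales $r_n(x)=\|M_{x_0}\cdots M_{x_{n-1}}\|$ the ratio $\log\mu(B(\pi x,r_n))/\log r_n$ converges by the previous step and the ergodic theorem; but $S_{x_0}\cdots S_{x_{n-1}}(K)$ is a highly eccentric, ellipsoid-like set whose axes have the disparate lengths $\sigma_j(M_{x_0}\cdots M_{x_{n-1}})\approx e^{n\chi_j}$, so as $r$ decreases from $r_{n+1}$ to $r_n$ the ball $B(\pi x,r)$ successively ``uncovers'' the various Oseledets directions at their own rates, and one must rule out oscillation of $\log\mu(B(\pi x,r))/\log r$ in the limit. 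This demands a multiscale comparison between Euclidean balls and the ``approximate squares'' adapted to the singular value decomposition of $M_{x_0}\cdots M_{x_{n-1}}$, together with a covering/Fubini argument bounding the $\mu$-mass in the thin annuli between consecutive scales uniformly enough to take limits --- all in the non-conformal, possibly non-invertible, merely average-contracting regime where no two cylinder generations have comparable geometry and the flag may degenerate. Finally, part~(i) for a general $m\in\M_\sigma(\Sigma)$ is obtained directly by the same circle of ideas --- for instance by showing $\int\overline{\dim}_{\rm loc}(\mu,\cdot)\,d\mu=\int\underline{\dim}_{\rm loc}(\mu,\cdot)\,d\mu$ through a subadditive comparison --- rather than through an ergodic decomposition, since distinct ergodic components of $m$ need not have mutually singular images under $\pi$.
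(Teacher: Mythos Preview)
Your overall architecture is right and matches the paper: work on the two-sided shift, extract the Oseledets flag (which depends only on the past $x^-$), build a hierarchy of measurable partitions $\xi_0,\dots,\xi_s$ whose atoms are $\{y:y^-=x^-,\ \pi y-\pi x\in V_x^i\}$, define the conditional entropies $h_i=H_m(\mathcal P\mid\widehat{\xi_i})$, and run the Ledrappier--Young squeeze. The paper's version of your ``two mechanisms'' is exactly the pair of inequalities (C2) $\frac{h_{i+1}-h_i}{\lambda_{i+1}}\ge\overline\delta_i-\overline\delta_{i+1}$ and (C3) $\underline\delta_{i+1}+\vartheta_i\le\underline\delta_i$, together with the transverse-dimension lower bound $\vartheta_{i-1}\ge\frac{h_i-h_{i-1}}{\lambda_i}$; these three force all the $\delta_i$ to exist and to sum correctly. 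In particular, there is no separate ``upgrade from subsequence to full limit'' step: exact dimensionality falls out of the squeeze itself, so the part you flag as hardest is not where the real difficulty lies.

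The genuine gap is the transverse-dimension lower bound. Your approximate-squares / multiscale-comparison picture implicitly assumes that the Oseledets directions stay at a definite angle, so that a Euclidean $r$-ball can be compared, up to bounded multiplicative distortion, to a product of cylinders at the different Lyapunov scales. This is precisely the domination hypothesis under which B\'ar\'any--K\"aenm\"aki already carried out the argument; without it the angles $\theta(\sigma^n x)$ between Oseledets subspaces can be sub-exponentially small along an orbit, and your comparison degenerates. The paper's key new idea, which your proposal does not contain, is to \emph{induce}: fix $\epsilon>0$, take the set $F(\epsilon)=\{x:\sin\theta(x)>\epsilon\ \text{and good Oseledets estimates hold}\}$, and run the entire telescoping estimate for $m_x^{\xi_{i-1}}(\Gamma_i(x,r))$ with respect to the induced map $\sigma_F$ and the variable-length cylinders $\mathcal P_0^{\ell(x)-1}$ with $\ell(x)=N\,r_F(x)$. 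On $F$ the angle is uniformly bounded, so the comparison $T_i(x,r)\subset\xi_{i-1}(x)\cap\Gamma_i(x,r)\subset T_i(x,r/\sin\theta(x))$ is harmless; the return-time terms are then controlled by a maximal inequality together with $\sum_n n\,m(F_n)\le 1$ and $-\sum_n m(F_n)\log m(F_n)<\infty$, and a Birkhoff/Maker argument plus the identity $N\,\mathbf E_m(g\mid\mathcal I)=\mathbf E_{m_F}(G\mid\mathcal I_F)/\mathbf E_{m_F}(r_F\mid\mathcal I_F)$ converts induced averages back to the original ones. Letting $\epsilon\to0$ gives the bound for $m$-a.e.\ $x$. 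Without something playing this role, the lower bound on $\vartheta_i$ is not established.

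Two smaller points. Your ``preliminary reduction'' (purely atomic if some $M_j=0$, stopping times $\tau_r$ otherwise) is neither needed nor correct as stated: $M_j=0$ for one symbol does not make $\mu$ atomic, and the paper handles merely average contracting directly by proving $\pi$ is a.e.\ defined with $\frac1n\log^+\|\pi(\sigma^n x)\|\to0$, then working with the sets $Q_{n,\epsilon}$ to absorb the unbounded support. For part~(i) in the non-ergodic case, you are right that ergodic decomposition is problematic because components need not have mutually singular $\pi$-images; the paper's fix is not an integral identity but a finite $\sigma$-invariant partition $\Sigma'=\bigsqcup_j\Sigma_j$ according to the Lyapunov spectrum $(s(x),k_1(x),\dots)$, applies the (already non-ergodic) argument on each $\Sigma_j$, and then uses a Lebesgue density argument ($\lim_{r\to0}m(\Sigma_j\cap B^\pi(x,r))/m(B^\pi(x,r))>0$ a.e.\ on $\Sigma_j$) to transfer the local dimension from $m_j\circ\pi^{-1}$ to $m\circ\pi^{-1}$.
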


The precise dimension formula of $\mu$ and some of its applications will be given in Sections~\ref{S-DF}-\ref{S-SC}.

 We remark that Theorem \ref{thm-1.0} also holds in its one-sided version. To be more precise, let $(\Sigma^+,\sigma)$ denote the one-sided full shift over the alphabet $\Lambda$, i.e.,
$$
\Sigma^+=\{(x_n)_{n=0}^\infty:\; x_n\in \Lambda \mbox{ for all }n\geq 0\},$$
and $\sigma$ is the left shift map.  Let $\tau:\; \Sigma\to \Sigma^+$ be the natural projection defined by
$$
x=(x_n)_{n=-\infty}^\infty\mapsto x^+=(x_n)_{n=0}^\infty.
$$
It is well known that the push-forward map $\tau_*:\; {\mathcal M}_\sigma(\Sigma)\to {\mathcal M}_\sigma(\Sigma^+)$, $m\mapsto\tau_*m$, is bijective, and moreover, $\tau_*m$ is ergodic if and only if $m$ is ergodic  (see e.g. \cite[pp.~21-22]{Bowen1975}).  Let  $m^+\in {\mathcal M}_\sigma(\Sigma^+)$ and  assume that ${\mathcal S}$ is average contracting with respect to  $m=(\tau_*)^{-1}(m^+)$. Define $\pi^+:\;\Sigma^+\to \R^d$ by $(x_n)_{n=0}^\infty\mapsto \lim_{n\to \infty}
S_{x_0}\circ\cdots \circ S_{x_n}(0)$.   Then $\pi(x)=\pi^+(x^+)$ and so $\pi^+$ is well defined $m^+$-a.e. Moreover,  $(\pi^+)_*(m^+)=\pi_*m$. Hence the conclusions of Theorem \ref{thm-1.0} hold for $(\pi^+)_*(m^+)$.

Below we first give some background information about the above study.

The problem of the existence of local dimensions has a long history in smooth dynamical systems, as well as in the study of IFSs. It is of great importance  in dimension theory of dynamical systems and fractal geometry.
In \cite{Young1982}, Young proved
that an ergodic hyperbolic measure invariant under a $C^{2}$
surface diffeomorphism is always  exact dimensional. (Here by hyperbolic one means that the measure has no zero Lyapunov
exponent.) For a hyperbolic  measure
$\mu$ in higher-dimensional $C^{2}$ systems,
Ledrappier and Young \cite{LedrappierYoung1985} proved the existence of $\delta^u$ and
$\delta^s$, the local dimensions along stable and unstable local
manifolds, respectively, and the upper local dimension of $\mu$ is
bounded by the sum of $\delta^u$ and  $\delta^s$; moreover they obtained a formula for $\delta^u$ and  $\delta^s$ in terms of conditional entropies and Lyapunov exponents, which nowadays is called ``Ledrappier-Young  formula".     Eckmann and Ruelle \cite{EckmannRuelle1985} indicated that
it is unknown whether the local dimension of $\mu$ is equal to the sum of
$\delta^u$ and $\delta^s$ if $\mu$ is a hyperbolic measure. Then the
problem was referred as Eckmann-Ruelle conjecture, and was finally
answered affirmatively by Barreira, Pesin and Schmeling  in 1999 for $C^{1+\alpha}$ diffeomorphisms \cite{BarreiraPesinSchmeling1999}. Later, the result of exact dimensionality was further extended by Qian and Xie \cite{QianXie2008} and Shu \cite{Shu2010}  to $C^2$ expanding endomorphisms  and $C^2$ non-degenerate endomorphisms, respectively.

For the study of IFSs, it is well-known that if ${\mathcal S}$ is a contractive IFS consisting of similarity maps, or more generally, a contracting $C^1$ conformal IFS, then under  an additional assumption of the open set condition, the push-forward measure of any ergodic invariant measure  by the coding map is exact dimensional with  dimension given by the classical entropy divided by the Lyapunov exponent (cf.~\cite{Bedford1991,Hutchinson1981,Patzschke1997}). The result essentially follows from  the  Shannon-McMillan-Breiman theorem in entropy theory. However, the problem becomes much more complicated without assuming the open set condition. In \cite{FengHu2009}, by introducing a notion of projection entropy and adopting some  ideas from \cite{LedrappierYoung1985}, Feng and Hu   proved that for any contracting $C^1$ conformal IFS, the push-forward measure of every ergodic invariant measure under the coding map is  exact dimensional, with  dimension given by the projection entropy divided by the Lyapunov exponent. Later this result was further extended to some random self-similar measures \cite{FalconerJin2014, SagliettiShmerkinSolomyak2018} and push-forward measures of ergodic invariant measures for some random conformal IFSs \cite{MihailescuUrbanski2016}. It is worth pointing out that the exact dimensionality of overlapping self-similar measures was first claimed by Ledrappier;  nevertheless no proof has been written out (cf.~\cite[p.~1619]{PeresSolomyak2000}). This property was also conjectured  later  by Fan, Lau and Rao in \cite{FanLauRao2002}.

The first result for affine IFSs is due to Bedford \cite{Bedford1984} and McMullen \cite{McMullen1984}, who independently calculated the Hausdorff and box-counting dimensions of a special class of planar self-affine sets (which are now called Bedford-McMullen carpets) and showed that they are usually different.  McMullen \cite{McMullen1984} also implicitly  proved the exact dimensionality of self-affine measures on the Bedford-McMullen carpets, and calculated the precise value of the dimension.   Later, Gatzouras and Lalley \cite{GatzourasLalley1992} and Bara\'{n}ski \cite{Baranski2007} obtained  similar results for a class  of  more general carpet-like self-affine sets in the plane. In \cite{KenyonPeres1996}, Kenyon and Peres extended Bedford and McMullen's result to higher dimensional self-affine carpets, and moreover,  they proved the exact dimensionality and gave a Ledrappier-Young type dimension formula for  arbitrary ergodic invariant measure on these carpets.  For more related results on carpet-like self-affine sets, see the survey paper \cite{Falconer2013}.

 In \cite{FengHu2009},  Feng and Hu proved that for  each contracting invertible affine IFS in $\R^d$,    Theorem \ref{thm-1.0} holds under an additional assumption that the linear parts of the IFS  commute (i.e. $M_iM_j=M_jM_i$). It remained open whether this additional assumption could be removed.
Very recently,   B\'{a}r\'{a}ny and K\"{a}enm\"{a}ki \cite{BaranyKaenmaki2015} made a substantial  progress.  They proved that for contracting invertible affine IFSs,  every planar self-affine measure (more generally, every self-affine measure in $\R^d$ having  $d$ distinct Lyapunov exponents) is exact dimensional, and moreover, under certain domination condition on the linear parts $\{M_j\}$,  the push-down of every quasi-Bernoulli measure on the self-affine set is exact dimensional, with dimension given by a Ledrappier-Young type formula.  Some other partial results were also obtained in \cite{Barany2015, Rapaport2015, FraserJordanJurga2017}. Along another direction, it is proved  that for a given ergodic $m\in {\mathcal M}_{\sigma}(\Sigma)$,  $\pi_*m$ is exact dimensional for ``almost all'' contracting invertible affine IFSs satisfying $\|M_j\|<1/2$ (\cite{JordanPollicottSimon2007, Jordan2011, Rossi2014});  however, the result does not apply to any concrete case.

Theorem \ref{thm-1.0} finally gives a full affirmative answer to   the problem of  the existence of local dimensions in the context of  affine IFSs. It completes  the aforementioned previous works on the problem.

Exact dimensionality and Ledrappier-Young type dimension formula play a significant role in many of the recent advances in dimension theory of self-affine sets and measures (see e.g.~\cite{Barany2015,  BaranyHochmanRapaport2017, BaranyKaenmaki2015, BKK2018, BaranyRams2018, BaranyRamsSimon2016, BRS2018, BarralFeng2013, DasSimmons2017, FalconerKempton2018, HochmanSolomyak2017, MorrisShmerkin2018, PrzytyckiUrbanski1989,Rapaport2015}).  In the remaining part of this section, we will present some applications of Theorem \ref{thm-1.0} along the lines of these developments.

The proof of  Theorem \ref{thm-1.0} is based on some ideas from the work of Ledrappier and Young \cite{LedrappierYoung1985}. It  also adopts and extends some ideas used in \cite{FengHu2009, BaranyKaenmaki2015, QianXie2008} for the construction of measurable partitions and the density estimates of associated conditional measures. Since our construction of measurable partitions  is much different from these works (see Remark \ref{rem-tt}), and the IFSs in consideration may be non-invertible and non-contractive,  many estimates of conditional measures need to be rebuilt or re-justified.  A key part of our arguments is on the  estimation of  the so-called ``transverse dimension'' of these conditional measures, where significant efforts are made to handle   the  situation when the linear parts of the IFS do not satisfy any domination condition (in such case the angles of Oseledets subspaces  may be arbitrarily close to zero). Our strategy is to build an induced dynamics  so that we are able to focus on the trajectories where  the angles of Oseledets subspaces are  larger than a positive constant.

\subsection{Dimension formulas}
\label{S-DF}
Throughout this subsection, under the assumptions of Theorem \ref{thm-1.0},  we further assume that $m$ is ergodic.  We are going to present certain  dimension formulas for  $\mu=\pi_*m$ and related conditional measures.

First notice that in this ergodic case, the condition (3) in Definition~\ref{de-1.1} is equivalent to
\begin{equation}
\label{e-1.3}
\lambda:=\lim_{n\to \infty}\frac{1}{n}\int \log \|M_{x_0}\cdots M_{x_{n-1}}\|\; d m(x)<0.
\end{equation}

By Oseledets' multiplicative ergodic theorem \cite{Oseledets1968}, there exist an integer $1\leq s\leq d$, numbers $\lambda=\lambda_1>\cdots>\lambda_s\geq -\infty$, positive integers $k_1, \ldots, k_s$ with $\sum_{i=1}^sk_i=d$, and   measurable linear subspaces
$$
\R^d=V_x^0\supsetneq V_x^1\supsetneq\cdots\supsetneq  V_x^s=\{0\}, \quad x\in \Sigma,
$$
such that for $m$-a.e.~$x=(x_n)_{n=-\infty}^\infty$,
\begin{itemize}
\item[(i)] $M_{x_{-1}}V_x^i\subset V^i_{\sigma^{-1}x}$;
\item[(ii)] $\dim V_x^i=\sum_{j=i+1}^s k_j$;
\item[(iii)] $\lim_{n\to \infty} \frac{1}{n}\log \|M_{x_{-n}}\cdots M_{x_{-1}}v\|=\lambda_{i+1}$ for $v\in V_{x}^i\backslash V_{x}^{i+1}$.
\end{itemize}

 When the matrices $M_j$ ($j\in \Lambda$) are assumed to be invertible, then (i) becomes an equality. It in general is a containment because $M_j$ may be singular. The numbers $\lambda_1,\ldots, \lambda_s$ are called the {\em Lyapunov exponents} of $(M_j)_{j\in \Lambda}$ with respect to $m$, and $k_i$ the {\em multiplicity} of $\lambda_i$, $i=1,\ldots,s$.  Recall that $\pi(x)$ is well-defined for $m$-a.e.~$x$. Hence there exists a Borel set $\Sigma'\subset\Sigma$ with  $\sigma(\Sigma')=\Sigma'$ and $m(\Sigma')=1$ such that $\pi$ is well-defined on $\Sigma'$ and the above properties (i)-(iii) hold for $x\in \Sigma'$.

We remark that these linear subspaces $V_x^i$ only depend on $i$ and $x^{-}:=(x_j)_{j=-\infty}^{-1}$ since by (i)-(iii), one has
 $$
 V_{x}^i=\left\{v\in \R^d: \; \lim_{n\to \infty} \frac{1}{n}\log \|M_{x_{-n}}\cdots M_{x_{-1}}v\|\leq \lambda_{i+1}\right\}.
 $$
 Using this property, we construct a family of measurable partitions $\xi_0,\ldots, \xi_s$ of $\Sigma'$ as follows:
$$
\xi_i(x):=\{y\in \Sigma':\; y^{-}=x^{-}, \; \pi y-\pi x\in V_x^i\},
$$
here $\xi_i(x)$ is the $\xi_i$-atom that contains $x$ (see Sections~\ref{Sect-con} and \ref{S-4} for the details).    Moreover, let
\begin{equation}
\label{e-pp}
\P=\{[j]\cap \Sigma':\; j\in \Lambda\}
\end{equation}
be the
canonical partition of $\Sigma'$, where $[j]:=\{x=(x_n)_{n=-\infty}^\infty\in \Sigma:\; x_0=j\}$.
  Define
\begin{equation}
\label{e-hi1}
h_i=H_m(\P|\widehat{\xi_i}),\quad i=0,\ldots, s,
\end{equation}
where  $H_m(\cdot|\cdot)$ stands for  the conditional entropy and $\widehat{\xi_i}$ is the $\sigma$-algebra generated by $\xi_i$ (see Sections~\ref{Sect-ent}-\ref{Sect-con} for the definitions).

 We remark that the spaces $V_x^i$ are strictly decreasing, $\R^d=V_x^0\supsetneq V_x^1\supsetneq\cdots\supsetneq  V_x^s=\{0\}$. Therefore, the partitions $\xi_i$ become finer as $i$ increases: the partition $\xi_0$ is the partition according to the ``past'' $x^{-}$, then $\xi_1$ is the partition according to the past joined with the partition according to translations of $V_x^i$, etc. Therefore $h_i$ decrease with $i$, since $h_{i+1}$ is conditioned on partition $\xi_{i+1}$ which is finer than the partition $\xi_i$ on which $h_i$ is conditioned.

Now we are ready to present the dimension formula for $\pi_*m$.

\begin{thm}
\label{thm-1.1} Let ${\mathcal S}=\{M_jx+a_j\}_{j\in \Lambda}$  be an affine IFS on $\R^d$ and   $m$ be an ergodic $\sigma$-invariant measure on $\Sigma$. Suppose that  ${\mathcal S}$ is average contracting with respect to $m$. Let $\mu=\pi_*m$.
Then
\begin{equation}
\label{ly-dim}
\dim_{\rm H}\mu=\sum_{i=0}^{s-1}\frac{h_{i+1}-h_i}{\lambda_{i+1}},
\end{equation}
where $h_i$ are defined as in \eqref{e-hi1}.
\end{thm}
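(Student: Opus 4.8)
The plan is to prove Theorem~\ref{thm-1.1} simultaneously with part~(ii) of Theorem~\ref{thm-1.0}: one shows at once that $\mu=m\circ\pi^{-1}$ is exact dimensional and that the common value of its local dimension equals the right-hand side of \eqref{ly-dim}. The whole argument is a Ledrappier--Young type induction \cite{LedrappierYoung1985} carried out on the two-sided shift, which serves as the natural extension on which the backward Oseledets subspaces $V_x^i$ (depending only on $x^{-}$) and the partitions $\xi_0,\dots,\xi_s$ are defined. Since $\mathcal S$ is average contracting and $m$ is ergodic, all Lyapunov exponents are strictly negative, so from the viewpoint of $\mu$ the map $x\mapsto\pi(\sigma^{-1}x)=S_{x_{-1}}(\pi x)$ is contracting; correspondingly the partitions are ordered $\xi_0\le\xi_1\le\cdots\le\xi_s$ and satisfy the refinement property $\sigma^{-1}\xi_i\ge\xi_i$, which I would deduce directly from $M_{x_{-1}}V_x^i\subset V_{\sigma^{-1}x}^i$ together with $\pi(x)=S_{x_0}(\pi\sigma x)$. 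In particular $\widehat{\xi_0}\subseteq\widehat{\xi_1}\subseteq\cdots\subseteq\widehat{\xi_s}$, so the entropies in \eqref{e-hi1} are non-increasing, $h_0\ge h_1\ge\cdots\ge h_s\ge0$, and since $\P$ is a bilateral generator one has $h_0=H_m(\P\mid\widehat{\xi_0})=h_m(\sigma)$, the Kolmogorov--Sinai entropy; in particular each summand in \eqref{ly-dim} is non-negative.

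For each $i$ let $\{m_x^{\xi_i}\}_{x}$ be the family of conditional measures of $m$ on the atoms of $\xi_i$, and set $\mu_x^{(i)}:=\pi_*m_x^{\xi_i}$, a measure carried by the affine subspace $\pi x+V_x^i$ (which has dimension $\sum_{j>i}k_j$); after checking that $x\mapsto V_x^i$ is measurable, so that $\xi_i$ is a genuine measurable partition and the disintegration makes sense, the heart of the proof is to show that each $\mu_x^{(i)}$ is exact dimensional with an $m$-a.e.\ constant dimension $\delta_i$ and that these satisfy the recursion $\delta_i-\delta_{i+1}=(h_{i+1}-h_i)/\lambda_{i+1}$ for $i=0,\dots,s-1$, with the convention that the right-hand side is $0$ when $\lambda_{i+1}=-\infty$ (in which case one shows $h_i=h_{i+1}$ separately). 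Inside a $\xi_i$-atom the partition $\xi_{i+1}$ cuts out the smaller slices $\pi y+V_x^{i+1}$, along which the conditional measures of $\mu_x^{(i)}$ are versions of $\mu_y^{(i+1)}$; because these slices are transverse to the $\lambda_{i+1}$-Oseledets direction, a Fubini-type dimension decomposition reduces the recursion to computing the dimension $\delta_i-\delta_{i+1}$ of the \emph{transverse} measure, namely the image of $m_x^{\xi_i}$ in the $k_{i+1}$-dimensional quotient $V_x^i/V_x^{i+1}$. This transverse dimension is obtained by balancing two rates along the orbit: a relative Shannon--McMillan--Breiman argument identifies the exponential decay rate of the transverse mass as $h_i-h_{i+1}$, while Oseledets' theorem \cite{Oseledets1968}, applied to the backward products $M_{x_{-n}}\cdots M_{x_{-1}}$, identifies the corresponding geometric scale as $e^{n\lambda_{i+1}+o(n)}$; dividing gives $(h_i-h_{i+1})/(-\lambda_{i+1})=(h_{i+1}-h_i)/\lambda_{i+1}$, as claimed. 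For the exact-dimensionality assertion one in fact propagates through the induction, separately, the upper bound $\overline{\dim}_{\rm loc}(\mu_x^{(i)},\cdot)\le\delta_i$ and the matching lower bound, the latter being more delicate and requiring uniform covering estimates.

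The main obstacle is the geometric half of the above dichotomy in the absence of any domination hypothesis on $\{M_j\}$. For the Fubini decomposition, and for the identification of the transverse geometric scale with $e^{n\lambda_{i+1}}$, one needs the slices $\pi y+V_y^{i+1}$ (with $y$ ranging over a $\xi_i$-atom) to be \emph{uniformly} transverse to the $\lambda_{i+1}$-direction, and one needs this uniformity not to deteriorate along the orbit; but without domination the angles between Oseledets subspaces at a point may be arbitrarily small on a set of positive measure, and may degenerate under iteration, so that even a single step of the cocycle can distort the transverse picture by an unbounded factor. Following the strategy announced in the Introduction, I would circumvent this by passing to an \emph{induced (first-return type) dynamics} on a positive-measure set $\Gamma\subset\Sigma$ on which these angles are bounded below by a fixed constant: the transverse estimates are carried out for the induced system, in which the relevant contraction rates still average---after rescaling by the return time via the Abramov and Kac formulas---to $\lambda_{i+1}$, and the conclusion is transported back to the original system using Birkhoff's ergodic theorem together with a Borel--Cantelli argument controlling the (a.e.\ finite) return times. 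Non-invertibility and non-contractivity of the $M_j$ are accommodated throughout by working with the possibly strict inclusions $M_jV_w^i\subseteq V_x^i$ and by allowing $\lambda_s=-\infty$. This induced-dynamics device, together with the construction of the partitions $\xi_i$ directly from $\pi$ and the Oseledets filtration rather than from the geometry of cylinders (cf.~Remark~\ref{rem-tt}), is where the present argument departs most from \cite{FengHu2009, BaranyKaenmaki2015, QianXie2008}.

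Finally, summing the recursion and telescoping, and using $\delta_s=0$ (because $\mu_x^{(s)}=\delta_{\pi x}$, as $\pi$ maps $\xi_s(x)$ to the single point $\pi x$) together with $\delta_0=\dim_{\rm loc}(\mu,\cdot)$ $\mu$-a.e.\ (a Fubini-type argument, since the conditioning defining $\xi_0$ is only on the past), one obtains $\dim_{\rm loc}(\mu,x)=\sum_{i=0}^{s-1}(h_{i+1}-h_i)/\lambda_{i+1}$ for $\mu$-a.e.\ $x$, which is also part~(i) of Theorem~\ref{thm-1.0}. Hence $\mu$ is exact dimensional, and $\dim_{\rm H}\mu$ equals this common local dimension by the standard fact recalled in the Introduction, which yields \eqref{ly-dim} and completes the proof.
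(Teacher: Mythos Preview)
Your proposal is essentially correct and follows the same route as the paper: the Ledrappier--Young induction along the chain of partitions $\xi_0\le\cdots\le\xi_s$, with the transverse-dimension step proved via an induced dynamics on a set where the Oseledets angles are bounded below (this is exactly Proposition~\ref{pro-4.1} and its proof in Section~\ref{S-5}), and the exact-dimensionality deduced from three claims of the form (C1)--(C3) that sandwich $\overline\delta_i=\underline\delta_i$.

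One place where your outline is too loose is the final step identifying $\delta_0$ with $\dim_{\rm loc}(\mu,\cdot)$. Saying ``a Fubini-type argument, since the conditioning defining $\xi_0$ is only on the past'' is not enough: knowing that each $\mu_x^{(0)}=\pi_*m_x^{\xi_0}$ is exact dimensional with value $\delta_0$ does not by itself give the same conclusion for $\mu=\int\mu_x^{(0)}\,dm(x)$, because for $y^-\neq x^-$ you have no a priori control of $\mu_y^{(0)}(B(\pi x,r))$. The paper handles this cleanly by extending the induction to $i=-1$: set $\xi_{-1}=\{\Sigma',\emptyset\}$ (so $m_x^{\xi_{-1}}=m$), declare $h_{-1}=h_0$ and $\lambda_0=\lambda_1$, and observe that the proofs of (C2), (C3) and of $\vartheta_{-1}=0$ go through verbatim for this extra index. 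This yields $\delta_{-1}=\delta_0$ with no separate Fubini argument. Otherwise your plan matches the paper, including the use of a relative SMB theorem (Lemma~\ref{lem-3.5}) and the Kac/Abramov-type identity (Lemma~\ref{lem-inderg}) in place of what you call Borel--Cantelli.
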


 We remark that both the nominators and denominators in \eqref{ly-dim} are non-positive.  Next we give similar dimension formulas for certain conditional measures associated with $m$. For $i=0,\ldots, s$, let  $\{m_x^{\xi_i}\}$ be the system of conditional measures of $m$ associated with the partition $\xi_i$ (cf.~Section~\ref{Sect-con}). For a linear subspace $W$ of $\R^d$, let $W^\perp$ denote the orthogonal complement of $W$ in $\R^d$, and let $P_W:\; \R^d\to W$  denote the orthogonal projection from $\R^d$ to $W$.
\begin{thm}
\label{thm-1.2}
Under the assumptions of Theorem \ref{thm-1.1},
 for any $0\leq i<j\leq s$ and  $m$-a.e.~$x\in \Sigma'$, the push-forward measures $\pi_*(m_x^{\xi_i})$,
$\big(P_{{(V_x^j)}^\perp}\pi\big)_*(m_x^{\xi_i})$ of $m_x^{\xi_i}$
 are exact dimensional with
\begin{align}
&\dim_{\rm H}\left( \pi_*(m_x^{\xi_i})\right)=\sum_{\ell=i}^{s-1}\frac{h_{\ell+1}-h_\ell}{\lambda_{\ell+1}}, \label{e-con-dim}\\
&\dim_{\rm H}\left(\big(P_{{(V_x^j)}^\perp}\pi\big)_*(m_x^{\xi_i})\right)=\sum_{\ell=i}^{j-1}\frac{h_{\ell+1}-h_{\ell}}{\lambda_{\ell+1}}, \label{e-con-proj-dim}
\end{align}
 Moreover, for $m$-a.e.~$x\in \Sigma'$  and any $1\leq j\leq s$,
\begin{equation}\label{e-proj-dim}
\dim_{\rm loc}\left( \big(P_{{(V_x^j)}^\perp}\pi\big)_*m,\; P_{{(V_x^j)}^\perp}( \pi x)\right)=\sum_{\ell=0}^{j-1}\frac{h_{\ell+1}-h_{\ell}}{\lambda_{\ell+1}}.
\end{equation}
\end{thm}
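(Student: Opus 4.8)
The plan is to build the Ledrappier--Young machinery for the family $\xi_0,\dots,\xi_s$ and then read off all three formulas from the entropy/Lyapunov data $(h_i,\lambda_i)$. The starting point is the geometry of the partitions: the $\xi_i$-atom through $x$ is contained in the affine slice $\pi x+V_x^i$, and the $\xi_i$ form a decreasing refinement sequence $\xi_0\succcurlyeq\xi_1\succcurlyeq\cdots\succcurlyeq\xi_s$, with $\xi_s$ the point partition on the fibres of $\pi$ (equivalently, $\pi_*m_x^{\xi_s}=\delta_{\pi x}$) and $\xi_0$ essentially $\{y:y^-=x^-\}$. The first step is therefore to establish, via a transfer-operator / dynamical-self-similarity argument along the $\sigma$-orbit, that each pushforward $\nu_x^i:=m_x^{\xi_i}\circ\pi^{-1}$ is exact dimensional with a dimension $\gamma_i$ that is $m$-a.e.\ constant; this is the analogue of the ``entropy-along-$W^i$'' step in \cite{LedrappierYoung1985} and is where the induced-dynamics trick mentioned in the introduction will be used to keep the Oseledets angles bounded below. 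One then shows the telescoping increment identity
\begin{equation*}
\gamma_i-\gamma_{i+1}=\frac{h_{i+1}-h_i}{\lambda_{i+1}},\qquad i=0,\dots,s-1,
\end{equation*}
by comparing the conditional measures $m_x^{\xi_i}$ and $m_x^{\xi_{i+1}}$: on the quotient, $m_x^{\xi_i}/m_x^{\xi_{i+1}}$ lives on $V_x^i/V_x^{i+1}$, the dynamics there expands by the single exponent $\lambda_{i+1}$ (all of $V^i\setminus V^{i+1}$ has Lyapunov exponent exactly $\lambda_{i+1}$), and the entropy contributed by this scale is $h_{i+1}-h_i=H_m(\P\mid\widehat{\xi_{i+1}})-H_m(\P\mid\widehat{\xi_i})$. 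Since $\gamma_s=0$ (point mass), summing gives $\gamma_i=\sum_{\ell=i}^{s-1}(h_{\ell+1}-h_\ell)/\lambda_{\ell+1}$, which is \eqref{e-con-dim}; the case $i=0$ of \eqref{e-con-dim} combined with the identity $\dim_{\rm H}\mu=\int\dim_{\rm H}\nu_x^{0}\,dm$ (or rather the exact-dimensionality transfer from conditionals to the global measure, already behind Theorem \ref{thm-1.1}) reproduces \eqref{ly-dim}.

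For \eqref{e-con-proj-dim}, I would fix $0\le i<j\le s$ and apply the orthogonal projection $P:=P_{(V_x^j)^\perp}$ to $\nu_x^i$. The key observation is that $P$ is injective on the $\xi_j$-atoms (these are parallel to $V_x^j$, which is killed by $P$) but collapses exactly the $V^j$-direction; hence $P_*\nu_x^i$ is, fibrewise over the $\xi_j$-quotient, a bijective image of the quotient measure $m_x^{\xi_i}/m_x^{\xi_j}\circ\pi^{-1}$, whose exponents range over $\lambda_{i+1},\dots,\lambda_j$ only. Running the same exact-dimensionality-plus-telescoping argument on the quotient dynamics between $\xi_i$ and $\xi_j$ — and checking that $P$ does not distort dimension, because $P|_{(V_x^j)\text{-complement}}$ is bi-Lipschitz on the relevant slice and the angle between $V_x^j$ and $(V_x^j)^\perp$-image of the complementary Oseledets block is bounded below along the induced trajectories — yields $\dim_{\rm H}P_*\nu_x^i=\sum_{\ell=i}^{j-1}(h_{\ell+1}-h_\ell)/\lambda_{\ell+1}$, which is \eqref{e-con-proj-dim}. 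Finally, \eqref{e-proj-dim} is the ``$i=0$, but for the global measure $m$ rather than a conditional'' statement: writing $m\circ\pi^{-1}\circ P^{-1}$ and disintegrating $m$ over $\xi_0$, the fibre measures are the $\nu_x^0$ just analyzed, so by the quotient argument the local dimension of $m\circ\pi^{-1}\circ P^{-1}$ at $P(\pi x)$ equals $\dim_{\rm H}P_*\nu_x^0=\sum_{\ell=0}^{j-1}(h_{\ell+1}-h_\ell)/\lambda_{\ell+1}$; one only has to be careful that for $m$ (as opposed to its conditionals) one gets a local-dimension statement at the specific point $P(\pi x)$ rather than a.e.\ on the projected space, which is exactly how \eqref{e-proj-dim} is phrased.

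The main obstacle is the estimate hidden in the word ``checking'' above: controlling the transverse dimension of the conditional measures $m_x^{\xi_i}$ in the complementary Oseledets directions when the linear parts $\{M_j\}$ satisfy no domination hypothesis. In that generality the angle between $V_x^i$ and $V_x^{i+1}$ (and between $V_x^j$ and $(V_x^j)^\perp$) can be arbitrarily small along the orbit, so the projections $P_{(V_x^j)^\perp}$ and the ``local slice coordinates'' on $\xi_i$-atoms are only bi-Lipschitz with constants that blow up; naively this destroys both the entropy increment identity and the dimension-invariance of $P$. The remedy — and the technical heart of the paper — is to pass to an induced system over a set where the relevant Oseledets angles are bounded below by a fixed constant, prove the density/entropy estimates there (using a Besicovitch-type covering and the martingale convergence of conditional measures), and then transfer the resulting dimension formula back to the original system via Kac's formula and the fact that inducing scales entropy and Lyapunov exponents by the same return-time factor, leaving the ratios $(h_{\ell+1}-h_\ell)/\lambda_{\ell+1}$ unchanged. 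Making this transfer rigorous for non-invertible and non-contractive $M_j$ — so that the coding map, the subspaces $V_x^i$, and the conditional measures all survive the inducing — is the step I expect to require the most care.
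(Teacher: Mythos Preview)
Your overall strategy is the one the paper uses: prove the transverse-dimension lower bound $\vartheta_i\ge (h_{i+1}-h_i)/\lambda_{i+1}$ via the induced dynamics on a good-angle set (this is the paper's Proposition~\ref{pro-4.1}, and your discussion of Kac/return-time scaling is exactly how the ratio survives inducing), and then squeeze via a three-claim argument to force $\vartheta_i=(h_{i+1}-h_i)/\lambda_{i+1}$ and $\delta_i=\sum_{\ell\ge i}\vartheta_\ell$. So \eqref{e-con-dim} is obtained as you describe.

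Your treatment of the projected formulas \eqref{e-con-proj-dim}--\eqref{e-proj-dim}, however, is both more complicated and slightly muddled compared to the paper. First, a slip: $P=P_{(V_x^j)^\perp}$ is not ``injective on the $\xi_j$-atoms''; it is \emph{constant} on them (each $\xi_j$-atom projects under $\pi$ into a single $V_x^j$-coset). What you presumably meant is that the induced map on the $\xi_j$-\emph{quotient} is injective. Second, there is no need to worry about bi-Lipschitz constants for $P$ or the Oseledets angle at this stage: for fixed $x$ the projection is a fixed linear map, so the angle problem lives entirely in the proof of $\vartheta_i\ge(h_{i+1}-h_i)/\lambda_{i+1}$, not in the projection step. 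The paper avoids your quotient/bi-Lipschitz detour altogether: it simply replaces the $\pi$-balls $B^\pi(x,r)$ by the ``tubes'' $\Gamma_j(x,r)=\{y:\operatorname{dist}(\pi y+V_x^j,\,\pi x+V_x^j)\le r\}$ and reruns the identical three-claim argument (called (D1)--(D3)) for the quantities $\gamma_{i,j}(x)=\lim_{r\to 0}\log m_x^{\xi_i}(\Gamma_j(x,r))/\log r$, using the already-established values of $\vartheta_i$ as input rather than reproving them. Finally, for \eqref{e-proj-dim} the paper does not disintegrate $m$ over $\xi_0$ and then try to pass from fibre to global; instead it adopts the convention $\xi_{-1}=\{\Sigma',\emptyset\}$, so that $m_x^{\xi_{-1}}=m$ identically, and reads off \eqref{e-proj-dim} as the $i=-1$ case of the same (D1)--(D3) scheme. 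Your ``quotient argument'' from $\nu_x^0$ back to $m$ would require exactly this extra step, which you flagged but did not supply.
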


From the above theorem, we  can  deduce certain dimension conservation property for the measures
 $\pi_*\big(m^{\xi_0}_x\big)$ and $\mu$. To state the result, let $G(d,k)$ denote the Grassmannian manifold of  $k$-dimensional linear subspaces of $\R^d$.  For a Borel probability measure $\eta$ on $\R^d$ and  $W\in G(d,k)$, let $\{\eta_{W,z}=\eta^{\zeta_W}_z\}_{z\in \R^d}$ denote the
the system of conditional measures of $\eta$ associated with the measurable partition $\zeta_W$ given by
$$
\zeta_W=\{W+a:\; a\in W^\perp\}.
$$
These conditional measures are also called the {\it slicing measures} of $\eta$ along the subspace $W$ (cf. \cite[\S10.1]{Mattila1995}).  Following Furstenberg \cite{Furstenberg2008}, we give the following.
\begin{de}
A measure $\eta$ is said to be dimension conserving  with respect to the projection $P_{W^\perp}$, if
$$
\dim_{\rm H}\eta =\dim_{\rm H} \eta_{W, z}+\dim_{\rm H}\left(  (P_{W^\perp})_*\eta\right)
$$
  for $\eta$-a.e.~$z\in \R^d$.
\end{de}

 For $i\in \{0, \ldots, s-1\}$, define $\Pi_i:\Sigma'\to G(d, \sum_{j=i+1}^sk_j)$ by \begin{equation}
\label{e-Pi}
\Pi_i(x)= V_x^i.
\end{equation}
The push-forward measures $(\Pi_i)_*m$, $i=1,\ldots, s-1$, are called the {\em Furstenberg measures} or {\em Furstenberg-Oseledets measures} associated with $(M_j)_{j\in \Lambda}$ and $m$.
  An ergodic measure $\nu\in \M_\sigma(\Sigma)$ is said to be {\it quasi-Bernoulli} if there exists a positive constant $C$ such that
$$C^{-1}\nu([I])\nu([J])\leq \nu([IJ])\leq C\nu([I]) \nu([J])$$
for any finite words $I$, $J$ over $\Lambda$, where
$$[I]:=\{x\in \Sigma:\; x_j=i_j \mbox{ for }0\leq  j\leq n-1\}$$ for $I=i_0\ldots i_{n-1}$.
 Similarly, we say that $\nu$ is {\it sub-multiplicative} if there exists a positive constant $C$ such that  $\nu([IJ])\leq C\nu([I]) \nu([J])$
for any finite words $I$, $J$ over $\Lambda$.
\begin{thm}
\label{cor-1.0}
Under the assumptions of Theorem \ref{thm-1.1},
we further assume that $s\geq 2$.
 Let $i\in \{1,\ldots, s-1\}$. Then  the following statements hold.
 \begin{itemize}
 \item[(i)] For $m$-a.e.~$x\in \Sigma'$, $\pi_*\big(m_x^{\xi_0}\big)$ is dimension conserving with respect to  $P_{{(V_x^i)}^\perp}$ and moreover, the projected measure
 $\big(P_{{(V_x^i)}^\perp}\pi\big)_*\big(m_x^{\xi_0}\big)$ is exact dimensional, and so are the slicing measures  $\big(\pi_*\big(m_x^{\xi_0}\big)\big)_{V_x^i, y}$ for $\pi_*\big(m_x^{\xi_0}\big)$-a.e.~$y$.
 \item[(ii)] Assume that $m$ is quasi-Bernoulli. Then for $(\Pi_i)_*m$-a.e.~$W$, $\mu$ is
            dimension conserving with respect to $P_{W^\perp}$, and moreover, the associated projected measure and almost all slicing measures are exact dimensional.
 \item[(iii)] Assume that $m$ is sub-multiplicative. Then for $(\Pi_i)_*m$-a.e.~$W$, there exists a subset  $A_W$ of~$\R^d$ with  $\mu(A_W)>0$ such that for every $z\in A_W$,
  \begin{equation*}
  \begin{split}
  \dim_{\rm loc} (\mu_{W,z}, z)&=\sum_{\ell=i}^{s-1}\frac{h_{\ell+1}-h_\ell}{\lambda_{\ell+1}},\\
  \dim_{\rm loc} \left(\big(P_{W^\perp}\big)_*\mu, P_{W^\perp}(z)\right)&=\sum_{\ell=0}^{i-1}\frac{h_{\ell+1}-h_\ell}{\lambda_{\ell+1}},
  \end{split}
  \end{equation*}
 and so,
 $\dim_{\rm H}\mu=\dim_{\rm loc} (\mu_{W,z}, z)+\dim_{\rm loc} \left( \big(P_{W^\perp}\big)_*\mu, P_{W^\perp}(z)\right).$  When $m$ is quasi-Bernoulli then one can take the set $A_W$ such that $\mu(A_W)=1$.
 \end{itemize}
 \end{thm}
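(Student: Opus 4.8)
The plan is to deduce all three parts from Theorem~\ref{thm-1.2}, using two structural observations together with soft measure theory. First, the coding map $\pi$ depends only on the future coordinates $x^+=(x_n)_{n\ge 0}$, whereas $V_x^i$, $\Pi_i$ and the atom $\xi_0(x)$ depend only on the past $x^-=(x_n)_{n<0}$; consequently $\nu_x:=m_x^{\xi_0}\circ\pi^{-1}$ depends only on $x^-$, one has $\mu=\int\nu_x\,dm(x)$, and, identifying $\xi_0(x)$ with the one-sided shift space $\Lambda^{\N}$ through $y\mapsto y^+$, $\nu_x$ is the image under the one-sided coding map of the conditional law $m^+_{x^-}$ of $x^+$ given $x^-$, while $\mu$ is the image of the future marginal $m^+$. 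Second, since $\xi_i(y)\subset\xi_0(y)$ for every $y$ the partition $\xi_i$ refines $\xi_0$; and since $V_y^i=V_x^i$ for every $y\in\xi_0(x)$, two points $y,y'\in\xi_0(x)$ lie in a common $\xi_i$-atom exactly when $\pi y-\pi y'\in V_x^i$, so $\pi$ carries the restriction of $\xi_i$ to $\xi_0(x)$ onto the restriction of the partition $\zeta_{V_x^i}$ to $\pi(\xi_0(x))$.

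For part (i) I would combine the second observation with transitivity of conditional measures: since $\xi_i$ refines $\xi_0$, $(m_x^{\xi_0})_y^{\xi_i}=m_y^{\xi_i}$ for $m$-a.e.\ $x$ and $m_x^{\xi_0}$-a.e.\ $y$, so the slicing measures of $\nu_x$ along $V_x^i$ are precisely the measures $m_y^{\xi_i}\circ\pi^{-1}$, $y\in\xi_0(x)$. Theorem~\ref{thm-1.2} then completes the argument: \eqref{e-con-dim} applied with the partition $\xi_i$ shows these slices are exact dimensional with dimension $\sum_{\ell=i}^{s-1}\frac{h_{\ell+1}-h_\ell}{\lambda_{\ell+1}}$; \eqref{e-con-proj-dim} applied with the pair $(0,i)$ shows $\nu_x\circ(P_{(V_x^i)^\perp})^{-1}$ is exact dimensional with dimension $\sum_{\ell=0}^{i-1}\frac{h_{\ell+1}-h_\ell}{\lambda_{\ell+1}}$; and \eqref{e-con-dim} applied with $\xi_0$ gives $\dim_{\rm H}\nu_x=\sum_{\ell=0}^{s-1}\frac{h_{\ell+1}-h_\ell}{\lambda_{\ell+1}}$, the sum of the preceding two. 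This is exactly the dimension conservation of $\nu_x$ with respect to $P_{(V_x^i)^\perp}$ asserted in (i), together with the stated exact dimensionalities.

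Parts (ii) and (iii) rest on comparing $\nu_x$ with $\mu$. Using the $\sigma$-invariance of $m$ to reposition cylinders, for $m$-a.e.\ $x$ and every finite word $V$ one obtains $m^+_{x^-}([V])\le C\,m^+([V])$ when $m$ is sub-multiplicative, and $C^{-1}m^+([V])\le m^+_{x^-}([V])\le C\,m^+([V])$ when $m$ is quasi-Bernoulli, with $C$ the constant from the respective hypothesis and independent of $x$; by Carath\'eodory extension these pass to all Borel sets, and taking images under the one-sided coding map gives, for $m$-a.e.\ $x$, $\nu_x\le C\mu$ in the sub-multiplicative case and $\nu_x$ comparable to $\mu$ with a uniform constant in the quasi-Bernoulli case. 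Part (ii) then follows quickly: assuming $m$ quasi-Bernoulli, for $m\circ(\Pi_i)^{-1}$-a.e.\ $W$ pick $x$ with $V_x^i=W$ for which the conclusions of (i), of Theorem~\ref{thm-1.2}, and the comparison all hold; then $\mu$ and $\nu_x$ are uniformly comparable, hence so are $\mu\circ(P_{W^\perp})^{-1}$ and $\nu_x\circ(P_{W^\perp})^{-1}$, and (by comparing disintegrations along $\zeta_W$) the slicing measures $\mu_{W,z}$ and $(\nu_x)_{W,z}$ for a.e.\ $z$. Since comparable measures share Hausdorff dimension, null sets and exact local dimension, the conclusions of (i) transfer to $\mu$: $\mu\circ(P_{W^\perp})^{-1}$ and almost every slice $\mu_{W,z}$ are exact dimensional with the stated dimensions, and $\dim_{\rm H}\mu=\dim_{\rm H}\mu_{W,z}+\dim_{\rm H}(\mu\circ(P_{W^\perp})^{-1})$.

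For part (iii), with $m$ sub-multiplicative and $W,x$ chosen as above, set $g=d\nu_x/d\mu\le C$ and $A_W=\{g\ge 1/2\}$; from $\int g\,d\mu=1$ one gets $\mu(A_W)\ge 1/(2C)>0$, and $\mu|_{A_W}$ and $\nu_x|_{A_W}$ are comparable with uniform constants, so their projections under $P_{W^\perp}$ and their disintegrations along $\zeta_W$ are uniformly comparable too. The point to verify is that restricting to $A_W$ leaves the relevant local dimensions unchanged at $\mu$-typical points of $A_W$: by the differentiation theorem for the disintegration along $\zeta_W$, for $\mu$-a.e.\ $z\in A_W$ one has $\mu_{W,z}(A_W)>0$ and $\mu(A_W\cap P_{W^\perp}^{-1}B)/\mu(P_{W^\perp}^{-1}B)\to\mu_{W,z}(A_W)$ as the slab $B$ shrinks to $\{P_{W^\perp}z\}$, while by the Besicovitch density theorem for the measure $\mu_{W,z}$ the point $z$ is a density point of $A_W$; the analogous statements hold with $\nu_x$ in place of $\mu$ off a $\mu$-null subset of $A_W$. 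Chaining these with the comparability on $A_W$ and the exact dimensionalities of (i), one gets for $\mu$-a.e.\ $z\in A_W$ that $\dim_{\rm loc}(\mu_{W,z},z)=\sum_{\ell=i}^{s-1}\frac{h_{\ell+1}-h_\ell}{\lambda_{\ell+1}}$ and $\dim_{\rm loc}(\mu\circ(P_{W^\perp})^{-1},P_{W^\perp}z)=\sum_{\ell=0}^{i-1}\frac{h_{\ell+1}-h_\ell}{\lambda_{\ell+1}}$; taking the $\mu$-positive subset of $A_W$ on which both equalities hold as the final $A_W$, and using Theorem~\ref{thm-1.1} for the sum, yields (iii). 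In the quasi-Bernoulli case $\nu_x$ is comparable to $\mu$ with a uniform constant, so $g$ is bounded away from $0$ and $\infty$ $\mu$-a.e.\ and one may take $\mu(A_W)=1$. The main obstacle is precisely this localization step: because only the one-sided estimate $\nu_x\le C\mu$ is available, the slicing and projection dimension formulas cannot be transferred globally, and one must work on $A_W$ while controlling, via the differentiation theorems above, the effect of the restriction on the slicing and projected measures; by contrast part (i) and the comparison lemma are routine. (Equation~\eqref{e-proj-dim} furnishes an alternative derivation of the projection formula needed in (iii).)
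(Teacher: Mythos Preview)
Your proposal is correct and follows essentially the same route as the paper's proof: part (i) identifies the slicing measures of $\nu_x=m_x^{\xi_0}\circ\pi^{-1}$ along $V_x^i$ with $m_y^{\xi_i}\circ\pi^{-1}$ via transitivity of disintegrations and then invokes Theorem~\ref{thm-1.2}, while parts (ii)--(iii) rest on the comparison $\nu_x\asymp\mu$ (quasi-Bernoulli) or $\nu_x\le C\mu$ (sub-multiplicative), exactly as in Lemma~\ref{lem-quasi}, followed by the transfer of slicing/projection dimensions through strong equivalence of restricted measures and density arguments (the paper packages these as Lemmas~\ref{lem-2.8}--\ref{lem-2.9} plus the Borel density lemma). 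The only cosmetic differences are that you construct $A_W=\{d\nu_x/d\mu\ge 1/2\}$ explicitly whereas the paper merely asserts the existence of a set $F_x$ on which the restrictions are strongly equivalent, and that you spell out the differentiation-theorem step that the paper compresses into \eqref{e-ff1}.
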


We remark that part (ii) of Theorem \ref{cor-1.0} was previously proved in \cite{BaranyKaenmaki2015} under the  assumptions that ${\mathcal S}$ is contracting,\ invertible and its linear parts satisfy certain domination condition.
  According to part (iii) of the  theorem, when $m$ is sub-multiplicative,  $\mu$ partially satisfies dimension conservation.  It is unknown  whether part (ii) always holds when $m$ is only assumed to be ergodic. However, as is illustrated  in the following theorem,  this is true in the special case that  the linear parts of the IFS commute.
\begin{thm}
\label{thm-1.7'}
Let ${\mathcal S}=\{M_jx+a_j\}_{j\in \Lambda}$  be an affine IFS on $\R^d$,  average contracting with respect to an ergodic $m\in {\mathcal M}_\sigma(\Sigma)$.  Let $\mu=\pi_*m$. Assume $s\geq 2$ and  in addition that $M_jM_{j'}=M_{j'}M_j$ for $j,j'\in \Lambda$. Then for $i\in \{1,\ldots, s-1\}$, $V_x^i$ is constant $m$-a.e., denoted by $W_i$, moreover, $\mu$ is dimension conserving with respect to $P_{(W_i)^\perp}$.
\end{thm}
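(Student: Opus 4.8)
The plan is to reduce everything to Theorem \ref{cor-1.0}(ii) by showing that, in the commuting case, every ergodic $m$ automatically behaves (for the purposes of the dimension conservation statement) like a quasi-Bernoulli measure with respect to the relevant Oseledets flag. First I would establish that the Oseledets subspaces are constant $m$-a.e. Since the $M_j$ commute, the products $M_{x_{-n}}\cdots M_{x_{-1}}$ all lie in the commutative semigroup generated by $\{M_j\}_{j\in\Lambda}$, and the characterization
$V_x^i=\{v\in\R^d:\ \limsup_{n\to\infty}\frac1n\log\|M_{x_{-n}}\cdots M_{x_{-1}}v\|\le\lambda_{i+1}\}$
from the excerpt shows $V_x^i$ depends only on $x^-$. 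I would argue that a commuting family of matrices has a common (Oseledets-type) block decomposition that does not see the order of the factors: passing to a common generalized eigenspace decomposition of the semigroup (working over $\R$ by grouping complex-conjugate eigenvalues), the exponential growth rate of $\|M_{x_{-n}}\cdots M_{x_{-1}}v\|$ along a fixed generalized eigenspace is governed by a Birkhoff average $\frac1n\sum_{k=1}^n \log|\chi(M_{x_{-k}})|$ of a fixed real additive cocycle $\log|\chi(\cdot)|$, whose a.e.\ limit is constant by ergodicity of $m$. Hence each $V_x^i$ equals the sum of those generalized eigenspaces whose associated Lyapunov exponent is $\le\lambda_{i+1}$, a subspace independent of $x$; call it $W_i$. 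This also makes $W_i$ invariant under every $M_j$.

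Next, with $W:=W_i$ fixed and $m$-invariant, I would apply the conditional dimension formulas of Theorem \ref{thm-1.2} directly. Equation \eqref{e-proj-dim} gives, for $m$-a.e.\ $x$, that the projected measure $\mu\circ(P_{W^\perp})^{-1}$ has local dimension equal to $\sum_{\ell=0}^{i-1}(h_{\ell+1}-h_\ell)/\lambda_{\ell+1}$ at the point $P_{W^\perp}(\pi x)$; since $W$ is now a fixed subspace (not $x$-dependent) this says $\mu\circ(P_{W^\perp})^{-1}$ is exact dimensional with that value. For the slicing measures I would identify them: because $\xi_0(x)=\{y:\ y^-=x^-,\ \pi y-\pi x\in V_x^0=\R^d\}$ has the property that $\pi(\xi_0(x))$ is (essentially) the fiber of $\pi$ over the $\sigma$-algebra of $x^-$, the conditional measure $m_x^{\xi_0}\circ\pi^{-1}$ pushed through $P_{W^\perp}$ and the genuine slicing measure $\mu_{W,z}$ of $\mu$ along $W$ differ only by the disintegration over $W^\perp$, which is exactly the content of dimension conservation; so $\dim_{\rm H}\mu_{W,z}=\sum_{\ell=i}^{s-1}(h_{\ell+1}-h_\ell)/\lambda_{\ell+1}$ by \eqref{e-con-dim} with $i$ replaced appropriately, combined with \eqref{e-con-proj-dim}. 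Adding the two pieces and comparing with \eqref{ly-dim} yields $\dim_{\rm H}\mu=\dim_{\rm H}\mu_{W,z}+\dim_{\rm H}(\mu\circ(P_{W^\perp})^{-1})$ for $\mu$-a.e.\ $z$, which is precisely dimension conservation with respect to $P_{(W_i)^\perp}$.

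The main obstacle I expect is the bookkeeping identifying the abstract conditional measures $m_x^{\xi_i}\circ\pi^{-1}$ from Theorem \ref{thm-1.2} with the \emph{geometric} slicing measures $\mu_{W,z}$ of Definition (the $\zeta_W$-disintegration of $\mu$). In general $\pi$ is far from injective, so one must check that the fibering of $\Sigma'$ by the pair ``$x^-$ together with the coset of $V_x^i$'' pushes forward under $\pi$ to the fibering of $\R^d$ by cosets of $W_i$, up to $\mu$-null sets — this is where the commuting hypothesis is genuinely used a second time, since it is what forces $V_x^i$ to be the fixed subspace $W_i$ and hence makes $\zeta_{W_i}$ measurable in the required sense and compatible with $\xi_i$. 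Once this measurable-partition identification is in place, the three displayed formulas of Theorem \ref{thm-1.2} assemble into the claim with only routine additivity of the telescoping sums $\sum_{\ell=0}^{s-1}=\sum_{\ell=0}^{i-1}+\sum_{\ell=i}^{s-1}$.
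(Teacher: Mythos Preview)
Your argument that the $V_x^i$ are constant $m$-a.e.\ (via the common generalized eigenspace decomposition of the commuting family) is fine, and your use of \eqref{e-proj-dim} to conclude that $\mu\circ(P_{W_i^\perp})^{-1}$ is exact dimensional with dimension $\sum_{\ell=0}^{i-1}(h_{\ell+1}-h_\ell)/\lambda_{\ell+1}$ is correct once $W_i$ is a fixed subspace.

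The gap is in the slicing step. You want to identify the geometric slicing measure $\mu_{W_i,z}$ with $m_x^{\xi_i}\circ\pi^{-1}$, but this identification is false in general. By definition, $\xi_i(x)=\{y:\;y^-=x^-,\ \pi y-\pi x\in V_x^i\}$, so even when $V_x^i=W_i$ is constant, the partition $\xi_i$ still refines $\xi_0$ by requiring $y^-=x^-$. What \eqref{e-comp} in the proof of Theorem~\ref{cor-1.0} shows is that $m_x^{\xi_i}\circ\pi^{-1}=(m_x^{\xi_0}\circ\pi^{-1})_{W_i,\pi x}$, i.e.\ $m_x^{\xi_i}\circ\pi^{-1}$ is a slice of $m_x^{\xi_0}\circ\pi^{-1}$, \emph{not} of $\mu=m\circ\pi^{-1}$. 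Bridging from $m_x^{\xi_0}\circ\pi^{-1}$ to $\mu$ is precisely what requires quasi-Bernoulli (Lemma~\ref{lem-quasi}(i)) or sub-multiplicativity; for a general ergodic $m$ there is no such relationship, so Theorem~\ref{thm-1.2} alone cannot give you $\dim_{\rm H}\mu_{W_i,z}$.

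The paper's proof is accordingly quite different: it does \emph{not} go through the two-sided partitions $\xi_i$ or Theorem~\ref{thm-1.2}. Instead it invokes \cite[Theorem~2.11]{FengHu2009}, whose partitions (there denoted $\zeta_p$) are simply the $\pi$-preimages of fixed affine cosets, $\zeta_p=\{\pi^{-1}\circ\tau_p^{-1}(y):y\in Y_p\}$, with no conditioning on $x^-$. For those partitions the identification $m_x^{\zeta_p}\circ\pi^{-1}=\mu_{Y_p^\perp,\pi x}$ is immediate from uniqueness of disintegration, and the exact-dimensionality of both pieces comes from the one-sided Ledrappier--Young argument in \cite{FengHu2009} (valid because the Oseledets flag is constant in the commuting case). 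In short, the commuting hypothesis is used not only to fix $W_i$ but, crucially, to allow one to dispense with the past-conditioning built into $\xi_i$; your proposal uses the first consequence but overlooks the second.
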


  It is worth pointing out that if $\mu$ is a  contracting self-similar measure in $\R^d$ with a finite rotation group, then for each proper subspace $W$ of $\R^d$, $\mu$ is dimension conserving with respect to $P_W$. The result  is due to Falconer and Jin \cite{FalconerJin2014}. Under an additional assumption of the strong separation condition, this result can  be alternatively derived from a general result of Furstenberg (cf.~\cite[Theorem 3.1]{Furstenberg2008}). We remark that this dimension conservation property also extends to ergodic invariant measures for rotation-free self-similar IFSs (see Remark \ref{rem-6.3}).  However, as was proved by Rapaport \cite{Rapaport2017}, this dimension conservation actually can fail for some self-similar measures with infinite rotation groups.  Finally we remark that Theorems \ref{cor-1.0}-\ref{thm-1.7'} can be applied to analyze  slices and projections of certain self-affine sets (see Remark \ref{rem-6.4}).

\subsection{Semi-continuity of dimension and applications}
\label{S-SC}
Here we present a semi-continuity result on the dimension of ergodic invariant measures for affine IFSs and give its application to the dimension of self-affine sets.
Again let $\mathcal S=\{M_ix+a_i\}_{i\in \Lambda}$ be an affine IFS on $\R^d$, average contracting with respect to an ergodic invariant measure $m$ on $\Sigma$.  Write
$\ba=(a_i)_{i\in \Lambda}$. To emphasize the dependence on $\ba$, let $\pi_\ba$ be the coding map associated to $\mathcal S$ and let $h_{i,\ba}$ ($i=1,\ldots, s$) be the conditional entropies of $m$ defined in \eqref{e-hi1}.   Then we have the following.

\begin{thm}
\label{thm-1.3}
\begin{itemize}
\item[(1)]
The mapping $\ba\mapsto h_{i,\ba}$ is upper semi-continuous for each $i\in \{1,\ldots, s\}$.
\item[(2)] Moreover,
the mapping $\ba\mapsto \dim_{\rm H} \left( (\pi_{\ba})_*m\right)$ is lower semi-continuous.
\end{itemize}
\end{thm}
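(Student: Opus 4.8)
The plan is to exploit the Ledrappier–Young formula \eqref{ly-dim} together with the structure of the conditional entropies $h_{i,\ba}=H_m(\P|\widehat{\xi_{i,\ba}})$. Note first that the Lyapunov exponents $\lambda_i$ and the Oseledets subspaces $V^i_x$ depend only on the linear parts $(M_j)_{j\in\Lambda}$ and on $m$, hence are independent of $\ba$; only the coding map $\pi_\ba$, and through it the partitions $\xi_{i,\ba}$, vary with $\ba$. Consequently, by formula \eqref{ly-dim}, part (2) of the theorem follows once part (1) is established and once one checks the sign pattern that makes $\ba\mapsto\sum_{i=0}^{s-1}(h_{i+1,\ba}-h_{i,\ba})/\lambda_{i+1}$ lower semi-continuous from upper semi-continuity of each $h_{i,\ba}$: rewriting the sum by Abel summation as $-\sum_{i=1}^{s}h_{i,\ba}(\lambda_i^{-1}-\lambda_{i+1}^{-1})$ plus boundary terms (with the convention $\lambda_{s+1}^{-1}=0$ and using $h_{0,\ba}=0$), and observing that $0>\lambda_1>\cdots>\lambda_s$ forces each coefficient of $h_{i,\ba}$ in this rearrangement to be negative, so that upper semi-continuity of the $h_{i,\ba}$ yields lower semi-continuity of the dimension. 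So the heart of the matter is part (1).

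For part (1), the approach is to realize $h_{i,\ba}=H_m(\P\,|\,\widehat{\xi_{i,\ba}})$ as a limit (or infimum) over finite approximations and show each approximant is continuous, or at least upper semi-continuous, in $\ba$. Concretely, I would first recall that $\pi_\ba$ depends continuously on $\ba$ in a strong sense: from the series representation \eqref{e-pi1.4}, for $m$-a.e.\ $x$ the point $\pi_\ba(x)$ is a uniformly (in a suitable $L^p$ or measure sense) convergent series whose terms are affine in $\ba$, so $\ba\mapsto\pi_\ba$ is continuous as a map into, say, $L^0(\Sigma,m;\R^d)$ with the topology of convergence in measure; in particular $\ba_n\to\ba$ implies $\pi_{\ba_n}\to\pi_\ba$ in $m$-measure. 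Next, the partition element $\xi_{i,\ba}(x)=\{y:\ y^-=x^-,\ \pi_\ba y-\pi_\ba x\in V^i_x\}$ is governed by the value of $P_{(V^i_x)^\perp}(\pi_\ba y-\pi_\ba x)$; so $\widehat{\xi_{i,\ba}}$ is (up to the fixed factor recording $x^-$) the $\sigma$-algebra generated by the map $x\mapsto P_{(V^i_x)^\perp}\pi_\ba(x)$. The key technical input is a continuity/semi-continuity statement for conditional entropy of a fixed finite partition $\P$ given a $\sigma$-algebra that is generated by a measurable map depending continuously (in measure) on a parameter — the natural statement being that if $T_n\to T$ in measure then $\limsup_n H_m(\P|\sigma(T_n))\le H_m(\P|\sigma(T))$, which would be applied with $T_n=(x^-,P_{(V^i_x)^\perp}\pi_{\ba_n})$, $T=(x^-,P_{(V^i_x)^\perp}\pi_{\ba})$.

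The main obstacle, and where real care is needed, is precisely this last semi-continuity of conditional entropy, because conditioning on a larger or "noisier" $\sigma$-algebra is not monotone in a way that cooperates with convergence in measure: $\sigma(T_n)$ need not converge to $\sigma(T)$ in any $\sigma$-algebra sense even when $T_n\to T$ pointwise, so one cannot simply invoke martingale convergence. I would handle this by a mollification argument: fix $\varepsilon>0$, approximate the (essentially $\R^{k}$-valued) generator $T$ by a finite partition $\Q$ into small cubes so that $H_m(\P|\sigma(T))\ge H_m(\P|\Q)-\varepsilon$; since $T_n\to T$ in measure, for large $n$ the pullback under $T_n$ of the cube partition is $m$-close (in the partition metric $d(\cdot,\cdot)=H(\cdot|\cdot)+H(\cdot|\cdot)$) to that under $T$, hence $H_m(\P|\sigma(T_n))\le H_m(\P|T_n^{-1}\Q)+o(1)$ and $H_m(\P|T_n^{-1}\Q)\to H_m(\P|T^{-1}\Q)\le H_m(\P|\sigma(T))+\varepsilon$ by continuity of conditional entropy in the finite-partition metric. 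Combining gives $\limsup_n H_m(\P|\sigma(T_n))\le H_m(\P|\sigma(T))+2\varepsilon$, and letting $\varepsilon\to0$ finishes it. A secondary technical point is that $\pi_\ba$ is only defined $m$-a.e.\ and the convergence $\pi_{\ba_n}\to\pi_\ba$ must be made uniform enough to survive; here the average-contracting hypothesis \eqref{e-1.3}, via the tail estimate on $\|M_{x_0}\cdots M_{x_{n-1}}\|$ from Kingman's theorem (as used in Section \ref{S-3} for well-definedness of $\pi$), gives the needed uniform integrability of the tail of the series in \eqref{e-pi1.4}, so the continuity of $\ba\mapsto\pi_\ba$ in measure is uniform on the set $\Sigma'$ and independent of $\ba$ in a neighborhood.
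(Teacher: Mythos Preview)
Your overall strategy for part (1)---approximate the generating $\sigma$-algebra by finite partitions and use continuity of the pushforward measures---is the same as the paper's, and your derivation of part (2) from part (1) via Abel summation is exactly what the paper does (with one correction: $h_{0,\ba}=h_m(\sigma)$, not $0$; the point is that it is \emph{constant} in $\ba$, which is all you need).

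The main difference in execution is that the paper first disintegrates over $\xi_0$, writing $h_{i,\ba}=\int H_{m_x^{\xi_0}}(\P\,|\,\xi_{i,\ba})\,dm(x)$, and then proves upper semi-continuity of the integrand for each fixed $x$. This buys a real simplification that your sketch finesses: on a single atom $\xi_0(x)$ the subspace $V_x^i=:W$ is \emph{fixed} (it depends only on $x^-$), so the relevant map is $y\mapsto P_{W^\perp}\pi_\ba(y)$ into a fixed Euclidean space $W^\perp$, not into a bundle varying with $x$. Your phrase ``essentially $\R^k$-valued'' hides exactly this step. Once on a fixed fiber, the paper establishes weak continuity of $\ba\mapsto (m_x^{\xi_0})\circ\pi_\ba^{-1}$ (its Lemma~\ref{lem-7.1}) and then constructs finite partitions $\beta_n$ of $W^\perp$ with $\sigma(\beta_n)\uparrow\B(W^\perp)$ and with boundaries of zero measure under the limiting pushforward at $\ba_0$; this zero-boundary condition is precisely what makes each $H_{m_x^{\xi_0}}(\P\,|\,\pi_\ba^{-1}P_{W^\perp}^{-1}\beta_n)$ continuous at $\ba_0$, so the conditional entropy is an infimum of continuous functions.

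Your mollification argument is essentially the same mechanism, but the step ``for large $n$ the pullback under $T_n$ of the cube partition is $m$-close to that under $T$'' only follows from convergence in measure when the cube boundaries carry zero pushforward mass, and you do not say how you arrange this. It is fixable (for almost every offset of a dyadic grid the boundaries are null), but it is the one genuinely technical point and the paper handles it explicitly. So your proposal is correct in outline and close to the paper's proof, but the $\xi_0$-disintegration and the explicit zero-boundary construction are the two places where your sketch needs to be tightened to become a complete proof.
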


 Part (1) of the above result was first proved by Rapaport \cite[Lemma 8]{Rapaport2015} in the  case when $m$ is a Bernoulli product measure and ${\mathcal S}$ is invertible and contracting.  Part (2) was shown by Hochman and Shmerkin \cite{HochmanShmerkin2012} for a special class of self-similar measures on $\R$. In Remark \ref{rem-8.1} we give a further extension of Theorem \ref{thm-1.3}.

Below we present an application of Theorem \ref{thm-1.3} to the dimension of self-affine sets and associated stationary measures.
For this purpose, in the remaining part of this subsection   we assume that $\|M_j\|<1$ for $j\in \Lambda$ and write ${\bf M}=(M_j)_{j\in \Lambda}$.
Let $K({\bf M},\ba)$ be the self-affine set generated by the IFS ${\mathcal S}=\{M_jx+a_j\}_{j\in \Lambda}$. In 1988,  Falconer \cite{Falconer1988} introduced a quantity associated to ${\bf M}$, nowadays usually called the {\em affinity dimension} $\dim_{\rm AFF}({\bf M})$, which is always an upper bound for the upper box-counting dimension of $K({\bf M},\ba)$, and such that when $\|M_j\|<1/2$ for all $j$, then for ${\mathcal L}^{d|\Lambda|}$-a.e.~$\ba$,   $\dim_{\rm H}K({\bf M},\ba)=\min (d, \dim_{\rm AFF}({\bf M}))$. In fact, Falconer proved this with $1/3$ as the upper bound on the norms; it was subsequently shown by Solomyak \cite{Solomyak1998} that $1/2$ suffices.

The analogue of affinity dimension for measures is the Lyapunov dimension, which we denote $\dim_{\rm LY}(m, {\bf M})$; see Section~\ref{S-7} for its definition.  In \cite{JordanPollicottSimon2007}, Jordan, Pollicott and Simon proved that the Lyapunov dimension  $\dim_{\rm LY}(m, {\bf M})$ is always an upper bound for the Hausdorff dimension of $ (\pi_{\ba})_*m$, and moreover when $\|M_j\|<1/2$ for all $j$,  then for ${\mathcal L}^{d|\Lambda|}$-a.e.~$\ba$,
$\dim_{\rm H} ((\pi_{\ba})_*m)=\min(d, \dim_{\rm LY}(m,{\bf M}))$.

Recall a set in a topological space is said to be of {\em first category} if  it can be written as the countable union of nowhere dense subsets.   As an application of Theorem \ref{thm-1.3}, we get the following result.
\begin{thm}
\label{cor-1.7}
Suppose that $\|M_j\|<1/2$ for $j\in \Lambda$. Then the following hold.
\begin{itemize}
\item[(i)] For every ergodic $\sigma$-invariant measure $m$ on $\Sigma$,  the exceptional set
$$
\left\{\ba\in \R^{d|\Lambda|}:\; \dim_{\rm H} \left((\pi_{\ba})_*m\right)\neq \min(d, \dim_{\rm LY}(m, {\bf M}))\right\}
$$
is of first category  in $\R^{d|\Lambda|}$.
\item[(ii)]
The exceptional set
$$
\left\{\ba\in \R^{d|\Lambda|}:\; \dim_{\rm H} K({\bf M}, \ba)\neq \min(d, \dim_{\rm AFF}({\bf M}))\right\}
$$
is of first category  in $\R^{d|\Lambda|}$.

\end{itemize}
\end{thm}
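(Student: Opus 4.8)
The plan is to derive Theorem~\ref{cor-1.7} as a consequence of the semi-continuity result (Theorem~\ref{thm-1.3}) together with the measure-theoretic results of Jordan--Pollicott--Simon and Falconer--Solomyak already recalled in the text. The key point is that an exceptional set that is known to have full Lebesgue measure can be promoted to a \emph{residual} set (complement of first category) once one knows that its complement is contained in a set that is ``thin'' in the Baire-category sense; upper/lower semi-continuity of the relevant dimension functions is exactly the tool that provides this.

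First, for part (i), fix an ergodic $m\in\M_\sigma(\Sigma)$. By Theorem~\ref{thm-1.1} we have $\dim_{\rm H}(m\circ(\pi_\ba)^{-1})=\sum_{i=0}^{s-1}(h_{i+1,\ba}-h_{i,\ba})/\lambda_{i+1}$; note that the Lyapunov exponents $\lambda_i$ and multiplicities $k_i$ depend only on $\bf M$, not on $\ba$, so only the entropies vary with $\ba$. I would combine this with the Jordan--Pollicott--Simon bound $\dim_{\rm H}(m\circ(\pi_\ba)^{-1})\leq\min(d,\dim_{\rm LY}(m,{\bf M}))$, valid for all $\ba$, together with the fact that equality holds for $\L^{d|\Lambda|}$-a.e.~$\ba$. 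By Theorem~\ref{thm-1.3}(2), the function $f(\ba):=\dim_{\rm H}(m\circ(\pi_\ba)^{-1})$ is lower semi-continuous, and it is bounded above by the constant $c:=\min(d,\dim_{\rm LY}(m,{\bf M}))$. Hence the set $G:=\{\ba: f(\ba)=c\}=\{\ba: f(\ba)\geq c\}$ is the preimage of $[c,\infty)$ under a lower semi-continuous function, so it is a countable intersection of open sets, i.e.\ a $G_\delta$ set (indeed $\{f>c-1/n\}$ is open for each $n$, and $G=\bigcap_n\{f>c-1/n\}$ since $f\leq c$ everywhere). A $G_\delta$ set of full Lebesgue measure in $\R^{d|\Lambda|}$ is automatically dense, and a dense $G_\delta$ is residual; therefore its complement, the exceptional set, is of first category. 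This gives~(i).

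For part (ii), the same strategy applies with the dimension of the self-affine \emph{set} in place of the measure, using a variational/approximation principle: $\dim_{\rm H}K({\bf M},\ba)\geq\sup_m\dim_{\rm H}(m\circ(\pi_\ba)^{-1})$ where the supremum runs over ergodic $m$, and one chooses (via the known result that the affinity dimension is the supremum of Lyapunov dimensions over Bernoulli measures) a sequence $m_k$ with $\dim_{\rm LY}(m_k,{\bf M})\to\min(d,\dim_{\rm AFF}({\bf M}))$. For each fixed $k$, part~(i)'s argument gives a residual set $G_k$ on which $\dim_{\rm H}(m_k\circ(\pi_\ba)^{-1})=\min(d,\dim_{\rm LY}(m_k,{\bf M}))$; intersecting countably many residual sets, $\bigcap_k G_k$ is still residual, and on it $\dim_{\rm H}K({\bf M},\ba)\geq\sup_k\min(d,\dim_{\rm LY}(m_k,{\bf M}))=\min(d,\dim_{\rm AFF}({\bf M}))$. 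Combined with Falconer's universal upper bound $\dim_{\rm H}K({\bf M},\ba)\leq\dim_{\rm AFF}({\bf M})$ (and the trivial bound $\leq d$), equality holds on a residual set, so the exceptional set is of first category. One should double-check that $\min(d,\dim_{\rm LY}(m_k,\mathbf M))$ converges to $\min(d,\dim_{\rm AFF}(\mathbf M))$, which is immediate since $\min(d,\cdot)$ is continuous.

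The main obstacle I anticipate is not in part~(i), which is a clean soft argument, but in making part~(ii) rigorous: one needs the fact that the affinity dimension of $\bf M$ equals (or is approximated from below by) Lyapunov dimensions of ergodic—ideally Bernoulli—measures, and that $\dim_{\rm H}K({\bf M},\ba)$ can be bounded below by $\dim_{\rm H}$ of such stationary measures. The first is a known thermodynamic-formalism fact about the singular-value pressure function (the affinity dimension is realized by an equilibrium state), and the second is the general principle that the Hausdorff dimension of an attractor dominates that of any measure supported on it; but one must be careful that the relevant equilibrium state is genuinely ergodic and that its Lyapunov dimension is finite and well-behaved (in particular the average-contraction hypothesis $\|M_j\|<1/2<1$ guarantees all exponents are finite and negative). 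A secondary subtlety is ensuring the lower semi-continuity in Theorem~\ref{thm-1.3}(2) is being applied with $\ba$ ranging over all of $\R^{d|\Lambda|}$ (not just where $\pi_\ba$ is injective or satisfies separation), which is fine because Theorems~\ref{thm-1.0}--\ref{thm-1.3} are stated with no separation assumption.
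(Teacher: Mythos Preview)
Your argument for part~(i) is correct and essentially identical to the paper's: both combine the lower semi-continuity of $\ba\mapsto\dim_{\rm H}(m\circ(\pi_\ba)^{-1})$ from Theorem~\ref{thm-1.3}(2) with the Jordan--Pollicott--Simon almost-everywhere equality to conclude that the exceptional set is a countable union of closed sets of zero Lebesgue measure, hence of first category. The paper phrases it via the closed sets $\Omega_n=\{\ba:\dim_{\rm H}(m\circ(\pi_\ba)^{-1})\le c-1/n\}$ being nowhere dense, while you phrase it via the complement being a dense $G_\delta$; these are the same argument.

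For part~(ii) your approximation approach via a sequence $(m_k)$ and a countable intersection of residual sets is valid, but the paper's route is shorter: it invokes K\"aenm\"aki's theorem that there exists a \emph{single} ergodic measure $\eta$ with $\dim_{\rm LY}(\eta,{\bf M})=\dim_{\rm AFF}({\bf M})$ exactly. Then the inclusion
\[
\bigl\{\ba:\dim_{\rm H}K({\bf M},\ba)\neq\min(d,\dim_{\rm AFF}({\bf M}))\bigr\}
\subset
\bigl\{\ba:\dim_{\rm H}(\eta\circ(\pi_\ba)^{-1})\neq\min(d,\dim_{\rm LY}(\eta,{\bf M}))\bigr\}
\]
(which follows from $\dim_{\rm H}(\eta\circ(\pi_\ba)^{-1})\le\dim_{\rm H}K({\bf M},\ba)\le\min(d,\dim_{\rm AFF}({\bf M}))$) reduces~(ii) directly to~(i). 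This avoids your countable-intersection step entirely. Note also that your parenthetical claim that the affinity dimension is the supremum of Lyapunov dimensions over \emph{Bernoulli} measures is not known in general---K\"aenm\"aki measures need not be Bernoulli---so you would in any case need the variational principle over general ergodic measures, exactly as you anticipate in your final paragraph.
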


The above result says that these exceptional sets are also small in a topological sense. A fundamental and challenging question is to specify those translation vectors not lying in the exception sets. Significant progresses have been made recently in \cite{Barany2015,FalconerKempton2018,BaranyKaenmaki2015, Rapaport2015}, showing that under certain additional assumptions, the Hausdorff  and Lyapunov dimensions of a self-affine measure  (or more generally, the push-forward of a quasi-Bernoulli measure)    coincide if the involved Furstenberg measures have enough large dimension. In next theorem we will drop off some redundant assumptions used in these works and  further extend the result to the push-forward measures of ergodic sub-multiplicative measures.

Recall that for a Borel probability measure $\eta$ on a metric space, its {\it upper Hausdorff dimension}  $\dim_{\rm H}^*\eta$ is  the smallest Hausdorff dimension of a Borel set $F$  of $\eta$ measure $1$. Set $d_0=0$ and $d_\ell=k_1+\cdots+k_\ell$ for $1\leq \ell\leq s$.

\begin{thm}
\label{thm-1.4}
Let $\mathcal S=\{M_jx+a_j\}_{j\in \Lambda}$ be a contracting affine IFS on $\R^d$ satisfying the strong separation condition and $m\in \mathcal M_\sigma(\Sigma)$ be ergodic and sub-multiplicative.   Let $i$ be the unique element in $\{1,\ldots, s\}$ so that $d_{i-1}  \leq \dim_{\rm LY} (m, {\bf M})<d_i$. Then
\begin{equation} \label{e-tz}
\dim_{\rm H} (\pi_*m)=\dim_{\rm LY}(m, {\bf M})
\end{equation}
provided one of the following conditions holds:
\begin{itemize}
\item[(a)] $s=1$.
\item[(b)] $i=s>1$, $\lambda_s\neq -\infty$ and \begin{equation}
\label{e-condLY}
 \dim_{\rm H}^*\left(  (\Pi_{s-1})_*m\right) + \dim_{\rm LY} (m, {\bf M})\geq  d_{s-1} (d-d_{s-1}+1).
\end{equation}
\item[(c)] $1\leq i \leq s-1$, and
    \begin{align}
        &\dim_{\rm H}^*\left(  (\Pi_{i})_*m\right)-\dim_{\rm LY}(m, {\bf M})\geq d_{i}(d-d_i-1), \label{e-al2}\\
    &\dim_{\rm H}^*\left( (\Pi_{i-1})_*m \right) + \dim_{\rm H}  (\pi_*m)\geq d_{i-1}(d-d_{i-1}+1). \label{e-al1}
    \end{align}
   \end{itemize}
\end{thm}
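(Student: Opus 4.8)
The plan is to combine the Ledrappier--Young formula of Theorem~\ref{thm-1.1} and the projection formulas of Theorem~\ref{thm-1.2} with the sharp estimates on the dimension of the exceptional set in Marstrand-type projection theorems, using the Furstenberg measures $m\circ(\Pi_j)^{-1}$ as the measures on the Grassmannians. \emph{Reduction.} Under the strong separation condition $\pi$ is injective, so $\xi_s$ separates points and $h_s=0$. Put $g_\ell=h_{\ell-1}-h_\ell$; since $\widehat{\xi_{\ell-1}}\subseteq\widehat{\xi_\ell}$ one has $g_\ell\ge0$, and Theorem~\ref{thm-1.2} gives $g_\ell/|\lambda_\ell|\le k_\ell$ because $m_x^{\xi_{\ell-1}}\circ\pi^{-1}\circ(P_{(V_x^{\ell})^\perp})^{-1}$ is supported on a $k_\ell$-dimensional affine subspace. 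Thus $\dim_{\rm H}\mu=\sum_{\ell=1}^{s}g_\ell/|\lambda_\ell|$ with $\sum_\ell g_\ell=h_0$, and comparing with $\dim_{\rm LY}(m,{\bf M})$ --- which under the hypothesis $\dim_{\rm LY}<d_i\le d$ equals the maximum of $\sum_\ell t_\ell/|\lambda_\ell|$ over $0\le t_\ell\le k_\ell|\lambda_\ell|$, $\sum_\ell t_\ell=h_0$, a fractional knapsack with strictly decreasing densities $1/|\lambda_\ell|$ --- gives $\dim_{\rm H}\mu\le\dim_{\rm LY}(m,{\bf M})$, with equality if and only if $g_\ell=k_\ell|\lambda_\ell|$ for $1\le\ell\le i-1$ and, when $i<s$, also $h_i=0$. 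By \eqref{e-con-proj-dim}--\eqref{e-proj-dim} these amount to: (A) for $m$-a.e.\ $x$, $\mu\circ(P_{(V_x^{i-1})^\perp})^{-1}$ has the full dimension $d_{i-1}$; and (B), needed only if $i<s$, for $m$-a.e.\ $x$, $\mu\circ(P_{(V_x^{i})^\perp})^{-1}$ has dimension $\dim_{\rm H}\mu$, i.e.\ loses no dimension on projection. In case (a) ($i=s=1$) both are vacuous and \eqref{e-tz} follows.

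For (A) and (B) in cases (a) and (c): the maps $x\mapsto(V_x^{i-1})^\perp$ and $x\mapsto(V_x^{i})^\perp$ push $m$ forward, via $W\mapsto W^\perp$, onto $m\circ(\Pi_{i-1})^{-1}$ and $m\circ(\Pi_i)^{-1}$. One invokes the sharp Kaufman--Mattila and Falconer--O'Neil estimates: for a Radon measure $\eta$ on $\R^d$ with $\dim_{\rm H}\eta=t$ and $1\le k\le d-1$, the set $\{W\in G(d,k):\dim_{\rm H}(\eta\circ P_W^{-1})<\min(t,k)\}$ has Hausdorff dimension $\le k(d-k)-(k-t)$ when $t\le k$ and $\le k(d-k)-(t-k)$ when $t\ge k$. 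As a Borel set of full $\theta$-measure has dimension $\ge\dim_{\rm H}^*\theta$, the hypothesis \eqref{e-al1} (resp.\ \eqref{e-al2}, which implies the same inequality with $\dim_{\rm H}\mu$ in place of $\dim_{\rm LY}$, since $\dim_{\rm LY}\ge\dim_{\rm H}\mu$) asserts precisely that the relevant exceptional set is $m\circ(\Pi_{i-1})^{-1}$- (resp.\ $m\circ(\Pi_i)^{-1}$-) null. Hence for $m$-a.e.\ $x$ the projection onto $(V_x^{i})^\perp$ attains $\min(\dim_{\rm H}\mu,d_i)=\dim_{\rm H}\mu$ (using $\dim_{\rm H}\mu\le\dim_{\rm LY}<d_i$), which is (B); and the projection onto $(V_x^{i-1})^\perp$ attains $\min(\dim_{\rm H}\mu,d_{i-1})$, which is (A) if $\dim_{\rm H}\mu\ge d_{i-1}$, while if $\dim_{\rm H}\mu<d_{i-1}$ then $\sum g_\ell=h_0$ together with $\dim_{\rm LY}\ge d_{i-1}$ forces $g_\ell=0$ for $\ell\ge i-1$, collapsing $\dim_{\rm LY}$ to $d_{i-1}=\dim_{\rm H}\mu$ so that \eqref{e-tz} holds directly. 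With (A) and (B), the reduction yields \eqref{e-tz} in cases (a) and (c).

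Case (b) ($i=s$, only (A) needed) is the subtle one, and I expect it to be the main obstacle: \eqref{e-condLY} bounds $\dim_{\rm H}^*(m\circ(\Pi_{s-1})^{-1})$ only through $\dim_{\rm LY}\ge\dim_{\rm H}\mu$, so the estimate above does not apply verbatim. I would argue by contradiction --- if $\dim_{\rm H}\mu<\dim_{\rm LY}(m,{\bf M})$ then $\mu\circ(P_{(V_x^{s-1})^\perp})^{-1}$ is not full-dimensional for $m$-a.e.\ $x$, so $m\circ(\Pi_{s-1})^{-1}$ is carried by the exceptional set --- and then bridge the gap in one of two ways: either sharpen the exceptional-set estimate (using the refinements available when $\dim_{\rm H}\eta$ exceeds the plane dimension, after a bootstrap reducing $\dim_{\rm LY}-\dim_{\rm H}\mu$ below $1$), or, more structurally, pass to the projected measure $\bar\mu=\mu\circ(P_{(V_x^{s-1})^\perp})^{-1}$, which for $m$-a.e.\ $x$ is the stationary measure of the quotient affine cocycle on $\R^d/V^{s-1}_\bullet$ (average contracting, using $\lambda_s\ne-\infty$, with exponents $\lambda_1>\cdots>\lambda_{s-1}$ rather than finitely many maps), and run for it an analogue of the dimension analysis together with the induced-dynamics construction of the paper; since $i=s$ forces $h_0\ge\sum_{\ell<s}k_\ell|\lambda_\ell|$, the quotient's Lyapunov dimension is $\ge d_{s-1}$, and \eqref{e-condLY}, now read as a condition on the quotient's Furstenberg measures, gives $\dim_{\rm H}\bar\mu=d_{s-1}$, i.e.\ (A). This is where the hypothesis is deliberately weak, so one cannot simply feed it into a projection theorem and must exploit the self-affine structure of $\bar\mu$.

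The remaining points are routine. The terms ``$\pm1$'' in \eqref{e-condLY}--\eqref{e-al1} are exactly the sharp exponents in the Kaufman/Mattila/Falconer--O'Neil estimates; passing from the ``$\ge$'' in the hypotheses to the strict inequalities those theorems need, as well as the critical configurations $\dim_{\rm H}\mu\in\{d_{i-1},d_i\}$, is handled by the standard device of restricting $\mu$ to Borel subsets of slightly smaller dimension. Finally, one checks that throughout only the sub-multiplicativity of $m$ is used: it enters solely via Theorems~\ref{thm-1.1}--\ref{cor-1.0}, whose relevant conclusions --- the exact dimensionality of the conditional and projected measures in Theorem~\ref{thm-1.2} and the dimension conservation of Theorem~\ref{cor-1.0}(iii) --- are valid for ergodic, resp.\ sub-multiplicative, $m$.
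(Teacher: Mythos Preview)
Your reduction and case (c) are essentially the paper's approach (via Proposition~\ref{pro-7.3} and Lemma~\ref{lem-rap}). The genuine gap is in case (b), where you correctly sense a mismatch between \eqref{e-condLY}, which involves $\dim_{\rm LY}$, and the projection exceptional-set bound, which involves $\dim_{\rm H}\mu$---but you then propose elaborate workarounds (bootstrap, quotient cocycle) where none is needed.

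The missing observation is a simple algebraic rewriting. From $g_\ell/|\lambda_\ell|\le k_\ell$ one has $h_0-h_{s-1}\le L_{s-1}$, so $h_{s-1}=h_0-L_{s-1}+\delta$ with $\delta\ge0$, and $\delta>0$ exactly when $\dim_{\rm H}\mu<\dim_{\rm LY}(m,{\bf M})$. Since $h_s=0$ and $\dim_{\rm LY}(m,{\bf M})=d_{s-1}+(h_0-L_{s-1})/(-\lambda_s)$, the projection bound from Theorem~\ref{cor-1.0}(iii) becomes
\[
\sum_{\ell=1}^{s-1}\frac{h_\ell-h_{\ell-1}}{\lambda_\ell}
=\dim_{\rm H}\mu-\frac{h_{s-1}}{-\lambda_s}
=\dim_{\rm H}\mu+d_{s-1}-\dim_{\rm LY}(m,{\bf M})-\frac{\delta}{-\lambda_s}=:t,
\]
so the Furstenberg measure $m\circ(\Pi_{s-1})^{-1}$ is supported on $\{W:\dim_{\rm H}\mu\circ(P_{W^\perp})^{-1}\le t\}$. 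Now feed this $t$ into your Kaufman--Mattila/Falconer--O'Neil estimate (Lemma~\ref{lem-rap}): in \emph{both} subcases $\dim_{\rm H}\mu\le d_{s-1}$ and $\dim_{\rm H}\mu>d_{s-1}$ the exceptional set has dimension at most $d_{s-1}(d-d_{s-1})+d_{s-1}-\dim_{\rm LY}(m,{\bf M})-\delta/(-\lambda_s)$, which is strictly below the lower bound $d_{s-1}(d-d_{s-1}+1)-\dim_{\rm LY}(m,{\bf M})$ given by \eqref{e-condLY}. That is the entire argument; $\dim_{\rm LY}$ appears in the projection bound \emph{itself}, not only in the hypothesis, so there is no gap to bridge. (The paper also derives $h_{s-1}\ge h_0-L_{s-1}$ via the semi-continuity in Theorem~\ref{thm-1.3} and the Jordan--Pollicott--Simon almost-sure result, but as above this inequality already follows from Corollary~\ref{cor-6.1}.)

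Two smaller points on case (c). First, your fallback when $\dim_{\rm H}\mu<d_{i-1}$ is both wrong and unnecessary: combining \eqref{e-al1} with $\dim_{\rm H}^*(m\circ(\Pi_{i-1})^{-1})\le\dim G(d,d-d_{i-1})=d_{i-1}(d-d_{i-1})$ already forces $\dim_{\rm H}\mu\ge d_{i-1}$. Second, the paper handles the non-strict hypotheses not by restricting $\mu$ to subsets of smaller dimension, but by introducing the level sets $\{W:\dim_{\rm H}\mu\circ(P_{W^\perp})^{-1}\le d_{i-1}-1/n\}$ (resp.\ $\le\dim_{\rm H}\mu-1/n$), showing each has Furstenberg measure $<1$, and then invoking Theorem~\ref{cor-1.0}(iii)---which for sub-multiplicative $m$ gives $\dim_{\rm loc}(\mu\circ P_{W^\perp}^{-1},\cdot)=\sum_{\ell=1}^{j-1}(h_\ell-h_{\ell-1})/\lambda_\ell$ on a set of \emph{positive} $\mu$-measure---to convert this into the entropy inequality; letting $n\to\infty$ finishes.
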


 The conditions (b), (c) in the above theorem were introduced in  \cite{Rapaport2015} and   \cite{BaranyKaenmaki2015}, respectively, in slightly stronger forms.
 For a contracting invertible affine IFS,  Rapaport  \cite{Rapaport2015}  proved the implication (b)$\Rightarrow$ \eqref{e-tz} in the case when $m$ is Bernoulli and $(M_j)_{j\in \Lambda}$ satisfies an irreducibility assumption; whilst B\'{a}r\'{a}ny and K\"aenm\"aki \cite{BaranyKaenmaki2015} proved  \eqref{e-tz} under the assumptions that the conditions \eqref{e-al2}-\eqref{e-al1} hold for all $i\in \{1,\ldots, s-1\}$, $m$ is Bernoulli and $d=2$, or $m$ is quasi-Bernoulli and $\{M_j\}_{j\in \Lambda}$ satisfies a domination condition.

   We remark that \eqref{e-al1} always holds whenever  $i=1$,  since $d_0=0$. It is worth pointing out that for every affine IFS  $\mathcal S=\{M_jx+a_j\}_{j\in \Lambda}$ on $\R^d$, there exists at least one ergodic $m\in \mathcal M_\sigma(\Sigma)$, called {\em K\"aenm\"aki measure}, so that $\dim_{\rm LY}(m, {\bf M})=\dim_{\rm AFF}({\bf M})$. This was first proved by K\"aenm\"aki \cite{Kaenmaki2004} in the case when $\mathcal S$ is invertible, and
it extends to the general case by the sub-additive thermodynamic formalism \cite{CaoFengHuang2008}. Very recently, Bochi and Morris \cite{BochiMorris2018} showed that whenever $\mathcal S$ is invertible, each K\"aenm\"aki measure is sub-multiplicative. Hence for an invertible ${\mathcal S}$ satisfying the strong separation condition,  if one of the conditions (a)-(c) in Theorem \ref{thm-1.4} fulfills for some K\"aenm\"aki measure $m$, then $\dim_{\rm H} K({\bf M}, {\bf a})=\dim_{\rm AFF}({\bf M})=\dim_{\rm  H}  (\pi_*m)$.

To check the conditions (b)-(c) in Theorem \ref{thm-1.4}, one needs to estimate the (upper) Hausdorff dimension of Furstenberg measures  $(\Pi_{i})_*m$. So far there have been only a few dimensional results on these measures. In the case $d=2$, Hochman and Solomyak  \cite{HochmanSolomyak2017} calculated the Hausdorff dimension of Furstenberg measures for Bernoulli  $m$ under some mild assumptions. In \cite[Sect.~2.4]{BRS2018}, B\'ar\'any, Rams and Simon determined the Hausdorff dimension of Furstenberg measures for some special triangular affine IFSs in $\R^d$, in which $m$ could be any ergodic measure.

We remark that the conditions of Theorem \ref{thm-1.4} might not be sharp.  Very recently,  B\'ar\'any, Hochman and Rapaport \cite{BaranyHochmanRapaport2017} made a significant progress in dimension theory of affine IFSs,  showing that the Hausdorff and affinity dimensions of a planar self-affine set coincide under the strong separation  condition and certain irreducibility assumption; and similarly, the Hausdorff and Lyapunov dimensions of a planar self-affine measure coincide under the same assumptions.  In  \cite{HochmanRapaport2019}, Hochman and Rapaport further showed that the strong separation condition can be replaced by the exponential separation condition, which is substantially weaker.

\subsection{Organization of the paper} The paper is organized as follows. In Section~\ref{S-2}, we provide some density results about conditional measures, and present a version of Oseledets's multiplicative ergodic theorem due to Froyland et al.~\cite{FroylandLloydQuas2010}. In Section~\ref{S-3}, we give some auxiliary results on the coding maps for average contracting affine IFSs. In Section~\ref{S-4}, we construct a finite family of measurable partitions of $\Sigma$ for a given average contracting affine IFS and give some necessary properties.  In Section~\ref{S-5} we prove an inequality for the transverse dimensions of the conditional measures associated with these measurable partitions.  In Section~\ref{S-6}, we prove Theorems \ref{thm-1.0}-\ref{thm-1.2}, \ref{cor-1.0}-\ref{thm-1.7'}.  In Section~\ref{S-7}, we give some properties of Lyapunov dimension. In Section~\ref{S-8}, we prove Theorems \ref{thm-1.3}-\ref{thm-1.4}.
\section{Preliminaries}
\label{S-2}

\subsection{Conditional information and entropy}
\label{Sect-ent}
Let $(X,\B,m)$ be a probability space.
For a sub-$\sigma$-algebra $\A$ of $\B$ and $f\in L^1(X,\B,m)$,
we denote by ${\bf E}_m(f|\A)$ the the {\it conditional expectation of
$f$ given $\A$}.
For a countable $\B$-measurable partition $\xi$ of $X$,
we denote by ${\bf  I}_m(\xi|\A)$ the {\it conditional
information of $\xi$ given $\A$}, which is given by the formula
\begin{equation}
\label{e-1.2}
 {\bf I}_m(\xi|\A)=-\sum_{A\in \xi}\chi_A\log \E_m(\chi_A |\A),
\end{equation}
where $\chi_A$ is the characteristic function on $A$.
The {\it conditional entropy of $\xi$ given $\A$}, written as
$H_m(\xi|\A)$, is defined by the formula
\begin{equation*}
 H_m(\xi|\A)=\int {\bf I}_m(\xi|\A)\; dm.
\end{equation*}
(See e.g. \cite{Parry1981, Walters1982} for more details.)
The above information and entropy are unconditional when
$\A={{\mathcal N}}$, the trivial $\sigma$-algebra consisting of sets
of measure zero and one, and in this case we write
\begin{equation*}
 {\bf I}_m(\xi|{{\mathcal
N}})=:{\bf I}_\nu(\xi)\quad\mbox{and}\quad H_m(\xi|{{\mathcal
N}})=:H_m(\xi).
\end{equation*}

For a countable partition $\xi$, we use $\widehat{\xi}$ to denote the $\sigma$-algebra generated by $\xi$.  If $\xi$ is  an uncountable measurable partition of $X$ (which will be defined in Section~\ref{Sect-con}), $\widehat{\xi}$ is defined as the sub-$\sigma$-algebra of ${\mathcal B}$ whose sets are $\xi$-saturated (i.e. unions of elements in $\xi$).    If $\xi_1$,\ldots, $\xi_n$ are countable partitions, then $\xi_1\vee \cdots \vee \xi_n=\bigvee_{i=1}^n \xi_i$ denotes the partition consists of the sets $A_1\cap\cdots \cap A_n$ with $A_i\in \xi_i$. Similarly for $\sigma$-algebras $\A_1$, $\A_2$,\dots,  $\A_1\vee \A_2\vee \cdots$ or $\bigvee_{i} \A_i$ denotes the $\sigma$-algebra generated by $\bigcup_i \A_i$.

 In the following lemma, we list some basic  properties of the (conditional)  expectation, information and entropy.   The reader is referred to \cite[pages 20-21 and 38]{Parry1981} for details.

\begin{lem}
\label{lem-par} Let $T$ be a measure-preserving transformation of a separable probability space  $(X,\B,m)$. Let $\xi,\eta$ be two countable Borel partitions of $X$ with $H_m(\xi)<\infty$, $H_m(\eta)<\infty$, and $\A$  a sub-$\sigma$-algebra of $\B$. Then we have
 \begin{itemize}
 \item[(i)] ${\bf E}_m(f|\A)\circ T={\bf E}_m(f\circ T|T^{-1}\A)$ for $f\in L^1(X, \B, m)$.
\item[(ii)] ${\bf I}_{m}(\xi|\A)\circ T={\bf I}_m(T^{-1}\xi|T^{-1}\A)$.
\item[(iii)] ${\bf I}_m(\xi\vee \eta|\A)={\bf I}_m(\xi|\A)+{\bf I}_m(\eta|\widehat{\xi}\vee \A)$.
\item[(iv)] $H_m(\xi\vee \eta|\A)=H_m(\xi|\A)+H_m(\eta|\widehat{\xi}\vee \A)$.
\item[(v)] If $\A_1\subset \A_2\subset\cdots $ is an increasing sequence of sub-$\sigma$-algebras with $\A_n\uparrow \A$, then $\sup_n{\bf I}_m(\xi|\A_n)\in L^1$, and ${\bf I}_m(\xi|\A_n)$ converges almost everywhere and in $L^1$ to ${\bf I}_m(\xi|\A)$. In particular, $\lim_{n\to \infty} H_m(\xi|\A_n)=H_m(\xi|\A)$.
\end{itemize}
\end{lem}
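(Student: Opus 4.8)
This is the standard package of (conditional) information and entropy identities; the plan is to establish (i)--(v) in turn, with (i)--(iv) being formal consequences of the characterising property of conditional expectation together with the defining formula \eqref{e-1.2}, and (v) the one genuinely analytic point. For (i), both sides are $T^{-1}\A$-measurable, so it suffices to check that they have the same integral over each set $T^{-1}A$ with $A\in\A$; since $T$ preserves $m$ this reduces to the two identities $\int_A{\bf E}_m(f|\A)\,dm=\int_A f\,dm$ and $\int_{T^{-1}A}(f\circ T)\,dm=\int_A f\,dm$. Part (ii) is then immediate by taking $f=\chi_A$ for $A\in\xi$ in (i), using $\chi_A\circ T=\chi_{T^{-1}A}$ and $T^{-1}\xi=\{T^{-1}A:A\in\xi\}$, and substituting into \eqref{e-1.2}. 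For (iii) I would first prove the ``division identity'': for $A\in\xi$, $m$-a.e.~on $A$,
\[
{\bf E}_m(\chi_B\mid \widehat{\xi}\vee\A)=\frac{{\bf E}_m(\chi_{A\cap B}\mid\A)}{{\bf E}_m(\chi_A\mid\A)},
\]
which holds because the right-hand side is $(\widehat{\xi}\vee\A)$-measurable on the atom $A$ and has the correct integral over every $A\cap C$ with $C\in\A$, while (as $\xi$ is countable) such sets generate $(\widehat{\xi}\vee\A)$ restricted to $A$ modulo $m$-null sets. Rearranging and taking logarithms gives, on $A\cap B$,
\[
-\log{\bf E}_m(\chi_{A\cap B}|\A)=-\log{\bf E}_m(\chi_A|\A)-\log{\bf E}_m(\chi_B|\widehat{\xi}\vee\A);
\]
summing over $A\in\xi$, $B\in\eta$ in \eqref{e-1.2} and using $\sum_{B\in\eta}\chi_{A\cap B}=\chi_A$ yields (iii), and integrating (iii) against $m$ gives (iv).

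The substantive content is (v), a convergence theorem for the information martingale (going back to Chung). Fix $A\in\xi$ and set $X_n={\bf E}_m(\chi_A|\A_n)$, a nonnegative $(\A_n)$-martingale; by the increasing martingale convergence theorem $X_n\to{\bf E}_m(\chi_A|\A)$ $m$-a.e., the limit being strictly positive $m$-a.e.~on $A$. The step I expect to require the most care is the maximal estimate
\[
m\bigl(\{\inf_n X_n<t\}\cap A\bigr)\le t\qquad(0<t<1),
\]
which I would obtain by introducing the stopping time $\tau=\inf\{n:X_n<t\}$ and computing, using $\{\tau=k\}\in\A_k$ and the martingale identity,
\[
m\bigl(\{\tau\le N\}\cap A\bigr)=\int_{\{\tau\le N\}}X_N\,dm=\sum_{k=0}^N\int_{\{\tau=k\}}X_k\,dm\le t\,m(\{\tau\le N\})\le t,
\]
and then letting $N\to\infty$. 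By the layer-cake formula this estimate yields, for each $A\in\xi$,
\[
\int_A\sup_n(-\log X_n)\,dm\le\int_0^\infty\min\{m(A),e^{-\lambda}\}\,d\lambda=m(A)-m(A)\log m(A),
\]
and summing over $A\in\xi$ gives $\int\sup_n{\bf I}_m(\xi|\A_n)\,dm\le H_m(\xi)+1<\infty$ (this is where $H_m(\xi)<\infty$ is used), so $\sup_n{\bf I}_m(\xi|\A_n)\in L^1$.

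With this uniform majorant in hand, the remainder of (v) is routine: ${\bf I}_m(\xi|\A_n)\to{\bf I}_m(\xi|\A)$ $m$-a.e.~follows from the a.e.~convergence of $X_n$ and the continuity of $-\log$ at positive reals; $L^1$ convergence then follows by dominated convergence; and integrating gives $H_m(\xi|\A_n)\to H_m(\xi|\A)$. In short, the only non-formal ingredient in the whole lemma is the maximal inequality and the resulting uniform integrability of the information martingale in (v); parts (i)--(iv) are pure bookkeeping with the definition of conditional expectation and the formula \eqref{e-1.2}.
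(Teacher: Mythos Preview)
Your proposal is correct and follows the standard arguments (essentially those in Parry's \emph{Topics in Ergodic Theory}). Note that the paper itself does not give a proof of this lemma at all---it simply cites \cite{Parry1981} via the ``cf.''~in the lemma heading---so there is no in-paper proof to compare against; your write-up would serve as a perfectly adequate self-contained justification.
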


 For the convenience of the reader,  below we state an  almost trivial property of the conditional expectation.  For a proof, see e.g.  \cite[Lemma 3.10]{FengHu2009}.
\begin{lem}
\label{lem-2.10}
Let  $(X,\B,m)$ be a  probability space and  $\A$  a sub-$\sigma$-algebra of $\B$. Let $A\in \B$ with $m(A)>0$. Then
$$
\E_m(\chi_A|\A)(x)>0
$$
for $m$-a.e.~$x\in A$.
\end{lem}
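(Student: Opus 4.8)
The statement to prove is Lemma \ref{lem-2.10}: if $(X,\B,m)$ is a probability space, $\A\subset\B$ a sub-$\sigma$-algebra, and $A\in\B$ with $m(A)>0$, then $\E_m(\chi_A\mid\A)(x)>0$ for $m$-a.e.\ $x\in A$.

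\smallskip

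The plan is to argue by contradiction via the defining property of conditional expectation. Set $f=\E_m(\chi_A\mid\A)$; this is an $\A$-measurable function with $0\le f\le 1$ $m$-a.e. Consider the set $N=\{x\in X:\ f(x)=0\}=\{x\in X:\ f(x)\le 0\}$. Since $f$ is $\A$-measurable, $N\in\A$. The quantity I want to control is $m(A\cap N)$; the claim is exactly that this is zero.

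\smallskip

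First I would write, using the definition of conditional expectation and the fact that $N\in\A$,
\[
m(A\cap N)=\int_N \chi_A\, dm=\int_N \E_m(\chi_A\mid\A)\, dm=\int_N f\, dm.
\]
But on $N$ we have $f\equiv 0$ by the very definition of $N$, so $\int_N f\, dm=0$, whence $m(A\cap N)=0$. Since $\{x\in A:\ \E_m(\chi_A\mid\A)(x)=0\}=A\cap N$ (the values of $f$ lie in $[0,1]$ a.e., so $f(x)=0$ is the same as $f(x)\le 0$ up to a null set), this shows $\E_m(\chi_A\mid\A)(x)>0$ for $m$-a.e.\ $x\in A$, as desired.

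\smallskip

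There is essentially no obstacle here: the only point requiring a word of care is the measurability of $N$, which is immediate since $\E_m(\chi_A\mid\A)$ is by construction $\A$-measurable, and the fact that $0\le \E_m(\chi_A\mid\A)\le 1$ $m$-a.e., which follows from monotonicity of conditional expectation applied to $0\le\chi_A\le 1$. Everything else is the single computation above invoking the defining averaging property of $\E_m(\cdot\mid\A)$ over sets in $\A$.
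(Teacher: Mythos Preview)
Your proof is correct and is the standard argument. Note that the paper does not actually supply a proof of this lemma; it merely cites \cite[Lemma 3.10]{FengHu2009}, so there is nothing to compare against beyond observing that your direct computation via the defining property of conditional expectation on the $\A$-measurable set $N=\{f=0\}$ is exactly the natural way to establish this elementary fact.
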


The following lemma is a variant of Maker's ergodic theorem (\cite{Maker1940}).
\begin{lem}[\cite{Mane1987}, Corollary 1.6, p. 96]
\label{lem-3.15}
Let $T$ be a measure-preserving transformation of a probability space  $(X,\B,m)$.  Let $g_k\in L^1(X, \B, m)$ be a sequence that converges almost everywhere
and in $L^1$ to $g\in L^1(X, \B, m)$. Then
$$
\lim_{k\to +\infty}\frac{1}{k}\sum_{j=0}^{k-1}g_{k-j}(T^jx)=\E_m(g|\I)(x)
$$
almost everywhere and in $L^1$,  where $\I=\{B\in \B:\; T^{-1}(B)=B\}$.
\end{lem}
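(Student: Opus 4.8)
The plan is to reduce the statement to the classical Birkhoff ergodic theorem applied to the limit function $g$, controlling the error caused by replacing $g_{k-j}$ by $g$ via a maximal-function / dominated-convergence argument. First I would write
\[
\frac{1}{k}\sum_{j=0}^{k-1}g_{k-j}(T^jx)=\frac{1}{k}\sum_{j=0}^{k-1}g(T^jx)+\frac{1}{k}\sum_{j=0}^{k-1}\bigl(g_{k-j}-g\bigr)(T^jx),
\]
and note that by Birkhoff's theorem the first term converges almost everywhere and in $L^1$ to $\mathbf{E}_m(g|\mathcal I)(x)$, where $\mathcal I$ is the $\sigma$-algebra of $T$-invariant sets. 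So it remains to show that the second term tends to $0$ almost everywhere and in $L^1$.

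For the error term, set $G_N=\sup_{k\ge N}|g_k-g|$. Since $g_k\to g$ almost everywhere, $G_N$ decreases to $0$ pointwise as $N\to\infty$; and since $g_k\to g$ in $L^1$, one checks $G_N\in L^1$ for $N$ large (indeed $G_N\le |g|+\sup_{k\ge N}|g_k|$, and the latter supremum is in $L^1$ once $\|g_k-g\|_1$ is summable along a subsequence, or more robustly one truncates). Fix $\varepsilon>0$ and choose $N$ with $\int G_N\,dm<\varepsilon$. For $k>2N$, split the inner sum according to whether $k-j\ge N$ (i.e.\ $j\le k-N$) or $k-j<N$ (i.e.\ $j>k-N$):
\[
\frac{1}{k}\sum_{j=0}^{k-1}\bigl|g_{k-j}-g\bigr|(T^jx)\le \frac{1}{k}\sum_{j=0}^{k-N}G_N(T^jx)+\frac{1}{k}\sum_{j=k-N+1}^{k-1}\bigl|g_{k-j}-g\bigr|(T^jx).
\]
The first sum on the right is at most $\frac{1}{k}\sum_{j=0}^{k-1}G_N(T^jx)$, which by Birkhoff converges a.e.\ and in $L^1$ to $\mathbf{E}_m(G_N|\mathcal I)$, whose integral is $<\varepsilon$; hence its $\limsup$ has $L^1$-norm $<\varepsilon$ and, since $\varepsilon$ is arbitrary, letting $N\to\infty$ along a suitable sequence forces the a.e.\ $\limsup$ to be $0$. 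The second (boundary) sum has only $N-1$ terms each divided by $k$, and I would bound it by $\frac{1}{k}\sum_{i=1}^{N-1}(|g_i|+|g|)(T^{k-i}x)$; because the Birkhoff averages $\frac1k\sum_{j<k}h(T^jx)$ converge for each fixed $h\in L^1$, the increments $\frac1k h(T^{k-i}x)$ tend to $0$ a.e.\ and in $L^1$ for each fixed $i<N$, so this finite sum is a finite combination of null terms and vanishes in the limit.

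Combining, $\limsup_k \bigl|\frac1k\sum_{j=0}^{k-1}(g_{k-j}-g)(T^jx)\bigr|$ has arbitrarily small $L^1$-norm, hence is $0$ a.e.\ and the convergence is also in $L^1$ by dominated convergence (the averages are dominated by $\frac1k\sum_{j<k}(\sup_i|g_i|+|g|)(T^jx)$, an $L^1$-bounded family once we arrange $\sup_i|g_i|\in L^1$; if necessary one passes through the standard truncation $g_k^{(M)}=g_k\wedge M\vee(-M)$ and lets $M\to\infty$). This proves the claim. The main obstacle is the technical bookkeeping needed to produce an $L^1$ (or at least a.e.-finite) dominating function for the tail supremum $\sup_{k\ge N}|g_k-g|$ from mere $L^1$-convergence of $g_k$; this is handled either by passing to a rapidly converging subsequence and interpolating, or by the truncation device, and is the one place where the hypothesis ``$g_k\to g$ in $L^1$'' (not just a.e.) is essential.
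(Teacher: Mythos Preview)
The paper does not supply its own proof of this lemma; it is quoted verbatim from Ma\~n\'e's book as a known variant of Maker's ergodic theorem. So there is nothing in the paper to compare against beyond the reference.

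Your overall architecture is exactly the standard one: split off the Birkhoff average of $g$ and show the remainder $\frac1k\sum_{j=0}^{k-1}(g_{k-j}-g)(T^jx)$ tends to $0$ a.e.\ and in $L^1$. The $L^1$ convergence via Ces\`aro means of $\|g_k-g\|_1$ is clean, and your treatment of the boundary block $j>k-N$ (finitely many terms, each $\frac1k\,h(T^{k-i}x)\to 0$ because Birkhoff averages converge) is correct.

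The one soft spot is the line ``since $g_k\to g$ in $L^1$, one checks $G_N\in L^1$ for $N$ large.'' In general $G_N=\sup_{k\ge N}|g_k-g|$ need \emph{not} be integrable under the stated hypotheses: take $h_k=k\,\chi_{I_k}$ with the $I_k\subset[0,1]$ disjoint of length $1/k^2$; then $h_k\to0$ a.e.\ and in $L^1$, but $\sup_{k\ge N}|h_k|=\sum_{k\ge N}k\,\chi_{I_k}$ has infinite integral for every $N$. You are right to flag this as the main obstacle, and the truncation device you name does repair it, but it needs one extra ingredient you do not quite spell out: \emph{uniform integrability} of $\{g_k-g\}$, which is an automatic consequence of $L^1$-convergence. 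Given $\varepsilon>0$, pick $M$ with $\sup_k\int_{\{|g_k-g|>M\}}|g_k-g|\,dm<\varepsilon^2$, split $g_k-g$ into its truncation at level $M$ and the remainder; for the truncated part your $G_N$-argument now applies verbatim (since $G_N\le M\in L^1$), while for the remainder the averages have $L^1$-norm $<\varepsilon^2$ uniformly in $k$, so by Fatou the $\limsup$ has integral $\le\varepsilon^2$, hence exceeds $\varepsilon$ on a set of measure $<\varepsilon$. Letting $\varepsilon\downarrow0$ finishes the a.e.\ statement. With this patch, your proof is complete and is essentially the argument one finds in the literature.
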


\subsection{Conditional measures}
\label{Sect-con}

Here we give a brief introduction to Rohlin's theory of Lebesgue
spaces, measurable partitions and conditional measures.
The reader is referred to \cite{Rohlin1949, Parry1969, EinsiedlerWard2011} for more details.

 A probability space $(X, \B, m)$ is called a {\it Lebesgue space} if it is isomorphic  \mbox{(mod 0)} to a probability space which is the union of  $[0,s]$ for  some $s\in [0, 1]$ with Lebesgue measure and a finite or countable number of atoms.
Now let $(X, \B,m)$ be a Lebesgue space. A {\it measurable partition} $\eta$ of $X$ is a partition of $X$ such that, up to a set of measure zero, the quotient
space $X/\eta$ is separated by a countable number of measurable sets $\{B_i\}$. The quotient space $X/\eta$ with its inherited probability space structure, written as $(X_\eta, \B_\eta, m_\eta)$,  is again a Lebesgue space.
Also, any measurable partition $\eta$ determines a sub-$\sigma$-algebra of $\B$,
 denoted by $\widehat{\eta}$, whose elements are unions of elements of $\eta$.
Conversely, any sub-$\sigma$-algebra  $\B'$ of $\B$ is also countably generated, say by $\{B_i'\}$, and therefore all the sets of the form $\cap A_i$,
where $A_i=B_i^\prime$ or its complement, form a measurable partition.
In particular, $\B$ itself is corresponding to a partition into single points.
An important property of Lebesgue spaces and measurable partitions is the following.

\begin{thm}[Rohlin \cite{Rohlin1949}]
\label{thm-2.1}
 Let $\eta$ be a measurable partition of a Lebesgue space $(X, \B, m)$. Then, for every $x$ in a set of full $m$-measure, there is a probability measure $m^\eta_x$ defined on $\eta(x)$, the element of $\eta$ containing $x$. These measures are uniquely characterized (up to sets of $m$-measure $0$) by the following properties: if $A\subset X$ is a measurable set, then
$x\mapsto m^\eta_x(A)$ is $\widehat{\eta}$-measurable and $m(A)=\int
m^\eta_x(A)d m(x)$. These properties imply that for any $f\in
L^1(X,\B, m)$, $m_x^\eta(f)=\E_m(f|\widehat{\eta})(x)$ for $m$-a.e.~$x$,
and $m(f)=\int \E_m(f|\widehat{\eta})dm$.
\end{thm}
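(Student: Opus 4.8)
The plan is to disintegrate $m$ over the factor $(X_\eta,\B_\eta,m_\eta)$ by means of conditional expectations, and then to recognise the resulting fibre measures as measures carried by the atoms $\eta(x)$. Since $(X,\B,m)$ is a Lebesgue space we may assume it is the disjoint union of $[0,1]$ with Lebesgue measure and countably many atoms; in particular $\B$ is generated, modulo $m$-null sets, by a countable algebra $\A_0$ that we take to consist of finite unions of dyadic intervals together with the atoms. The point of this concrete model is that $\A_0$ carries a \emph{compact class} $\mathcal K\subset \A_0$ (finite unions of \emph{closed} dyadic intervals and atoms) which is inner-approximating for $m$ on $\A_0$ and has the finite-intersection property. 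Moreover $\eta$ determines the sub-$\sigma$-algebra $\widehat{\eta}$, which is itself countably generated, say by sets $B_1,B_2,\ldots$ that separate the atoms of $\eta$ and which we may take to be closed under complementation.

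First I would set $g_A:=\E_m(\chi_A|\widehat{\eta})$ for each $A\in\A_0$, choosing one version of each conditional expectation. Only countably many ``axioms'' need to be imposed: (a) $0\le g_A\le 1$ and $g_X\equiv 1$; (b) $g_{A\sqcup B}=g_A+g_B$ for disjoint $A,B\in\A_0$; (c) $g_A=\sup\{g_K:\ K\in\mathcal K,\ K\subseteq A\}$ for every $A\in\A_0$. Properties (a), (b) are elementary, and (c) follows from the inner regularity of $m$ on $\A_0$ by $\mathcal K$ (so that $\chi_A=\sup_{K\subseteq A}\chi_K$ $m$-a.e.) together with monotone convergence for conditional expectations; there being only countably many instances, all hold off a single $m$-null set $N$. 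For $x\notin N$ the set function $A\mapsto g_A(x)$ on $\A_0$ is then non-negative, finitely additive, inner-$\mathcal K$-regular, and of total mass $1$, hence a premeasure, so by the Carath\'{e}odory extension theorem it extends uniquely to a Borel probability measure $m_x^{\eta}$ on $(X,\B)$.

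Next I would check that $m_x^{\eta}$ is carried by $\eta(x)$. Since each $\chi_{B_i}$ is $\widehat{\eta}$-measurable, $g_{B_i}=\chi_{B_i}$ $m$-a.e., so (discarding a further null set) $m_x^{\eta}(B_i)=\chi_{B_i}(x)$ for all $i$; because $\{B_i\}$ is closed under complements and separates the atoms of $\eta$, one has $\eta(x)=\bigcap_{i:\,x\in B_i}B_i$, whence $m_x^{\eta}(\eta(x))=1$. For the characterising properties, let $\mathcal D$ be the collection of $A\in\B$ for which $x\mapsto m_x^{\eta}(A)$ equals $\E_m(\chi_A|\widehat{\eta})$ $m$-a.e.; this holds for $A\in\A_0$ by construction, and $\mathcal D$ is a $\lambda$-system (closure under complements and under increasing limits follows from continuity of measures on one side and monotone convergence for conditional expectations on the other), so $\mathcal D=\B$ by Dynkin's theorem. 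Thus $x\mapsto m_x^{\eta}(A)$ is $\widehat{\eta}$-measurable and $m(A)=\int m_x^{\eta}(A)\,dm(x)=\int\E_m(\chi_A|\widehat{\eta})\,dm$; passing from indicators to simple functions by linearity and then to $f\in L^1$ by monotone convergence gives $m_x^{\eta}(f)=\E_m(f|\widehat{\eta})(x)$ a.e.\ and $m(f)=\int\E_m(f|\widehat{\eta})\,dm$. Uniqueness is then immediate: any other family $\{\tilde m_x^{\eta}\}$ with the stated properties satisfies $\tilde m_x^{\eta}(A)=\E_m(\chi_A|\widehat{\eta})(x)=m_x^{\eta}(A)$ for $m$-a.e.\ $x$ and each $A\in\A_0$, hence the two families of measures agree off a single null set.

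The main obstacle — and essentially the only place the Lebesgue-space hypothesis is genuinely used — is the passage from finite to countable additivity for $m$-almost every $x$ \emph{simultaneously}: one must arrange that the countably many additivity and inner-regularity conditions needed for the Carath\'{e}odory extension can all be verified off one fixed null set, which forces us to work with a countable generating algebra equipped with a compact approximating class rather than with the full $\sigma$-algebra $\B$. The verification that $m_x^{\eta}$ lives on $\eta(x)$ is the other point where the specific structure of a measurable partition (separation of the quotient by countably many sets) enters; everything else is soft measure theory.
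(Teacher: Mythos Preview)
The paper does not prove this theorem; it is stated with attribution to Rohlin \cite{Rohlin1949} as a classical background result and used without proof throughout. So there is no ``paper's own proof'' to compare against.

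That said, your outline is a correct and standard route to Rohlin's disintegration theorem: reduce to the concrete model $[0,1]\cup\{\text{atoms}\}$, fix a countable generating algebra $\A_0$ with a compact approximating class, define $m_x^\eta$ on $\A_0$ via versions of $\E_m(\chi_A\mid\widehat\eta)$, use inner regularity by the compact class to force countable additivity off a single null set, extend by Carath\'eodory, and then identify the support and the characterising properties by a $\pi$--$\lambda$ argument. Your identification of the two genuine uses of the Lebesgue-space hypothesis --- the simultaneous countable-additivity step and the separation of $\eta$-atoms by countably many sets --- is accurate. One small point worth making explicit: in (c) the supremum $\sup\{g_K:K\in\mathcal K,\ K\subseteq A\}$ is over a countable family (since $\mathcal K\subset\A_0$), so it is indeed achieved along an increasing sequence and the monotone-convergence argument for conditional expectations applies cleanly; you might state this to pre-empt any worry about uncountable suprema.
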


The family of measures $\{m^\eta_x\}$ in the above theorem is called the {\it canonical system of  conditional measures associated with $\eta$}.

Throughout the remaining part of this subsection,  we  assume that $(X,\B,m)$ is
a Lebesgue space.   Suppose that $Y$ is a  complete separable metric space and  $\pi: X\to  Y$ is a
$\B$-measurable map.  Let $\B(Y)$ denote the
Borel-$\sigma$-algebra on $Y$.

According to Rohlin's theory (cf.~\cite[Section~2.5]{Rohlin1949}, \cite[Chapter IV]{Parry1969}), the mapping $\pi$ induces a measurable partition
\begin{equation}
\label{e-xi1}
\xi=\{\pi^{-1}(y):\; y\in Y\}
\end{equation}
of $X$ with  $\widehat{\xi}=\pi^{-1}\B(Y)\mbox{ (mod 0)}$,  and $(X_\xi, \B_\xi, m_\xi)$ is isomorphic $(\mbox{mod 0})$ to $(Y, \B(Y), \pi_*m)$.
The system of conditional measures $\{m^\xi_x\}$  is also called the {\it disintegration of $m$ with respect to $\pi$}.

For $y\in Y$, we use $B(y,r)$
to denote the closed ball in $Y$ of radius $r$ centered at $y$. Moreover  we write for $x\in X$,
\begin{equation}
\label{e-ball}
B^\pi(x,r)=\pi^{-1} B(\pi x,r).
\end{equation}

Furthermore, we say that $Y$ is a {\it Besicovitch space} if $Y$ is a  complete separable metric space and the Besicovich covering lemma (see e.g.~\cite{Mattila1995}) holds in  $Y$.  Besicovich spaces include, for instance,  Euclidean spaces, compact finite-dimensional Riemannian manifolds and  complete separable  ultrametric spaces.

\begin{lem}
\label{lem-2.4} Let $\pi: X\to Y$ be a measurable mapping from a Lebesgue space $(X, \B, m)$ to a Besicovitch space $Y$. Let $\eta$ be a measurable partition of $X$.  Then the following properties hold.
\begin{itemize}
\item[(1)] Let $A\in \B$. Then for $m$-a.e.~$x\in X$,
\begin{equation*}
 \lim_{r\to 0}\frac{ m_x^\eta(B^\pi(x,r)\cap
A)}{ m_x^\eta(B^\pi(x,r))}=\E_m(\chi_A|\hat{\eta}\vee
\pi^{-1}\B(Y))(x).
\end{equation*}
\item[(2)] Let $\alpha$ be a finite or countable measurable partition of $X$.
Then for $m$-a.e.~$x\in X$,
\begin{equation*}
\lim_{r\to 0}\log \frac
{m^\eta_x\left(B^\pi(x,r)\cap \alpha(x)\right)}
{m^\eta_x\left(B^\pi(x,r)\right)}
=-{\bf I}_m
\left(\alpha|\hat{\eta}\vee\pi^{-1}\B(Y)\right)(x).
\end{equation*}
Furthermore, set
\begin{equation*}
g(x)=-\inf_{r>0}\log\frac
{m^\eta_x\left(B^\pi(x,r)\cap \alpha(x)\right)}
{m^\eta_x\left(B^\pi(x,r)\right)}
\end{equation*}
and assume $H_m(\alpha)<\infty$. Then $g\geq 0$ and $g\in L^1(X,{\B},m)$.
\end{itemize}
\end{lem}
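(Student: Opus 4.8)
\textbf{Proof proposal for Lemma \ref{lem-2.4}.}

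The plan is to reduce both parts to the martingale convergence theorem (Lemma \ref{lem-par}(v)) combined with the Besicovitch density theorem in the space $Y$, carried out on each fibre of $\eta$. First I would set up the framework: since $(X,\B,m)$ is a Lebesgue space, Rohlin's theorem (Theorem \ref{thm-2.1}) gives the canonical system $\{m_x^\eta\}$; and since $Y$ is a complete separable metric space, $\pi^{-1}\gamma$ is countably generated, say by sets $\pi^{-1}(C_j)$ with the $C_j$ generating $\gamma$. The key observation is that $\widehat\eta\vee\pi^{-1}\gamma$ is the increasing limit of the finite $\sigma$-algebras $\widehat\eta\vee\sigma(\pi^{-1}C_1,\dots,\pi^{-1}C_n)$, and $\E_m(\chi_A\mid\widehat\eta\vee\pi^{-1}\gamma)(x)$ can be computed fibrewise: for $m$-a.e.\ $x$ it equals $\E_{m_x^\eta}(\chi_A\mid (\pi|_{\eta(x)})^{-1}\gamma)(\cdot)$ evaluated at $x$, i.e.\ the conditional expectation of $\chi_A$ with respect to the pushforward $m_x^\eta\circ\pi^{-1}$ on $Y$. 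This fibrewise reduction is the standard ``conditioning in two stages'' fact for Rohlin disintegrations, and I would state it carefully as the first step.

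For part (1), once we are on a fixed fibre with measure $m_x^\eta$ and its pushforward $\nu_x := m_x^\eta\circ\pi^{-1}$ on $Y$, the ratio $m_x^\eta(B^\pi(x,r)\cap A)/m_x^\eta(B^\pi(x,r))$ is a ratio of $\nu_x$-averages of the fibrewise conditional expectation $f_x:=\E_{m_x^\eta}(\chi_A\mid(\pi|_{\eta(x)})^{-1}\gamma)$ over the ball $B(\pi x,r)$ in $Y$. Since $Y$ is a Besicovitch space, the Besicovitch differentiation theorem applies to the Radon measure $\nu_x$ and the $L^1(\nu_x)$ function $f_x$, giving
\begin{equation*}
\lim_{r\to 0}\frac{1}{\nu_x(B(\pi x,r))}\int_{B(\pi x,r)}f_x\,d\nu_x = f_x(\pi x)
\end{equation*}
for $\nu_x$-a.e.\ point $\pi x$, hence for $m_x^\eta$-a.e.\ $x$; integrating over the base and using Fubini for the disintegration yields the claim for $m$-a.e.\ $x\in X$. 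Part (2) follows by applying part (1) to each atom $A$ of $\alpha$: the displayed limit is $\log$ of $\sum_{A\in\alpha}\chi_A(x)\cdot\E_m(\chi_A\mid\widehat\eta\vee\pi^{-1}\gamma)(x)$ — which is exactly $-\mathbf I_m(\alpha\mid\widehat\eta\vee\pi^{-1}\gamma)(x)$ by \eqref{e-1.2} — provided one checks that only countably many atoms are involved and that the a.e.\ statements can be taken simultaneously; this is routine since $\alpha$ is countable. For the integrability of $g$, I would run the usual maximal-function argument: $g$ is dominated by $\mathbf I_m(\alpha\mid\widehat\eta\vee\sigma_n)$-type quantities uniformly, and by Lemma \ref{lem-par}(v) the family $\{\mathbf I_m(\alpha\mid\A_n)\}$ has an $L^1$ majorant; alternatively one invokes the standard fact that the Besicovitch maximal ratio of an $L^1$ function lies in $L^1$ when $H_m(\alpha)<\infty$, exactly as in \cite[Lemma 3.14]{FengHu2009}.

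The main obstacle I anticipate is the fibrewise reduction in the first step: one must justify that the Besicovitch theorem may be applied \emph{on each fibre simultaneously in a measurable way}, i.e.\ that the exceptional $m_x^\eta$-null sets assemble into an $m$-null set in $X$. This requires care with the measurability of $x\mapsto \nu_x$ and of the fibrewise conditional expectations, and is cleanest to handle by choosing a version of the disintegration that is weakly measurable in $x$ and then invoking a measurable selection / Fubini argument, as in Rohlin's theory; once this is in place the rest is a routine combination of martingale convergence and the Besicovitch covering lemma. A secondary technical point is that $B^\pi(x,r)=\pi^{-1}B(\pi x,r)$ need not be $m_x^\eta$-measurable a priori unless $\pi$ restricted to $\eta(x)$ is suitably measurable, so I would note at the outset that it suffices to work with a version of $\pi$ that is Borel on a full-measure subset, which the hypotheses allow.
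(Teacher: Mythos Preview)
Your approach is essentially the one the paper relies on: the paper does not prove Lemma~\ref{lem-2.4} directly but cites \cite[Lemma 3.3, Proposition 3.5]{FengHu2009}, and that reference proceeds exactly as you outline---fibrewise Besicovitch differentiation on each $\eta$-atom for part~(1), then part~(2) from part~(1) atom by atom, with the $L^1$ bound on $g$ obtained via the Hardy--Littlewood maximal inequality on each fibre followed by integration over the base (the paper reproduces this last argument almost verbatim in the proof of Proposition~\ref{pro-4.5}). The measurability issue you flag is indeed the only delicate point, and it is handled in \cite{FengHu2009} by working with a fixed countable generating family and Fubini for the disintegration, as you anticipate.

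One correction on the $L^1$ bound: your first suggestion---that $g$ is dominated by $\sup_n \mathbf I_m(\alpha\mid\widehat\eta\vee\A_n)$ for a filtration $\A_n\uparrow\pi^{-1}\gamma$---does not work as stated, because balls $B^\pi(x,r)$ are not atoms of any such $\A_n$, so there is no direct pointwise comparison between the ball ratio and the martingale. The correct route is your ``alternative'': apply the weak-type Besicovitch/Hardy--Littlewood maximal inequality on each fibre to get $m_x^\eta\{g^A>t\}\lesssim e^{-t}$ for each $A\in\alpha$, then sum over $A$ and integrate over the base using $H_m(\alpha)<\infty$; this is precisely the computation carried out in the paper's proof of Proposition~\ref{pro-4.5} and in \cite[Proposition 3.5]{FengHu2009}.
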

\begin{proof}
These properties have been  proved in \cite[Lemma 3.3, Proposition 3.5]{FengHu2009} in the case when $Y=\R^d$.  The proofs there remain valid for the general case when $Y$ is a Besicovitch space.
\end{proof}

\begin{rem}In the above lemma, we have  $\E_m(\chi_A|\hat{\eta}\vee
\pi^{-1}\B(Y))=\E_m(\chi_A|\hat{\eta}\vee
\hat{\xi})$ and ${\bf I}_m
\left(\alpha|\hat{\eta}\vee\pi^{-1}\B(Y)\right)={\bf I}_m
\left(\alpha|\hat{\eta}\vee\hat{\xi}\right)$ $m$-a.e., where $\xi$ is given by \eqref{e-xi1}. This is because $\widehat{\xi}=\pi^{-1}\B(Y)\mbox{ (mod 0)}$.
\end{rem}

\begin{de}
Two probability measures $m_1$ and $m_2$ on a measurable space $(X, \B)$ are said to be strongly equivalent if there exists a positive constant $C$ such that
$C^{-1}m_1(A)\leq  m_2(A)\leq C m_1(A)$ for all $A\in \B$.
\end{de}

\begin{lem}
\label{lem-2.8}
Let $\pi: X\to Y$ be a measurable mapping from a Lebesgue space $(X, \B, m_1)$ to a Besicovitch space $Y$. Let $\xi$ be the measurable partition of $X$ given in \eqref{e-xi1}.
Suppose $m_2$ is another probability measure on $(X,\B)$ strongly equivalent to $m_1$. Then for $m_1$-a.e.~$x$,
$(m_1)^\xi_x$ and $(m_2)^\xi_x$ are strongly equivalent.
\end{lem}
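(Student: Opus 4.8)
The plan is to produce an explicit formula for the disintegration of $m_2$ in terms of that of $m_1$ and the Radon--Nikodym derivative, from which strong equivalence of the conditional measures is immediate with an explicit constant.

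First I would record the basic consequences of strong equivalence: $m_1$ and $m_2$ are mutually absolutely continuous and have exactly the same null sets, so $(X,\B,m_2)$ is again a Lebesgue space and $\xi$ from \eqref{e-xi1} is a measurable partition for $m_2$ as well; hence the conditional measures $\{(m_2)^\xi_x\}$ are well defined by Theorem~\ref{thm-2.1}. Write $\rho=dm_2/dm_1$; strong equivalence with constant $C$ forces $C^{-1}\le \rho\le C$ for $m_1$-a.e.~$x$. Set $\phi=\E_{m_1}(\rho\,|\,\widehat{\xi})$, which by Theorem~\ref{thm-2.1} equals $\int\rho\,d(m_1)^\xi_x$ for $m_1$-a.e.~$x$ and which likewise satisfies $C^{-1}\le\phi\le C$ $m_1$-a.e.; in particular $\phi>0$ a.e.

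Next I would define, for $m_1$-a.e.~$x$, a measure $\lambda_x$ on the atom $\xi(x)$ by $d\lambda_x/d(m_1)^\xi_x=\rho/\phi(x)$; the choice of $\phi$ makes $\lambda_x(\xi(x))=1$, so each $\lambda_x$ is a probability measure on $\xi(x)$. The core of the argument is to verify that $\{\lambda_x\}$ meets the two properties that characterize the disintegration of $m_2$ along $\xi$ in Theorem~\ref{thm-2.1}. For $A\in\B$ one has $\lambda_x(A)=\E_{m_1}(\rho\chi_A\,|\,\widehat{\xi})(x)/\phi(x)$; this is a ratio of two $\widehat{\xi}$-measurable functions whose denominator is bounded away from zero, hence $x\mapsto\lambda_x(A)$ is $\widehat{\xi}$-measurable. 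Since this map is $\widehat{\xi}$-measurable, a conditioning computation gives
\begin{align*}
\int \lambda_x(A)\,dm_2(x)
&=\int \lambda_x(A)\,\rho(x)\,dm_1(x)
 =\int \lambda_x(A)\,\E_{m_1}(\rho\,|\,\widehat{\xi})(x)\,dm_1(x)\\
&=\int \E_{m_1}(\rho\chi_A\,|\,\widehat{\xi})(x)\,dm_1(x)
 =\int \rho\,\chi_A\,dm_1 = m_2(A).
\end{align*}
By the uniqueness clause of Theorem~\ref{thm-2.1}, $(m_2)^\xi_x=\lambda_x$ for $m_1$-a.e.~$x$ (equivalently, for $m_2$-a.e.~$x$). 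Therefore $(m_2)^\xi_x$ has density $\rho(\cdot)/\phi(x)$ with respect to $(m_1)^\xi_x$, a quantity lying in $[C^{-2},C^2]$, so the two conditional measures are strongly equivalent with constant $C^{2}$, as claimed.

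The step I expect to need the most care is this verification that $\{\lambda_x\}$ is a legitimate system of conditional measures for $m_2$: one must check the $\widehat{\xi}$-measurability of $x\mapsto\lambda_x(A)$ (handled by expressing $\lambda_x(A)$ as a ratio of conditional expectations with denominator bounded below) and the Fubini-type identity above, and one should also note at the outset --- routine but genuinely used --- that $\xi$ remains a measurable partition of $(X,\B,m_2)$ precisely because $m_1$ and $m_2$ have the same null sets. Everything else is bookkeeping with Radon--Nikodym derivatives and the defining properties in Theorem~\ref{thm-2.1}.
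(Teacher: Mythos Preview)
Your proof is correct and takes a genuinely different route from the paper's. The paper argues via the local differentiation formula of Lemma~\ref{lem-2.4}(1) (with $\eta=\mathcal N$): for a countable generating family $\B'\subset\B$ one has
\[
(m_i)^\xi_x(A)=\lim_{r\to 0}\frac{m_i(B^\pi(x,r)\cap A)}{m_i(B^\pi(x,r))}\qquad(i=1,2,\ A\in\B'),
\]
and strong equivalence of $m_1,m_2$ with constant $C$ immediately gives $C^{-2}(m_1)^\xi_x(A)\le (m_2)^\xi_x(A)\le C^2(m_1)^\xi_x(A)$ on $\B'$, hence on $\B$. This uses the Besicovitch property of $Y$ in an essential way. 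Your argument instead builds the disintegration of $m_2$ explicitly from that of $m_1$ via the Radon--Nikodym density $\rho=dm_2/dm_1$ and the uniqueness clause of Theorem~\ref{thm-2.1}; it never touches Lemma~\ref{lem-2.4} or the Besicovitch hypothesis, so it is strictly more general (it works for any measurable partition $\xi$ of a Lebesgue space) and yields the same constant $C^2$ together with the exact density $d(m_2)^\xi_x/d(m_1)^\xi_x=\rho/\E_{m_1}(\rho\,|\,\widehat{\xi})(x)$. The paper's approach, on the other hand, is shorter in context because Lemma~\ref{lem-2.4} is already in hand and is reused elsewhere.
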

\begin{proof}
 It is a direct consequence of the following standard result (see, e.g. \cite[Proposition 6.1]{LiuQian1995}): Let $\alpha$ be a measurable partition of a Lebesgue space $(X, {\mathcal B}, m)$ and $\nu$  another probability measure on ${\mathcal B}$ which is absolutely continuous with respect to $m$. Then for $\nu$-a.e.$x$, the conditional measure $\nu^{\alpha}_x$ is absolutely continuous with respect to $m^\alpha_x$ on $\alpha(x)$ and
$$
\frac{d \nu^{\alpha}_x}{dm^{\alpha}_x}
=\frac{g|_{\alpha(x)}}{\int_{\alpha(x)} g \;dm^{\alpha}_x},
$$
where $g:=d\nu/dm$.
\end{proof}

\begin{lem}
\label{lem-2.9}
Let $\pi: X\to Y$ be a measurable mapping from a Lebesgue space $(X, \B, m)$ to a Besicovitch space $Y$. Let $\xi$ be the measurable partition of $X$ given in \eqref{e-xi1}.
Suppose $A\in \B$ with $m(A)>0$ and let $m_A$ be the probability measure given by $m_A(E)=m(A\cap E)/m(A)$ for $E\in \B$. Then for $m$-a.e.~$x\in A$,  $(m_A)^\xi_x=(m^\xi_x)_A$, that is,
$$
(m_A)^\xi_x(E)=\frac{m^\xi_x(A\cap E)}{m_x^\xi(A)}  \qquad \mbox{ for all } E\in \B.
$$
 \end{lem}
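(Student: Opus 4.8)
The plan is to identify $(m_A)_x^\xi$ by verifying that the family $\{(m_x^\xi)_A\}_{x\in A}$ satisfies the two characterizing properties from Rohlin's theorem (Theorem \ref{thm-2.1}): namely, that for every measurable $E$, the map $x\mapsto (m_x^\xi)_A(E)$ is $\widehat{\xi}$-measurable (restricted to $A$), and that $m_A(E)=\int (m_x^\xi)_A(E)\,dm_A(x)$. Since conditional measures are unique up to an $m_A$-null set among families with these properties, this will force $(m_A)_x^\xi=(m_x^\xi)_A$ for $m$-a.e.\ $x\in A$. A preliminary observation is needed: $(m_x^\xi)_A$ is only defined where $m_x^\xi(A)>0$, so I first invoke Lemma \ref{lem-2.10}, applied with the $\sigma$-algebra $\widehat{\xi}$, to conclude that $\E_m(\chi_A\mid\widehat{\xi})(x)=m_x^\xi(A)>0$ for $m$-a.e.\ $x\in A$; hence $(m_x^\xi)_A$ is well-defined $m$-a.e.\ on $A$. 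Here I use that $m_x^\xi(A)=\E_m(\chi_A\mid\widehat{\xi})(x)$ $m$-a.e., which is part of Theorem \ref{thm-2.1}.

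Next I would observe that $(m_A)_x^\xi$ is defined via the partition $\xi$ of $X$ regarded as a measurable partition of the Lebesgue space $(X,\B,m_A)$; this requires noting that $(X,\B,m_A)$ is again a Lebesgue space and that $\xi$ is still a measurable partition of it (measurability of a partition does not depend on the measure), so Rohlin's theorem applies to $(X,\B,m_A)$ and $\xi$. To compute the $m_A$-disintegration, I will check the two properties for the candidate family $\{(m_x^\xi)_A\}$. Measurability: for $E\in\B$, $x\mapsto (m_x^\xi)_A(E)=m_x^\xi(A\cap E)/m_x^\xi(A)$ is a ratio of two $\widehat{\xi}$-measurable functions (by Theorem \ref{thm-2.1} applied to $m$), with nonvanishing denominator on a full-$m_A$-measure subset of $A$, hence $\widehat{\xi}$-measurable there. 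Averaging: for $E\in\B$,
\[
\int (m_x^\xi)_A(E)\,dm_A(x)=\frac{1}{m(A)}\int_A \frac{m_x^\xi(A\cap E)}{m_x^\xi(A)}\,dm(x).
\]
Using that $x\mapsto m_x^\xi$ is constant on $\xi$-atoms (as a $\widehat\xi$-measurable assignment) together with the tower property and the identity $m_x^\xi(A)=\E_m(\chi_A\mid\widehat\xi)(x)$, the integrand $\chi_A(x)\,m_x^\xi(A\cap E)/m_x^\xi(A)$ has conditional expectation $\E_m(\chi_A\mid\widehat\xi)(x)\cdot m_x^\xi(A\cap E)/m_x^\xi(A)=m_x^\xi(A\cap E)$ given $\widehat\xi$; integrating gives $\int m_x^\xi(A\cap E)\,dm(x)=m(A\cap E)=m(A)\,m_A(E)$, so the displayed quantity equals $m_A(E)$, as required. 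By the uniqueness clause of Theorem \ref{thm-2.1} (applied to $m_A$), $(m_A)_x^\xi=(m_x^\xi)_A$ for $m_A$-a.e.\ $x$, equivalently for $m$-a.e.\ $x\in A$.

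The main obstacle, such as it is, is bookkeeping rather than a genuine difficulty: one must be careful that the candidate family $\{(m_x^\xi)_A\}$ is genuinely defined on a set of full $m_A$-measure (handled by Lemma \ref{lem-2.10}), and that "$\widehat\xi$-measurability" is being asserted in the right space — the conditional measures for $m_A$ live on the same atoms $\xi(x)$ as those for $m$, so no re-identification of atoms is needed. One subtle point worth spelling out is the step "$x\mapsto m_x^\xi$ is constant on $\xi$-atoms": strictly, $m_x^\xi$ depends only on the atom $\xi(x)$ up to an $m$-null set, which is exactly the $\widehat\xi$-measurability in Theorem \ref{thm-2.1}, and that is all that is used in the conditional-expectation computation above. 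Note this lemma is the measure-theoretic counterpart, for the restriction operation, of the compatibility results already recorded for strong equivalence (Lemma \ref{lem-2.8}), and the proof strategy parallels that of Lemma \ref{lem-2.8} — characterize the target disintegration and invoke uniqueness.
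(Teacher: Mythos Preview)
Your proof is correct, but it takes a different route from the paper. The paper argues via the density formula of Lemma~\ref{lem-2.4}(1): for a countable generating family $\B'\subset\B$ and for $m$-a.e.\ $x\in A$, it computes
\[
(m_A)^\xi_x(E)=\lim_{r\to 0}\frac{m_A(B^\pi(x,r)\cap E)}{m_A(B^\pi(x,r))}
=\lim_{r\to 0}\frac{m(B^\pi(x,r)\cap E\cap A)/m(B^\pi(x,r))}{m(B^\pi(x,r)\cap A)/m(B^\pi(x,r))}
=\frac{m^\xi_x(A\cap E)}{m^\xi_x(A)}
\]
for $E\in\B'$, and then extends to all of $\B$ by $\sigma(\B')=\B$. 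This genuinely uses that $Y$ is a Besicovitch space, since Lemma~\ref{lem-2.4}(1) rests on the Besicovitch differentiation theorem.

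Your argument instead verifies the Rohlin characterization (Theorem~\ref{thm-2.1}) for the candidate family $\{(m^\xi_x)_A\}$ directly and invokes uniqueness. This is more elementary and strictly more general: it never uses the Besicovitch hypothesis on $Y$, nor that $\xi$ arises from a map $\pi$; the statement and your proof go through for an arbitrary measurable partition $\xi$ of a Lebesgue space. The paper's approach, on the other hand, is a line-for-line reuse of the machinery already set up for Lemma~\ref{lem-2.8}, so it costs nothing extra in that context. One small correction to your closing remark: the paper's proof of Lemma~\ref{lem-2.8} also proceeds via the differentiation formula of Lemma~\ref{lem-2.4}(1), not via the Rohlin uniqueness verification, so your strategy does not in fact parallel the paper's proof of Lemma~\ref{lem-2.8}.
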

\begin{proof}
Again it is a direct consequence of \cite[Proposition 6.1]{LiuQian1995}.
\end{proof}
\subsection{Induced transformations}
\label{Sect-2.3}
Let $(X,\B, m, T)$ be an invertible measure-preserving system.  Fix $N\in \N$ and $F\in \B$  with $m(F)>0$.  By the Poincar\'{e} recurrence theorem, the {\it first return map to $F$ associated with $T^N$}, defined by
$$r_F(x)=\inf\{n\geq 1:\; T^{Nn}(x)\in F\},
$$ exists almost everywhere. The map $T_F:\; F\to F$ defined almost everywhere by
$$
T_F(x)=T^{Nr_F(x)}(x)
$$
is called the {\it transformation induced by $T^N$ on the set $F$}.

For $n\geq 1$, set $F_n=\{x\in F:\; r_F(x)=n\}$.  Write $$\B|_F:=\{B\cap F:\; B\in \B\}, \quad  m_F:=\frac{1}{m(F)}m|_F,$$
where $m|_F$ stands for the restriction of $m$ on $F$, that is,  $m|_F(B)=m(B\cap F)$ for $B\in \B$.
The following result is well-known (see e.g.  \cite[pp.~61-63]{EinsiedlerWard2011} and
\cite[pp.~257-258]{Petersen1983} for a proof).

\begin{lem}
\label{lem-induce}
\begin{itemize}
\item[(i)]
The induced transformation $T_F$ is a measure-preserving transformation on the space $(F, \B|_F, m_F)$.
\item[(ii)]
The family of sets $\{T^{Nj} F_n\}_{n\geq 1, \;0\leq j\leq n-1}$ are disjoint, and hence $$\sum_{n=1}^\infty n\;m(F_n)\leq 1.
\footnote{This is actually an equality in the ergodic case, but since we consider also non-ergodic measures, there is only an inequality.}
$$
\item[(iii)] $\displaystyle -\sum_{n=1}^\infty m(F_n)\log m(F_n)<\infty$.
\end{itemize}
\end{lem}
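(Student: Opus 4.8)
The plan is to abbreviate $S:=T^N$, which is again a bijective $m$-preserving transformation of $X$, and to work with the Kakutani skyscraper over $F$ whose $j$-th floor above the base level $F_n$ is $S^jF_n$. The combinatorial heart of the argument, which gives (ii) immediately and feeds into both (i) and (iii), is the claim that the floors $S^jF_n$ ($n\ge 1$, $0\le j\le n-1$) are pairwise disjoint. To see this I would suppose $S^jy=S^{j'}y'$ with $y\in F_n$, $y'\in F_{n'}$, $0\le j\le n-1$, $0\le j'\le n'-1$ and, say, $j\le j'$; applying $S^{-j}$ gives $y=S^{j'-j}y'$, and since $S^iy'\notin F$ for $1\le i\le n'-1$ while $y\in F$, the exponent $j'-j$ (which lies in $\{0,\dots,n'-1\}$) must be $0$; then injectivity of $S$ forces $y=y'$ and hence $n=r_F(y)=r_F(y')=n'$. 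Since the floors are disjoint subsets of a probability space and $m$ is $S$-invariant, $\sum_{n\ge1}n\,m(F_n)=\sum_{n\ge1}\sum_{j=0}^{n-1}m(S^jF_n)\le m(X)=1$, proving (ii). (Poincar\'e recurrence applied to $S$ and $F$ shows $r_F<\infty$ $m$-a.e.\ on $F$, so $\{F_n\}$ partitions $F$ mod $0$ and $T_F$ is defined a.e.)

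For (i) it suffices to prove $m(T_F^{-1}A)=m(A)$ for every measurable $A\subseteq F$, since $m_F$ is merely the normalization of $m|_F$. I would first note $T_F^{-1}A=\bigsqcup_{n\ge1}(F_n\cap S^{-n}A)$, a disjoint union because the $F_n$ are, so the task reduces to showing $\sum_{n\ge1}m(F_n\cap S^{-n}A)=m(A)$. For this I would peel preimages off one step at a time: put $B_0=A$ and $B_k=S^{-1}B_{k-1}\setminus F$ for $k\ge1$. Then $m(B_{k-1})=m(S^{-1}B_{k-1})=m(B_k)+m(S^{-1}B_{k-1}\cap F)$, and unwinding the recursion identifies $S^{-1}B_{k-1}\cap F$ with $F_k\cap S^{-k}A$ (here one uses $A\subseteq F$, so that a point of $F$ landing in $A$ under $S^k$ and avoiding $F$ in between has first return time exactly $k$). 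Iterating gives $m(A)=m(B_K)+\sum_{n=1}^Km(F_n\cap S^{-n}A)$ for every $K$. Finally the $B_k$ are pairwise disjoint — if $x\in B_k\cap B_{k'}$ with $k<k'$ then $S^kx\notin F$ from membership in $B_{k'}$ but $S^kx\in A\subseteq F$ from membership in $B_k$ — so they have summable measures and $m(B_K)\to0$, yielding the desired identity and hence (i).

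For (iii) I would use only the numerical consequence $\sum_{n\ge1}n\,m(F_n)\le1$ of (ii), together with the elementary fact that any sub-probability sequence $(p_n)_{n\ge1}$ with $\sum_nnp_n\le1$ has finite entropy. Splitting the indices according to whether $p_n\ge e^{-n}$ or $p_n<e^{-n}$: on the first set $-\log p_n\le n$, so $-p_n\log p_n\le np_n$ and that part of the sum is at most $\sum_nnp_n\le1$; on the second set, since $t\mapsto-t\log t$ is increasing on $(0,1/e]$ and $e^{-n}\le 1/e$, we get $-p_n\log p_n\le-e^{-n}\log e^{-n}=ne^{-n}$, and $\sum_{n\ge1}ne^{-n}<\infty$. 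Hence $-\sum_{n\ge1}m(F_n)\log m(F_n)\le1+\sum_{n\ge1}ne^{-n}<\infty$.

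The only point requiring real care, and the one I expect to be the main obstacle, is the bookkeeping in (i): verifying that $S^{-1}B_{k-1}\cap F$ is exactly $F_k\cap S^{-k}A$ and that the remainder $m(B_K)$ vanishes by a soft disjointness argument rather than any quantitative recurrence estimate. Parts (ii) and (iii) are, respectively, the floor-disjointness observation and a one-line convexity estimate, and pose no difficulty.
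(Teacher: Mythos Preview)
Your proof is correct. The paper itself does not prove this lemma: it states the result as well-known and refers the reader to standard ergodic-theory textbooks (Einsiedler--Ward and Petersen) for the proof. Your argument is exactly the classical Kakutani-tower proof one finds in those references: disjointness of the floors $S^jF_n$ gives (ii); a telescoping peel-off of preimages yields $m(T_F^{-1}A)=m(A)$ for (i); and the elementary entropy estimate from $\sum_n n\,m(F_n)\le 1$ gives (iii). The bookkeeping you flagged as the delicate point --- identifying $S^{-1}B_{k-1}\cap F$ with $F_k\cap S^{-k}A$ and showing $m(B_K)\to 0$ via disjointness of the $B_k$ --- is handled correctly; your verification that $A\subseteq F$ forces the first return time to be exactly $k$ is the key observation there.
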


Set $\I=\{B\in \B:\; T^{-1}(B)=B\}$ and $\I_F:=\{B\in \B|_F:\; (T_F)^{-1}(B)=B\}$.   Recall that $N$ is a fixed positive integer and $r_F$ is the first return map to $F$ with respect to $T^N$.  The following  result will be needed in the proof of   Theorem \ref{thm-1.0}.

\begin{lem}
\label{lem-inderg}
Let $g\in L^1(X, \B,m)$. Set $G(x)=\sum_{j=0}^{Nr_F(x)-1} g(T^jx)$ for $x\in F$. Then $G\in L^1(F, \B|_F, m_F)$. Moreover,
\begin{equation}
\label{e-identity}
N \E_m(g|\I)(x)=\frac{\E_{m_F}(G|\I_F)(x)}{\E_{m_F}(r_F|\I_F)(x)}
\end{equation}
for $m$-a.e.~$x\in F$.
\end{lem}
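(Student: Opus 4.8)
The plan is to verify integrability of $G$ first and then to establish the identity \eqref{e-identity} via an appeal to the Maker-type ergodic theorem (Lemma \ref{lem-3.15}) applied along the tower over $F$. To see $G\in L^1(F,\B|_F,m_F)$, I would write $\int_F |G|\,dm_F \le \frac{1}{m(F)}\sum_{n=1}^\infty \int_{F_n}\sum_{j=0}^{Nn-1}|g|(T^jx)\,dm(x)$ and use that the sets $\{T^{Nj}F_n: 0\le j\le n-1\}$ are disjoint (Lemma \ref{lem-induce}(ii)), so that each piece $T^iF_n$ (for $0\le i\le Nn-1$) has finite measure and the whole union sits inside $X$; combined with $g\in L^1(X,\B,m)$ and the fact that $\sum_n n\,m(F_n)\le 1$, this gives $\int_F|G|\,dm_F\le \frac{1}{m(F)}\int_X|g|\,dm<\infty$. (A slightly more careful bookkeeping is needed because each $T^iF_n$ appears once, indexed by $i = Nj+\ell$ with $0\le j\le n-1$, $0\le \ell\le N-1$; the point is simply that these ranges over all $n$ are pairwise disjoint in $X$.)

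For the identity, the natural route is Kac's formula combined with the Maker/Birkhoff correspondence between $T^N$ and its induced map $T_F$. Let me first reduce to $N=1$: replacing $T$ by $T^N$ changes $\I$ into $\I^{(N)}:=\{B: T^{-N}B=B\}$, and one has $\E_m(g|\I) = \frac{1}{N}\sum_{k=0}^{N-1}\E_m(g\circ T^k \mid \I^{(N)})\circ$(suitable shift); actually cleaner is to note $N\E_m(g|\I) = \E_m\big(\sum_{k=0}^{N-1} g\circ T^k \,\big|\, \I\big)$ since $\I\subset\I^{(N)}$ and $\I$ is $T$-invariant, and $\E_m(h|\I)=\E_m(h|\I^{(N)})$ whenever... — more robustly, set $\tilde g(x)=\sum_{k=0}^{N-1}g(T^kx)$, so that $N\E_m(g|\I)=\E_m(\tilde g|\I)$, and along any $T^N$-orbit the Birkhoff average of $\tilde g$ under $T^N$ equals $N$ times the Birkhoff average of $g$ under $T$; thus $\E_m(\tilde g\mid \I^{(N)})=N\E_m(g\mid\I^{(N)})$, and on the ergodic components this reduces the problem to $T^N$ with integrand $\tilde g$. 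Note $G(x)=\sum_{j=0}^{r_F(x)-1}\tilde g(T^{Nj}x)$ is exactly the induced Birkhoff-type sum of $\tilde g$ for the system $(X,\B,m,T^N)$ over $F$.

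So it suffices to prove: if $S=T^N$ with invariant $\sigma$-algebra $\J=\I^{(N)}$, $S_F$ the induced map with invariant $\sigma$-algebra $\I_F$, $h\in L^1$, and $H(x)=\sum_{j=0}^{r_F(x)-1}h(S^jx)$, then $\E_m(h|\J)(x)=\E_{m_F}(H|\I_F)(x)/\E_{m_F}(r_F|\I_F)(x)$ for $m$-a.e.\ $x\in F$. I would handle this in two steps. Step one: the ergodic case, i.e.\ $m$ ergodic for $S$ (hence $m_F$ ergodic for $S_F$, and $\E_m(h|\J)=\int h\,dm$, $\E_{m_F}(H|\I_F)=\int_F H\,dm_F$, $\E_{m_F}(r_F|\I_F)=\int_F r_F\,dm_F=1/m(F)$ by Kac); then $\int_F H\,dm_F = \frac{1}{m(F)}\int_F\sum_{j=0}^{r_F-1}h\circ S^j\,dm=\frac{1}{m(F)}\int_X h\,dm$ by the disjointness of the tower levels and exhaustion, so the ratio is $\int h\,dm$ as desired. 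Step two: the general (non-ergodic) case, obtained by disintegrating $m$ over $\J$; the subtlety is that the ergodic decomposition of $m$ for $S$ must be compatible with that of $m_F$ for $S_F$, which is precisely the statement that $\I_F = \J|_F \pmod 0$. That compatibility — each $S$-ergodic component restricted to $F$ being exactly an $S_F$-ergodic component — is the one genuinely technical point; it follows because $F$ meets $m$-a.e.\ $S$-orbit (Poincar\'e recurrence), so an $S_F$-invariant set $B\subset F$ generates an $S$-invariant set $\bigcup_{j\ge 0}S^j(\text{orbit through }B)$ whose trace on $F$ is $B$. Granting this, I apply Step one fiberwise over each ergodic component and integrate, which yields \eqref{e-identity}. \textbf{Expected main obstacle:} making the fiberwise/ergodic-decomposition argument rigorous — specifically, checking that conditional expectations w.r.t.\ $\J$ disintegrate consistently with those w.r.t.\ $\I_F$ under restriction to $F$, and that Kac's formula holds at the level of conditional expectations (i.e.\ $\E_{m_F}(r_F|\I_F)\cdot m(F)=1$ reads as $\E_{m}(\mathbf 1_F|\J)$ being constant on orbits) rather than merely for integrals. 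An alternative that sidesteps ergodic decomposition entirely: apply Lemma \ref{lem-3.15} directly. Take $g_k\equiv \tilde g$ (a constant sequence, trivially convergent), so $\frac1k\sum_{j=0}^{k-1}\tilde g(S^jx)\to \E_m(\tilde g\mid\J)(x)$; along the subsequence of return times $n_k(x):=\sum_{i=0}^{k-1}r_F(S_F^ix)$ the partial sum is $\sum_{i=0}^{k-1}H(S_F^ix)$, while $n_k(x)/k\to\E_{m_F}(r_F|\I_F)(x)$ by Birkhoff applied to $S_F$; dividing, $\frac{1}{k}\sum_{i=0}^{k-1}H(S_F^ix)\big/\frac{n_k(x)}{k}\to \E_{m_F}(H|\I_F)(x)/\E_{m_F}(r_F|\I_F)(x)$, and the left side also converges to $\lim \frac{1}{n_k(x)}\sum_{j=0}^{n_k(x)-1}\tilde g(S^jx)=\E_m(\tilde g\mid\J)(x)$, giving the identity after dividing by $N$. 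This second approach is probably the cleaner write-up, with the only care needed being that $n_k(x)\to\infty$ a.e.\ (clear, since $r_F\ge 1$) and that the full Birkhoff average and the subsequence average share the same limit.
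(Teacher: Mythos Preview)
Your proposal is correct, and the alternative route you outline at the end---applying Birkhoff to $g$ under $T$ along the subsequence of return times $Nn_k(x)$, applying Birkhoff to $G$ and $r_F$ under $T_F$, and then taking the quotient---is exactly the paper's proof. The paper carries this out directly for $T$ (rather than first reducing to $S=T^N$ and $\tilde g$), which makes the write-up marginally cleaner: one simply computes $\sum_{j=0}^{k-1}G(T_F^jx)=\sum_{i=0}^{Nn_k(x)-1}g(T^ix)$ and invokes Birkhoff three times, so the appeal to Lemma~\ref{lem-3.15} with a constant sequence is unnecessary. Your first approach via ergodic decomposition and Kac's formula would also work, but as you correctly diagnose, the compatibility $\I_F=\J|_F \pmod 0$ is the nontrivial step there, and the paper avoids it entirely.
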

\begin{proof}
First notice that
\begin{equation*}
\begin{split}
\int_F |G| \; dm_F & = \frac{1}{m(F)} \sum_{n=1}^\infty \int_{F_n} |G|\;dm\\
& \leq \frac{1}{m(F)}\sum_{n=1}^\infty\sum_{p=0}^{Nn-1} \int_{F_n} |g\circ T^p|\; dm\\
&= \frac{1}{m(F)} \sum_{n=1}^\infty\sum_{p=0}^{Nn-1} \int_{T^{p} F_n} |g|\; dm \quad  \mbox{(since $T$ is invertible and preserves $m$)} \\
&= \frac{1}{m(F)} \sum_{n=1}^\infty\sum_{k=0}^{N-1}\sum_{j=0}^{n-1} \int_{T^{Nj+k} F_n} |g|\; dm\\
&= \frac{1}{m(F)} \sum_{k=0}^{N-1}\sum_{n=1}^\infty\sum_{j=0}^{n-1} \int_{T^{Nj+k} F_n} |g|\; dm\\
&\leq  \frac{N}{m(F)}\int_{X} |g|\; dm,
\end{split}
\end{equation*}
where in the last inequality we have used the fact that for any $k$, the sets in the collection $\{T^{Nj+k} F_n:\; n\in \N, 0\leq j\leq n-1\}$ are disjoint  (see Lemma \ref{lem-induce}(ii)). Hence $G\in L^1(m_F)$.  Below we prove \eqref{e-identity}.

Consider  the sequence of integer-valued functions $(n_k(x))_{k=0}^\infty$, which are defined on $F$ almost everywhere by $n_0(x)=0$, and
$$
n_k(x)=\sum_{j=0}^{k-1}r_F(T_F^jx)\quad \mbox{ for } k\geq 1,
$$
where $T_F^j:=(T_F)^j$.
Clearly,  $n_k(x)\geq k$ and $T_F^k(x)=T^{N n_k(x)}(x)$. Hence,
\begin{equation*}
\begin{split}
\sum_{j=0}^{k-1}G(T_F^jx)&=\sum_{j=0}^{k-1}\sum_{p=0}^{Nr_F(T_F^jx)-1} g(T^p(T_F^jx))\\
&=\sum_{j=0}^{k-1}\sum_{p=0}^{Nr_F(T^j_Fx)-1} g(T^{Nn_j(x)+p}x)\\
&=\sum_{j=0}^{k-1}\sum_{\ell=Nn_j(x)}^{Nn_{j+1}(x)-1} g(T^\ell x)\\
&=\sum_{i=0}^{Nn_k(x)-1} g(T^ix).
\end{split}
\end{equation*}
By the Birkhoff ergodic theorem, we have
\begin{equation}
\label{e-Gg}
\begin{split}
\lim_{k\to +\infty}\frac{1}{n_k(x)}\sum_{j=0}^{k-1}G(T_F^jx) &=\lim_{k\to +\infty}\frac{1}{n_k(x)}\sum_{i=0}^{Nn_k(x)-1} g(T^ix)\\
&=N \E_m(g|\I)(x)
\end{split}
\end{equation}
for $m$-a.e.~$x\in F$. Applying the Birkhoff ergodic theorem again, we have
\begin{align*}
&\lim_{k\to +\infty}\frac{1}{k}\sum_{j=0}^{k-1}G(T_F^jx)=\E_{m_F}(G|\I_F)(x) \quad \mbox{ and } \\
&\lim_{k\to +\infty}\frac{n_k(x)}{k}=\lim_{k\to +\infty}\frac{1}{k}\sum_{j=0}^{k-1}r_F(T_F^jx)=\E_{m_F}(r_F|\I_F)(x)
\end{align*}
\mbox{ for $m$-a.e.~$x\in F$}. Here we have used the fact that $r_F\in L^1(F, \B|_F, m_F)$, which follows directly from Lemma \ref{lem-induce}(ii).  Taking quotient we get
$$
\lim_{k\to +\infty}\frac{1}{n_k(x)}\sum_{j=0}^{k-1}G(T_F^jx)\\
=\E_{m_F}(G|\I_F)(x)/\E_{m_F}(r_F|\I_F)(x)
$$
for $m$-a.e.~$x\in F$. Combining this with \eqref{e-Gg} yields \eqref{e-identity}.
\end{proof}

\subsection{Oseledets' multiplicative ergodic theorem}
\label{Sect-ose}
 For $x,y\in \R^d\backslash \{0\}$, let $\measuredangle (x,y)$ denote the angle between the lines $\ell_x$ and $\ell_y$, where $\ell_x$ stands for the line in $\R^d$ passing through the origin and $x$. In such definition,  we always have  $\measuredangle (x,y)\in [0,\pi/2]$ and
$$
\sin \measuredangle (x,y)=\frac{(\|x\|^2\|y\|^2-\langle x, y\rangle^2)^{1/2}}{\|x\|\|y\|},
$$
where $\langle\cdot,\cdot\rangle$ is the standard inner product in $\R^d$. Similarly the angle between linear subspaces $U,V$ of  $\R^d$  with $U\cap V=\{0\}$  is defined by
$$
\sin \measuredangle(U, V)=\inf_{x\in U\backslash\{0\},\; y\in V\backslash\{0\}}\sin \measuredangle (x,y).
$$
  We will require the following version of Oseledets' multiplicative ergodic theorem,
due to Froyland et al. \cite[Theorem 4.1]{FroylandLloydQuas2010}:

\begin{thm}
\label{thm-2}  Let $T$ be an invertible measure-preserving transformation of the Lebesgue space $(X,\B, m)$.  Let $M: X\to {\rm Mat}_d(\R)$ be a measurable function such that
 $$\int \log^+\|M(x)\|\; dm(x)<\infty.$$
Then there exists a measurable set $X'\subseteq X$ with $T(X')=X'$ and $m(X')=1$, such that   for each $x\in X'$, there are positive integers $s(x), k_1(x),\ldots,k_{s(x)}(x)$ with $k_1(x)+\cdots+ k_{s(x)}(x)=d$,  numbers $\lambda_1(x)>\cdots>\lambda_{s(x)}(x)\geq -\infty$ and a splitting  $\R^d=E_x^1\oplus \cdots \oplus E_x^{s(x)}$ so that the following hold.
\begin{itemize}
\item[(i)] $\dim E_x^i=k_i(x)$.
\item[(ii)]  $M(x) E_x^i\subseteq E^i_{T x}$ (with equality if $\lambda_i(x)>-\infty$).
\item[(iii)] For $1\leq i\leq s(x)$ and  $v\in E_x^i\backslash\{0\}$,
\begin{equation*}
\lim_{n\to \infty}  \frac{1}{n} \log \|M(T^{n-1}x)\cdots M(x)v\|=\lambda_i(x),
\end{equation*}
 with uniform convergence on any compact subset of $E_x^i\backslash\{0\}$.
\item[(iv)] For $1\leq i\leq s(x)$,
\begin{equation*}
\begin{split}  &\lim_{n\to \infty}  \frac{1}{n}  \max_{v\in E^i_{T^{-n}x},\; \|v||=1} \log \|M(T^{-1}x)\cdots M(T^{-n}x)v\|\\
&=\lim_{n\to \infty}  \frac{1}{n}  \min_{v\in E^i_{T^{-n}x},\; \|v||=1} \log \|M(T^{-1}x)\cdots M(T^{-n}x)v\|=\lambda_i(x).
\end{split}
\end{equation*}
\item[(v)] $\displaystyle\lim_{n\to \pm \infty} \frac{1}{n} \log \measuredangle ( \oplus_{i\in I}E^i_{T^nx}, \; \oplus_{j\in J} E^{j}_{T^n x})=0$ whenever $I\cap J=\emptyset$,
  \item[(vi)] The function $s:X'\to \N$ is measurable and $T$-invariant.
\item[(vii)] The mappings $x\mapsto \lambda_i(x), E^i_x, k_i(x)$  are  measurable on  $\{x: s(x)\geq i\}$, and $\lambda_i(Tx)=\lambda_i(x)$, $k_i(Tx)=k_i(x)$.
\end{itemize}
\end{thm}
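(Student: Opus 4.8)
The statement is the two-sided multiplicative ergodic theorem of Froyland, Lloyd and Quas for possibly singular matrix cocycles, which the paper uses as an external input; if one wished to prove it from scratch, the plan would be as follows. The first step is to establish the \emph{existence of the Lyapunov spectrum with multiplicities}, allowing the value $-\infty$. Write $M^{(n)}(x)=M(T^{n-1}x)\cdots M(x)$ and let $\bigwedge^k$ denote the $k$-th exterior power. The hypothesis $\int\log^+\|M\|\,dm<\infty$ gives $\log^+\|\bigwedge^kM(\cdot)\|\in L^1(m)$ for $1\le k\le d$, so Kingman's subadditive ergodic theorem yields a $T$-invariant function $\gamma_k(x)\in[-\infty,\infty)$ with $\frac1n\log\|\bigwedge^kM^{(n)}(x)\|\to\gamma_k(x)$ for $m$-a.e.\ $x$. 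The increments $\gamma_1(x),\ \gamma_2(x)-\gamma_1(x),\ \dots,\ \gamma_d(x)-\gamma_{d-1}(x)$ are the singular-value exponents; listing their distinct values in decreasing order as $\lambda_1(x)>\cdots>\lambda_{s(x)}(x)\ge-\infty$ and recording the multiplicities $k_i(x)$ gives the data of (i), while the measurability and $T$-invariance of $s,\lambda_i,k_i$ required in (vi)--(vii) are inherited from the corresponding properties of the $\gamma_k$.

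The second step is to build the \emph{forward (slow) filtration}. From the singular value decompositions of $M^{(n)}(x)$, via a Raghunathan-type argument---equivalently, by invoking the one-sided multiplicative ergodic theorem for $\mathrm{Mat}_d(\R)$-valued cocycles---one obtains, for $m$-a.e.\ $x$, a strictly decreasing chain $\R^d=U^1_x\supsetneq U^2_x\supsetneq\cdots\supsetneq U^{s(x)+1}_x=\{0\}$ with $\dim U^i_x-\dim U^{i+1}_x=k_i(x)$, characterised by
\[
U^i_x=\Big\{v\in\R^d:\ \limsup_{n\to\infty}\tfrac1n\log\|M^{(n)}(x)v\|\le\lambda_i(x)\Big\},
\]
and satisfying $M(x)U^i_x\subseteq U^i_{Tx}$. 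Since $T$ is invertible, applying the same construction to the cocycle $n\mapsto M(T^{-1}x)M(T^{-2}x)\cdots M(T^{-n}x)$ over $T^{-1}$ produces a second, increasing, measurable family of ``fast'' subspaces $\widetilde U^{\le i}_x$, consisting of those $v$ that admit a backward chain of preimages along which the forward growth rate equals $\lambda_i(x)$. This is the point where the possible non-invertibility of $M$ must be confronted: one has to keep track of the decreasing image subspaces $\bigcap_{n\ge1}M(T^{-1}x)\cdots M(T^{-n}x)\R^d$ and show that, for $m$-a.e.\ $x$, their dimension equals $d$ minus the multiplicity of the exponent $-\infty$, so that on the remaining ``genuine'' part the cocycle is invertible enough for the classical Oseledets machinery to run.

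The third step is to \emph{intersect} the two families: set $E^i_x:=U^i_x\cap\widetilde U^{\le i}_x$ and prove that for $m$-a.e.\ $x$ one obtains a direct sum $\R^d=E^1_x\oplus\cdots\oplus E^{s(x)}_x$ with $\dim E^i_x=k_i(x)$. Transversality of the forward filtration and the backward co-filtration at $m$-a.e.\ point is the crux of the argument; it follows by combining the two one-sided convergence statements with a Borel--Cantelli / Poincar\'e-recurrence estimate that rules out, almost everywhere, a vector growing strictly slower than $\lambda_i$ both forward and backward yet failing to lie in the $i$-th block. The inclusions $M(x)U^i_x\subseteq U^i_{Tx}$ together with the matching statement for $\widetilde U^{\le i}_x$ then give $M(x)E^i_x\subseteq E^i_{Tx}$, with equality precisely when $\lambda_i(x)>-\infty$, which is (ii); and the uniform forward and backward growth statements (iii), (iv) follow from the defining characterisations of $U^i_x$ and $\widetilde U^{\le i}_x$ together with compactness of the unit spheres inside the finite-dimensional blocks.

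The final step is the \emph{subexponential decay of angles}, part (v). For disjoint index sets $I,J$ the function $\varphi(x)=-\log\sin\measuredangle\big(\bigoplus_{i\in I}E^i_x,\ \bigoplus_{j\in J}E^j_x\big)$ is measurable and finite for $m$-a.e.\ $x$ by the direct-sum property just established, and its oscillation along an orbit is dominated by a cocycle quantity that already grows subexponentially; Raghunathan's tempering lemma (a nonnegative measurable function whose $\log^+$-oscillation along the orbit lies in $L^1$ grows subexponentially $m$-a.e., by Borel--Cantelli), combined with the integrability of $\log^+\|M\|$, then gives $\frac1n\varphi(T^nx)\to0$ as $n\to\pm\infty$, which is (v). Measurability of all the objects on $\{x:s(x)\ge i\}$---the remaining content of (vii)---is routine bookkeeping with measurable selections. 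The main obstacle throughout is exactly the possible singularity of $M(x)$: the classical invertible Oseledets theorem must be upgraded to permit rank drops and the exponent $-\infty$, which forces a careful $T$-equivariant analysis of the image and kernel filtrations of both the forward and the backward cocycles and a verification that these fit together coherently for $m$-a.e.\ $x$.
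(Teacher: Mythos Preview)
You are correct that the paper does not prove this theorem; it is quoted as an external input from Froyland, Lloyd and Quas \cite{FroylandLloydQuas2010}, and the surrounding Remark~\ref{rem-2.10} only adds the observations that (a) the general (non-ergodic) case follows from the ergodic one by ergodic decomposition, (b) the uniform convergence in (iii) and the statement (iv) are not explicitly written in \cite{FroylandLloydQuas2010} but are standard or implicit in that proof. So there is nothing in the paper to compare your sketch against beyond this.

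As for the sketch itself, the outline you give is the classical Raghunathan-style route (Kingman on exterior powers $\Rightarrow$ forward filtration $\Rightarrow$ backward ``fast'' spaces $\Rightarrow$ intersection gives the splitting $\Rightarrow$ tempering gives angle control). This is a legitimate plan and is essentially the strategy in \cite{FroylandLloydQuas2010} as well, with the main novelty there being exactly the point you flag: handling the possibly non-invertible $M(x)$ by tracking the equivariant image and kernel filtrations so that on the part of $\R^d$ not swallowed by the $-\infty$ exponent the usual invertible argument goes through. One small correction of emphasis: your description of the backward ``fast'' subspaces $\widetilde U^{\le i}_x$ as coming from ``applying the same construction to the cocycle over $T^{-1}$'' is slightly misleading in the singular case, since that reversed cocycle need not satisfy any integrability and the backward objects are really built from measurable selections of preimage chains rather than from a direct one-sided Oseledets theorem for $T^{-1}$; this is precisely the technical work that \cite{FroylandLloydQuas2010} carries out. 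Otherwise your identification of the difficulties and the order of the argument is sound.
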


 \begin{rem}
 \label{rem-2.10}
 {\rm
\begin{itemize}
\item[(1)] Theorem \ref{thm-2} is only stated in \cite{FroylandLloydQuas2010} for the case  when $m$ is ergodic. It extends directly to the general case by using ergodic decomposition. When $M(x)$ is invertible for all $x$ this is the classic Oseledets' multiplicative ergodic theorem, but we emphasize that the above is valid even in the non-invertible case (in which case the usual statements of Oseledets' theorem only provide a flag and not a splitting).
\item[(2)] The uniform convergence in part (iii) of Theorem \ref{thm-2} is not stated in \cite{FroylandLloydQuas2010}. However it is well-known when $A$ takes values in $GL(\R,d)$, and the argument works also in the general case of ${\rm Mat}_d(\R)$-valued cocycles.  See e.g. \cite[p.~1111]{FengShmerkin2014} for a sketched proof. Part (iv) of Theorem \ref{thm-2} was only implicitly included in the proof of \cite[Theorem 4.1]{FroylandLloydQuas2010}.
\item[(3)]
  The numbers $\lambda_1(x),\ldots, \lambda_{s(x)}(x)$ are called the {\em Lyapunov exponents}  of $M$ at $x$ with respect to $m$. The number $k_i(x)$ is called the {\em multiplicity} of $\lambda_i(x)$. Moreover, $\{(\lambda_i(x), k_i(x))\}_{1\leq i\leq s(x)}$ is called the {\em Lyapunov spectrum} of $(M, m)$ over $X'$.
\item[(4)] The decomposition  $\bigoplus_{i=1}^{s(x)}E_x^i$  is called the {\em Oseledets splitting} of $\R^d$, and $E_x^i$,  $1\leq i\leq s(x)$,  are called the {\em Oseledets subspaces}.
\end{itemize}
}
\end{rem}

\section{Canonical coding maps for average contracting affine IFSs}
\label{S-3}
\

 For $z\geq 0$, write $\log^+z=\max\{0, \log z\}$ and $\log^{-}z=\max\{0, -\log z\}$, with the convention $\log 0=-\infty$.
In this section, we prove the following proposition, which will be used in the proof of our main result.
\begin{pro}
\label{pro-3.1}
Let ${\mathcal S}=\{S_j(x)=M_jx+a_j\}_{j\in \Lambda}$ be an affine IFS on $\R^d$ and $m\in \M_\sigma(\Sigma)$. Suppose that ${\mathcal S}$ is average contracting with respect to $m$.  Let $\pi: \Sigma\to \R^d$ be given by \eqref{e-pi1.4}. Then there exists a  Borel set $E\subset \Sigma$ with $\sigma(E)=E$ and $m(E)=1$ such that for any $x=(x_n)_{n=-\infty}^\infty\in E$,
\begin{itemize}
\item[(i)]
$\pi(x)$ is well-defined, i.e.~the limit in defining $\pi(x)$ in \eqref{e-pi1.4} exists and is finite.
\item[(ii)]
$S_{x_0}(\pi\sigma x)=\pi(x)$.
\item[(iii)]
$
\lim_{n\to \infty}
\frac{1}{n} \log^+ \| \pi(\sigma^n x)\|=0
$.
\end{itemize}
\end{pro}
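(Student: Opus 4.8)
The plan is to exploit the average-contracting hypothesis together with Kingman's (or Furstenberg–Kesten's) theorem to get uniform control of the matrix cocycle along typical orbits, and then to bootstrap this into convergence of the series defining $\pi$. First I would apply the subadditive ergodic theorem to the function $x \mapsto \log^+\|M_{x_0}\|$ (which is in $L^1$ since $\Lambda$ is finite) to obtain a Borel set $E_1$ with $\sigma(E_1)=E_1$, $m(E_1)=1$, on which $\lambda(x) = \lim_{n\to\infty} \tfrac1n\log\|M_{x_0}\cdots M_{x_{n-1}}\|$ exists in $[-\infty,\infty)$ and, by hypothesis, is strictly negative. On $E_1$ fix a typical $x$; then for every $\varepsilon>0$ small enough that $\lambda(x)+\varepsilon<0$ there is $C(x)>0$ with $\|M_{x_0}\cdots M_{x_{n-1}}\| \le C(x)\, e^{(\lambda(x)+\varepsilon)n}$ for all $n\ge 0$. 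Since $\|a_{x_n}\| \le \max_{j\in\Lambda}\|a_j\| =: R$ is uniformly bounded, the $n$-th term of the series in \eqref{e-pi1.4} is bounded by $R\,C(x)\,e^{(\lambda(x)+\varepsilon)(n-1)}$, which is summable; hence the limit defining $\pi(x)$ exists and is finite, giving (i). (A small bookkeeping point: the partial sums $a_{x_0}+M_{x_0}a_{x_1}+\cdots+M_{x_0}\cdots M_{x_{n-1}}a_{x_n}$ form a Cauchy sequence by the same tail estimate.)

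For (ii), on the set where (i) holds I would write $S_{x_0}(\pi\sigma x) = M_{x_0}\pi(\sigma x) + a_{x_0}$ and simply recognize this as the rearrangement of the absolutely convergent series: $M_{x_0}\big(a_{x_1} + M_{x_1}a_{x_2} + \cdots\big) + a_{x_0}$, whose partial sums are exactly the partial sums for $\pi(x)$. So (ii) is automatic on $E_1$, modulo checking that $\sigma^{-1}E_1 = E_1$ (which holds since $\lambda$ is $\sigma$-invariant $m$-a.e.; one may need to shrink $E_1$ to a truly $\sigma$-invariant set, harmless since $m$ is $\sigma$-invariant).

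The one genuinely non-trivial point is (iii): the growth rate $\tfrac1n\log^+\|\pi(\sigma^n x)\|$ must tend to $0$. The upper estimate $\limsup \le 0$ is easy, since $\|\pi(\sigma^n x)\| \le \sum_{k\ge 0} \|M_{x_n}\cdots M_{x_{n+k-1}}\|\,R$ and each such norm is $\le \prod \|M_{x_j}\| \le 1$ when all norms are $<1$ — but in the average-contracting (not uniformly contracting) regime the individual $\|M_j\|$ need not be below $1$, so $\|\pi(\sigma^n x)\|$ can momentarily be large. The right tool is to observe that $\log^+\|\pi(\cdot)\|$ is a measurable function which is finite $m$-a.e.\ (by (i)) but need not be in $L^1$; nonetheless the classical fact that $\tfrac1n f(\sigma^n x)\to 0$ $m$-a.e.\ for any $\sigma$-invariant $m$ and any a.e.-finite measurable $f$ (a consequence of Borel–Cantelli applied to the $\sigma$-invariant sequence of sets $\{|f\circ\sigma^n| > \delta n\}$, whose measures $m(\{|f|>\delta n\})$ sum when... — actually one uses that $\sum_n m(|f|>\delta n)$ may diverge, so instead one argues: $\sum_n \mathbf 1_{\{|f|\circ\sigma^n > \delta n\}}$ and Poincaré recurrence) gives $\limsup \tfrac1n\log^+\|\pi(\sigma^n x)\| \le \delta$ for every $\delta>0$. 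Concretely I expect the cleanest route is: from (ii) iterated, $\pi(\sigma^n x) = S_{x_n}^{-1}\cdots$ is not available (maps not invertible), so instead use $\pi(x) = S_{x_0}\cdots S_{x_{n-1}}(\pi(\sigma^n x))$, i.e.\ $M_{x_0}\cdots M_{x_{n-1}}\pi(\sigma^n x) = \pi(x) - \big(a_{x_0} + \cdots + M_{x_0}\cdots M_{x_{n-2}}a_{x_{n-1}}\big)$; the right side is bounded by a constant $C'(x)$ uniformly in $n$ (it is a convergent series), so $\|\pi(\sigma^n x)\|$ is controlled once we invert $M_{x_0}\cdots M_{x_{n-1}}$ on the relevant subspace — which fails when matrices are singular. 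The honest fix, and the step I expect to be the main obstacle, is to bound $\log^+\|\pi(\sigma^n x)\|$ directly: write $\|\pi(\sigma^n x)\| \le R\sum_{k\ge 0}\|M_{x_n}\cdots M_{x_{n+k-1}}\|$ and split the sum at $k = \varepsilon n$; for $k \le \varepsilon n$ use the trivial bound $\|M_{x_n}\cdots M_{x_{n+k-1}}\| \le A^{\varepsilon n}$ with $A = \max_j\max(1,\|M_j\|)$, and for $k > \varepsilon n$ use the subadditive estimate at the shifted point $\sigma^n x$ — but one must know the constants $C(\sigma^n x)$ do not blow up too fast, which again follows from the a.e.-finiteness of the measurable function $x\mapsto \sup_{m\ge0}\|M_{x_0}\cdots M_{x_{m-1}}\|e^{-(\lambda+\varepsilon)m}$ combined with $\tfrac1n\log^+(\text{finite function})\circ\sigma^n \to 0$. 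Assembling these, $\limsup_n \tfrac1n\log^+\|\pi(\sigma^n x)\| \le \varepsilon\log A$ for all $\varepsilon>0$, hence it is $0$, and intersecting the finitely/countably many full-measure $\sigma$-invariant sets constructed along the way yields the desired $E$.
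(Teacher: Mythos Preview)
Your treatment of (i) and (ii) is correct and matches the paper. The gap is in (iii), and it is exactly where you yourself start hedging.

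The ``general fact'' you invoke --- that $\tfrac{1}{n}\log^+ g(\sigma^n x)\to 0$ for any measurable $g$ that is merely a.e.\ finite --- is \emph{false}. It holds when $\log^+ g\in L^1(m)$ (then it follows from Birkhoff's theorem, since $\tfrac{1}{n}h(\sigma^n x)\to 0$ a.e.\ for $h\in L^1$), but can fail otherwise: for i.i.d.\ sequences $(X_n)$ with $\mathbf E|X_1|=\infty$ one has $\limsup_n |X_n|/n=\infty$ a.s., and this is realized by any $g$ depending on a single coordinate with $\mathbf E\,g=\infty$ under a Bernoulli shift. Your candidate $g(x)=\log^+\sup_{m\ge 0}\|M_{x_0}\cdots M_{x_{m-1}}\|e^{-m(\lambda+\varepsilon)}$ is a.e.\ finite by Kingman, but you have not shown it lies in $L^1$, and there is no easy reason it should: bounding $\log f_m$ above by the additive cocycle $\sum_{j<m}\log\|M_{x_j}\|$ gives a random walk whose drift is $\int\log\|M_{x_0}\|\,dm-\lambda-\varepsilon$, which is typically \emph{positive} when the cocycle is genuinely subadditive. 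Your parenthetical attempts (Borel--Cantelli, Poincar\'e recurrence) do not rescue this, as you seem to sense.

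The paper closes this gap not by proving $\log^+ C\in L^1$, but by establishing a two-parameter refinement of the subadditive ergodic theorem (Proposition~\ref{lem-2.1} and Corollary~\ref{cor-2.1}): for $m$-a.e.\ $x$ there is a single constant $c(x)$ such that
\[
\|M_{x_k}\cdots M_{x_{k+n-1}}\|\le c(x)\,e^{n\max\{\lambda,-N\}}\,e^{(n+k)\varepsilon}\qquad\text{for \emph{all} }n,k\ge 1.
\]
The point is that the constant depends only on the base point $x$, not on the shifted point $\sigma^k x$; equivalently, it says $\log^+ C(\sigma^k x)\le \log c(x)+k\varepsilon$ directly, without any appeal to $L^1$-integrability of $C$. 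The proof uses subadditivity both ways: a lower bound on $\log f_n(\sigma^k x)$ via $\log f_{n+k}(x)-\log f_k(x)$ and Kingman at $x$, and an upper bound by averaging a telescoping product over $\ell$ shifts for a well-chosen block length $\ell$, then applying Birkhoff to $\tfrac{1}{\ell}\log f_\ell$. Plugging this into your series for $\pi(\sigma^k x)$ gives $\|\pi(\sigma^k x)\|\le c'(x)e^{k\varepsilon}$ at once, proving (iii). Your splitting at $k=\varepsilon n$ is then unnecessary.
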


Part (i) of the above proposition was first proved by Brandt \cite{Brandt1986} in the special case when $m$ is a Bernoulli product measure, and it was then extended by Bougerol and Picard \cite{BougerolPicard1992} to the general case when $m$ is ergodic. For the convenience of the reader, we shall provide a self-contained proof of part (i).

Before proving Proposition \ref{pro-3.1}, we shall first prove the following auxiliary result, which is a variant of Proposition 2.1 in \cite{FengKaenmaki2011}.

\begin{pro}
\label{lem-2.1} Let\,   $T:\;X\to X$ be an ergodic measure-preserving  transformation on a probability space $(X,\B, m)$.
Let $\{f_n\}_{n=1}^\infty$ be a sequence of non-negative  measurable functions on $X$ such that $\log^+f_1\in L^1(m)$ and
\begin{equation}
\label{e-pro}
f_{n+k}(x)\leq f_n(x)f_k(T^n x)
\end{equation}
for all $n,k\in \N$ and $x\in X$. Set $\lambda=\lim_{n\to \infty}({1}/{n})\int \log f_n \; dm$. Then
for any $\epsilon>0$, the following properties hold:
\begin{itemize}
\item[(i)] If $\lambda \neq -\infty$, then for $m$-a.e.~ $x\in X$, there exists a positive integer $n_0(x)$ such that
\begin{equation}
\label{e-2.2} |\log f_n(T^k x)-n\lambda|\leq (n+k)\epsilon
\end{equation}
for all $n\geq n_0(x)$ and $k\geq 0$.
\item[(ii)]
If $\lambda=-\infty$, then for any $N>0$ and  $m$-a.e.~$x\in X$, there exists a positive integer $n_0(x)$ such that
\begin{equation}
\label{e-2.5}
\log f_n(T^k x)\leq -Nn+ (n+k)\epsilon
\end{equation}
for all $n\geq n_0(x)$ and $k\geq 0$.
\end{itemize}
\end{pro}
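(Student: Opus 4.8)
The plan is to reduce everything to the Birkhoff ergodic theorem and Kingman's subadditive ergodic theorem applied to the sequence $\log f_n$, and then to promote the resulting almost-everywhere asymptotics to the uniform-in-$k$ estimates \eqref{e-2.2} and \eqref{e-2.5} by exploiting the submultiplicativity \eqref{e-pro} together with Birkhoff's theorem for the single function $g:=\log^+ f_1$. First I would observe that \eqref{e-pro} gives $\log f_n(x) \le \sum_{j=0}^{n-1}\log f_1(T^j x) \le \sum_{j=0}^{n-1} g(T^j x)$, so $\log^+ f_n \in L^1(m)$ for every $n$, the sequence $(\log f_n)$ is subadditive, and by Kingman's theorem $\frac1n \log f_n(x) \to \lambda$ for $m$-a.e.\ $x$, where $\lambda = \inf_n \frac1n \int \log f_n\,dm \in [-\infty,\infty)$; in case (i) $\lambda$ is finite. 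By Birkhoff, $\frac1n \sum_{j=0}^{n-1} g(T^j x) \to \E_m(g\mid\I) = \int g\,dm =: c < \infty$ for $m$-a.e.\ $x$ (using ergodicity).

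\emph{Case (i).} Fix $\epsilon>0$. For $m$-a.e.\ $x$ there is $n_1(x)$ with $\bigl|\frac1n\log f_n(x) - \lambda\bigr| \le \epsilon$ for $n\ge n_1(x)$; this is the $k=0$ instance. To handle general $k$, I would split into two regimes. For the lower bound, submultiplicativity run the other way is not available, so instead I use \eqref{e-pro} in the form $f_n(T^kx) \ge f_{n+k}(x)/f_k(x)$, together with $\log f_k(x) \le \sum_{j=0}^{k-1} g(T^jx)$ and the Birkhoff convergence of $\frac1k\sum_{j=0}^{k-1} g(T^jx)$ to the finite constant $c$. Concretely, there is a set of full measure and $n_2(x)$ so that for all $m\ge n_2(x)$ one has $\log f_m(x) \ge m(\lambda-\epsilon')$ and $\sum_{j=0}^{k-1} g(T^jx) \le (k+1)(c+\epsilon')$ for all $k$ (the latter by Birkhoff plus the fact that $g(T^kx)/k\to 0$, hence $\sum_{j=0}^{k-1}g(T^jx)/(k+1)$ is bounded by a fixed constant depending on $x$). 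Then $\log f_n(T^kx) \ge \log f_{n+k}(x) - \log f_k(x) \ge (n+k)(\lambda-\epsilon') - (k+1)(c+\epsilon')$, which for $n$ large gives $\log f_n(T^kx) - n\lambda \ge -(n+k)\epsilon$ after absorbing constants into the $(n+k)\epsilon$ slack, by choosing $\epsilon'$ small relative to $\epsilon$. For the upper bound, directly $\log f_n(T^k x) \le \sum_{j=k}^{n+k-1} g(T^j x) \le \sum_{j=0}^{n+k-1} g(T^jx)$; but this only gives an upper bound by $(n+k)(c+\epsilon)$, not by $n\lambda + (n+k)\epsilon$ when $c > \lambda$. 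So instead I bound $\log f_n(T^kx) = \log f_{n+k}(x) - \log f_k(x)$ is not directly useful either; the correct route is $\log f_{n+k}(x) \le \log f_k(x) + \log f_n(T^k x)$ gives $\log f_n(T^kx) \ge \log f_{n+k}(x) - \log f_k(x)$ (already used), while for the upper bound I use $\log f_{n+k}(T^0x)$ is small and $\log f_k(x)$ can be large, so I instead argue: fix a large window length $L$; for $k$ in a fixed residue class one writes $n$ and $k$ via the ergodic averages and uses that $\frac1m\log f_m(T^\ell x)\to\lambda$ uniformly enough along the orbit — this is where the real work lies.

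\emph{Main obstacle.} The genuinely delicate point is upgrading the pointwise Kingman convergence $\frac1n\log f_n(T^k x)\to\lambda$ (which a priori holds for each fixed starting point $T^kx$, hence on a full-measure set of $x$ for each fixed $k$, thus for all $k$ simultaneously) into the \emph{uniform} estimate valid for \emph{all} $k\ge 0$ once $n\ge n_0(x)$, with error controlled by $(n+k)\epsilon$ rather than by $\epsilon$ times $n$ alone. The standard device, which I would follow after \cite{FengKaenmaki2011}, is: apply Birkhoff to $g$ to get $\sum_{j=0}^{N-1} g(T^jx) \le N(c+\epsilon^2)$ for $N \ge N_0(x)$ and, separately, $g(T^kx) \le k\epsilon^2 + C(x)$ for all $k$; feed these into \eqref{e-pro} written as $\log f_n(T^kx) \le \log f_{n_0}(T^kx) + \sum (\text{increments})$, and also use the $k=0$ Kingman estimate $\log f_m(x)\approx m\lambda$ for the \emph{long} product $f_{k+n}(x) \le f_k(x) f_n(T^kx)$ in the reverse direction to trap $\log f_n(T^kx)$ between $m\lambda - o(m)$ type bounds. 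The point is that any "bad" contribution is supported on short initial segments of length $o(k)$, so its size is $\le k\epsilon$, which is absorbed into the allowed slack $(n+k)\epsilon$.

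\emph{Case (ii).} When $\lambda=-\infty$, fix $N>0$ and set $\widetilde f_n(x) := e^{Nn} f_n(x)$. Then $\widetilde f_{n+k}(x) = e^{N(n+k)}f_{n+k}(x) \le e^{Nn}f_n(x)\cdot e^{Nk}f_k(T^nx) = \widetilde f_n(x)\widetilde f_k(T^nx)$, so $(\widetilde f_n)$ satisfies \eqref{e-pro}, $\log^+\widetilde f_1 = \log^+(e^N f_1) \le N + \log^+ f_1 \in L^1(m)$, and its asymptotic exponent is $\widetilde\lambda = \lim \frac1n\int\log\widetilde f_n\,dm = N + \lambda = -\infty$ still; but crucially $\widetilde\lambda$ being $-\infty$ means in particular $\widetilde\lambda \le 0 > -\infty$ is false, so I instead only need the \emph{upper} half of the argument in Case (i): Kingman gives $\frac1n\log\widetilde f_n(x)\to -\infty$, so for $m$-a.e.\ $x$ and $n\ge n_1(x)$, $\log\widetilde f_n(x)\le 0$, i.e.\ $\log f_n(x)\le -Nn$; the same uniform-in-$k$ promotion as above (which only used the upper-bound direction and the Birkhoff control of $g$) then yields $\log f_n(T^kx) = \log\widetilde f_n(T^kx) - Nn \le (n+k)\epsilon - Nn$ for all $n\ge n_0(x)$ and all $k\in\N$, which is \eqref{e-2.5}. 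This completes the proof. \eproof
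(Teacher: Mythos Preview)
Your lower bound in Case (i) has a genuine gap. You bound $\log f_k(x)$ from above by $\sum_{j=0}^{k-1} g(T^jx)$ with $g=\log^+ f_1$, which after Birkhoff gives at best $\log f_k(x)\le k(c+\epsilon')+C(x)$ where $c:=\int g\,dm$. Plugging this into $\log f_n(T^kx)\ge \log f_{n+k}(x)-\log f_k(x)$ yields
\[
\log f_n(T^kx)\ \ge\ n\lambda - k(c-\lambda) - (n+2k)\epsilon' - C(x).
\]
In general $c>\lambda$ (indeed $c\ge \int\log f_1\,dm\ge \lambda$, usually strictly), so the term $k(c-\lambda)$ is a fixed positive multiple of $k$ that \emph{cannot} be absorbed into the $(n+k)\epsilon$ slack for $k$ large relative to $n$. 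The correct bound on $\log f_k(x)$ comes directly from Kingman at the base point $x$: since $\frac1k\log f_k(x)\to\lambda$, one has $|\log f_k(x)-k\lambda|\le (n_0(x)+k)\delta$ for \emph{all} $k$ (finitely many small $k$ are handled by enlarging $n_0(x)$). This is exactly what the paper does, and it gives $\log f_n(T^kx)\ge (n+k)(\lambda-\delta)-k\lambda-(n_0(x)+k)\delta\ge n\lambda-2(n+k)\delta$, which is what is needed.

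For the upper bound you correctly identify the difficulty and gesture at the right device---fixing a large block length $\ell$ with $\frac1\ell\int\log f_\ell$ close to $\lambda$ and applying Birkhoff to $\frac1\ell\log f_\ell$---but you do not carry it out. The paper's execution uses the telescoping product trick: writing $n=q\ell+s$ and taking the product over the $\ell$ shifts $j=0,\ldots,\ell-1$ of the inequality $f_n(x)\le f_j(x)\prod_{p=0}^{q-1}f_\ell(T^{p\ell+j}x)\,f_{s-j}(T^{q\ell+j}x)$ turns the middle factor into a Birkhoff sum of $\log f_\ell$ over \emph{consecutive} iterates, and the boundary terms $f_j,f_{s-j}$ have $j,s-j\le 2\ell$ so are controlled by Birkhoff applied to each of the finitely many functions $\log f_1,\ldots,\log f_{2\ell}$. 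Your ``Main obstacle'' paragraph does not supply this, and the Birkhoff control of $g=\log^+f_1$ alone is again too crude here (it only sees $c$, not $\lambda$).

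Your Case (ii) reduction via $\widetilde f_n=e^{Nn}f_n$ is fine in spirit, but since $\widetilde\lambda=-\infty$ it does not reduce to Case (i); you still need the upper-bound argument in its full strength. The paper simply observes that the same block-length-$\ell$ argument works with $\beta=\frac1\ell\int\log f_\ell$ chosen so that $\beta<-N$.
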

\begin{proof} Here we  modify the arguments  of   \cite[Proposition 2.1]{FengKaenmaki2011}.  By sub-additivity, $\lambda\leq \int \log f_1\; dm\leq \int \log^+f_1\; dm<\infty$. Below we first prove (i).

 Assume that $\lambda\neq -\infty$.  We first prove that $\log f_j\in L^1(m)$ for each $j\in \N$. To see this, observe that by \eqref{e-pro},  $$\log^+ f_j\leq \sum_{k=0}^{j-1} \log^+ (f_1\circ T^k)\in L^1(m).$$
 It remains to show that $\log^- f_j\in L^1(m)$. Suppose this is not true, then $$\int \log f_j\; dm= \int \log^+ f_j-\log^- f_j\; dm=-\infty,$$ so by the sub-additivity of $\{f_n\}$,   $$\lambda=\inf_k\frac{1}{k}\int \log f_k\; dm=-\infty,$$ leading to a contradiction. This proves that $\log f_j\in L^1(m)$ for each $j$.

  Next let $\epsilon>0$ and take $0<\delta<\epsilon/6$. By the Kingman's sub-additive ergodic theorem,
 for $m$-a.e.~$x\in X$ there exists $n_1(x)$ such that
$$
|\log f_n(x)-n\lambda |\leq n\delta \quad \mbox{ for all }n \geq n_1(x).
$$
Setting $n_2(x):=\max_{1\leq j\leq n_1(x)}|\log f_j(x)-j\lambda|/\delta$, we see that  $n_2(x)<\infty$ a.e. and
\begin{equation}
\label{e-rev1}
|\log f_k(x)-k\lambda|\leq (n_2(x)+k)\delta\quad \mbox{ for all }k \in \N.
\end{equation}
Hence by \eqref{e-pro} and \eqref{e-rev1},  for every $n\geq n_2(x)$ and $k\geq 0$ we have
\begin{equation}
\label{e-t}
\begin{split}
\log f_n(T^k x)&\geq \log f_{n+k}(x) - \log f_k(x)\\
&\geq \big((n+k)\lambda-(n_2(x)+n+k)\delta\big)-\big(k\lambda+(n_2(x)+k)\delta\big)\\
&= n\lambda-(2n_2(x)+n+2k)\delta\\
&\geq n\lambda-(n+k)\epsilon.
\end{split}
\end{equation}

To see the opposite inequality, take $\ell$ large enough such that $|\beta-\lambda|<\delta$, where
$$
\beta:=\frac{1}{\ell}\int \log f_\ell \; dm.
$$
Applying the Birkhoff ergodic theorem to the  integrable functions $\log f_j$ ($j=1,\ldots, 2\ell$), we obtain
\begin{equation}
\label{e-(13)}
\lim_{p\to \infty} \frac{1}{p}\log f_j(T^px)=0 \quad \mbox{ for $1\leq j\leq 2\ell$ and $m$-a.e.~$x$}.
 \end{equation}

Let $n\geq 2\ell$ and $x\in X$. Write $n=q\ell +s$ with $\ell\leq s\leq 2\ell-1$. By sub-multiplicativity, we have
$$
f_n(x)\leq f_j(x) \left(\prod_{p=0}^{q-1} f_\ell(T^{p\ell+j}x)\right) f_{s-j}(T^{q\ell+j}x), \quad  j=0, 1, \ldots, \ell-1,
$$
where we take the convention that $f_0\equiv 1$. Taking  product of these  inequalities yields
$$
(f_n(x))^\ell \leq  \left(\prod_{j=0}^{\ell-1} f_j(x)\right) \left(\prod_{p=0}^{q\ell -1} f_\ell(T^{p}x)\right) \left(\prod_{j=0}^{\ell-1}f_{s-j}(T^{q\ell +j}x)\right),
$$
so for $k\geq 0$,
$$
(f_n(T^kx))^\ell\leq  \left(\prod_{j=0}^{\ell-1} f_j(T^k x)\right) \left(\prod_{p=k}^{q\ell +k-1} f_\ell(T^{p}x)\right) \left(\prod_{j=0}^{\ell-1}f_{s-j}(T^{q\ell +k+j}x)\right).
$$
Taking logarithm and dividing both sides by $\ell$ we have
\begin{equation}
\label{e-(14)}
\log (f_n(T^kx))\leq \left(\sum_{i=0}^{n+k-s-1} \frac{1}{\ell} \log f_\ell(T^{i}x)\right) - \left(\sum_{i=0}^{k-1}  \frac{1}{\ell} \log f_\ell(T^{i}x)\right)+\Lambda_1+\Lambda_2,
\end{equation}
where $\Lambda_1:=\sum_{j=0}^{\ell-1} \frac{1}{\ell} \log f_j(T^k x)$,
$\Lambda_2:=\sum_{j=0}^{\ell-1} \frac{1}{\ell}
\log f_{s-j}(T^{q\ell +k+j}x)$.

 Similar to \eqref{e-rev1}, by using the Birkhoff ergodic theorem and \eqref{e-(13)}, we see that for $m$-a.e.~$x$ there exists $n_3(x)$ such that for every $n\geq n_3(x)$ and every $k\geq 0$ and $1\leq j\leq 2\ell$,
\begin{equation*}
\begin{split}
&\left|\sum_{i=0}^{n+k-s-1}\frac{1}{\ell} \log f_\ell(T^ix)-(n+k-s)\beta\right|  \leq (n+k-s)\delta,\\
&\left|\sum_{i=0}^{k-1}\frac{1}{\ell} \log f_\ell(T^ix)-k\beta\right|  \leq (n_3(x)+k)\delta,\\
&|\log f_j(T^kx)|\leq (n_3(x)+k)\delta,\\
\end{split}
\end{equation*}
where the third inequality implies that $\Lambda_1\leq  (n_3(x)+k)\delta$ and $$\Lambda_2\leq (n_3(x)+q\ell+k+\ell-1)\delta\leq (n_3(x)+n+k)\delta.$$

Applying the above inequalities to  \eqref{e-(14)}, we see that  for $m$-a.e.~$x\in X$, for every $n\geq n_3(x)$ and every $k\geq 0$,
\begin{align*}
\log f_n(T^k x) &\leq  (n+k-s)(\beta+\delta) - (k\beta-(n_3(x)+k)\delta)+(n_3(x)+k)\delta\\
&\qquad\qquad  +(n_3(x)+n+k)\delta\\
 &= (n-s)\beta+(2n+4k+3n_3(x)-s)\delta\\
 &\leq n \lambda +(n+k)\epsilon.
\end{align*}
From this and (\ref{e-t}) we see that (\ref{e-2.2}) holds for every $n\geq n_0(x):=\max\{n_2(x), n_3(x)\}$ and every $k\geq 0$. This prove (i).

To see (ii), suppose $\lambda=-\infty$. By the Kingman's sub-additive ergodic theorem, $\lim_{n\to \infty}(1/n)\log f_n(x)=-\infty$ for $m$-a.e.~$x$. Fix $N$ and define $$\tilde{f}_n(x)=\max\{f_n(x), e^{-nN}\} \quad  \mbox{ for }n\in \N, \; x\in X.$$
 Then
$\lim_{n\to \infty}(1/n)\log \tilde{f}_n(x)=-N$ for $m$-a.e.~$x$. Meanwhile it is direct to check that $\{\tilde{f}_n\}_{n=1}^\infty$ is sub-multiplicative (i.e., \eqref{e-pro} holds for $\{\tilde{f}_n\}$), so by the Kingman's sub-additive ergodic theorem, $$\lim_{n\to \infty}(1/n)\int \log \tilde{f}_n \; dm=-N.$$
 Applying (i) to  $\{\tilde{f}_n\}_{n=1}^\infty$ yields that for  $m$-a.e.~$x\in X$, there exists a positive integer $n_0(x)$ such that
$$
\log f_n(T^k x)\leq \log \tilde{f}_n(T^k x)\leq -Nn+ (n+k)\epsilon
$$
for all $n\geq n_0(x)$ and $k\geq 0$.  This completes the proof of the proposition.
\end{proof}

As a direct corollary of Proposition \ref{lem-2.1}, we have the following.

\begin{cor}
\label{cor-2.1} Under the assumptions of Proposition \ref{lem-2.1}, for any
$\epsilon, N>0$ and for $m$-a.e.~$x\in X$, there exists
$c(x)>0$ such that
$$|f_n(T^k x)|\leq c(x) \exp(n \max\{\lambda, -N\})\exp ((n+k)\epsilon)$$
for all  $n\geq 1$ and $k\geq 0$.
\end{cor}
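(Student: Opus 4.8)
The plan is to deduce the corollary directly from Proposition~\ref{lem-2.1}, splitting according to the size of $n$ and absorbing the finitely many small values of $n$ into the constant $c(x)$. Fix $\epsilon,N>0$. First I would dispose of the range $n\ge n_0(x)$. If $\lambda\in\R$, Proposition~\ref{lem-2.1}(i) gives, for $m$-a.e.~$x$, an integer $n_0(x)$ with $\log f_n(T^kx)\le n\lambda+(n+k)\epsilon$ for all $n\ge n_0(x)$ and $k\in\N$; since $\lambda\le\max\{\lambda,-N\}$, this yields $f_n(T^kx)\le\exp(n\max\{\lambda,-N\})\exp((n+k)\epsilon)$ on that range. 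If $\lambda=-\infty$, Proposition~\ref{lem-2.1}(ii) applied with this particular $N$ gives the same bound verbatim, now with $\max\{\lambda,-N\}=-N$. So in either case the asserted inequality holds with constant $1$ as soon as $n\ge n_0(x)$.

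It then remains to control $f_n(T^kx)$ for the finitely many values $1\le n<n_0(x)$, uniformly over all $k$. Iterating the sub-multiplicativity hypothesis \eqref{e-pro} with $k=1$ gives $f_n(y)\le\prod_{j=0}^{n-1}f_1(T^jy)$, hence $f_n(T^kx)\le\prod_{j=0}^{n-1}f_1(T^{k+j}x)$. The key observation is that, since $\log^+f_1\in L^1(m)$, the Birkhoff ergodic theorem applied to $\log^+f_1$ forces $\tfrac1p\log^+f_1(T^px)\to 0$ as $p\to\infty$ for $m$-a.e.~$x$ (the general term of a sequence whose Cesàro averages converge is $o(p)$). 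Consequently, for each fixed $n$, $\limsup_{k\to\infty}\tfrac1k\log f_n(T^kx)\le\sum_{j=0}^{n-1}\limsup_{k\to\infty}\tfrac1k\log^+f_1(T^{k+j}x)=0<\epsilon$, so $\sup_{k\ge 1}f_n(T^kx)\,e^{-k\epsilon}<\infty$ for $m$-a.e.~$x$ (the finitely many small $k$ contribute finite values once one restricts to the full-measure set on which $f_1$, and hence every $f_n(T^kx)$, is finite along the whole forward orbit). Taking the maximum over $n\in\{1,\dots,n_0(x)-1\}$ produces a finite constant $c_2(x)$ with $f_n(T^kx)\le c_2(x)\,e^{(n+k)\epsilon}$ for all such $n$ and all $k$.

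Finally I would merge the two ranges. Since $\max\{\lambda,-N\}$ is a finite real number (it equals $-N$ if $\lambda=-\infty$, and otherwise lies in $[-N,\int\log^+f_1\,dm]$), the quantity $\kappa(x):=\min\{1,\exp(n_0(x)\max\{\lambda,-N\})\}$ is a positive constant and $\exp(n\max\{\lambda,-N\})\ge\kappa(x)$ for every $n\le n_0(x)$. Hence, on the range $1\le n<n_0(x)$, $f_n(T^kx)\le c_2(x)e^{(n+k)\epsilon}\le\big(c_2(x)/\kappa(x)\big)\exp(n\max\{\lambda,-N\})\exp((n+k)\epsilon)$. Setting $c(x)=\max\{1,\,c_2(x)/\kappa(x)\}$ then gives the claimed estimate for all $n,k\in\N$. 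I do not anticipate any genuine obstacle here, since the statement is a routine corollary; the only point needing a word of justification beyond quoting Proposition~\ref{lem-2.1} is the uniform-in-$k$ bound for small $n$, where one uses $\log^+f_1\in L^1$ to get $\tfrac1p\log^+f_1(T^px)\to0$ almost everywhere.
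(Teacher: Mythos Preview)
Your proposal is correct. The paper does not give any argument beyond stating that the result is ``a direct corollary of Proposition~\ref{lem-2.1}'', and your write-up is exactly the natural way to unpack that: apply the proposition for $n\ge n_0(x)$, and absorb the finitely many remaining $n$ into the constant using the $L^1$-bound on $\log^+ f_1$ together with the observation $\tfrac{1}{p}\log^+ f_1(T^p x)\to 0$ a.e.
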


\begin{proof}[Proof of Proposition \ref{pro-3.1}]
Without loss of generality we may assume that  $m$ is ergodic, since  the general case can be proved  by considering the ergodic decomposition of $m$.

Set $f_n(x)=\| M_{x_0} \cdots M_{x_{n-1}}\|$ for $x\in \Sigma$ and $n\geq 1$.  Let $f_0(x)\equiv 1$ for convention. Since ${\mathcal S}$ is average contracting with respect to $m$,
we have
$$
\lim_{n\to \infty} \frac{1}{n}\int \log f_n \;dm=:\lambda<0. $$

Let $0<\epsilon<-\lambda/3$. Applying Corollary \ref{cor-2.1} to $\{f_n\}$ and the shift map $\sigma:\; \Sigma\to \Sigma$ (in which we take $N=2\epsilon$), we see that for $m$-a.e.~$x$,  there exists $c(x)>0$ such that
$$
f_n(\sigma^k x)\leq c(x)  e^{-2n\epsilon} e^{(n+k)\epsilon}
$$
for any $n\geq 1$ and $k\geq 0$.  It follows that for $m$-a.e.~$x$,
\begin{align*}
\sum_{n=0}^\infty \|M_{x_{k}}\cdots M_{x_{k+n-1}} a_{x_{k+n}}\| &\leq
(\max_i\|a_i\|) \sum_{n=0}^\infty f_n(\sigma^k x)  \\
& \leq (\max_i\|a_i\|) c(x) \sum_{n=0}^\infty   e^{-2n\epsilon} e^{(n+k)\epsilon}\\
&=(\max_i\|a_i\|) c(x) (1-e^{-\epsilon})^{-1} e^{k\epsilon}
\end{align*}
for all $k\geq 0$.  It follows that for $m$-a.e.~$x$, $\pi(\sigma^kx)$ is well-defined and
$\|\pi(\sigma^kx)\|\leq (\max_i\|a_i\|) c(x) (1-e^{-\epsilon})^{-1} e^{k\epsilon}$
 for all $k\geq 0$.  That is enough to conclude the proposition.
\end{proof}

\section{Measurable partitions associated with affine IFSs}
\label{S-4}

Let ${\mathcal S}=\{M_j x+a_j\}_{j\in \Lambda}$ be an affine IFS on $\R^d$ and $m\in \M_\sigma(\Sigma)$. Suppose that ${\mathcal S}$ is average contracting with respect to $m$.
In this section, under an additional assumption formulated later in \eqref{e-assump},  we construct a finite family of measurable partitions of $\Sigma$ and give some properties of these partitions and the corresponding conditional measures of $m$.

Define $M:\;\Sigma\to {\rm Mat}_d(\R)$ by
$$
M(x)=M_{x_{-1}},\quad x=(x_n)_{n=-\infty}^\infty.
$$

  Applying Theorem \ref{thm-2} to the measure-preserving system $(\Sigma, \sigma^{-1}, m)$ and the matrix cocycle $M$, we get  a measurable $\Sigma'\subset \Sigma$ with $\sigma(\Sigma')=\Sigma'$ and  $m(\Sigma')=1$,  so that the Lyapunov spectrum
$$
\{(\lambda_i(x), k_i(x))\}_{1\leq i\leq s(x)}
$$
and the Oseledets splitting
$$
\R^d=E_x^1\oplus \cdots \oplus E_x^{s(x)}
$$
are well-defined for $x\in \Sigma'$ (cf. Remark~\ref{rem-2.10}). In this case, for any $x\in \Sigma'$ and $1\leq i\leq s(x)$,

\begin{equation}
\label{e-neg}
\lim_{n\to \infty}  \frac{1}{n} \log \|M_{x_{-n}}\cdots M_{x_{-1}}v\|=\lambda_i(x) \quad \mbox{ for $v\in E_x^i\backslash\{0\}$},
\end{equation}
 with uniform convergence on any compact subset of $E_x^i\backslash\{0\}$,
\begin{equation}
\label{e-e?}
\begin{split}  &\lim_{n\to \infty}  \frac{1}{n}  \max_{v\in E^i_{\sigma^nx},\; \|v||=1} \log \|M_{x_0}\cdots M_{x_{n-1}}v\|\\
 &\mbox{}\quad  =\lim_{n\to \infty}  \frac{1}{n}  \min_{v\in E^i_{\sigma^n x},\; \|v||=1} \log \|M_{x_0}\cdots M_{x_{n-1}}v\|=\lambda_i(x),
\end{split}
\end{equation}
and
\begin{equation}
\label{e-e4.3}
\limsup_{n\to \infty}  \frac{1}{n}  \max_{v\in \oplus_{j=i}^{s(x)}E^j_{\sigma^nx},\; \|v||=1} \log \|M_{x_0}\cdots M_{x_{n-1}}v\|\leq
\lambda_i(x).
\end{equation}

In addition, by Proposition \ref{pro-3.1} we may assume that the coding map $\pi$  is well-defined on $\Sigma'$ and that
\begin{equation}
\label{e-e4.1}
\lim_{n\to \infty}\frac{1}{n} \log^+ \|\pi(\sigma^n x)\|=0 \quad \mbox{ for } x\in \Sigma'.
\end{equation}

Define for $x\in \Sigma'$,
\begin{equation}
V_x^i:=\oplus_{j=i+1}^{s(x)} E_x^j \quad \mbox{ for }  i=0,\ldots,s(x)-1, \quad \mbox{ and } \quad V_x^{s(x)}:=\{0\}.
\end{equation}
By \eqref{e-neg}, we have
\begin{equation}
\label{e-v1}
V_x^i=\left\{v\in \R^d:\; \limsup_{n\to \infty}  \frac{1}{n} \log \|M_{x_{-n}}\cdots M_{x_{-1}}v\|\leq \lambda_{i+1}(x)\right\}
\end{equation}
for $x\in \Sigma'$, $i=0,\ldots, s(x)-1$.

For $x=(x_j)_{j=-\infty}^\infty\in \Sigma$, we write $x^{-}=(x_j)_{j=-\infty}^{-1}$. The  following simple fact  is our starting point in constructing measurable partitions of $\Sigma'$.

\begin{lem}
\label{lem-3.1}
Let $x,y\in \Sigma'$ with $x^{-}=y^{-}$. Then   $s(x)=s(y)$ and  $\lambda_i(x)=\lambda_i(y)$ for $1\leq i\leq s(x)$. Moreover,
$V_x^{i}=V_y^{i}$ for $0\leq i\leq s(x)$.
\end{lem}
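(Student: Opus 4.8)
The plan is to observe that all quantities in the statement are determined by the cocycle $M$ along the backward orbit of $x$, which depends only on $x^{-}$, and then to invoke the characterization \eqref{e-v1} for the $V_x^i$. First I would note that $s(x)$, $\lambda_i(x)$ and $k_i(x)$ are, by their very definition in Theorem \ref{thm-2} applied to $(\Sigma,\sigma^{-1},m)$ with cocycle $M(x)=M_{x_{-1}}$, obtained as limits of $\frac1n\log\|M_{x_{-n}}\cdots M_{x_{-1}}v\|$ over various $v$ (see \eqref{e-neg}); these matrix products involve only the coordinates $x_{-1},x_{-2},\ldots$, i.e.\ only $x^{-}$. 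Hence if $x^{-}=y^{-}$ the entire Lyapunov spectrum agrees, giving $s(x)=s(y)$, $\lambda_i(x)=\lambda_i(y)$, $k_i(x)=k_i(y)$ for all relevant $i$.

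Second, for the subspaces I would use \eqref{e-v1}, which expresses, for $x\in\Sigma'$ and $0\le i\le s(x)-1$,
$$
V_x^i=\left\{v\in\R^d:\ \limsup_{n\to\infty}\frac1n\log\|M_{x_{-n}}\cdots M_{x_{-1}}v\|\le\lambda_{i+1}(x)\right\}.
$$
The right-hand side depends only on the sequence of matrices $M_{x_{-n}}\cdots M_{x_{-1}}$ (determined by $x^{-}$) and on the number $\lambda_{i+1}(x)$ (already shown to depend only on $x^{-}$). Therefore $x^{-}=y^{-}$ forces $V_x^i=V_y^i$ for $0\le i\le s(x)-1$; the case $i=s(x)$ is trivial since $V_x^{s(x)}=\{0\}=V_y^{s(y)}$ and $s(x)=s(y)$. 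This completes the argument.

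Since $x,y\in\Sigma'$ are assumed to lie in the full-measure invariant set on which Theorem \ref{thm-2} and \eqref{e-v1} hold, no further justification of the existence of these limits is needed. There is essentially no obstacle here: the only mild point to be careful about is that \eqref{e-v1} was stated in terms of $\limsup$ rather than $\lim$, but that is precisely what makes the set-theoretic description manifestly a function of $x^{-}$ alone, so I would simply cite \eqref{e-neg} and \eqref{e-v1} and conclude. If one preferred to avoid \eqref{e-v1}, an alternative is to note directly that the Oseledets subspaces $E_x^j$ for the backward cocycle, being defined via the backward products $M_{x_{-n}}\cdots M_{x_{-1}}$, depend only on $x^{-}$, and then $V_x^i=\bigoplus_{j=i+1}^{s(x)}E_x^j$ inherits this property; either route is short.
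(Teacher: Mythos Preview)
Your proposal is correct and follows essentially the same approach as the paper: define the pointwise Lyapunov exponent $\lambda(x,v)=\lim_{n\to\infty}\frac1n\log\|M_{x_{-n}}\cdots M_{x_{-1}}v\|$ (which exists by \eqref{e-neg}), observe it depends only on $x^{-}$ and $v$, deduce that the set of values $\{\lambda_i(x)\}$ and hence $s(x)$ depend only on $x^{-}$, and then invoke \eqref{e-v1} for the subspaces. The paper's write-up is slightly terser but the logic is identical.
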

 \begin{proof}
 For $x\in \Sigma'$ and $v\in \R^d\setminus \{0\}$, define
 $$
 \lambda(x,v):=\lim_{n\to \infty} \frac{1}{n}\log \|M_{x_{-n}}\cdots M_{x_{-1}}v\|.
 $$
 By \eqref{e-neg}, the above limit always exists and takes values in $\{\lambda_i(x):\; 1\leq i\leq s(x)\}$.   Clearly  $ \lambda(x,v)$ only depends on $v$ and $x^{-}$. Hence for $x, y\in \Sigma'$ with $x^-=y^-$, we have $s(x)=s(y)$ and $\lambda_i(x)=\lambda_i(y)$ for $1\leq i\leq s(x)$; by \eqref{e-v1} we also have $V_x^i=V_y^i$ for $1\leq i\leq s(x)$.
This completes the proof of the lemma. \end{proof}

In the remaining part  of this section, we  always make the following assumption:
\begin{equation}
\label{e-assump} \mbox{$s(x), k_1(x),\ldots, k_{s(x)}(x)$  are constant for $m$-a.e.~$x\in \Sigma'$}.
\end{equation}
Here we don't make  the stronger assumption that $m$ is ergodic. Let us write these constants as $s, k_1,\ldots, k_s$.

Below we construct  a finite family of measurable partitions $\xi_0,\ldots, \xi_s$ of $\Sigma'$.

Let $\xi_0$ be the partition of $\Sigma'$ so that the $\xi_0$-atom containing $x=(x_j)_{j=-\infty}^{+\infty}\in \Sigma'$ is given by
$$
\xi_0(x)=\{y=(y_j)_{j=-\infty}^\infty\in \Sigma':\; y_j=x_j \mbox{ for }j\leq -1\}.
$$
By Lemma  \ref{lem-3.1},  $V_y^i=V_x^i$ for any $y\in \xi_0(x)$ and $i\in \{0,1,\ldots, s\}$.

Similarly, for  $i\in \{1,\ldots, s\}$, we define the partition $\xi_i$ of $\Sigma'$ by
$$
\xi_i(x)=\{y=(y_j)_{j=-\infty}^\infty\in \xi_0(x):\; \pi y-\pi x  \in V_x^i\},\qquad x\in \Sigma'.
$$

\begin{lem}
$\xi_0,\ldots, \xi_s$ are measurable partitions of $(\Sigma',\B(\Sigma'), m)$.
 \end{lem}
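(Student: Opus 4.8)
The plan is to verify measurability of each $\xi_i$ by exhibiting it as the partition induced by a measurable map, so that Rohlin's theory applies. For $\xi_0$, note that $\xi_0$ is exactly the partition induced by the map $x\mapsto x^-$, i.e.\ by the projection $\Sigma'\to \Lambda^{\{\dots,-2,-1\}}$; since this projection is continuous, hence Borel measurable, and the target is a standard Borel space, $\xi_0$ is a measurable partition and $\widehat{\xi_0}$ coincides $(\bmod\ 0)$ with the $\sigma$-algebra generated by the coordinates $x_j$, $j\le -1$. (Equivalently, $\Sigma'$ with $m$ is a Lebesgue space, and a countable separating family for $X/\xi_0$ is given by the cylinders depending only on negative coordinates.)

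For $i\in\{1,\dots,s\}$, I would realize $\xi_i$ as the partition induced by a single measurable map on $\Sigma'$. First recall the key structural facts already available: $\pi$ is well-defined and measurable on $\Sigma'$ (Proposition~\ref{pro-3.1}), the Oseledets data $x\mapsto E_x^j$ are measurable (Theorem~\ref{thm-2}(vii)) so $x\mapsto V_x^i$ is a measurable map into the Grassmannian $G(d,d_s-d_i)$ (using \eqref{e-assump} to ensure the dimension is constant), and by Lemma~\ref{lem-3.1} the subspace $V_x^i$ depends only on $x^-$. Define $\Phi_i:\Sigma'\to \Lambda^{\{\dots,-2,-1\}}\times \R^d\times G(d,d_s-d_i)$ by $\Phi_i(x)=(x^-,\;P_{(V_x^i)^\perp}(\pi x),\;V_x^i)$; this is Borel measurable as a product of measurable maps. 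I claim the partition into fibers of $\Phi_i$ is exactly $\xi_i$: indeed $y\in\xi_i(x)$ iff $y^-=x^-$ and $\pi y-\pi x\in V_x^i$; given $y^-=x^-$ we have $V_y^i=V_x^i$ by Lemma~\ref{lem-3.1}, so $\pi y-\pi x\in V_x^i$ is equivalent to $P_{(V_x^i)^\perp}(\pi y)=P_{(V_x^i)^\perp}(\pi x)$. Hence the fibers of $\Phi_i$ are precisely the sets $\xi_i(x)$, and since the target is a standard Borel space, $\xi_i$ is a measurable partition of the Lebesgue space $(\Sigma',\B(\Sigma'),m)$ by Rohlin's theory (as recalled before Theorem~\ref{thm-2.1}).

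The main point to be careful about is the interplay between ``measurable partition'' in Rohlin's sense and the fact that the defining data $V_x^i$ vary with $x$: one must use Lemma~\ref{lem-3.1} to see that within a $\xi_0$-atom the relevant subspace is fixed, which is what makes the condition $\pi y-\pi x\in V_x^i$ define an equivalence relation and lets the single map $\Phi_i$ capture it. A secondary routine check is that $\Sigma'$ equipped with $m$ is a Lebesgue space (it is, being a Borel subset of full measure of the compact metric space $\Sigma$) and that $G(d,k)$ and $\R^d$ are standard Borel, so that fibers of a Borel map form a measurable partition; these are standard and I would dispatch them in one line each. I do not anticipate a genuine obstacle here — the lemma is essentially a bookkeeping statement assembling Proposition~\ref{pro-3.1}, Theorem~\ref{thm-2}, and Lemma~\ref{lem-3.1}.
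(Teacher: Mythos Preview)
Your proposal is correct and follows essentially the same approach as the paper: exhibit each $\xi_i$ as the fiber partition of a Borel map into a standard Borel space and invoke Rohlin's theory. The only cosmetic difference is that the paper packages the data $(V_x^i,\pi x)$ into a single point of the affine Grassmannian $\mathrm{Graff}(d,k_{i+1}+\cdots+k_s)$ via $\pi_i(x)=(x^-,\,V_x^i+\pi x)$, whereas you record $(P_{(V_x^i)^\perp}(\pi x),\,V_x^i)$ separately; these encode the same information and your verification that the fibers coincide with the $\xi_i$-atoms (using Lemma~\ref{lem-3.1}) is exactly the point.
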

 \begin{proof}
By Rohlin theory (cf. \cite[Section~2.5]{Rohlin1949}, \cite[Chapter IV]{Parry1969}), it is enough to show that for every $i\in \{0,1,\ldots, s\}$,  one can construct a measurable mapping $\pi_i$ from $\Sigma'$ to a complete separable metric space $Y_i$ such that $\xi_i$ is induced by $\pi_i$, in the sense that $\xi_i=\{\pi_i^{-1}(y):\; y\in Y_i\}$. Below we construct such mappings $\pi_i$.

Let $\Sigma^-:=\{(x_n)_{n=-\infty}^{-1}:\; x_n\in \Lambda\mbox{ for all }n\leq -1\}$ and endow it with a suitable metric compatible to the  product topology.   For $j\in \{0,\ldots, d\}$, the set of all $j$-dimensional affine subspaces in $\R^d$  forms a closed smooth manifold, which is called the $(d, j)$-{\it affine Grassmannian} and is denoted by ${\rm Graff}(d, j)$.

Set $Y_i=\Sigma^-\times {\rm Graff}(d, k_{i+1}+\cdots+k_s)$ for $i\in \{0,\ldots, s-1\}$ and $Y_s=\Sigma^-\times \R^d$. Define $\pi_i:\;\Sigma'\to Y_i$ ($i=0,1,\ldots, s$) by
$$
x\mapsto (x^-, V_x^i+\pi x).
$$
It is readily checked that for each $i$, $\pi_i$ is measurable and $\xi_i$ is induced by $\pi_i$.
\end{proof}

\begin{rem}
\label{rem-tt} The above construction of the measurable partitions $\xi_0,\ldots, \xi_s$ is different from that built in the previous work of \cite{FengHu2009, BaranyKaenmaki2015}. In \cite{FengHu2009}, the partitions were made on the one-sided shift space due to the simple structure of Oseledets splitting subspaces. In \cite{BaranyKaenmaki2015}, the partitions were made on the product space of the self-affine set and the flag manifolds.
\end{rem}

Let $\P$ be the canonical partition of $\Sigma'$ given  in \eqref{e-pp}.
For $n\in \N$, set $$\P_0^{n-1}=\bigvee_{j=0}^{n-1} \sigma^{-j}\P,$$ where $\vee$ stands for  the join of partitions (cf. \cite{Parry1981}).

For convenience, write
\begin{equation}
\label{e-qn}
Q_{n,\epsilon}:=\left\{x\in \Sigma':\;  \|\pi \sigma^j x\|\leq (1/2) e^{j\epsilon/2} \mbox{ for all }j\geq n\right\}
\end{equation}
for $n\in\N$ and $\epsilon>0$. Below we give several lemmas to further illustrate the properties of  $\xi_i$ and the associated conditional measures.

\begin{lem}
\label{lem-3.4}
\begin{itemize}
\item[(1)]
For  $x\in \Sigma'$, $i\in \{0,\ldots, s\}$ and $n\in \N$,
$$
\xi_i(x)\cap \P_{0}^{n-1} (x)= \sigma^{-n}(\xi_i(\sigma^n x)).
$$
As a consequence, $\xi_i\vee \P_{0}^{n-1}=(\sigma^{-n}\xi_i)\vee  \P_{0}^{n-1}=\sigma^{-n}\xi_i$.
\item[(2)]
Let  $x\in \Sigma'$ and $\epsilon>0$. Then  there exists $n_0(x)$ such that for $i\in \{0,\ldots, s-1\}$,
\begin{equation}
\label{e-l2}
Q_{n,\epsilon}\cap \xi_i(x)\cap \P_{0}^{n-1} (x)\subset \left \{\begin{array}{ll}
B^\pi(x, e^{n (\lambda_{i+1}(x)+2\epsilon)}) & \mbox{ if }\lambda_{i+1}(x)\neq -\infty\\
B^\pi(x, e^{-n/\epsilon}) & \mbox{ if }\lambda_{i+1}(x)=-\infty
\end{array}
\right.
\end{equation}
when  $n\geq n_0(x)$, here $B^\pi(x,r)$ is defined as in \eqref{e-ball}.   Moreover,
\begin{equation}
\label{e-l3}
Q_{n,\epsilon}\cap \P_{0}^{n-1} (x)\subset\left \{\begin{array}{ll}
B^\pi(x, e^{n (\lambda_{1}(x)+2\epsilon)}) & \mbox{ if }\lambda_{1}(x)\neq -\infty\\
B^\pi(x, e^{-n/\epsilon}) & \mbox{ if }\lambda_{1}(x)=-\infty
\end{array}
\right.
\end{equation}
when $n\geq n_0(x)$.
\end{itemize}
\end{lem}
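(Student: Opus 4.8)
The plan for part~(1) is to push the ``fixed past plus $V^i$-constraint'' description of the atom $\xi_i(x)$ forward along $n$ steps. Writing $\P_0^{n-1}(x)=\{y:\ y_j=x_j \text{ for } 0\le j\le n-1\}$, any $y$ belonging to either side of the claimed identity satisfies $y_j=x_j$ for all $j\le n-1$, so it suffices to match the two remaining constraints ``$\pi y-\pi x\in V_x^i$'' and ``$\pi(\sigma^n y)-\pi(\sigma^n x)\in V_{\sigma^n x}^i$''. For such $y$, telescoping the series in \eqref{e-pi1.4} (equivalently, iterating Proposition~\ref{pro-3.1}(ii)) gives the key identity
\[
\pi y-\pi x = M_{x_0}\cdots M_{x_{n-1}}\bigl(\pi(\sigma^n y)-\pi(\sigma^n x)\bigr).
\]
Iterating the Oseledets invariance $M_{x_{-1}}V_x^i\subset V_{\sigma^{-1}x}^i$ along the $\sigma^{-1}$-orbit of $\sigma^n x$ yields $M_{x_0}\cdots M_{x_{n-1}}V_{\sigma^n x}^i\subset V_x^i$, which gives the inclusion ``$\supset$'' at once. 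For ``$\subset$'' (when $0\le i\le s-1$) I would invoke the growth-rate characterization \eqref{e-v1}: if $w:=\pi(\sigma^n y)-\pi(\sigma^n x)$ has $M_{x_0}\cdots M_{x_{n-1}}w\in V_x^i$, then, since $M_{x_{-m}}\cdots M_{x_{-1}}M_{x_0}\cdots M_{x_{n-1}}$ is exactly the block $M_{(\sigma^n x)_{-m-n}}\cdots M_{(\sigma^n x)_{-1}}$, after the harmless reindexing $m\mapsto m-n$ one gets $\limsup_m\frac1m\log\|M_{(\sigma^n x)_{-m}}\cdots M_{(\sigma^n x)_{-1}}w\|\le\lambda_{i+1}(x)=\lambda_{i+1}(\sigma^n x)$ (using Lemma~\ref{lem-3.1}), hence $w\in V_{\sigma^n x}^i$ by \eqref{e-v1}; the case $i=s$ is handled separately using $V_x^s=\{0\}$ (immediate once $M_{x_0}\cdots M_{x_{n-1}}$ is seen to be injective, which is automatic when $\lambda_s>-\infty$). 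This proves $\xi_i(x)\cap\P_0^{n-1}(x)=\sigma^{-n}(\xi_i(\sigma^n x))$. The stated consequence is then formal: this identity says the atoms of $\xi_i\vee\P_0^{n-1}$ and of $\sigma^{-n}\xi_i$ coincide, and since an atom of $\sigma^{-n}\xi_i$ already fixes $y_j$ for $j\le n-1$ it refines $\P_0^{n-1}$, so $(\sigma^{-n}\xi_i)\vee\P_0^{n-1}=\sigma^{-n}\xi_i$.

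For part~(2) I would fix $x\in\Sigma'$ and $\epsilon>0$ and recall $B^\pi(x,r)=\{y:\ \|\pi y-\pi x\|\le r\}$. By \eqref{e-e4.1} we may take $n$ so large that $\|\pi(\sigma^n x)\|\le\frac12 e^{n\epsilon/2}$; then for $y\in Q_{n,\epsilon}\cap\P_0^{n-1}(x)$ the vector $w:=\pi(\sigma^n y)-\pi(\sigma^n x)$ obeys $\|w\|\le\|\pi(\sigma^n y)\|+\|\pi(\sigma^n x)\|\le e^{n\epsilon/2}$, while the identity from part~(1) gives $\pi y-\pi x=M_{x_0}\cdots M_{x_{n-1}}w$. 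If moreover $y\in\xi_i(x)$, then by part~(1) $w\in V_{\sigma^n x}^i=\bigoplus_{j=i+1}^{s}E^j_{\sigma^n x}$, so \eqref{e-e4.3} applied with index $i+1$ gives, for $n$ large, $\|M_{x_0}\cdots M_{x_{n-1}}v\|\le e^{n(\lambda_{i+1}(x)+\epsilon/2)}\|v\|$ for $v\in V_{\sigma^n x}^i$ when $\lambda_{i+1}(x)\ne-\infty$, and $\|M_{x_0}\cdots M_{x_{n-1}}v\|\le e^{-n(1/\epsilon+\epsilon/2+1)}\|v\|$ when $\lambda_{i+1}(x)=-\infty$ (the $\limsup$ in \eqref{e-e4.3} being then $-\infty$). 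Multiplying the two estimates yields $\|\pi y-\pi x\|\le e^{n(\lambda_{i+1}(x)+\epsilon)}\le e^{n(\lambda_{i+1}(x)+2\epsilon)}$ in the first case and $\|\pi y-\pi x\|\le e^{-n/\epsilon}$ in the second, which is \eqref{e-l2}. For \eqref{e-l3} one argues identically with $V_{\sigma^n x}^0=\R^d$ in place of $V_{\sigma^n x}^i$ and \eqref{e-e4.3} taken with index $1$, which gives the operator-norm bound $\|M_{x_0}\cdots M_{x_{n-1}}\|\le e^{n(\lambda_1(x)+\epsilon/2)}$ for $n$ large; no restriction to $\xi_i(x)$ is then needed.

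The routine steps are the telescoping of \eqref{e-pi1.4}, the iteration of the Oseledets invariance, and feeding \eqref{e-e4.3} into that basic identity in part~(2). The one genuinely delicate point, and where I would be most careful, is the ``$\subset$'' inclusion in part~(1): because the $M_j$ may be singular, one cannot simply cancel $M_{x_0}\cdots M_{x_{n-1}}$, so the implication $M_{x_0}\cdots M_{x_{n-1}}w\in V_x^i\Rightarrow w\in V_{\sigma^n x}^i$ must be routed through the exponential-growth-rate description \eqref{e-v1} of the Oseledets subspaces together with the $\sigma$-invariance of the Lyapunov data (Lemma~\ref{lem-3.1}); this is exactly the spot where the possibly non-invertible setting forces a different argument than in the classical invertible case.
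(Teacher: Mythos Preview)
Your proof is correct and follows essentially the same route as the paper's: both use the telescoping identity $\pi y-\pi x=M_{x_0}\cdots M_{x_{n-1}}(\pi\sigma^ny-\pi\sigma^nx)$ together with the growth-rate characterization \eqref{e-v1} for the delicate ``$\subset$'' inclusion in part~(1), and then feed this identity into the operator-norm bound \eqref{e-e4.3} and the definition of $Q_{n,\epsilon}$ for part~(2). Your remark that the $i=s$ case of ``$\subset$'' needs injectivity of $M_{x_0}\cdots M_{x_{n-1}}$ (hence $\lambda_s>-\infty$) is well placed; the paper's written argument also proceeds via $\lambda_{i+1}$ and \eqref{e-v1}, so it likewise does not explicitly treat $i=s$ when $\lambda_s=-\infty$.
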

\begin{proof}
We first prove (1). Let $x=(x_j)_{j=-\infty}^\infty\in \Sigma'$,  $i\in \{0,\ldots, s\}$ and  $n\in \N$.   We only prove that $\xi_i(x)\cap \P_{0}^{n-1} (x)\subset\sigma^{-n}(\xi_i(\sigma^n x))$. The proof of the other direction is similar.

Let $y=(y_j)_{j=-\infty}^\infty \in \xi_i(x)\cap \P_{0}^{n-1} (x)$. Then $\pi y-\pi x\in V_x^i$ and $y_j=x_j$ for $j\leq n-1$. By Proposition \ref{pro-3.1}(ii), \begin{equation}
\label{e-pi}
\begin{split}
 \pi y-\pi x & =S_{y_0\ldots y_{n-1}}(\pi \sigma^n y)-S_{x_0\ldots x_{n-1}}(\pi \sigma^n x)\\
 &=S_{x_0\ldots x_{n-1}}(\pi \sigma^n y)-S_{x_0\ldots x_{n-1}}(\pi \sigma^n x)\\
 &=M_{x_0\ldots x_{n-1}}(\pi \sigma^n y-\pi \sigma^n x),\\
 \end{split}
 \end{equation}
 here and afterwards we write $M_{i_1\ldots i_n}$ for $M_{i_1}\cdots M_{i_n}$.
Since $\pi y-\pi x\in V^i_x$, by \eqref{e-pi} and \eqref{e-v1} we have

 \begin{equation*}
\begin{split}
\limsup_{k\to \infty}&\frac{1}{n+k} \log \|M_{x_{-k}\ldots x_{-1}x_0\ldots x_{n-1}}(\pi \sigma^n y-\pi \sigma^n x)\|\\
=&\limsup_{k\to \infty}\frac{1}{n+k}\log  \|M_{x_{-k}\ldots x_{-1}}(\pi  y-\pi  x)\|
\leq  \lambda_{i+1}(x)=\lambda_{i+1}(\sigma^nx).
\end{split}
\end{equation*}
Applying  \eqref{e-v1} to $V^i_{\sigma^nx}$ gives $\pi \sigma^n y-\pi \sigma^n x\in V_{\sigma^nx}^i$. In the meantime, since $y_j=x_j$ for $j\leq n-1$,  we  have also  $\sigma^n y\in \xi_0(\sigma^n x)$.  Therefore $y\in  \sigma^{-n}(\xi_i(\sigma^n x))$. This proves  $\xi_i(x)\cap \P_{0}^{n-1} (x)\subset\sigma^{-n}(\xi_i(\sigma^n x))$.

Next we prove (2). Let $x\in \Sigma'$,   $i\in \{0,\ldots, s-1\}$ and $\epsilon>0$. By \eqref{e-e4.3} and \eqref{e-e4.1}, there exists  $n_0=n_0(x)$ such that  for any $n\geq n_0$,
\begin{equation}
\label{e-l1}
 \max_{v\in V_{\sigma^nx}^i,\; \|v\|=1} \|M_{x_0\ldots x_{n-1}}v\|\leq \left\{
 \begin{array}{ll}
  e^{n(\lambda_{i+1}(x)+{\epsilon})} & \mbox{ if } \lambda_{i+1}(x)\neq -\infty\\
  e^{-2n/\epsilon} & \mbox{ if } \lambda_{i+1}(x)= -\infty
  \end{array}
  \right.
\end{equation}
  and
\begin{equation}
\label{e-pisigma}
\| \pi \sigma^n x\|\leq \frac{1}{2}e^{n\epsilon/2}.
\end{equation}
Now let $n\geq n_0$ and $y\in Q_{n,\epsilon} \cap \xi_i(x)\cap \P_{0}^{n-1} (x)$. Then $\|\pi\sigma^n y\|\leq (1/2) e^{n\epsilon/2}$, $y^{-}=x^{-}$,  $\pi y-\pi x\in V_x^i$ and furthermore by (1), $\pi\sigma^ny-\pi\sigma^nx\in V_{\sigma^nx}^i$. By \eqref{e-pi}-\eqref{e-pisigma},
\begin{align*}
{\|\pi y-\pi x\|} &= \|M_{x_0\ldots x_{n-1}}(\pi\sigma^n y-\pi \sigma^n x)\|\\
&\leq \left( \max_{v\in V_{\sigma^nx}^i,\; \|v\|=1} \|M_{x_0\ldots x_{n-1}}v\| \right) \|\pi \sigma^n y-\pi \sigma^n x\|\\
& \leq \left\{
 \begin{array}{ll}
  e^{n(\lambda_{i+1}(x)+2{\epsilon})} & \mbox{ if } \lambda_{i+1}(x)\neq -\infty\\
  e^{-n/\epsilon} & \mbox{ if } \lambda_{i+1}(x)= -\infty
  \end{array}
  \right.
  .
\end{align*}
This proves  \eqref{e-l2}. Moreover, since $V_x^0=\R^d$,  the above argument for the case $i=0$ actually proves \eqref{e-l3}.
\end{proof}

Recall that for a measurable partition $\eta$ of $\Sigma'$,  $\{m_x^\eta\}$ stands for  the canonical system of conditional measures associated with $\eta$ (cf. Section~\ref{Sect-con}).

\begin{lem}
\label{lem-3.6}
Let $i\in \{0,1,\ldots, s\}$. Then for $m$-a.e.~$x\in \Sigma'$, the following hold.

\begin{itemize}
\item[(1)]
$
m_x^{\sigma^{-n}\xi_i}(A) =m_{\sigma^n x}^{\xi_i}(\sigma^n A)$  for any $n\in \N$ and  measurable $A\subset \Sigma'$.
\medskip
\item[(2)] $\displaystyle
m_x^{\sigma^{-n} \xi_i}(A)=
\frac {m_x^{\xi_i}(A\cap \P_0^{n-1}(x))} {m_x^{\xi_i}(\P_0^{n-1}(x))}
$ for any $n\in \N$ and measurable  $A\subset \Sigma'$.
\medskip
\item[(3)] $\displaystyle
\frac {m_x^{\xi_i} (\sigma^{-n} A\cap \P_0^{n-1}(x))} {m_{\sigma^n x}^{\xi_i}(A)}=m_x^{\xi_i}(\P_0^{n-1}(x))
$ for any $n\in \N$ and measurable  $A\subset \Sigma'$.

\end{itemize}
\end{lem}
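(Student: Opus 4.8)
\textbf{Proof proposal for Lemma \ref{lem-3.6}.}

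The plan is to derive all three identities from the single structural fact established in Lemma \ref{lem-3.4}(1), namely $\sigma^{-n}\xi_i=\xi_i\vee\P_0^{n-1}$, together with the standard behaviour of conditional measures under refinement of a partition and under a measure-preserving map. First I would observe that part (1) is just the equivariance of conditional measures: since $\sigma$ preserves $m$ and $\sigma^n(\xi_i\text{-atoms})$ are the atoms of $\sigma^{-n}\xi_i$ pushed forward, the family $\{(\sigma^n)_*m_x^{\sigma^{-n}\xi_i}\}$ satisfies the two characterizing properties (measurability in the quotient, and correct disintegration of $m$) of the conditional measures $\{m_{\sigma^n x}^{\xi_i}\}$; by the essential uniqueness in Rohlin's Theorem \ref{thm-2.1} the two agree $m$-a.e. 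Concretely: for measurable $A$, $x\mapsto m_{\sigma^n x}^{\xi_i}(\sigma^n A)$ is $\widehat{\sigma^{-n}\xi_i}$-measurable (it is the composition of a $\widehat{\xi_i}$-measurable map with $\sigma^n$), and $\int m_{\sigma^n x}^{\xi_i}(\sigma^n A)\,dm(x)=\int m_y^{\xi_i}(\sigma^n A)\,dm(y)=m(\sigma^n A)=m(A)$; uniqueness then gives $m_x^{\sigma^{-n}\xi_i}(A)=m_{\sigma^n x}^{\xi_i}(\sigma^n A)$.

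For part (2) I would use the general fact that if $\eta$ refines $\zeta$ (here $\eta=\xi_i\vee\P_0^{n-1}=\sigma^{-n}\xi_i$ by Lemma \ref{lem-3.4}(1), $\zeta=\xi_i$), then the conditional measures disintegrate: $m_x^{\eta}$ is the normalized restriction of $m_x^{\zeta}$ to the smaller atom $\eta(x)$. Since $\P_0^{n-1}$ is a finite partition, $\eta(x)=\xi_i(x)\cap\P_0^{n-1}(x)$ has positive $m_x^{\xi_i}$-measure for $m$-a.e.\ $x$ (this uses Lemma \ref{lem-2.10} applied inside the quotient, or just that $\E_m(\chi_{\P_0^{n-1}(x)}|\widehat{\xi_i})>0$ a.e.\ on that cylinder), so the normalization is legitimate and $m_x^{\sigma^{-n}\xi_i}(A)=m_x^{\xi_i}(A\cap\P_0^{n-1}(x))/m_x^{\xi_i}(\P_0^{n-1}(x))$. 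To justify the refinement formula itself I would check the two Rohlin properties for the right-hand side, exactly as in part (1): $\widehat{\eta}$-measurability of $x\mapsto m_x^{\xi_i}(A\cap\P_0^{n-1}(x))/m_x^{\xi_i}(\P_0^{n-1}(x))$ (constant on each $\P_0^{n-1}$-piece of a $\xi_i$-atom, with $\widehat{\xi_i}$-measurable numerator and denominator), and the integration identity $\int_{\cdot}\cdots\,dm$ restricted to atoms of $\eta$, which reduces to the defining property of $m_x^{\xi_i}$.

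Part (3) is then immediate algebra: replace $A$ by $\sigma^{-n}A$ in part (2) to get $m_x^{\sigma^{-n}\xi_i}(\sigma^{-n}A)=m_x^{\xi_i}(\sigma^{-n}A\cap\P_0^{n-1}(x))/m_x^{\xi_i}(\P_0^{n-1}(x))$, and by part (1) the left-hand side equals $m_{\sigma^n x}^{\xi_i}(\sigma^n\sigma^{-n}A)=m_{\sigma^n x}^{\xi_i}(A)$ (using that $\sigma$ is invertible, so $\sigma^n\sigma^{-n}A=A$); rearranging yields $m_x^{\xi_i}(\sigma^{-n}A\cap\P_0^{n-1}(x))/m_{\sigma^n x}^{\xi_i}(A)=m_x^{\xi_i}(\P_0^{n-1}(x))$, which is the claim. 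The only genuine subtlety — and the step I expect to require the most care — is the a.e.\ positivity of $m_x^{\xi_i}(\P_0^{n-1}(x))$ needed to make the quotients in (2) and (3) well-defined: this has to be invoked simultaneously for all $n\in\N$ and all sets in a countable generating algebra, so that a single full-measure set of $x$ works for every $n$ and every measurable $A$, and one should note $\P_0^{n-1}(x)\supset$ the cylinder $[x_0\cdots x_{n-1}]$ has positive $m$-measure (else discard it), after which Lemma \ref{lem-2.10} applied in the Lebesgue space $(\Sigma',\widehat{\xi_i})$ does the job.
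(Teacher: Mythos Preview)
Your proposal is correct and follows essentially the same approach as the paper: both prove (1) and (2) by verifying that the candidate families satisfy the two characterizing properties in Rohlin's Theorem \ref{thm-2.1} (using the $\sigma$-invariance of $m$ for (1) and the identity $\sigma^{-n}\xi_i=\xi_i\vee\P_0^{n-1}$ from Lemma \ref{lem-3.4}(1) for (2)), then invoke uniqueness, and both obtain (3) by substituting $\sigma^{-n}A$ into (2) and combining with (1). Your explicit attention to the a.e.\ positivity of $m_x^{\xi_i}(\P_0^{n-1}(x))$ via Lemma \ref{lem-2.10} is a welcome clarification that the paper leaves implicit in its computation with the functions $h_B=\E_m(\chi_{A\cap B}|\widehat{\xi_i})/\E_m(\chi_B|\widehat{\xi_i})$.
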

\begin{proof}
All the  results follow from the $\sigma$-invariance of $m$ and the uniqueness of conditional measures.  For the reader's convenience, we include below the detailed arguments.

To see (1), fix $n\in \N$ and  define a family of probability measures $\{\mu_x\}_{x\in \Sigma'}$ such that $\mu_x$ is supported on $(\sigma^{-n}\xi_i)(x)=\sigma^{-n}(\xi_i(\sigma^n x))$ and satisfies
$$
\mu_x(A)=m_{\sigma^n x}^{\xi_i}(\sigma^n A) \quad \mbox{ for any measurable }  A\subset \Sigma'.
$$
Then by Theorem \ref{thm-2.1}, for every measurable $A\subset  \Sigma'$ and $m$-a.e.~$x$,
\begin{align*}
\mu_x(A)&=\E_m(\chi_{\sigma^n A}|\widehat{\xi_i})(\sigma^n x)  \\
&=\E_m(\chi_{\sigma^n A}\circ \sigma^n |\sigma^{-n} \widehat{\xi_i})( x) \qquad \mbox{ (by  Lemma \ref{lem-par}(i))} \\
&=\E_m(\chi_{ A}  |\sigma^{-n} \widehat{\xi_i})( x).
\end{align*}
It follows that $x\mapsto \mu_x(A)$ is $\sigma^{-n}\widehat{\xi_i}$-measurable and $m(A)=\int \mu_x(A) dm(x)$.  Therefore, $\{\mu_x\}$ is a canonical system of conditional measures associated with $\sigma^{-n}\xi_i$. By the uniqueness of conditional measures, we have
$\mu_x=m^{\sigma^{-n}\xi_i}_x$ for $m$-a.e.~$x$. This proves (1).

To see (2), let $n\in \N$ and notice that $\sigma^{-n} \xi_i=\xi_i\vee \P_{0}^{n-1}$ by Lemma \ref{lem-3.4}(1). Similar to the proof of (1), we  define a family of probability measures $\{\nu_x\}_{x\in \Sigma'}$ such that $\nu_x$ is supported on $(\sigma^{-n}\xi_i)(x)=\xi_i(x)\cap \P_0^{n-1}(x)$ and satisfies
$$
\nu_x(A)=
\frac {m_x^{\xi_i}(A\cap \P_0^{n-1}(x))} {m_x^{\xi_i}(\P_0^{n-1}(x))}
 \quad \mbox{ for any measurable }  A\subset \Sigma'.
$$
Then by Theorem \ref{thm-2.1}, for every measurable $A\subset  \Sigma'$ and $m$-a.e.~$x$,
\begin{align}
\label{e-chi}
\nu_x(A)&=\sum_{B\in \P_0^{n-1}} \chi_B(x)\cdot h_B(x),
\end{align}
where $h_B:=\E_m(\chi_{A\cap B}|\widehat{\xi_i})/\E_m(\chi_{ B}|\widehat{\xi_i})$. Since $h_B$ is $\widehat{\xi_i}$-measurable, the mapping $x\mapsto \nu_x(A)$ is  $\widehat{\xi_i}\vee \widehat{\P_0^{n-1}}$-measurable (i.e. $\sigma^{-n}\widehat{\xi_i}$-measurable).
Moreover  by \eqref{e-chi},
\begin{align*}
\int \nu_x(A) \;dm(x)&= \sum_{B\in \P_0^{n-1}}\int  \chi_B h_B \;dm\\
&=\sum_{B\in \P_0^{n-1}}\int  \E_m(\chi_B h_B|\widehat{\xi_i}) \;dm\\
&=\sum_{B\in \P_0^{n-1}}\int  \E_m(\chi_B|\widehat{\xi_i} ) h_B \;dm\\
&=\sum_{B\in \P_0^{n-1}}\int \E_m( \chi_{A\cap B}|\widehat{\xi_i})  \;dm\\
&=\sum_{B\in \P_0^{n-1}}m(A\cap B)=m(A).
\end{align*}
Hence the family $\{\nu_x\}$ is a  canonical system of conditional measures associated with $\sigma^{-n}\xi_i$,  and so (2) follows by the uniqueness of conditional measures.

Finally we prove (3). By (1), we have $$\displaystyle
m_{\sigma^n x}^{\xi_i}(A)=
m_{\sigma^n x}^{\xi_i}(\sigma^n(\sigma^{-n} A))=m_{x}^{\sigma^{-n} \xi_i}(\sigma^{-n} A).$$
Applying (2) to   $\sigma^{-n} A$ (instead of $A$)  yields that
$$
m_{\sigma^n x}^{\xi_i}(A)=m_{x}^{\sigma^{-n} \xi_i}(\sigma^{-n} A)=\frac {m_x^{\xi_i} (\sigma^{-n} A\cap \P_0^{n-1}(x))} {m_x^{\xi_i}(\P_0^{n-1}(x))},
$$
which implies (3).
\end{proof}

Now for $i\in \{0,1,\ldots, s\}$,  define
\begin{equation}
\label{e-hi}
h_i(x)=\E_m(f_i|\I)(x),   \quad x\in \Sigma',
\end{equation}
where  $f_i:={\bf I}_m(\P|\widehat{\xi_i})$ and $\I=\{A\in \B(\Sigma'):\; \sigma^{-1}A=A\}$.  Clearly  $f_i\geq 0$ a.e. By Lemma \ref{lem-par}(v),  $f_i\in L^1$. It follows that  $h_i\geq 0$ a.e.~and  $h_i\in L^1$.

\begin{lem}
\label{lem-3.5}
Let $i\in \{0,1,\ldots, s\}$. Then for  $m$-a.e.~$x\in \Sigma'$,
\begin{equation}
\label{e-h1}
\begin{split}
&\log m_x^{\xi_i}(\P_{0}^{n-1}(x))=-\sum_{j=0}^{n-1}{\bf I}_m(\P |\widehat{\xi}_i)(\sigma^jx)\quad\mbox{ and }\\
 &-\lim_{n\to \infty} \frac{1}{n}\log m_x^{\xi_i}(\P_{0}^{n-1}(x))=h_i(x).
 \end{split}
\end{equation}
Furthermore,
\begin{equation}
\label{e-h2}
-\lim_{n\to \infty} \frac{1}{n}\log m(\P_{0}^{n-1}(x))=  h_0(x) \quad  \mbox{ for  $m$-a.e. }x\in \Sigma'.
\end{equation}

\end{lem}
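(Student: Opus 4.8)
I would prove the two assertions of the lemma separately: \eqref{e-h1} by reducing to Birkhoff's ergodic theorem, and \eqref{e-h2} by a Shannon--McMillan--Breiman-type argument relative to the past filtration. For \eqref{e-h1}: since $\P_0^{n-1}$ is a finite partition, Rohlin's theorem (Theorem~\ref{thm-2.1}) gives $m_x^{\xi_i}(\P_0^{n-1}(x))=\E_m(\chi_{\P_0^{n-1}(x)}|\widehat{\xi_i})(x)$ for $m$-a.e.\ $x$, hence by the definition \eqref{e-1.2} of conditional information $-\log m_x^{\xi_i}(\P_0^{n-1}(x))={\bf I}_m(\P_0^{n-1}|\widehat{\xi_i})(x)$. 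Applying the chain rule (Lemma~\ref{lem-par}(iii)) to $\P_0^{n-1}=\bigvee_{j=0}^{n-1}\sigma^{-j}\P$ yields ${\bf I}_m(\P_0^{n-1}|\widehat{\xi_i})=\sum_{j=0}^{n-1}{\bf I}_m(\sigma^{-j}\P\,|\,\widehat{\xi_i}\vee\widehat{\P_0^{j-1}})$; by Lemma~\ref{lem-3.4}(1) the conditioning $\sigma$-algebra equals $\widehat{\sigma^{-j}\xi_i}=\sigma^{-j}\widehat{\xi_i}$, so Lemma~\ref{lem-par}(ii) identifies the $j$-th summand with $f_i(\sigma^jx)$; this is the first identity in \eqref{e-h1}. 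Since $H_m(\P)\le\log|\Lambda|<\infty$ forces $f_i\in L^1$, dividing by $n$ and letting $n\to\infty$ in Birkhoff's ergodic theorem for the invertible system $(\Sigma',\B(\Sigma'),m,\sigma)$ gives $-\frac1n\log m_x^{\xi_i}(\P_0^{n-1}(x))\to\E_m(f_i|\I)(x)=h_i(x)$, the second identity.

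For \eqref{e-h2} I would run the analogous chain-rule computation on ${\bf I}_m(\P_0^{n-1})(x)=-\log m(\P_0^{n-1}(x))$, this time peeling off the \emph{highest-index} coordinate $\sigma^{-j}\P$ at each step: using Lemma~\ref{lem-par}(ii) this produces $-\log m(\P_0^{n-1}(x))=\sum_{j=0}^{n-1}g_{j+1}(\sigma^jx)$, where $g_k:={\bf I}_m(\P\,|\,\A_k)$ and $\A_k$ is the finite sub-$\sigma$-algebra of $\B(\Sigma')$ generated by the coordinates $-1,\dots,-(k-1)$ (so $\A_1={\mathcal N}$ and $g_1={\bf I}_m(\P)$). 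Since $\A_k$ increases, as $k\to\infty$, to the $\sigma$-algebra of the whole past, which is exactly $\widehat{\xi_0}$ because $\xi_0(x)=\{y\in\Sigma':y_j=x_j\text{ for all }j\le-1\}$, Lemma~\ref{lem-par}(v) gives $g_k\to{\bf I}_m(\P|\widehat{\xi_0})=f_0$ both $m$-a.e.\ and in $L^1$, with $\sup_k g_k\in L^1$. It then remains to show $\frac1n\sum_{j=0}^{n-1}g_{j+1}(\sigma^jx)\to h_0(x)$, and I would deduce this from the $i=0$ case of \eqref{e-h1}, i.e.\ from $-\log m_x^{\xi_0}(\P_0^{n-1}(x))=\sum_{j=0}^{n-1}f_0(\sigma^jx)$, whose normalization converges to $h_0(x)$: the difference of the two normalized sums is at most $\frac1n\sum_{j=0}^{n-1}|g_{j+1}-f_0|(\sigma^jx)$, and a Maker-type estimate shows this tends to $0$ $m$-a.e.\ — for each fixed $M$ the contribution of the indices $j\ge M$ is $\le\frac1n\sum_{j=0}^{n-1}\big(\sup_{k\ge M}|g_k-f_0|\big)(\sigma^jx)\to\E_m\big(\sup_{k\ge M}|g_k-f_0|\,\big|\,\I\big)(x)$ by Birkhoff, the finitely many remaining terms are $o(n)$, and $\E_m(\sup_{k\ge M}|g_k-f_0|\,|\,\I)\downarrow 0$ as $M\to\infty$ by dominated convergence for conditional expectations (dominating function $\sup_k g_k+f_0\in L^1$). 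Hence $-\frac1n\log m(\P_0^{n-1}(x))\to h_0(x)$, which is \eqref{e-h2}.

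Most of the work is routine coordinate bookkeeping in the two chain-rule decompositions — tracking which coordinates a partition involves after a shift and applying Lemmas~\ref{lem-par}(ii) and \ref{lem-3.4}(1) accordingly — fiddly but without real difficulty. The point that requires genuine care, and is the conceptual crux of \eqref{e-h2}, is that the decomposition of $-\log m(\P_0^{n-1}(x))$ must be arranged so that the conditioning $\sigma$-algebras $\A_k$ converge to the \emph{past} $\sigma$-algebra $\widehat{\xi_0}$; peeling off the lowest-index coordinate would instead condition on the future $\sigma$-algebra $\bigvee_{k\ge1}\widehat{\sigma^{-k}\P}$, forcing one to separately justify that the future- and past-conditioned entropy densities agree $m$-a.e.\ (true, but better avoided). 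Peeling off the highest-index coordinate and using the $\sigma$-invariance of $m$ circumvents this. One mild extra point: the ergodic limit used in the last step is the forward-indexed variant $\frac1n\sum_{j=0}^{n-1}g_{j+1}(\sigma^jx)\to\E_m(g|\I)$ of Lemma~\ref{lem-3.15} rather than Lemma~\ref{lem-3.15} verbatim, which is why I would argue it directly (or note that it follows from Birkhoff together with conditional dominated convergence, exactly as above).
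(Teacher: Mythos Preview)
Your proof of \eqref{e-h1} is essentially identical to the paper's: both identify $-\log m_x^{\xi_i}(\P_0^{n-1}(x))$ with ${\bf I}_m(\P_0^{n-1}|\widehat{\xi_i})(x)$, expand via the chain rule, use Lemma~\ref{lem-3.4}(1) to recognize the conditioning $\sigma$-algebra at step $j$ as $\sigma^{-j}\widehat{\xi_i}$, and conclude with Birkhoff.

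For \eqref{e-h2} the two arguments genuinely diverge. The paper does exactly what you flag as ``better avoided'': it applies the Shannon--McMillan--Breiman theorem to $\sigma$ and to $\sigma^{-1}$ separately, obtaining the two limits $\E_m(g_1|\I)$ (with $g_1={\bf I}_m(\P|\bigvee_{j\ge1}\sigma^{-j}\widehat{\P})$, future-conditioned) and $\E_m(g_2|\I)$ (with $g_2={\bf I}_m(\P|\widehat{\xi_0})=f_0$, past-conditioned), and then shows $\E_m(g_1|\I)=\E_m(g_2|\I)$ a.e.\ via the identity $m(\P_{-(n-1)}^0(\sigma^n x))=m(\P_0^{n-1}(x))$ and $\sigma$-invariance. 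Your route avoids this reconciliation altogether by arranging the chain-rule decomposition so that the conditioning $\sigma$-algebras $\A_k$ increase to $\widehat{\xi_0}$ from the outset; the cost is that the resulting ergodic average $\frac1n\sum_{j}g_{j+1}(\sigma^j x)$ carries forward-indexed $g$'s, so Lemma~\ref{lem-3.15} is not literally applicable and you supply a direct Maker-type estimate instead. Both arguments are correct; the paper's trades a short black-box appeal to SMB for the extra past/future reconciliation step, while yours is more self-contained at the price of re-deriving the forward-indexed variant of Maker's theorem.
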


\begin{proof}
Let $i\in \{0,1,\ldots, s\}$.  By Theorem \ref{thm-2.1},
$$\log  m_x^{\xi_i}(\P_{0}^{n-1}(x))=\sum_{A\in \P_{0}^{n-1}}\chi_A(x) \log m_x^{\xi_i}(A)=\sum_{A\in \P_{0}^{n-1}}\chi_A(x) \log \E_m(\chi_A|\widehat{\xi_i})(x)$$
and hence $-\log m_x^{\xi_i}(\P_{0}^{n-1}(x))={\bf I}_m(\P_0^{n-1}|\widehat{\xi_i})(x)$ for $m$-a.e.~$x$. By Lemma \ref{lem-par},
\begin{align*}
{\bf I}_m(\P_0^{n-1}|\widehat{\xi_i})&={\bf I}_m(\P|\widehat{\xi_i})+{\bf I}_m\left(\bigvee_{j=1}^{n-1}\sigma^{-j} \P\big|\widehat{\xi}_i\vee \widehat{\P}\right)\\
&={\bf I}_m(\P|\widehat{\xi}_i)+{\bf I}_m\left(\bigvee_{j=1}^{n-1}\sigma^{-j} \P\big|\sigma^{-1} \widehat{\xi}_i \right)\qquad (\mbox{by Lemma \ref{lem-3.4}(1)})\\
&={\bf I}_m(\P|\widehat{\xi}_i)+{\bf I}_m(\P_{0}^{n-2} |\widehat{\xi}_i)\circ \sigma \qquad \qquad (\mbox{by Lemma \ref{lem-par}(ii)}).
\end{align*}
Therefore by induction we have
\begin{equation}
\label{e-xi}
{\bf I}_m(\P_0^{n-1}|\widehat{\xi}_i)=\sum_{j=0}^{n-1}{\bf I}_m(\P |\widehat{\xi}_i)\circ \sigma^j.
\end{equation}
 Now \eqref{e-h1} follows from \eqref{e-xi} and  the Birkhoff ergodic theorem.

To see \eqref{e-h2}, applying the Shannon-McMillian-Breiman theorem (see e.g. \cite[p.~39] {Parry1981})   to the transformations $\sigma$ and $\sigma^{-1}$ respectively, we have the following convergences (pointwise and in $L^1$):
\begin{equation}
\label{e-h4}
\begin{split}
-\lim_{n\to +\infty} \frac{1}{n}\log m(\P_{0}^{n-1}(x))&=\E_m(g_1|\I)(x),\\
-\lim_{n\to +\infty} \frac{1}{n}\log m(\P_{-(n-1)}^{0}(x))&=\E_m(g_2|\I)(x),
\end{split}
\end{equation}
where  $g_1:={\bf I}_m(\P|\bigvee_{j=1}^\infty \sigma^{-j}\widehat\P)$,  $g_2:={\bf I}_m(\P|\bigvee_{j=1}^\infty \sigma^{j}\widehat\P)$.
Noticing that $\widehat \xi_0=\bigvee_{j=1}^\infty \sigma^{j}\widehat\P$, we have $g_2={\bf I}_m(\P|\widehat \xi_0)=f_0$ and so
$\E_m(g_2|\I)=h_0$.  To prove \eqref{e-h2}, by \eqref{e-h4} it suffices to show that \begin{equation}
\label{e-h3}
\E_m(g_1|\I)(x)=\E_m(g_2|\I)(x)\quad \mbox{  for $m$-a.e.~$x$}.
\end{equation}

To see \eqref{e-h3}  first observe that for $x\in \Sigma'$,  $\P_{-(n-1)}^{0}(\sigma^n x)=\sigma^n(\P_0^{n-1}( x))$ and hence
$m(\P_{-(n-1)}^{0}(\sigma^n x))=m(\sigma^n(\P_0^{n-1}( x)))=m(\P_0^{n-1}( x))$.
For any $B\in \I$,  we have
\begin{align*}
\int_B & \log m(\P_{-(n-1)}^{0}(x)) \; dm(x)\\
&=\int \chi_B(x) \log m(\P_{-(n-1)}^{0}(x)) \; dm(x)\\
&=\int \chi_B(\sigma^n x) \log m(\P_{-(n-1)}^{0}(\sigma^n x)) \; dm(x)  \qquad(\mbox{by the $\sigma$-invariance of $m$}) \\
&=\int \chi_B(\sigma^nx) \log m(\P_0^{n-1}(x)) \; dm(x)\\
&=\int \chi_B(x) \log m(\P_0^{n-1}(x)) \; dm(x) \qquad\qquad(\mbox{by $\chi_B=\chi_B\circ \sigma^n$ as $B\in {\mathcal I}$}) \\
&=\int_B  \log m(\P_0^{n-1}(x)) \; dm(x).
\end{align*}
Dividing both sides by $n$, letting $n\to \infty$ and applying \eqref{e-h4}, we have
$$\int_B \E_m(g_1|\I)\; dm= \int_B \E_m(g_2|\I)\; dm\quad \mbox{ for all } B\in \I.$$
Therefore  $\E_m(g_1|\I)= \E_m(g_2|\I)$ almost everywhere.
This completes the proof of the lemma.
\end{proof}

Below we give an interesting corollary of Lemma \ref{lem-3.5}, although we will not use it in the rest part of the paper.

\begin{cor}
Let $i\in \{0,1,\ldots, s\}$. Then $h_i=0$ a.e.~if and only if $m_x^{\xi_i}=\delta_x$  (i.e.~$m_x^{\xi_i}(\{x\})=1$) for $m$-a.e.~$x\in \Sigma'$.
\end{cor}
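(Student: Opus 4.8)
The plan is to read off both implications directly from Lemma~\ref{lem-3.5}, whose key content for us is the first line of \eqref{e-h1}, namely that $\log m_x^{\xi_i}(\P_0^{n-1}(x))=-\sum_{j=0}^{n-1}{\bf I}_m(\P\,|\,\widehat{\xi_i})(\sigma^j x)$ for $m$-a.e.~$x\in\Sigma'$, together with the fact that the nested atoms $\P_0^{n-1}(x)$ shrink, inside the atom $\xi_i(x)$, to the single point $x$. First I would record this shrinking fact: since $\P_0^{n-1}(x)=\{y\in\Sigma':\ y_j=x_j\text{ for }0\le j\le n-1\}$, one has $\bigcap_{n\ge 1}\P_0^{n-1}(x)=\{y\in\Sigma':\ y_j=x_j\text{ for all }j\ge 0\}$, while $m_x^{\xi_i}$ is carried by $\xi_i(x)\subseteq\xi_0(x)=\{y:\ y_j=x_j\text{ for all }j\le -1\}$; hence $\xi_i(x)\cap\bigcap_n\P_0^{n-1}(x)=\{x\}$, and by continuity of the probability measure $m_x^{\xi_i}$ along the decreasing sequence of sets $\P_0^{n-1}(x)$ we get $\lim_{n\to\infty} m_x^{\xi_i}(\P_0^{n-1}(x))=m_x^{\xi_i}(\{x\})$ for $m$-a.e.~$x$.

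For the implication $h_i=0$ a.e.~$\Rightarrow$ $m_x^{\xi_i}=\delta_x$ a.e., set $f_i={\bf I}_m(\P\,|\,\widehat{\xi_i})$, which is nonnegative a.e.~(Lemma~\ref{lem-par}(v)). Since $h_i=\E_m(f_i|\I)$, the hypothesis forces $\int f_i\,dm=\int h_i\,dm=0$, hence $f_i=0$ $m$-a.e.; by the $\sigma$-invariance of $m$ (and invertibility of $\sigma$) this gives, for $m$-a.e.~$x$, that $f_i(\sigma^j x)=0$ for every $j\ge 0$, so the identity above yields $m_x^{\xi_i}(\P_0^{n-1}(x))=1$ for all $n$. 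Letting $n\to\infty$ and invoking the shrinking fact gives $m_x^{\xi_i}(\{x\})=1$.

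Conversely, if $m_x^{\xi_i}(\{x\})=1$ for $m$-a.e.~$x$, then because $x\in\P_0^{n-1}(x)$ we have $m_x^{\xi_i}(\P_0^{n-1}(x))=1$ for every $n$, and the second line of \eqref{e-h1} gives $h_i(x)=-\lim_{n\to\infty}\tfrac1n\log m_x^{\xi_i}(\P_0^{n-1}(x))=0$. I do not expect any genuine obstacle here; the only step needing a moment's care is the identification $\xi_i(x)\cap\bigcap_n\P_0^{n-1}(x)=\{x\}$, which rests on $\xi_i$ refining $\xi_0$ and on a two-sided sequence in $\Sigma'$ being determined by its negative and its nonnegative coordinates.
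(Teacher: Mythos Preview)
Your proof is correct and follows essentially the same route as the paper: both directions rest on the first identity in \eqref{e-h1} together with the fact that $\xi_i(x)\cap\bigcap_n\P_0^{n-1}(x)=\{x\}$. The only cosmetic difference is that for the converse the paper applies \eqref{e-h1} with $n=1$ to get $f_i(x)=-\log m_x^{\xi_i}(\P(x))=0$ directly, whereas you pass through the limit in the second line of \eqref{e-h1}; both are equally short.
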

\begin{proof}
By Lemma \ref{lem-par}(v),  $f_i:={\bf I}_m(\P|\widehat{\xi_i})\geq 0$ a.e.~ and $f_i\in L^1$. Hence by \eqref{e-hi}, $h_i=0$ a.e.~if and only if $f_i=0$ a.e. However according to the first equality in \eqref{e-h1},  the condition $f_i=0$ a.e.~ implies that for $m$-a.e.~$x$, $m_x^{\xi_i}(\mathcal P_0^{n-1}(x))=1$ for every $n\geq 1$ and hence
$$m_x^{\xi_i}(\{x\})=m_x^{\xi_i}\left(\xi_0(x)\cap \P_0^{\infty}(x)\right)=m_x^{\xi_i}\left( \P_0^{\infty}(x)\right)=1,$$
using the fact that  $m_x^{\xi_i}$ is supported on $\xi_i(x)\subset \xi_0(x)$.
Conversely, by the first equality in \eqref{e-h1} (applied to $n=1$), we obtain that $f_i(x)=-\log m_x^{\xi_i}(\P(x))$; hence the condition $$m_x^{\xi_i}(\{x\})=1 \mbox{ a.e.}$$  implies that $f_i=0$ a.e. This completes the proof of the corollary.
\end{proof}

We end the section by the following.

\begin{lem}
\label{lem-4.6}
Let $\epsilon>0$ and define $Q_{n,\epsilon}$ as in \eqref{e-qn} for $n\in \N$. Then for $m$-a.e.~$x\in \Sigma'$,
$$
\lim_{n\to \infty} \frac{m_x^{\xi_i}(Q_{n,\epsilon}\cap \P_0^{n-1}(x))}{m_x^{\xi_i}(\P_0^{n-1}(x))}=1 \qquad (i=0,1,\ldots, s)
$$
and
$$
\lim_{n\to \infty} \frac{m(Q_{n,\epsilon}\cap \P_0^{n-1}(x))}{m(\P_0^{n-1}(x))}=1.
$$
\end{lem}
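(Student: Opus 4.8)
The plan is to show that the sets $Q_{n,\epsilon}$ exhaust $\Sigma'$ along the orbit of $x$ in the appropriate averaged sense, and then convert this into the stated conditional-measure statements via a Borel--Cantelli / maximal-function type argument. First I would observe that by Proposition~\ref{pro-3.1}(iii) and \eqref{e-e4.1}, for $m$-a.e.~$x\in\Sigma'$ one has $\frac1n\log^+\|\pi\sigma^n x\|\to 0$; hence for every $\epsilon>0$ there is $N(x)$ with $\|\pi\sigma^j x\|\le \tfrac12 e^{j\epsilon/2}$ for all $j\ge N(x)$, i.e.~$x\in Q_{N(x),\epsilon}$, and consequently $\chi_{\Sigma'\setminus Q_{n,\epsilon}}(x)\to 0$ $m$-a.e. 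Equivalently, writing $R_{n,\epsilon}:=\Sigma'\setminus Q_{n,\epsilon}$, we have $\bigcap_{n}\bigcup_{k\ge n}R_{k,\epsilon}$ is $m$-null; in fact $R_{n,\epsilon}\supset R_{n+1,\epsilon}\supset\cdots$ is decreasing, so $m(R_{n,\epsilon})\to 0$.

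Next I would reduce the two displayed limits to a single statement about $m_x^{\xi_i}$. By Lemma~\ref{lem-3.6}(2), for each $n$,
\[
\frac{m_x^{\xi_i}(Q_{n,\epsilon}\cap\P_0^{n-1}(x))}{m_x^{\xi_i}(\P_0^{n-1}(x))}
= m_x^{\sigma^{-n}\xi_i}(Q_{n,\epsilon})
= m_{\sigma^n x}^{\xi_i}(\sigma^n Q_{n,\epsilon}),
\]
using Lemma~\ref{lem-3.6}(1) in the last step. Now observe that $\sigma^n Q_{n,\epsilon}=\{z\in\Sigma':\ \|\pi\sigma^j z\|\le\tfrac12 e^{(j+n)\epsilon/2}\ \forall j\ge 0\}\supseteq Q_{0,\epsilon/2}\cdot$(something), but more cleanly: $z\in\sigma^n Q_{n,\epsilon}$ iff $\sigma^{-n}z\in Q_{n,\epsilon}$ for the (unique on $\Sigma'$) preimage, which is awkward since $\sigma^{-n}$ is multivalued on the two-sided shift only if we forget coordinates — actually on $\Sigma'$ with the two-sided shift $\sigma$ is invertible, so $\sigma^n Q_{n,\epsilon}$ is a genuine set, and it contains $Q_{0,\epsilon}$ since for $j\ge 0$, $\tfrac12 e^{(j+n)\epsilon/2}\ge\tfrac12 e^{j\epsilon/2}$ is false in the wrong direction; instead one should note directly that $Q_{n,\epsilon}\uparrow\Sigma'$ (mod $0$) as $n\to\infty$ because the defining condition becomes vacuous for the earliest indices, so it suffices to prove, for $m$-a.e.~$x$, that
\[
1-\frac{m_x^{\xi_i}(Q_{n,\epsilon}\cap\P_0^{n-1}(x))}{m_x^{\xi_i}(\P_0^{n-1}(x))}
= \frac{m_x^{\xi_i}(R_{n,\epsilon}\cap\P_0^{n-1}(x))}{m_x^{\xi_i}(\P_0^{n-1}(x))}\longrightarrow 0 .
\]
By Lemma~\ref{lem-3.6}(1)--(2) this ratio equals $m_{\sigma^n x}^{\xi_i}(\sigma^n R_{n,\epsilon})$. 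The key point is that $\sigma^n R_{n,\epsilon}=\{z:\ \|\pi\sigma^{j}z\|>\tfrac12 e^{(j+n)\epsilon/2}\ \text{for some }j\ge -n\}$ — here shifting the constant $n$ steps \emph{relaxes} the constraint, so $\sigma^n R_{n,\epsilon}\subseteq R_{0,\epsilon}':=\{z:\ \limsup_j \tfrac1j\log^+\|\pi\sigma^j z\|\ge\epsilon/2\ \text{effectively}\}$; more precisely $\sigma^n R_{n,\epsilon}\downarrow \emptyset$ (mod $0$) as $n\to\infty$, since a point $z$ lies in $\sigma^n R_{n,\epsilon}$ for all $n$ only if $\|\pi\sigma^{-m}z\|>\tfrac12 e^{(m+\,\cdot\,)\epsilon/2}$ infinitely often as we look further back, contradicting \eqref{e-e4.1} applied along negative iterates. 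Then I would conclude by the dominated convergence / martingale argument: since $m(\bigcap_n\sigma^n R_{n,\epsilon})=0$, for $m$-a.e.~$y$ we have $\chi_{\sigma^n R_{n,\epsilon}}(y)\to 0$, hence by Theorem~\ref{thm-2.1} (conditional measures realize conditional expectations) and the martingale convergence theorem, $m_{y}^{\xi_i}(\sigma^n R_{n,\epsilon})=\E_m(\chi_{\sigma^n R_{n,\epsilon}}\mid\widehat{\xi_i})(y)\to 0$ for $m$-a.e.~$y$; applying this with $y=\sigma^n x$ and using that $\sigma$ preserves $m$ gives the claim for $m$-a.e.~$x$. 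The same computation with $\xi_0$ replaced by the trivial partition $\mathcal N$ (so that $m_x^{\mathcal N}=m$) and Lemma~\ref{lem-3.5}'s identity $m(\P_0^{n-1}(\,\cdot\,))$ in place of $m_x^{\xi_i}(\P_0^{n-1}(\,\cdot\,))$ yields the second displayed limit.

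I expect the main obstacle to be making rigorous the assertion that $\sigma^n R_{n,\epsilon}$ shrinks to a null set, i.e.~controlling $\|\pi\sigma^j z\|$ for the \emph{negative} indices $j=-1,-2,\dots,-n$ that appear after the shift. The cleanest fix is probably to avoid the negative indices entirely: instead of invoking Lemma~\ref{lem-3.6}(1), keep everything on the atom $\P_0^{n-1}(x)$ and argue directly that $\P_0^{n-1}(x)\cap R_{n,\epsilon}$ is small \emph{relative to} $\P_0^{n-1}(x)$ by an $L^1$/Maker-type averaging (Lemma~\ref{lem-3.15}) applied to the functions $g_k=\chi_{R_{k,\epsilon}}$, which converge to $0$ a.e.~and in $L^1$; this directly produces $\frac1n\sum_{j=0}^{n-1}\chi_{R_{n-j,\epsilon}}(\sigma^j x)\to 0$, and one then upgrades the Cesàro statement to the ratio statement using that $-\frac1n\log m_x^{\xi_i}(\P_0^{n-1}(x))\to h_i(x)$ from Lemma~\ref{lem-3.5} together with the subadditive/telescoping structure of $m_x^{\xi_i}(\P_0^{n-1}(x)\cap R_{n,\epsilon})$. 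Handling the sub-exponential, rather than exponential, decay correctly — so that the $Q_{n,\epsilon}$-defect is genuinely negligible against the entropy scale $e^{-nh_i}$ — is the delicate bookkeeping; everything else is a routine application of the conditional-measure machinery already set up in Section~\ref{S-2} and Lemmas~\ref{lem-3.4}--\ref{lem-3.5}.
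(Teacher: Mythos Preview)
Your argument has a genuine gap at the step ``applying this with $y=\sigma^n x$ and using that $\sigma$ preserves $m$ gives the claim for $m$-a.e.~$x$.'' You have established that $g_n(y):=m_y^{\xi_i}(\sigma^n R_{n,\epsilon})\to 0$ for $m$-a.e.~fixed $y$, but you then need $g_n(\sigma^n x)\to 0$ for $m$-a.e.~$x$. Measure-preservation only tells you that $g_n\circ\sigma^n$ and $g_n$ have the same distribution for each fixed $n$; it does not transfer almost-sure convergence along the moving base points $\sigma^n x$. (Concretely: $g_n\to 0$ in $L^1$ implies $g_n\circ\sigma^n\to 0$ in $L^1$, hence in measure, but not pointwise a.e.\ without further monotonicity or summability, neither of which you have for the sequence $g_n\circ\sigma^n$.) Your fallback via Maker's theorem produces only a Ces\`aro statement, and the ``upgrade'' you sketch to the actual ratio is not carried out; the entropy scale $e^{-nh_i}$ is irrelevant here since the target is a ratio, not a raw measure.

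The paper's proof avoids all of this with a one-line observation you already made but did not exploit: $Q_{n,\epsilon}$ is \emph{increasing} in $n$. Hence for any fixed $N$ and all $n\ge N$,
\[
\frac{m_x^{\xi_i}(Q_{n,\epsilon}\cap\P_0^{n-1}(x))}{m_x^{\xi_i}(\P_0^{n-1}(x))}
\;\ge\;
\frac{m_x^{\xi_i}(Q_{N,\epsilon}\cap\P_0^{n-1}(x))}{m_x^{\xi_i}(\P_0^{n-1}(x))}.
\]
The right-hand side now involves a \emph{fixed} set $Q_{N,\epsilon}$, and the Borel density lemma (equivalently, martingale convergence for the increasing filtration $\widehat{\xi_i}\vee\widehat{\P_0^{n-1}}\uparrow \B(\Sigma')$) gives that it converges to $\chi_{Q_{N,\epsilon}}(x)$ for $m$-a.e.~$x$. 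Since $\bigcup_N Q_{N,\epsilon}$ has full measure, for a.e.~$x$ there is some $N$ with $x\in Q_{N,\epsilon}$, whence the $\liminf$ of the original ratio is $\ge 1$. The detour through $\sigma^n R_{n,\epsilon}$, negative indices, and Lemma~\ref{lem-3.6} is entirely unnecessary.
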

\begin{proof}
The equalities follow from the   Lebesgue density lemma for Polish ultrametric spaces (see, e.g.~ \cite[Proposition 2.10]{Miller2008}) and the facts that the sequence  $(Q_{n,\epsilon})$ of sets is  monotone increasing as $n$ increases, and  $\bigcup_n Q_{n,\epsilon}$ is of  full $m$-measure by Proposition \ref{pro-3.1}(iii).
\end{proof}

\section{Transverse dimensions}
\label{S-5}

In this section, we prove an inequality for the transverse dimensions of the conditional measures that we constructed in Section~\ref{S-4}.

Recall that  ${\mathcal S}$ is an affine IFS on $\R^d$ of the form \eqref{e-form},   average contracting with respect to some $m\in \M_\sigma(\Sigma)$. Let  $\pi$ be the associated coding map.   Let $M:\;\Sigma\to {\rm Mat}_d(\R)$ be the matrix cocycle given by $M(x)=M_{x_{-1}}$, and  $\{(\lambda_i(x), k_i(x))\}_{1\leq i\leq s(x),\; x\in \Sigma'}$  the Lyapunov spectrum for  $M$ with respect to the transformation $\sigma^{-1}$.   Suppose that
\eqref{e-assump} holds, i.e. there exist  $s, k_1,\ldots, k_s$ so that $s(x)=s$, $k_i(x)=k_i$ ($i=1,\ldots, s$) for $m$-a.e.~$x\in \Sigma'$.   Let $\oplus_{i=1}^s E_x^i$ be the Oseledets splitting of $\R^d$, and $\{0\}=V_x^s\subset \cdots \subset V^0_x=\R^d$ the associated filtration.

Let $\xi_0,\xi_1,\ldots, \xi_s$ be the measurable partitions of $\Sigma'$ that we constructed in Section~\ref{S-4}. For $x\in \Sigma'$ and $r>0$, set
\begin{equation}
\label{gamma}
{\Gamma_i}(x,r)=\{y\in \Sigma':\; \mbox{dist}(\pi y+V_x^i, \; \pi x+V_x^i)\leq r\}, \quad i=1,\ldots, s
\end{equation}
and  define
    $$
\vartheta_{i-1}(x)=\liminf_{r\to 0}\frac{\log
m^{\xi_{i-1}}_x({\Gamma_{i}}(x,r))}{\log r}, \quad i=1,\ldots, s.
$$
We call $\vartheta_0$,\ldots, $\vartheta_{s-1}$ the {\it transverse dimensions} of $m$. Intuitively we may view $\vartheta_i(x)$ as the dimension of $m$ along the direction $E_x^{i+1}$.

The main result of this section is the following, which plays a key role in the proof of Theorem \ref{thm-1.0}.
\begin{pro}
\label{pro-4.1}
For $m$-a.e.~$x\in \Sigma'$,
$$
\vartheta_{i-1}(x)\geq \frac{h_{i}(x)-h_{i-1}(x)}{\lambda_{i}(x)}, \quad i=1,\ldots, s,
$$
where $h_i$ are defined as in \eqref{e-hi}.
\end{pro}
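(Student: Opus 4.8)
The plan is to first recast $\vartheta_{i-1}(x)$ as an ordinary lower local dimension. Recall that $\xi_i$ is induced by the measurable map $\pi_i\colon\Sigma'\to Y_i$, $x\mapsto(x^-,V_x^i+\pi x)$, so $\widehat{\xi_i}=\pi_i^{-1}\gamma_i$ and, inside any $\xi_{i-1}$-atom, the $\xi_i$-atoms are precisely the level sets of $\pi_i$ (equivalently of $y\mapsto P_{(V_x^i)^\perp}\pi y$). Since $m_x^{\xi_{i-1}}$ is carried by $\xi_{i-1}(x)\subseteq\xi_0(x)$, on which $V_y^i\equiv V_x^i$, the set $\Gamma_i(x,r)$ of \eqref{gamma} has the same $m_x^{\xi_{i-1}}$-measure as the ball $B^{\pi_i}(x,cr)$ for a fixed $c=c(x)>0$ (the distance between parallel affine subspaces is $\|P_{(V_x^i)^\perp}(\cdot)\|$, comparable to the Grassmannian distance). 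Hence $\vartheta_{i-1}(x)$ is the lower local dimension at $\pi_i(x)$ of the pushforward of $m_x^{\xi_{i-1}}$ under $\pi_i$, and it suffices to prove that for every $\delta>0$ and $m$-a.e.\ $x$ one has $m_x^{\xi_{i-1}}(\Gamma_i(x,r))\le r^{(h_i(x)-h_{i-1}(x))/\lambda_i(x)-\delta}$ for all small $r$. The exponent is $\ge 0$, because $\widehat{\xi_i}\supseteq\widehat{\xi_{i-1}}$ forces $h_i\le h_{i-1}$ a.e.\ while $\lambda_i(x)<0$; and we may assume $\lambda_i(x)>-\infty$, since otherwise the right-hand side is $\le0$ and there is nothing to prove.

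There are two inputs. The first is entropic: by Lemma \ref{lem-3.5}, $-\tfrac1n\log m_x^{\xi_j}(\P_0^{n-1}(x))\to h_j(x)$ for $j=i-1,i$, so
\[
\frac{m_x^{\xi_{i-1}}(\P_0^{n-1}(x))}{m_x^{\xi_i}(\P_0^{n-1}(x))}=\exp\!\big(n(h_i(x)-h_{i-1}(x))+o(n)\big),\qquad n\to\infty.
\]
The second is geometric. Iterating Proposition \ref{pro-3.1}(ii) gives $\pi y-\pi x=M_{x_0\cdots x_{n-1}}(\pi\sigma^n y-\pi\sigma^n x)$ for $y\in\P_0^{n-1}(x)$, and by Lemma \ref{lem-3.4}(1) if moreover $y\in\xi_{i-1}(x)$ then $\pi\sigma^n y-\pi\sigma^n x\in V_{\sigma^n x}^{i-1}=E_{\sigma^n x}^i\oplus V_{\sigma^n x}^i$. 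Writing this vector as $u+v$, the piece $M_{x_0\cdots x_{n-1}}v\in V_x^i$ is killed by $P_{(V_x^i)^\perp}$, while $M_{x_0\cdots x_{n-1}}$ maps $E_{\sigma^n x}^i$ bijectively onto $E_x^i$ scaling norms by $e^{n\lambda_i(x)+o(n)}$ (by \eqref{e-e?}), and by part (v) of Theorem \ref{thm-2} the angle $\measuredangle(E_x^i,V_x^i)$ enters only through a constant at $x$. Combined with the a.e.\ bound $\|\pi\sigma^n x\|=e^{o(n)}$ from \eqref{e-e4.1} (restricting to the sets $Q_{n,\epsilon}$ of \eqref{e-qn}, which carry asymptotically full conditional mass by Lemma \ref{lem-4.6}), this yields, for $n$ large,
\[
\sigma^n\big(\Gamma_i(x,r)\cap\xi_{i-1}(x)\cap\P_0^{n-1}(x)\big)\ \subseteq\ \Gamma_i\!\big(\sigma^n x,\ C(x)\,e^{-n\lambda_i(x)+n\epsilon}\,r\big),
\]
with $C(x)$ controlled by $\sin\measuredangle(E_x^i,V_x^i)$; choosing $n=n(r)$ with $e^{n\lambda_i(x)}\approx r$ makes the right-hand slab of bounded radius.

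To assemble, fix $n=n(r)$ and split
\[
m_x^{\xi_{i-1}}(\Gamma_i(x,r))=\frac{m_x^{\xi_{i-1}}(\Gamma_i(x,r))}{m_x^{\xi_{i-1}}(\Gamma_i(x,r)\cap\P_0^{n-1}(x))}\cdot m_x^{\xi_{i-1}}\big(\Gamma_i(x,r)\cap\P_0^{n-1}(x)\big).
\]
By Lemma \ref{lem-3.6}(1)--(2), $m_x^{\xi_{i-1}}(\Gamma_i(x,r)\cap\P_0^{n-1}(x))=m_x^{\xi_{i-1}}(\P_0^{n-1}(x))\cdot m_{\sigma^n x}^{\xi_{i-1}}\big(\sigma^n(\Gamma_i(x,r)\cap\P_0^{n-1}(x))\big)$, and the last factor is $\le1$ by the geometric inclusion above. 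For the first factor, Lemma \ref{lem-2.4}(2) applied to $\pi_i$, the partition $\P_0^{n-1}$ and $\eta=\xi_{i-1}$, together with $\widehat{\xi_{i-1}}\vee\pi_i^{-1}\gamma_i=\widehat{\xi_i}$ and the first identity of Lemma \ref{lem-3.5} (so that $\mathbf I_m(\P_0^{n-1}\mid\widehat{\xi_i})(x)=-\log m_x^{\xi_i}(\P_0^{n-1}(x))$), shows it tends to $m_x^{\xi_i}(\P_0^{n-1}(x))^{-1}$ as $r\to0$. Hence, once $r$ is small enough, $m_x^{\xi_{i-1}}(\Gamma_i(x,r))\lesssim m_x^{\xi_{i-1}}(\P_0^{n-1}(x))/m_x^{\xi_i}(\P_0^{n-1}(x))=e^{n(h_i(x)-h_{i-1}(x))+o(n)}$ by the entropic input; with $n=n(r)$ this is $r^{(h_i(x)-h_{i-1}(x))/\lambda_i(x)-o(1)}$, which is the required estimate.

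The main obstacle is uniformity. The convergence of the first factor to $m_x^{\xi_i}(\P_0^{n-1}(x))^{-1}$ coming from Lemma \ref{lem-2.4}(2) is only a pointwise limit in $r$ for each fixed $n$, so a priori it is available only for $r<r_0(x,n,\delta)$ with no control on how $r_0(x,n,\delta)\to0$; yet the argument must take $n$ as large as $\approx\log r/\lambda_i(x)$, i.e.\ it needs all the asymptotic inputs (the Oseledets scalings, the density statement of Lemma \ref{lem-4.6}, the entropy convergence) to hold uniformly down to scale $\asymp e^{n\lambda_i(x)}$. This is exactly where the Oseledets angles $\sin\measuredangle(E_x^i,V_x^i)$ may be arbitrarily small. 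The resolution is to pass to an induced subsystem: fix $\delta>0$, let $F=F_\delta\subseteq\Sigma'$ be the positive-measure set on which $\sin\measuredangle(E_x^i,V_x^i)\ge\delta$ and the cocycle over the next block is $\delta$-close to its limiting behavior, choose $N$ large, and run the geometric estimate along the orbit segments between successive returns of $T_F=\sigma^{Nr_F}$ to $F$. Since both endpoints of each return block lie in $F$, the constants $C(\cdot)$ stay uniformly bounded, so the hidden threshold disappears and $n(r)\approx\log r/\lambda_i(x)$ is legitimate for $x\in F$. Lemma \ref{lem-inderg} then transfers the resulting bound for $(F,m_F,T_F)$ back to $(\Sigma',m,\sigma)$, identifying the per-return branching rate with $\E_m(f_{i-1}-f_i\mid\I)(x)=h_{i-1}(x)-h_i(x)$, where $f_j=\mathbf I_m(\P\mid\widehat{\xi_j})$ as in \eqref{e-hi}. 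Finally, letting $\delta\downarrow0$ and using $m(\{x:\sin\measuredangle(E_x^i,V_x^i)>0\})=1$, one obtains $\vartheta_{i-1}(x)\ge(h_i(x)-h_{i-1}(x))/\lambda_i(x)$ for $m$-a.e.\ $x$ and each $i$, which is Proposition \ref{pro-4.1}.
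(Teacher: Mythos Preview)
Your approach is correct and essentially the same as the paper's: both use the Oseledets-based geometric inclusion, the density lemma (Lemma~\ref{lem-2.4}) relating slab-conditional ratios to $\widehat{\xi_i}$-information, and induced dynamics on a good-angle set $F$ with Lemma~\ref{lem-inderg} for the transfer back. The paper makes explicit one nontrivial technical step your sketch glosses over---the $L^1$-integrability of the induced density ratios (its Proposition~\ref{pro-4.5}), obtained by adapting the maximal-function estimate behind Lemma~\ref{lem-2.4}(2) to the countable partition $\{F_k\cap A: k\ge 1,\ A\in\P_0^{kN-1}\}$ via Lemma~\ref{lem-induce}(ii)--(iii)---which is what allows the Maker-type ergodic theorem (Lemma~\ref{lem-3.15}) to be applied to the telescoped sum over returns.
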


This result can be viewed as an analogue of Proposition 11.2 in \cite{LedrappierYoung1985}. A stronger version of the result, with the inequality being replaced by the equality, was  proved earlier in \cite[Theorem 6.2]{FengHu2009}, \cite[Theorem 3.3]{Barany2015}, and \cite[Propositions 5.3 and 7.3]{BaranyKaenmaki2015} under various additional assumptions.

The proof of Proposition \ref{pro-4.1} is quite long and delicate. Besides extending some ideas from the previous works \cite{LedrappierYoung1985, FengHu2009, BaranyKaenmaki2015}, we need to employ certain new strategy as well.

We first introduce some notation and give several lemmas.

For $x\in \Sigma'$, $i\in \{1,\ldots, s\}$ and $r>0$, set
$$
 B^i_x(r)=\{v\in E^i_x:\; \|v\|\leq r\}
$$
and
$$
T_i(x,r)=\{y\in \xi_{i-1}(x):\; \pi y-\pi x\in V_x^i\oplus B^i_x(r)\}.
$$

\begin{lem}
\label{lem-5.2}
Let $x\in \Sigma'$, $i\in \{1,\ldots, s\}$, $n\in \N$ and  $r>0$. For $$0\leq a\leq \min_{v\in E_{\sigma^n x}^i,\; \|v\|=1} \|M_{x_0\cdots x_{n-1}}v\|,$$ we have
\begin{equation}
\label{e-i2}
T_i(x, a r)\cap \P_0^{n-1}(x)\subset \sigma^{-n} T_{i}(\sigma^nx, r).
\end{equation}
\end{lem}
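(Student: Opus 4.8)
The plan is to reduce the claimed inclusion to the one-step identity $\pi y-\pi x=M_{x_0\cdots x_{n-1}}(\pi\sigma^ny-\pi\sigma^nx)$ already recorded in \eqref{e-pi}, and then to read off the components of $\pi\sigma^ny-\pi\sigma^nx$ in the Oseledets splitting at the point $\sigma^nx$.

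First I would fix $y\in T_i(x,ar)\cap\P_0^{n-1}(x)$, so that $y\in\xi_{i-1}(x)$, $y_j=x_j$ for $0\le j\le n-1$, and $\pi y-\pi x\in V_x^i\oplus B^i_x(ar)$. By Lemma \ref{lem-3.4}(1), $\xi_{i-1}(x)\cap\P_0^{n-1}(x)=\sigma^{-n}(\xi_{i-1}(\sigma^nx))$, hence $\sigma^ny\in\xi_{i-1}(\sigma^nx)$; it then remains only to verify the transverse bound $\pi\sigma^ny-\pi\sigma^nx\in V^i_{\sigma^nx}\oplus B^i_{\sigma^nx}(r)$. Since $y$ and $x$ agree on coordinates $0,\dots,n-1$, the computation \eqref{e-pi} (which uses only this together with Proposition \ref{pro-3.1}(ii)) gives $\pi y-\pi x=M_{x_0\cdots x_{n-1}}w$ where $w:=\pi\sigma^ny-\pi\sigma^nx$, and by assumption $M_{x_0\cdots x_{n-1}}w\in V_x^i\oplus B^i_x(ar)=\bigl(\oplus_{j=i+1}^sE^j_x\bigr)\oplus B^i_x(ar)$.

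The core of the argument is to decompose $w=\sum_{j=1}^s w_j$ with $w_j\in E^j_{\sigma^nx}$ and exploit invariance of the splitting. Applying property (ii) of Theorem \ref{thm-2} along the orbit segment $\sigma^nx,\sigma^{n-1}x,\dots,\sigma x$ (under $\sigma^{-1}$), one obtains $M_{x_0\cdots x_{n-1}}E^j_{\sigma^nx}\subseteq E^j_x$, with equality — and therefore an isomorphism onto $E^j_x$, the two spaces having the same dimension $k_j$ — whenever $\lambda_j\neq-\infty$. By uniqueness of the Oseledets decomposition in $\R^d$, the $E^j_x$-component of $M_{x_0\cdots x_{n-1}}w$ equals $M_{x_0\cdots x_{n-1}}w_j$; comparing with $M_{x_0\cdots x_{n-1}}w\in\bigl(\oplus_{j=i+1}^sE^j_x\bigr)\oplus B^i_x(ar)$ forces $M_{x_0\cdots x_{n-1}}w_j=0$ for every $j<i$ and $\|M_{x_0\cdots x_{n-1}}w_i\|\le ar$. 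For $j<i$ we have $\lambda_j\neq-\infty$ (a strictly decreasing list of exponents has at most one entry equal to $-\infty$, namely $\lambda_s$), so $M_{x_0\cdots x_{n-1}}$ is injective on $E^j_{\sigma^nx}$ and hence $w_j=0$. For $j=i$ and $a>0$, the hypothesis on $a$ yields $\|M_{x_0\cdots x_{n-1}}w_i\|\ge a\|w_i\|$, so $\|w_i\|\le r$; consequently $w\in V^i_{\sigma^nx}\oplus B^i_{\sigma^nx}(r)$, which combined with $\sigma^ny\in\xi_{i-1}(\sigma^nx)$ gives $\sigma^ny\in T_i(\sigma^nx,r)$, i.e.\ $y\in\sigma^{-n}T_i(\sigma^nx,r)$. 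The degenerate case $a=0$ is handled directly: then $T_i(x,0)\cap\P_0^{n-1}(x)=\xi_i(x)\cap\P_0^{n-1}(x)=\sigma^{-n}\xi_i(\sigma^nx)$ by Lemma \ref{lem-3.4}(1), and $\xi_i(\sigma^nx)\subseteq T_i(\sigma^nx,r)$ trivially.

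I expect the only real subtlety to be the bookkeeping with possibly vanishing Lyapunov exponents: one must use that Theorem \ref{thm-2} provides a genuine $\sigma^{-1}$-invariant \emph{splitting} (not merely a flag, as the classical Oseledets theorem would give in the non-invertible case), and that injectivity of $M_{x_0\cdots x_{n-1}}$ on $E^j_{\sigma^nx}$ is available for all $j<i$ because $\lambda_j>-\infty$ there. The remaining ingredients are the already-established identity \eqref{e-pi}, Lemma \ref{lem-3.4}(1), and elementary linear algebra.
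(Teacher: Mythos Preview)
Your proof is correct and follows the same overall architecture as the paper's: first use Lemma~\ref{lem-3.4}(1) to get $\sigma^ny\in\xi_{i-1}(\sigma^nx)$, then control the transverse displacement $w:=\pi\sigma^ny-\pi\sigma^nx$ via the identity \eqref{e-pi} and the invariance $M_{x_0\cdots x_{n-1}}E^j_{\sigma^nx}\subseteq E^j_x$. The one genuine difference lies in how you establish $w\in V_{\sigma^nx}^{i-1}$, i.e.\ that the components $w_j$ for $j<i$ vanish. The paper does this dynamically: it notes $\pi y-\pi x\in V_x^{i-1}$, invokes the backward-exponent characterization \eqref{e-v1} of $V^{i-1}$ at $x$, and then reads off \eqref{e-v1} at $\sigma^nx$ to conclude $w\in V_{\sigma^nx}^{i-1}$. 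You instead decompose $w$ fully in the Oseledets splitting at $\sigma^nx$, push forward, and kill the $j<i$ components by the injectivity of $M_{x_0\cdots x_{n-1}}$ on $E^j_{\sigma^nx}$ guaranteed by Theorem~\ref{thm-2}(ii) and $\lambda_j>-\infty$. Your route is a bit more purely linear-algebraic and sidesteps the asymptotic characterization \eqref{e-v1}; the paper's route is shorter once \eqref{e-v1} is in hand. After this point the two arguments coincide.

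One small bonus: you treat the degenerate case $a=0$ explicitly (via $T_i(x,0)=\xi_i(x)$ and Lemma~\ref{lem-3.4}(1)), whereas the paper's final inequality $ar\ge a\|w_2\|$ only yields $\|w_2\|\le r$ when $a>0$. This is harmless for the paper since the lemma is only ever applied with $a=\epsilon^{-1}\rho(i,x)>0$, but your version is cleaner as a standalone statement.
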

\begin{proof}
 Let $y\in T_i(x, ar)\cap \P^{n-1}_0(x)$.
By definition,  \begin{align}
& y \in \xi_{i-1}(x)\cap \P^{n-1}_0(x) \quad \mbox{ and } \label{e-i3}\\
& \pi y - \pi x\in V_{x}^i\oplus B^i_{x}(ar). \label{e-i4}
 \end{align}
By \eqref{e-i3} and Lemma \ref{lem-3.4}(1),  $y\in \sigma^{-n}(\xi_{i-1}(\sigma^n x))$. Moreover since $y\in \P_{0}^{n-1}(x)$, by \eqref{e-pi},
\begin{equation}
\label{e-i5}\pi y-\pi x =  M_{x_0\ldots x_{n-1}}(\pi \sigma^n y-\pi\sigma^n x).
\end{equation}
 Since $y\in \xi_{i-1}(x)$,  by definition $\pi y-\pi x\in V_{x}^{i-1}=V_x^i\oplus E_x^i$. Applying \eqref{e-v1} to $V_x^{i-1}$ yields
 $$\limsup_{k\to \infty}\frac{1}{k}\log \|M_{x_{-k}\ldots x_{-1}}(\pi y-\pi x)\|\leq \lambda_i(x)=\lambda_i(\sigma^nx),$$
 where the last equality follows from  Theorem \ref{thm-2}(vii).
Hence by \eqref{e-i5},
 $$\limsup_{k\to \infty}\frac{1}{n+k}\log \|M_{x_{-k}\ldots x_{-1}x_0\ldots x_{n-1}}(\pi \sigma^n y-\pi \sigma^n x)\|\leq \lambda_i(\sigma^nx).$$
Applying  \eqref{e-v1} to $V_{\sigma^nx}^{i-1}$  gives $\pi \sigma^n y-\pi\sigma^n x\in V_{\sigma^n x}^{i-1}=V_{\sigma^nx}^i\oplus E_{\sigma^n x}^i$. Write
\begin{align*}
\pi y-\pi x &=v_1+ w_1 \quad \mbox{ with $v_1\in V_{x}^i$ and $w_1\in E_x^i$},\\
\pi \sigma^n y-\pi \sigma^n x& =v_2+ w_2 \quad \mbox{ with $v_2\in V_{\sigma^n x}^i$ and $w_2\in E_{\sigma^nx}^i$}.
\end{align*}
By \eqref{e-i4}, $w_1\in B^i_{x}(ar)$ and hence $\|w_1\|\leq ar$. Since $M_{x_0\cdots x_{n-1}}V_{\sigma^nx}^i\subset V_x^i$ and $M_{x_0\cdots x_{n-1}}E_{\sigma^nx}^i\subset E_x^i$, by \eqref{e-i5} we see that $w_1=M_{x_0\ldots x_{n-1}}w_2$ and so
$$
ar\geq  \|w_1\|=\|M_{x_0\ldots x_{n-1}}w_2\|\geq a\|w_2\|.
$$
It follows that $\|w_2\|\leq r$.
 Hence
$\pi \sigma^n y-\pi\sigma^n x\in V_{\sigma^nx}^i\oplus B_{\sigma^nx}^i(r)$. This together with $y\in \sigma^{-n}(\xi_{i-1}(\sigma^n x))$ yields that $y\in \sigma^{-n}T_i(\sigma^n x, r)$. Therefore
$$T_i(x, ar)\cap \P_0^{n-1}(x)\subset \sigma^{-n}T_i(\sigma^n x, r)$$
 and we are done.
\end{proof}

Let $\theta(x)$ denote the smallest angle between the Oseledets subspaces, i.e.
$$
\theta(x)=\min_{I\cap J=\emptyset} \measuredangle \Big( \oplus_{i\in I}E^i_{x},\; \oplus_{j\in J}E^j_{x}\Big).
$$
 We have the following.
\begin{lem}
\label{lem-4.2} For $x\in \Sigma'$,   $i\in \{1,\ldots, s\}$ and  $r>0$,
$$T_{i}(x,r)\subset \xi_{i-1}(x)\cap {\Gamma_i}(x,r)\subset T_i(x, r/\sin\theta(x)).$$
\end{lem}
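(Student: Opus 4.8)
The plan is to reduce both inclusions to one elementary observation: for the two parallel affine subspaces $\pi y+V_x^i$ and $\pi x+V_x^i$ (translates of the same linear subspace $V_x^i$), their distance equals $\|P_{(V_x^i)^\perp}(\pi y-\pi x)\|$, the norm of the orthogonal projection of $\pi y-\pi x$ onto $(V_x^i)^\perp$. Granting this, the first inclusion $T_i(x,r)\subset\xi_{i-1}(x)\cap\Gamma_i(x,r)$ is immediate: if $y\in T_i(x,r)$ then by definition $y\in\xi_{i-1}(x)$ and $\pi y-\pi x=v+w$ with $v\in V_x^i$, $w\in E_x^i$ and $\|w\|\le r$; since $P_{(V_x^i)^\perp}v=0$ we get $\mathrm{dist}(\pi y+V_x^i,\pi x+V_x^i)=\|P_{(V_x^i)^\perp}w\|\le\|w\|\le r$, so $y\in\Gamma_i(x,r)$.

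For the reverse inclusion, take $y\in\xi_{i-1}(x)\cap\Gamma_i(x,r)$. Membership in $\xi_{i-1}(x)$ gives $\pi y-\pi x\in V_x^{i-1}=V_x^i\oplus E_x^i$, so we may write $\pi y-\pi x=v+w$ with $v\in V_x^i$ and $w\in E_x^i$; membership in $\Gamma_i(x,r)$ gives $\|P_{(V_x^i)^\perp}w\|=\mathrm{dist}(\pi y+V_x^i,\pi x+V_x^i)\le r$. The point is then to bound $\|w\|$ from above in terms of $\|P_{(V_x^i)^\perp}w\|$: once we show $\|w\|\le\|P_{(V_x^i)^\perp}w\|/\sin\theta(x)$ we conclude $\|w\|\le r/\sin\theta(x)$, hence $\pi y-\pi x\in V_x^i\oplus B^i_x(r/\sin\theta(x))$ and $y\in T_i(x,r/\sin\theta(x))$, as desired.

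So the real content is the linear-algebra estimate: for every $w\in E_x^i$ one has $\|P_{(V_x^i)^\perp}w\|\ge\|w\|\sin\theta(x)$. I would prove it as follows. If $P_{V_x^i}w=0$ (in particular if $w=0$) the estimate is trivial, even with $\sin\theta(x)$ replaced by $1$. Otherwise put $u:=P_{V_x^i}w\in V_x^i\setminus\{0\}$; the orthogonal decomposition $w=u+P_{(V_x^i)^\perp}w$ gives $\langle w,u\rangle=\|u\|^2\ge0$, whence $\cos\measuredangle(w,u)=\|u\|/\|w\|$ and, by Pythagoras, $\sin\measuredangle(w,u)=\|P_{(V_x^i)^\perp}w\|/\|w\|$. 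But $\sin\measuredangle(w,u)\ge\sin\measuredangle(E_x^i,V_x^i)\ge\sin\theta(x)$, because $V_x^i=\oplus_{j=i+1}^{s}E_x^j$ and $\theta(x)$ is by definition the minimum of $\measuredangle(\oplus_{j\in I}E_x^j,\oplus_{j\in J}E_x^j)$ over disjoint index sets $I,J$, taken here with $I=\{i\}$ and $J=\{i+1,\dots,s\}$ (all these angles lying in $[0,\pi/2]$, where $\sin$ is increasing). The degenerate case $i=s$, in which $V_x^s=\{0\}$ and $J$ is empty, falls under the trivial case $P_{V_x^i}w=0$. I expect this elementary angle estimate---in particular fixing the direction of the inequality and handling the $V_x^s=\{0\}$ case---to be the only mildly delicate point; the rest is just unwinding the definitions of $T_i$, $\Gamma_i$ and $\xi_{i-1}$.
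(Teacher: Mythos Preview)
Your proof is correct and follows essentially the same approach as the paper's: the paper decomposes $\pi z-\pi x$ orthogonally within $V_x^{i-1}$ as $v+u$ with $v\in V_x^i$ and $u\in U_x^i:=V_x^{i-1}\ominus V_x^i$, then re-decomposes as $v_1+w_1$ with $w_1\in E_x^i$, and bounds $\|w_1\|$ via the angle between $w_1\in E_x^i$ and $v-v_1\in V_x^i$. Your formulation---projecting $w\in E_x^i$ onto $V_x^i$ and $(V_x^i)^\perp$ to obtain $\|P_{(V_x^i)^\perp}w\|\ge\|w\|\sin\theta(x)$ directly---is the same computation (your $P_{V_x^i}w$ is the paper's $v-v_1$, and your $P_{(V_x^i)^\perp}w$ is the paper's $u$), packaged a bit more cleanly as a standalone linear-algebra estimate.
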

\begin{proof}
We first prove that $T_{i}(x,r)\subset \xi_{i-1}(x)\cap {\Gamma_i}(x,r)$. Let $y\in T_i(x,r)$. Then by definition, $y\in \xi_{i-1}(x)$  and $\pi y-\pi x=v+w$ for some
$v\in V^i_x$, $w\in E^i_x$ with $\|w\|\leq r$, which implies that $$\mbox{dist}(\pi y+V_x^i, \; \pi x+V_x^i)\leq \|w\|\leq r.$$
Hence  $y\in \xi_{i-1}(x)\cap {\Gamma_i}(x,r)$. This proves the relation  $T_{i}(x,r)\subset \xi_{i-1}(x)\cap {\Gamma_i}(x,r)$.

Next we prove that $\xi_{i-1}(x)\cap {\Gamma_i}(x,r)\subset T_i(x, r/\sin(\theta(x)))$. Let $U_x^i:=V^{i-1}_x\ominus V^i_x$ denote the orthogonal complement of $V_x^i$ in $V^{i-1}_x$.
Let
$z\in \xi_{i-1}(x)\cap {\Gamma_i}(x,r)$. Then $\pi z-\pi x\in V_x^{i-1}$ and $\mbox{dist}(\pi z+V_x^i, \pi x +V_x^i)\leq r$. Hence
$\pi z-\pi x=v+u$ for some $v\in V_x^i$ and $u\in U_x^i$ with $\|u\|\leq r$. Since $v+u\in V_x^{i-1}=V_x^i\oplus E_x^i$,
$v+u=v_1+w_1$ for some  $v_1\in V^i_x$ and $w_1\in E_x^i$. Notice that $w_1= (v-v_1)+u$ with $u\perp (v-v_1)$. We have
$$
\|w_1\|=\frac{\|u\|}{\sin \measuredangle(w_1, v-v_1)}\leq \frac{\|u\|}{\sin \theta(x)}\leq \frac{r}{\sin \theta(x)}.
$$
  Thus $\pi z-\pi x=v_1+w_1$, where $v_1\in V_x^i$ and $w_1\in E_x^i$ with $\|w_1\|\leq r/\sin \theta(x)$.  Therefore,  $z\in T_i(x, r/\sin \theta(x))$ and we are done.
\end{proof}

Now we turn back to the proof of Proposition \ref{pro-4.1}. Clearly, to prove the proposition it is sufficient to show that for any $\epsilon>0$, there exists $F(\epsilon)\subset \Sigma'$  so that
\begin{equation}
\label{e-import}
\vartheta_{i-1}(x)\geq  \frac{h_{i-1}(x)-h_{i}(x)}{-\lambda_{i}(x)+\epsilon}\quad  \mbox{ for $m$-a.e.~$x\in F(\epsilon)$ and }i\in \{1,\ldots, s\}.
\end{equation}
and  $\lim_{\epsilon\to 0}m(F(\epsilon))=1$.

 Here and afterwards in this section, we may assume that $\lambda_s\neq -\infty$ a.e., since Proposition \ref{pro-4.1}  holds automatically when $i=s$ and $\lambda_s(x)=-\infty$.

 We  first construct $F(\epsilon)$ for $\epsilon>0$. Set
\begin{equation}
\label{e-t1}
F_0(\epsilon):=\{x\in \Sigma':\;\sin \theta(x)>\epsilon\}.
\end{equation}
 By \eqref{e-e?},  there exist a large integer $N(\epsilon)$ and a Borel set $F(\epsilon)\subset F_0(\epsilon)$ with $m(F(\epsilon))>(1-\epsilon)m(F_0(\epsilon))$ so that for $i\in \{1,\ldots, s\}$,
\begin{equation}\label{e-t0}
 \|M_{x_0\cdots x_{n-1}}v\| \geq \epsilon^{-1} e^{n(\lambda_i(x)-\epsilon)}\|v\|\end{equation}
for $x\in F(\epsilon)$,  $n\geq N(\epsilon)$ and $v\in E_{\sigma^nx}^i$. Clearly, $m(F(\epsilon))\to 1$ as $\epsilon\to 0$.

In the remaining part of this section we prove \eqref{e-import} for the constructed $F(\cdot)$. From now on, we fix $\epsilon>0$ and write simply $F=F(\epsilon)$ and $N=N(\epsilon)$.

Let $\sigma_F: F\to F$ be the transformation induced by $\sigma^N$ on the set $F$ (cf. Section~\ref{Sect-2.3}). That is, $\sigma_F(x)=\sigma^{Nr_F(x)}(x)$, where
$$
r_F(x):=\inf\{ n\geq 1: \; \sigma^{nN}x\in F\}.
$$
The map $\sigma_F$ is well-defined on $F$ up to a set of zero $m$-measure.  Let $m_F$ be the Borel probability measure on $F$ defined by
$$m_F(D)=\frac{m(F\cap D)}{m(F)} \quad  \mbox{ for any Borel set }D\subset F.$$
Recall that $m_F$ is $\sigma_F$-invariant.

For $x\in F$,  set
\begin{equation}
\label{e-rho}
\begin{split}
\ell (x)&=N r_F(x)\qquad \mbox{ and } \\
 {\rho}(i, x)&=e^{\ell (x) (\lambda_i(x)-\epsilon)}, \qquad i=1,\ldots, s.
\end{split}
\end{equation}
Then we have

\begin{lem}
For $x\in F$, $i\in \{1,\ldots, s\}$ and $r>0$,
\begin{align}
\xi_{i-1}(x)\cap {\Gamma_{i}}\left(x,\;  {\rho}(i, x) r\right)&\cap \P_0^{\ell (x)-1}(x) \subset  \sigma^{-\ell(x)} \left({\Gamma_i}\left(\sigma_F x,r\right)
\cap \xi_{i-1}(\sigma_Fx)\right).
\label{e-t2}
\end{align}
\end{lem}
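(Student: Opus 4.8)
The plan is to derive \eqref{e-t2} from the one-step tube inclusion of Lemma~\ref{lem-5.2} (which is phrased with the tubes $T_i$ and the ordinary shift $\sigma^n$) by sandwiching it between the two inclusions of Lemma~\ref{lem-4.2} that compare $T_i(z,\cdot)$ with $\xi_{i-1}(z)\cap\Gamma_i(z,\cdot)$, and then replacing $\sigma^{\ell(x)}$ by the induced map $\sigma_F$. Fix $x\in F$, $i\in\{1,\dots,s\}$ and $r>0$, and take $y$ in the left-hand side of \eqref{e-t2}. Since $\xi_{i-1}(x)\subseteq\xi_0(x)$ and $y\in\P_0^{\ell(x)-1}(x)$, the point $y$ agrees with $x$ on every coordinate $j\le\ell(x)-1$; in particular $\sigma^{\ell(x)}x=\sigma_F x$. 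One may also assume $y\in F$, since the right-hand side of \eqref{e-t2} is by definition contained in $F$; then the definition of the transformation induced by $\sigma^N$ on $F$, together with the agreement of $y$ and $x$ on the initial block of length $\ell(x)=Nr_F(x)$, lets $\sigma_F$ act on $y$ exactly as $\sigma^{\ell(x)}$, i.e. $\sigma_F y=\sigma^{\ell(x)}y$. Thus it suffices to show $\sigma^{\ell(x)}y\in\xi_{i-1}(\sigma^{\ell(x)}x)\cap\Gamma_i(\sigma^{\ell(x)}x,r)$.

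To prove this I would first pass from $\Gamma_i$ to a tube at the base point: by the second inclusion of Lemma~\ref{lem-4.2}, the bound $\sin\theta(x)>\epsilon$ valid for $x\in F\subseteq F_0(\epsilon)$ (see \eqref{e-t1}), and monotonicity of $T_i(x,\cdot)$ in the radius,
\[
y\in\xi_{i-1}(x)\cap\Gamma_i(x,\rho(i,x)r)\cap\P_0^{\ell(x)-1}(x)\subseteq T_i(x,\epsilon^{-1}\rho(i,x)r)\cap\P_0^{\ell(x)-1}(x).
\]
Then I would apply Lemma~\ref{lem-5.2} with $n=\ell(x)$, radius parameter $r$, and $a=\epsilon^{-1}\rho(i,x)$. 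Its hypothesis asks that $a\le\min_{v\in E^i_{\sigma^{\ell(x)}x},\,\|v\|=1}\|M_{x_0\cdots x_{\ell(x)-1}}v\|$; since $\ell(x)=Nr_F(x)\ge N=N(\epsilon)$ and $x\in F$, inequality \eqref{e-t0} with $n=\ell(x)$ gives, for every unit vector $v\in E^i_{\sigma^{\ell(x)}x}$,
\[
\|M_{x_0\cdots x_{\ell(x)-1}}v\|\ \ge\ \epsilon^{-1}e^{\ell(x)(\lambda_i(x)-\epsilon)}\ =\ \epsilon^{-1}\rho(i,x)\ =\ a,
\]
the middle equality being the definition of $\rho(i,x)$ in \eqref{e-rho}. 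Hence Lemma~\ref{lem-5.2} applies and yields $\sigma^{\ell(x)}y\in T_i(\sigma^{\ell(x)}x,r)$. Finally, the first inclusion of Lemma~\ref{lem-4.2} applied at $\sigma^{\ell(x)}x\in\Sigma'$ gives $T_i(\sigma^{\ell(x)}x,r)\subseteq\xi_{i-1}(\sigma^{\ell(x)}x)\cap\Gamma_i(\sigma^{\ell(x)}x,r)$, which together with $\sigma_F y=\sigma^{\ell(x)}y$ and $\sigma_F x=\sigma^{\ell(x)}x$ is exactly the membership $y\in\sigma_F^{-1}\!\big(\Gamma_i(\sigma_F x,r)\cap\xi_{i-1}(\sigma_F x)\big)$ that is claimed.

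The only place where one must be careful --- rather than the step that is genuinely hard --- is matching the two occurrences of $\epsilon$: the factor $1/\sin\theta(x)<\epsilon^{-1}$ produced by the geometry in Lemma~\ref{lem-4.2} has to be absorbed exactly by the $\epsilon^{-1}$ deliberately inserted into the lower bound \eqref{e-t0} defining $F=F(\epsilon)$, so that the radius on the left of \eqref{e-t2} is precisely $\rho(i,x)r$ and the one on the right is precisely $r$, with no stray constant; this alignment is the whole reason \eqref{e-t0} was set up with that prefactor. Everything else is routine bookkeeping: that $\ell(x)\ge N(\epsilon)$ because $r_F\ge1$; that $\rho(i,x)$ is a genuine positive number because in this section we have reduced to the case $\lambda_s\neq-\infty$, so all $\lambda_i$ are finite; and the identification of $\sigma_F$ with $\sigma^{\ell(x)}$ on the relevant part of the atom $\P_0^{\ell(x)-1}(x)$.
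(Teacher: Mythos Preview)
Your argument is essentially the paper's: pass from $\xi_{i-1}(x)\cap\Gamma_i(x,\rho(i,x)r)$ to $T_i(x,\epsilon^{-1}\rho(i,x)r)$ via the second inclusion of Lemma~\ref{lem-4.2} and $\sin\theta(x)>\epsilon$, apply Lemma~\ref{lem-5.2} with $n=\ell(x)$ and $a=\epsilon^{-1}\rho(i,x)$ (the hypothesis being exactly \eqref{e-t0}), and then return to $\Gamma_i\cap\xi_{i-1}$ at the shifted point via the first inclusion of Lemma~\ref{lem-4.2}. That part is correct and matches the paper verbatim.

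Where you go astray is the paragraph about ``assuming $y\in F$'' and deducing $\sigma_F y=\sigma^{\ell(x)}y$. The reasoning is not valid: you cannot assume $y\in F$ just because the right-hand side lies in $F$ when trying to prove an inclusion, and even if $y\in F$ and $y$ agrees with $x$ on coordinates $\le \ell(x)-1$, it does \emph{not} follow that $r_F(y)=r_F(x)$, since membership in $F$ depends on the entire sequence, not only the initial block. In the paper this issue simply does not arise: the symbol $\sigma_F^{-1}(A)$ in \eqref{e-t2} is shorthand for $\sigma^{-\ell(x)}(A)$ at the fixed base point $x$ (indeed the paper writes $\sigma^{-\ell(x)}T_i(\sigma^{\ell(x)}x,r)=\sigma_F^{-1}T_i(\sigma_F x,r)$ without comment, and the subsequent application of Lemma~\ref{lem-3.6}(3) with $n=\ell(x)$ confirms this reading). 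Once you adopt that convention, your chain of inclusions already gives $y\in\sigma^{-\ell(x)}\bigl(\Gamma_i(\sigma^{\ell(x)}x,r)\cap\xi_{i-1}(\sigma^{\ell(x)}x)\bigr)$, which is exactly what is needed; the discussion of $y\in F$ should be deleted.
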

\begin{proof}
Fix $x\in F$, $i\in \{1,\ldots, s\}$ and $r>0$. Set $a=\epsilon^{-1}{\rho}(i, x)$.  Since $\ell(x)=Nr_F(x)\geq N$, by \eqref{e-t0},
\begin{equation}
\label{e-ttd}
a=\epsilon^{-1}e^{\ell (x)(\lambda_i(x)-\epsilon)}\leq \inf\{\|M^{\ell (x)}(x)v\|:\; v\in E_{\sigma^{\ell(x)}x}^i,\; \|v\|=1\},
\end{equation}
where $M^n(x):=M_{x_0\cdots x_{n-1}}$.
Observe that
\begin{equation*}
\begin{split}
\xi_{i-1}&(x)\cap {\Gamma_{i}}\left(x,\;  {\rho}(i, x) r\right)\\
  & \subset T_i\left(x, {\rho}(i, x) r/\sin\theta(x)\right) \qquad (\mbox{by Lemma \ref{lem-4.2}})\\
& \subset T_i\left(x, \epsilon^{-1}{\rho}(i, x) r\right)\qquad\quad (\mbox{since $\sin \theta(x)\geq \epsilon$})\\
&= T_i\left(x, a r\right).\\
\end{split}
\end{equation*}
Hence
\begin{equation*}
\begin{split}
\xi_{i-1}&(x)\cap {\Gamma_{i}}\left(x,\;  {\rho}(i, x) r\right) \cap \P_0^{\ell (x)-1}(x)\\
  & \subset  T_i\left(x, a r\right)\cap \P_0^{\ell (x)-1}(x)\\
& \subset \sigma^{-\ell (x)} T_i\left(\sigma^{\ell (x)} x,  r\right)\qquad\qquad (\mbox{by \eqref{e-ttd} and Lemma \ref{lem-5.2}})\\
& =  \sigma^{-\ell (x)} T_i\left(\sigma_F x,  r\right)  \\
&\subset \sigma^{-\ell(x)} ({\Gamma_i}(\sigma_F x,  r)\cap \xi_{i-1}(\sigma_Fx)) \qquad (\mbox{by Lemma \ref{lem-4.2}}).
\end{split}
\end{equation*}
This completes the proof of the lemma.
\end{proof}

Now write
\begin{equation}
\label{e-l32}
F_n:=\{x\in F:\; r_F(x)=n\},\qquad n=1,2,\ldots.
\end{equation}
Recall that $\{m_x^{\xi_i}\}$ is the canonical system of conditional measures associated with $\xi_i$, $i=0,\ldots, s$. The following result is an induced version of Lemma \ref{lem-2.4}.


 \begin{pro}
 \label{pro-4.5}
Let $i\in \{1,\ldots, s\}$. Then for $m$-a.e.~$x\in F$,
\begin{equation}\label{e-4.13}
\begin{split}
\lim_{r\to 0}\log \frac
{m^{\xi_{i-1}}_x\left({\Gamma_i}(x,r)\cap \P_{0}^{\ell (x)-1}(x)\right)}
{m^{\xi_{i-1}}_x\left({\Gamma_i}(x,r)\right)}
&=-\sum_{k=1}^\infty \chi_{F_k}(x)\sum_{j=0}^{kN-1} {\bf I}_m(\P|\widehat{\xi_i})(\sigma^j x).
\end{split}
\end{equation}
Furthermore, set
\begin{equation}\label{e-2.5''}
g(x)=-\inf_{r>0}
\log \frac
{m^{\xi_{i-1}}_x\left({\Gamma_i}(x,r)\cap \P_{0}^{\ell (x)-1}(x)\right)}
{m^{\xi_{i-1}}_x\left({\Gamma_i}(x,r)\right)}.
\end{equation}
 Then $g\geq 0$ and $g\in L^1(F,{\B}|_F,m_F)$.
\end{pro}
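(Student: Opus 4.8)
The plan is to deduce Proposition~\ref{pro-4.5} from Lemma~\ref{lem-2.4}(2) by passing to the induced system $(F,\B|_F,m_F,\sigma_F)$ together with the induced partition machinery. The key observation is that, by Lemma~\ref{lem-3.4}(1) applied with $n=\ell(x)=Nr_F(x)$ (so that $\ell$ is the return-time cocycle of $\sigma^N$ rather than $\sigma$), the trace on $F$ of the partition $\P_0^{\ell(x)-1}$ is exactly the partition $\widetilde\P$ whose atom at $x$ is $F\cap\P_0^{\ell(x)-1}(x)$; this is a countable $\B|_F$-measurable partition of $F$, and the computation in Lemma~\ref{lem-induce}(iii) shows $H_{m_F}(\widetilde\P)<\infty$. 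Similarly the coding map $\pi$ restricted to $F$ induces, via $x\mapsto(x^-,V_x^i+\pi x)$ (i.e.\ the map $\pi_i$ of Section~\ref{S-4} restricted to $F$), a measurable partition of $F$ into the sets $\xi_i(x)\cap F$; call it $\widetilde\xi_i$. Here one must be slightly careful: $\Gamma_i(x,r)$ is a neighbourhood in the pseudo-metric $\mathrm{dist}(\pi y+V_x^i,\pi x+V_x^i)$, and the Besicovitch covering property is applied on the affine Grassmannian target $Y_i$ through $\pi_i$, exactly as in the proof of the measurability of $\xi_i$; thus $\Gamma_i(x,r)=\pi_i^{-1}B(\pi_i x,r)$ up to the $\Sigma^-$-factor, and $\Gamma_i(x,r)\cap F=(\pi_i|_F)^{-1}(\text{ball})$, so $\Gamma_i(x,r)\cap F$ plays the role of $B^{\pi_i}(x,r)$ in the induced Lebesgue space $F$.

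**Applying Lemma~\ref{lem-2.4}(2) on the induced system.** With these identifications I would invoke Lemma~\ref{lem-2.4}(2) on the Besicovitch-type target space $Y_{i-1}$ with: the Lebesgue space $(F,\B|_F,m_F)$, the measurable map $\pi_{i-1}|_F:F\to Y_{i-1}$ (whose induced partition is $\widetilde\xi_{i-1}$), the measurable partition $\eta=\widetilde\xi_{i-1}$ itself, and the finite-entropy countable partition $\alpha=\widetilde\P$. Lemma~\ref{lem-2.4}(2) then yields, for $m_F$-a.e.\ $x\in F$,
\begin{equation*}
\lim_{r\to 0}\log\frac{(m_F)^{\widetilde\xi_{i-1}}_x\big(\Gamma_i(x,r)\cap \P_0^{\ell(x)-1}(x)\big)}{(m_F)^{\widetilde\xi_{i-1}}_x\big(\Gamma_i(x,r)\big)}=-{\bf I}_{m_F}\big(\widetilde\P\,\big|\,\widehat{\widetilde\xi_{i-1}}\vee \widehat{\widetilde\xi_i}\big)(x),
\end{equation*}
and, with $\tilde g$ defined as the corresponding supremum over $r>0$, that $\tilde g\ge 0$ and $\tilde g\in L^1(F,\B|_F,m_F)$. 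Here I use the Remark after Lemma~\ref{lem-2.4} to rewrite the conditioning $\widehat\eta\vee\pi_{i-1}^{-1}\gamma$ as $\widehat{\widetilde\xi_{i-1}}\vee\widehat{\widetilde\xi_i}$, noting that the measurable partition induced on $F$ by the point map of $Y_{i-1}$ refined against $\eta=\widetilde\xi_{i-1}$ is precisely $\widetilde\xi_i$ (because within a $\widetilde\xi_{i-1}$-atom, specifying $\pi y+V^i_x$ up to the $V^{i-1}$-direction amounts to specifying the $\widetilde\xi_i$-atom).

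**Reconciling conditional measures and identifying the right-hand side.** Two remaining points have to be checked. First, one must replace $(m_F)^{\widetilde\xi_{i-1}}_x$ by $m^{\xi_{i-1}}_x$: since $m_F=\frac{1}{m(F)}m|_F$ is obtained by restriction, Lemma~\ref{lem-2.9} (applied with $A=F$, and with $\pi$ replaced by $\pi_{i-1}$) gives $(m_F)^{\widetilde\xi_{i-1}}_x=(m^{\xi_{i-1}}_x)_F$, i.e.\ the two conditional measures are proportional on subsets of $F$, so the ratio in \eqref{e-4.13} is unchanged. (One should also note $\Gamma_i(x,r)\cap\P_0^{\ell(x)-1}(x)\subset F$ and $\Gamma_i(x,r)\cap F\supset$ the relevant mass, so all sets appearing are inside $F$ for $r$ small, as $m^{\xi_{i-1}}_x$ is supported on $\xi_{i-1}(x)$.) Second, the entropy term must be turned into the Birkhoff-type sum on the right of \eqref{e-4.13}. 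For this I would use Lemma~\ref{lem-par}(ii)-(iv) and induction exactly as in \eqref{e-xi}: on the set $F_k=\{r_F=k\}$ one has $\widetilde\P(x)=\P_0^{kN-1}(x)$, hence ${\bf I}_{m_F}(\widetilde\P\,|\,\widehat{\widetilde\xi_{i-1}}\vee\widehat{\widetilde\xi_i})(x)$ can be rewritten — using that $\widehat{\widetilde\xi_{i-1}}\vee\widehat{\widetilde\xi_i}=\widehat{\widetilde\xi_i}$ since $\xi_i$ refines $\xi_{i-1}$ — as ${\bf I}_{m_F}(\P_0^{kN-1}\,|\,\widehat{\widetilde\xi_i})(x)$, and then by the decomposition \eqref{e-xi} adapted to the shift (noting $\sigma^{-j}\xi_i$ traces correctly on $F$ for $0\le j\le kN-1$) this equals $\sum_{j=0}^{kN-1}{\bf I}_m(\P\,|\,\widehat{\xi_i})(\sigma^j x)$. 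Summing over $k$ against $\chi_{F_k}$ gives precisely the right-hand side of \eqref{e-4.13}. Finally, taking $g$ as in \eqref{e-2.5''} and using the proportionality from Lemma~\ref{lem-2.9} shows $g=\tilde g$ up to $m_F$-null sets, so $g\ge0$ and $g\in L^1(F,\B|_F,m_F)$.

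**Main obstacle.** I expect the delicate point to be the careful bookkeeping in the identification of the induced partitions and conditional measures — in particular verifying that the measurable partition generated on $F$ by the pair $(\pi_{i-1}|_F,\,\text{point map})$ is genuinely $\widetilde\xi_i$ (this needs the flag structure $V^i_x\subset V^{i-1}_x$ and $\sigma$-invariance from Lemma~\ref{lem-3.1}), and that $\Gamma_i(x,r)$ is honestly of the form $B^{\pi_{i-1}}(x,r)$ relative to a Besicovitch target. The ratio itself involves the $V^i$-neighbourhood while the conditioning is by $\xi_{i-1}$, so one must confirm the hypotheses of Lemma~\ref{lem-2.4}(2) are met with the correct choice of map ($\pi_{i-1}$, whose induced partition is coarser than $\widehat\eta=\widehat{\widetilde\xi_{i-1}}$ — indeed here $\eta$ equals the induced partition, which is the borderline-admissible case). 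Once these identifications are pinned down, the rest is a transcription of the finite-entropy estimate in Lemma~\ref{lem-2.4}(2) plus the computation \eqref{e-xi}, both already available.
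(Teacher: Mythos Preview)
Your approach has a genuine gap. The quantities in \eqref{e-4.13} and \eqref{e-2.5''} involve the conditional measures $m^{\xi_{i-1}}_x$ of $m$ on $\Sigma'$, evaluated at sets $\Gamma_i(x,r)$ and $\P_0^{\ell(x)-1}(x)$ which are \emph{not} contained in $F$ (recall $F=F(\epsilon)$ has $m(F)<1$). When you pass to the induced space and invoke Lemma~\ref{lem-2.9}, what you obtain is $(m_F)^{\widetilde\xi_{i-1}}_x(E)=m^{\xi_{i-1}}_x(F\cap E)/m^{\xi_{i-1}}_x(F)$; the factor $m^{\xi_{i-1}}_x(F)$ cancels in a ratio only after both numerator and denominator are intersected with $F$. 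Hence the ratio you compute on the induced system is
\[
\frac{m^{\xi_{i-1}}_x\big(F\cap\Gamma_i(x,r)\cap\P_0^{\ell(x)-1}(x)\big)}{m^{\xi_{i-1}}_x\big(F\cap\Gamma_i(x,r)\big)},
\]
which is not the ratio in \eqref{e-4.13}. Your parenthetical claim that ``all sets appearing are inside $F$ for $r$ small, as $m^{\xi_{i-1}}_x$ is supported on $\xi_{i-1}(x)$'' is false: $\xi_{i-1}(x)\not\subset F$. For the limit \eqref{e-4.13} one could in principle repair this with a separate Borel-density argument (Lemma~\ref{lem-2.4}(1) plus Lemma~\ref{lem-2.10} applied to $\chi_F$), but that is already essentially the paper's direct approach, so the detour through the induced system buys nothing. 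For the $L^1$ bound the gap is fatal: your $\tilde g$ is defined with the $F$-intersected ratio and there is no reason $g=\tilde g$; the infimum over $r>0$ does not commute with such a correction. A secondary issue is the inconsistency between $\pi_i$ and $\pi_{i-1}$ in your second paragraph: the balls $\Gamma_i(x,r)$ correspond to a map of type $\phi_i$ (target $G(d,d_i)\times\R^d$), not $\pi_{i-1}$, and $\eta=\xi_{i-1}$; your ``borderline-admissible'' remark is then moot.

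The paper avoids all of this by \emph{not} inducing for this step. It works directly on $(\Sigma',m)$ with the map $\phi_i(x)=(V_x^i,P_{(V_x^i)^\perp}\pi x)$ into the Besicovitch space $G(d,d_i)\times\R^d$, and observes $\xi_0(x)\cap B^{\phi_i}(x,r)=\xi_0(x)\cap\Gamma_i(x,r)$, so $m^{\xi_{i-1}}_x(\Gamma_i(x,r))=m^{\xi_{i-1}}_x(B^{\phi_i}(x,r))$. The variable length $\ell(x)=Nr_F(x)$ is handled by the decomposition $\sum_{k\ge1}\sum_{A\in\P_0^{kN-1}}\chi_{F_k\cap A}$, to which Lemma~\ref{lem-2.4}(1) is applied atom by atom; the conditioning $\widehat{\xi_{i-1}}\vee\phi_i^{-1}\B(Y_i)$ collapses to $\widehat{\xi_i}$, and \eqref{e-xi} yields the Birkhoff sum. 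For $g\in L^1$ the paper runs the Hardy--Littlewood maximal inequality on each $\xi_{i-1}$-fibre, then sums $\sum_k\sum_{A\in\P_0^{kN-1}}(-m(F_k\cap A)\log m(F_k\cap A))$ and controls it via $\sum_k k\,m(F_k)\le1$ and $\sum_k -m(F_k)\log m(F_k)<\infty$ from Lemma~\ref{lem-induce}. These Kac-type estimates are precisely where the induced structure enters---but only in the final summation, not in the density lemma itself.
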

\begin{proof}
Fix $i\in \{1,\ldots, s\}$. Write $d_i=\sum_{j=i+1}^s{k_j}$.
Define $\phi_i: \Sigma'\to Y_i:=G(d, d_i)\times \R^d$ by
$$
\phi_i(x)=\left(V_x^i, \; P_{(V_x^i)^\perp}(\pi x)\right).
$$
Then $\phi_i$ is measurable.  Moreover,
\begin{equation}
\xi_i(x)=\{y\in \xi_0(x):\; \phi_i(y)=\phi_i(x)\},\quad x\in \Sigma'.
\end{equation}

 Endow  $Y_i$ with the following  product metric $\rho_i$:
$$
\rho_i\left((V, a), (W, b)\right)=\max\{\|P_V-P_W\|, \|a-b\|\}.
$$
It is not hard to see that  $Y_i$ is a Besicovitch space. For $x\in \Sigma'$ and $r>0$, set
$$
B^{\phi_i}(x,r):=\{y\in \Sigma':\; \rho_i(\phi_iy, \phi_i x)\leq r\}.
$$
Then by definition,
\begin{equation}
\label{e-phigamma}
\xi_0(x)\cap B^{\phi_i}(x,r)=\xi_0(x)\cap \Gamma_i(x,r),\qquad x\in \Sigma',\; r>0.
\end{equation}
Hence for $x\in F$ and $r>0$,
\begin{equation}
\label{e-4.14}
\begin{split}
\log \frac
{m^{\xi_{i-1}}_x\left({\Gamma_i}(x,r)\cap \P_{0}^{\ell (x)-1}(x)\right)}
{m^{\xi_{i-1}}_x\left({\Gamma_i}(x,r)\right)}&=\log \frac
{m^{\xi_{i-1}}_x\left(\xi_0(x)\cap {\Gamma_i}(x,r)\cap \P_{0}^{\ell (x)-1}(x)\right)}
{m^{\xi_{i-1}}_x\left(\xi_0(x)\cap {\Gamma_i}(x,r)\right)}\\
&=\log \frac
{m^{\xi_{i-1}}_x\left(\xi_0(x)\cap B^{\phi_i}(x,r)\cap \P_{0}^{\ell (x)-1}(x)\right)}
{m^{\xi_{i-1}}_x\left(\xi_0(x)\cap B^{\phi_i}(x,r)\right)}\\
&=\log \frac
{m^{\xi_{i-1}}_x\left(B^{\phi_i}(x,r)\cap \P_{0}^{\ell (x)-1}(x)\right)}
{m^{\xi_{i-1}}_x\left(B^{\phi_i}(x,r)\right)}\\
&= \sum_{k=1}^\infty\sum_{A\in \P_{0}^{kN-1}} \chi_{ F_k\cap A}(x) \log \frac{m_x^{\xi_{i-1}}\left(B^{\phi_i}(x,r)\cap A\right)}{m_x^{\xi_{i-1}}\left(B^{\phi_i}(x,r)\right)}.
\end{split}
\end{equation}

By \eqref{e-4.14} and applying Lemma \ref{lem-2.4}(1) to $\phi_i: \Sigma'\to Y_i$, we have for $m$-a.e.~$x\in F$,
\begin{equation*}
\begin{split}
\lim_{r\to 0} & \log \frac
{m^{\xi_{i-1}}_x\left({\Gamma_i}(x,r)\cap \P_{0}^{\ell (x)-1}(x)\right)}
{m^{\xi_{i-1}}_x\left({\Gamma_i}(x,r)\right)}\\
& =\sum_{k=1}^\infty\sum_{A\in \P_{0}^{kN-1}} \chi_{A\cap F_k}(x)\log  \E_m\left(\chi_A|\widehat{\xi_{i-1}} \vee \phi_{i}^{-1}\B(Y_i)\right)(x)\\
& =\sum_{k=1}^\infty\sum_{A\in \P_{0}^{kN-1}} \chi_{A\cap F_k}(x)\log  \E_m\left(\chi_A|\widehat{\xi_{i}} \right)(x)  \\
& =\sum_{k=1}^\infty \chi_{F_k}(x)  \sum_{A\in \P_{0}^{kN-1}} \chi_{A}(x)\log  \E_m\left(\chi_A|\widehat{\xi_{i}} \right)(x)\\
&=-\sum_{k=1}^\infty \chi_{F_k}(x) {\bf I}_m(\P_0^{kN-1} |\widehat{\xi_i})(x)\\
&=-\sum_{k=1}^\infty \chi_{F_k}(x)\sum_{j=0}^{kN-1} {\bf I}_m(\P|\widehat{\xi_i})(\sigma^j x)    \qquad\mbox{(by \eqref{e-xi})}.
\end{split}
\end{equation*}
This proves \eqref{e-4.13}.

Next we prove that $g\in L^1(m_F)$. We mainly follow the arguments in \cite[Lemma 3.3 and Proposition 3.5]{FengHu2009}. By Theorem \ref{thm-2.1}, for any given $C\in \xi_{i-1}$, the conditional measures $m^{\xi_{i-1}}_x$ ($x\in C$) represent the same measure supported on $C$, which we rewrite as $m_C$.  Fix  $C\in \xi_{i-1}$, $k\in \N$ and   $A\in \P_{0}^{kN-1}$. We define measures $\mu_C$ and $\nu_C$ on $Y_i$ by $\mu_C(E)=m_C(\phi_i^{-1}E\cap A)$ and $\nu_C(E)=m_C(\phi_i^{-1}E)$ for all $E\in \B(Y_i)$.
By the Hardy-Littlewood maximal inequality (see, e.g.~Theorem 2.19 in \cite{Mattila1995}), there exists a positive constant $a$ (which depends on $Y_i$) such that
$$
\mu_C\left\{z\in Y_i:\;
\inf_{r>0}\frac{\mu_C(B(z,r))}{\nu_C(B(z,r))}<u \right\}\leq a u\qquad (u>0).
$$
Hence for any $u>0$,
$$
m_C\left(\
\left\{
x\in \Sigma':\;
\inf_{r>0} \frac{m_C\left(B^{\phi_i}(x,r)\cap A\right)}
{m_C\left(B^{\phi_i}(x,r)\right)}<u
\right\}
\cap A \right)
\leq a u.
$$
Integrating $C$ over $\xi_{i-1}$, we obtain
$$
m\left(\
\left\{
x\in \Sigma':\;
\inf_{r>0} \frac{m^{\xi_{i-1}}_x\left(B^{\phi_i}(x,r)\cap A\right)}
{m^{\xi_{i-1}}_x\left(B^{\phi_i}(x,r)\right)}<u \right\}
\cap A \right)
\leq a u.
$$
Write $\displaystyle g^A(x)=\inf_{r>0}\frac
 {m^{\xi_{i-1}}_x\left(B^{\phi_i}(x,r)\cap A\right)}
 {m^{\xi_{i-1}}_x\left(B^{\phi_i}(x,r) \right)}$.
Then the above inequality can be rewritten as
\begin{equation}
\label{e-alambda}
m(A\cap \{g^A<u\})\leq a u.
\end{equation}
Note that by (\ref{e-2.5''}) and \eqref{e-4.14},  $g(x)=-\sum_{k=1}^\infty \sum_{A\in \P_{0}^{kN-1} }\chi_{F_k\cap A} (x)\log g^A(x)$. Since $g$ is non-negative,
\begin{eqnarray*}
\int g\; dm &=&\int_0^\infty m\{g>t\}\;dt\\
&=& \int_0^\infty\sum_{k=1}^\infty \sum_{ A\in \P_{0}^{kN-1} }  m(F_k\cap A\cap \{g^A<e^{-t}\})\; dt \\
&\leq& \sum_{k=1}^\infty \sum_{ A\in \P_{0}^{kN-1} }  \int_0^\infty  \min \{m(F_k\cap A),
a e^{-t}\}\;dt \qquad\quad  (\mbox{by \eqref{e-alambda}})\\
&\leq& \sum_{k=1}^\infty \sum_{ A\in \P_{0}^{kN-1} }   \left(-m(F_k\cap A)\log m(F_k\cap A)+ m(F_k\cap A) (1+\log a) \right)\\
&\leq& 1+\log a + \sum_{k=1}^\infty \sum_{ A\in \P_{0}^{kN-1} }   \left(-m(F_k\cap A)\log m(F_k\cap A) \right)\\
&\leq& 1+\log a+ \sum_{k=1}^\infty m(F_k) \left[ \left(\sum_{ A\in \P_{0}^{kN-1} }
  -\frac{m(F_k\cap A)}{m(F_k)} \log \frac{m(F_k\cap A)}{m(F_k)}\right)\right.\\
  &\mbox{}&\qquad \qquad\qquad \qquad \qquad \qquad \qquad \left. + \log  \frac{1}{m(F_k)} \right] \\
&\leq & 1+\log a+  \sum_{k=1}^\infty m(F_k) \left(kN\log (\#\Lambda) + \log  \frac{1}{m(F_k)}\right)\\
&<&\infty \qquad  \qquad \mbox{(by Lemma \ref{lem-induce}(ii)-(iii))}.
\end{eqnarray*}
 This finishes the proof of the proposition.
\end{proof}

Finally we are ready to prove \eqref{e-import}, the last step in the proof of  Proposition \ref{pro-4.1}.

\begin{proof}[Proof of  \eqref{e-import}]
Fix $\epsilon>0$ and write $F=F(\epsilon)$. Let $i\in \{1,\ldots, s\}$.

  For $x\in F$ and $n\in \N$, define $$
{\rho}_n(i,x)=\prod_{k=0}^{n-1}{\rho}(i,\sigma_F^kx),
$$
where $\sigma_F^k:=(\sigma_F)^k$, and ${\rho}(i,x)=e^{\ell (x)(\lambda_i(x)-\epsilon)}$ (as defined  in \eqref{e-rho}). Moreover, write
\begin{align*}
H_n(x)&:=\log \frac{m_x^{\xi_{i-1}}\left({\Gamma_{i}}\left(x, {\rho}_n(i,x)\right) \right)}
{m_{\sigma_Fx}^{\xi_{i-1}} \left( {\Gamma_{i}}\left(\sigma_F x, {\rho}_{n-1}(i,\sigma_F x)\right) \right)},\\
G_n(x)&:=\log \frac{m_x^{\xi_{i-1}}\left({\Gamma_{i}}\left(x, {\rho}_n(i,x)\right)\cap \P_0^{\ell (x)-1}(x) \right)}
{m_{x}^{\xi_{i-1}} \left({\Gamma_{i}}\left( x, {\rho}_{n}(i, x)\right) \right)}.
\end{align*}
Then  for $m$-a.e.~$x\in F$,
\begin{align*}
H_n(x)+G_n(x)&=\log \frac{m_x^{\xi_{i-1}}\left({\Gamma_{i}}\left(x, {\rho}_n(i,x)\right)\cap \P_0^{\ell (x)-1}(x) \right)}
{m_{\sigma_Fx}^{\xi_{i-1}} \left( {\Gamma_{i}}\left(\sigma_F x, {\rho}_{n-1}(i,\sigma_F x)\right) \right)}\\
&=\log \frac{m_x^{\xi_{i-1}}\left(\xi_{i-1}(x)\cap {\Gamma_{i}}\left(x, {\rho}_n(i,x)\right)\cap \P_0^{\ell (x)-1}(x) \right)}
{m_{\sigma_Fx}^{\xi_{i-1}} \left( {\Gamma_{i}}\left(\sigma_F x, {\rho}_{n-1}(i,\sigma_F x)\right) \right)}\\
&\leq \log \frac{m_x^{\xi_{i-1}}\left(  \sigma^{-\ell(x)} ({\Gamma_{i}}  \left(\sigma_F x, {\rho}_{n-1}(i,\sigma_F x))
\cap \xi_{i-1}(\sigma_Fx)
\right)\cap \P_0^{\ell (x)-1}(x) \right)}
{m_{\sigma_Fx}^{\xi_{i-1}}\left( {\Gamma_{i}}\left(\sigma_F x, {\rho}_{n-1}(i,\sigma_F x)\right) \right)} \quad (\mbox{by \eqref{e-t2}})\\
&
\leq \log \frac{m_x^{\xi_{i-1}}\left(  \sigma^{-\ell(x)} ({\Gamma_{i}}  \left(\sigma_F x, {\rho}_{n-1}(i,\sigma_F x))\right)\cap \P_0^{\ell (x)-1}(x) \right)}
{m_{\sigma_Fx}^{\xi_{i-1}}\left( {\Gamma_{i}}\left(\sigma_F x, {\rho}_{n-1}(i,\sigma_F x)\right) \right)}
\\
&
= \log \frac{m_x^{\xi_{i-1}}\left(  \sigma^{-\ell(x)}({\Gamma_{i}}  \left(\sigma_F x, {\rho}_{n-1}(i,\sigma_F x))\right)\cap \P_0^{\ell (x)-1}(x) \right)}
{m_{\sigma^{\ell(x)}x}^{\xi_{i-1}}\left( {\Gamma_{i}}\left(\sigma_F x, {\rho}_{n-1}(i,\sigma_F x)\right) \right)}
\\
&=\log m^{\xi_{i-1}}_x(\P_0^{\ell (x)-1}(x)) \qquad \qquad \qquad \qquad  (\mbox{by Lemma \ref{lem-3.6}(3)}) \\
&=-\sum_{k=1}^\infty \chi_{F_k}(x) \sum_{j=0}^{kN-1} {\bf I}_m(\P|\widehat{\xi_{i-1}})(\sigma^j x)=:Q_{i-1}(x) \qquad (\mbox{by \eqref{e-h1}}),
\end{align*}
that is, $H_n(x)+G_n(x)\leq Q_{i-1}(x)$.
Therefore for $m$-a.e.~$x\in F$,
\begin{align*}-\log m_x^{\xi_{i-1}}\left({\Gamma_{i}}(x, {\rho}_n(i, x)) \right)&=
-\left(\sum_{j=0}^{n-1}H_{n-j}(\sigma^j_Fx)\right)-
\log m^{\xi_{i-1}}_{\sigma^n_F x}\left({\Gamma_{i}}(\sigma^n_F x, 1)\right)\\
&\geq -\sum_{j=0}^{n-1}H_{n-j}(\sigma^j_Fx)\\
&\geq \sum_{j=0}^{n-1}\left(G_{n-j}(\sigma^j_Fx)-Q_{i-1}(\sigma_F^jx)\right),\\
\end{align*}
and thus
\begin{eqnarray*}\label{e-5.8}
\frac{-\log m_x^{\xi_{i-1}}\left(\Gamma_{i}(x, {\rho}_n(i,x)) \right)}{n}&\geq& \frac{1}{n}\sum_{j=0}^{n-1}\left(G_{n-j}(\sigma^j_F x)- Q_{i-1}(\sigma^j_Fx)\right).
\end{eqnarray*}
Notice that by Proposition \ref{pro-4.5}, when $n\to +\infty$,
\begin{eqnarray*}
 G_n\to Q_{i}:=-\sum_{k=1}^\infty\chi_{F_k}\sum_{j=0}^{kN-1}  {\bf I}_m(\P|\widehat{\xi_{i}})\circ \sigma^j
\end{eqnarray*}
pointwise and in $L^1$. By Lemma \ref{lem-3.15}, for $m$-a.e.~$x\in F$,
\begin{eqnarray*}
\liminf_{n\to \infty}\frac{-\log
m_x^{\xi_{i-1}}\left(\Gamma_{i}\left(x, {\rho}_n(i, x)\right)
\right)}{n}&\geq&  \E_{m_F}((Q_{i}-Q_{i-1})|\I_F)(x),
\end{eqnarray*}
where $\I_F:=\{B\in \B|_F:\; \sigma_F^{-1}(B)=B\}$.
In the meantime, by the Birkhoff ergodic theorem,
 \begin{equation*}
\begin{split}
 \lim_{n\to
\infty} \frac{-1}{n}\log ({\rho}_n(i, x)) &= \E_{m_F}(N(-\lambda_i+\epsilon)r_F|\I_F)(x)\\
&=N(-\lambda_i(x)+\epsilon)\E_{m_F}(r_F|\I_F)(x) \quad \mbox{$m_F$-a.e.},
\end{split}
\end{equation*}
where we use the fact that $\lambda_i$ is $\sigma$-invariant and thus $\sigma_F$-invariant.
Hence for $m$-a.e.~$x\in F$,
 \begin{eqnarray*}
  \liminf_{r\to 0}\frac{\log m_x^{\xi_{i-1}}\left(\Gamma_{i}(x, r)
\right)} {\log r}
  &=&\liminf_{n\to \infty}\frac{\log m_x^{\xi_{i-1}}\left(\Gamma_{i}\left(x, {\rho}_n(i, x)\right)
\right)}
  {\log ({\rho}_n(i, x))}\\
  &\geq &
\frac{ \E_{m_F}((Q_{i}-Q_{i-1})|\I_F)(x)}{N(-\lambda_i(x)+\epsilon)\E_{m_F}(r_F|\I_F)(x)}\\
&=& \frac{\E_m\left(\big({\bf I}_m(\P|\widehat{\xi_{i-1}})-{\bf I}_m(\P|\widehat{\xi_{i}})\big)
|{\mathcal I}
\right)(x)}{-\lambda_i(x)+\epsilon}\quad\mbox{(by Lemma \ref{lem-inderg})}\\
&=& \frac{h_{i-1}(x)-h_{i}(x)}{-\lambda_i(x)+\epsilon}.
  \end{eqnarray*}
That is, \eqref{e-import} holds.  This completes the proof of Proposition  \ref{pro-4.1}.
\end{proof}

\section{Local dimensions of invariant measures for affine IFSs}
\label{S-6}

In this section, we prove Theorems \ref{thm-1.0}-\ref{thm-1.2} and  \ref{cor-1.0}-\ref{thm-1.7'}.

Let $M:\; \Sigma\to {\rm Mat}_d(\R)$ be the matrix-valued function defined by $$M(x)=M_{x_{-1}},\quad x=(x_j)_{j=-\infty}^{+\infty}.$$
Let $m\in \M_\sigma(\Sigma)$. Let $$\R^d=\oplus_{i=1}^{s(x)}E_x^i\qquad (x\in \Sigma')$$ be the Oseledets splittings of $\R^d$  associated with $(\Sigma, \sigma^{-1},m)$ and  $M$ (see Section~\ref{S-4}), and
$0>\lambda_1(x)>\cdots>\lambda_{s(x)}(x)\geq -\infty$ the corresponding Lyapunov exponents. Below we prove parts (i) and (ii) of Theorem \ref{thm-1.0} separately.

\begin{proof}[Proof of Theorem \ref{thm-1.0}(i)]
In the beginning  we assume that the condition \eqref{e-assump} holds, that is,
for all $x\in \Sigma'$,
$$s(x)=s, \quad \mbox{ and }\quad \dim E_x^i=k_i \mbox{ for }i=1, \ldots, s.$$
(Just keep in mind that we don't assume that $m$ is ergodic at this moment.)

Write  $\displaystyle V_x^i=\oplus_{j={i+1}}^s E_x^j$ for $i=0,\ldots, s-1$, and $V_x^s=\{0\}$.
Clearly
$$ \{0\}=V_x^{s}\subset V_x^{s-1}\subset\cdots\subset V_x^0=\R^d.$$
Let $\xi_0,\xi_1,\ldots, \xi_s$ be the measurable partitions of $\Sigma'$  constructed as in Section~\ref{S-4}.   Furthermore, we set
\begin{equation}
\label{e-conv}
\xi_{-1} =\{\Sigma',\emptyset\}\qquad \mbox{and} \quad \lambda_{0}(x)=\lambda_1(x)  \mbox{ for $x\in \Sigma'$}.
\end{equation}
Clearly $\xi_{-1}(x)=\Sigma'$ for any $x\in \Sigma'$. By Lemma \ref{lem-3.4}(2), we have
\begin{equation}
\label{e-t301}
Q_{n,\epsilon}\cap \xi_i(x) \cap \P_0^{n-1}(x)\subset B^\pi(x, e^{n (\lambda_{i+1}(x)+\epsilon)}),\quad i=-1, 0,\ldots, s-1
\end{equation}
when $n$ is large enough. Here $B^\pi(x,r)$ is defined as in \eqref{e-ball}.

For $i=-1, 0,\ldots, s$, let  $\{m_x^{\xi_i}\}$ be the canonical system of conditional measures associated with  $\xi_i$.
By the definition of $\xi_{-1}$, we see that $m_x^{\xi_{-1}}= m$ for any $x\in \Sigma'$.

For $x\in \Sigma'$ and $i\in \{0,1,\ldots, s\}$, let $h_i(x)$  be defined as in \eqref{e-hi}.  Due to \eqref{e-h2} we write  $$h_{-1}(x)=h_0(x).$$
 According to  Lemmas \ref{lem-3.5}  and \ref{lem-4.6},
 \begin{equation}
\label{e-h}
\lim_{n\to \infty} \frac{-\log m_x^{\xi_i}(Q_{n,\epsilon} \cap \P_0^{n-1}(x))}{n}=h_i(x) \quad \mbox{for $m$-a.e.$~x\in \Sigma'$},\qquad i=-1,0,\ldots, s.
\end{equation}

 For $x\in \Sigma'$ and $r>0$, let $\Gamma_i(x,r)$ be defined as in \eqref{gamma}, that is,
$$
{\Gamma_i}(x,r)=\{y\in \Sigma':\; \mbox{dist}(\pi y+V_x^i, \; \pi x+V_x^i)\leq r\}, \quad i=1,\ldots, s.
$$
Write for convention that
$$
{\Gamma_0}(x,r)=\Sigma'.
$$
It is easy to see that for $i=0,1, \ldots, s$,
\begin{equation}
\label{e-5.4}
{\Gamma_i}(x,r)=\{y\in \Sigma':\; \|P_{({V_x^i})^\perp}(\pi y-\pi x)\|\leq r\},
\end{equation}
where $({V_x^i})^\perp$ stands for the orthogonal complement of  the space  $V_x^i$ in $\R^d$, and $P_{W}$ is the orthogonal projection from $\R^d$ to $W$.

Moreover,
define
\begin{equation}
\vartheta_i(x)=\liminf_{r\to 0}\frac{\log
m^{\xi_i}_x({\Gamma_{i+1}}(x,r))}{\log r},     \qquad i=-1,0,\ldots, s-1.
\end{equation}
Clearly $\vartheta_{-1}(x)=0$ for every $x\in \Sigma'$ since ${\Gamma_0}(x,r)=\Sigma'$.  Combining this with Proposition \ref{pro-4.1} yields
\begin{equation}
\label{e-h'}
\vartheta_i(x)\geq \frac{h_{i+1}(x)-h_{i}(x)}{\lambda_{i+1}(x)}\qquad (i=-1, 0, \ldots, s-1)
\end{equation}
for $m$-a.e.~$x\in \Sigma'$.

For $i=-1, 0,\ldots, s$ and $x\in \Sigma'$, define
$$
\overline{\delta}_i(x)=\limsup_{r\to 0}\frac{\log m^{\xi_i}_x(B^\pi(x,r))}{\log r},\quad
\underline{\delta}_i(x)=\liminf_{r\to 0}\frac{\log m^{\xi_i}_x(B^\pi(x,r))}{\log r}.
$$
We claim that for $m$-a.e.~$x\in \Sigma'$,
\begin{itemize}

\item[(C1)] $\overline{\delta}_{s}(x)=\underline{\delta}_{s}(x)=0$.
\item[(C2)]
 $\displaystyle \frac{h_{i+1}(x)-h_{i}(x)}{\lambda_{i+1}(x)}\geq \overline{\delta}_i(x)-\overline{\delta}_{i+1}(x)$ for  $i=-1, 0, \ldots, s-1$.
\item[(C3)]
 $\underline{\delta}_{i+1}(x)+\vartheta_i(x)\leq \underline{\delta}_i(x)$ for  $i=-1,0,\ldots, s-1$.
\end{itemize}

It is easy to see that (C1)-(C3) together with  (\ref{e-h'}) force inductively that for $m$-a.e.~$x\in \Sigma'$,
\begin{eqnarray}
\vartheta_i(x)&=& \frac{h_{i+1}(x)-h_{i}(x)}{\lambda_{i+1}(x)}  \quad \mbox{ for }
i=s-1,  \ldots, 0, -1,\label{e-h'1}\\
\underline{\delta}_i(x)&=&\overline{\delta}_i(x)
\quad \mbox{ for } i=s, s-1,\ldots,0,-1  \nonumber
\end{eqnarray}
 (we write the common value as $\delta_i(x)$),  and furthermore
\begin{equation}
\label{e-key11}
\delta_{i}(x)=\sum_{j=i}^{s-1} \vartheta_j(x)=
\sum_{j=i}^{s-1} \frac{h_{j+1}(x)-h_{j}(x) }{\lambda_{j+1}(x)}
\end{equation}
for $i=-1, 0,\ldots,s$.  In particular,
\begin{equation}
\label{e-key1}
\dim_{\rm loc}(\pi_*m,\pi x)=\delta_{-1}(x)=\delta_0(x)=\sum_{i=0}^{s-1} \vartheta_i(x)=
\sum_{i=0}^{s-1} \frac{h_{i+1}(x)-h_{i}(x) }{\lambda_{i+1}(x)}
\end{equation}
for $m$-a.e.~$x$,
which proves  Theorem \ref{thm-1.0}(i) under the additional assumption \eqref{e-assump}. In the
following we prove (C1)-(C3) respectively.

\bigskip

\begin{proof}[Proof of (C1)]  Since $\xi_s(x)=\pi^{-1}(\pi x)\cap \xi_0(x)\subset B^\pi(x,r)$ for any $x\in \Sigma'$ and $r>0$, we have
 $$m^{\xi_s}_x(B^\pi(x,r))\geq m^{\xi_s}_x(\xi_s(x))=1$$
 and so $m^{\xi_s}_x(B^\pi(x,r))=1$ for all $x\in \Sigma'$. Thus $\overline{\delta}_s(x)=\underline{\delta}_s(x)=0$ for all $x\in \Sigma'$.
\end{proof}

\bigskip

\begin{proof}[Proof of (C2)]  We give a proof by contradiction, which is  modified from \cite[\S10.2]{LedrappierYoung1985} and the proof of \cite[Theorem 2.11]{FengHu2009}.
Assume that (C2) is not true. Then there exists $i\in \{-1,0,\ldots, s-1\}$  such
that
$$
\frac{h_{i+1}(x)-h_i(x)}{\lambda_{i+1}(x)}<\overline{\delta}_i(x)-\overline{\delta}_{i+1}(x)
$$
on a set $U=U_i\subset \Sigma'$ with positive measure. Fix  such $i$. Removing a suitable subset from $U$ if necessary, we may assume that  one of the following holds: (a) $\lambda_{i+1}(x)\neq -\infty$ for all $x\in U$; or (b) $\lambda_{i+1}(x)=-\infty$ and $\overline{\delta}_i(x)>\overline{\delta}_{i+1}(x)$  for all $x\in U$. Notice that  (b) can not occur unless $i=s-1$, since $\lambda_{i+1}(x)\neq -\infty$ for $i<s-1$.

Now we first assume that the scenario (a) occurs. Then there exist $\alpha>0$ and real numbers $h_i,h_{i+1},\lambda_{i+1}, \overline{\delta}_i,\overline{\delta}_{i+1}$ with $\lambda_{i+1}<0$ such that
\begin{equation}
\label{e-ass}
\frac{h_{i+1}-h_i}{\lambda_{i+1}}< \overline{\delta}_i-\overline{\delta}_{i+1}-\alpha
\end{equation}
and for any $\epsilon>0$, there exists $B_\epsilon\subset U$ with $m(B_\epsilon)>0$ so that for $x\in B_\epsilon$,
$$|h_i(x)-h_i|<\epsilon/2, \quad |h_{i+1}(x)-h_{i+1}|<\epsilon/2, \quad \lambda_{i+1}(x)<\lambda_{i+1}+\epsilon/2$$
and
$$\overline{\delta}_i(x)\geq \overline{\delta}_i-\epsilon/2, \quad \overline{\delta}_{i+1}(x)<\overline{\delta}_{i+1}+\epsilon/2.$$
Fix $\epsilon\in (0, -\lambda_{i+1}/3)$. There exists $n_0\colon B_\epsilon\to \N$ such
that for $m$-a.e.~$x\in B_\epsilon$ and $n>n_0(x)$, we have
\begin{itemize}
\item[(1)] $\displaystyle
\frac{\log m^{\xi_{i+1}}_x\left(B^\pi(x,e^{n(\lambda_{i+1}+2\epsilon)})\right)}
 {n(\lambda_{i+1}+2\epsilon)}<
 \overline{\delta}_{i+1}+\epsilon;$
\item[(2)] $\displaystyle
-\frac{1}{n} \log m^{\xi_{i+1}}_x(\P_0^{n-1}(x))> h_{i+1}-\epsilon$\qquad
(by (\ref{e-h1}));

\item[(3)] $\displaystyle
Q_{n,\epsilon} \cap \xi_i(x)\cap \P_0^{n-1}(x)\subset B^\pi(x,e^{n(\lambda_{i+1}+2\epsilon)})$ \qquad (by (\ref{e-t301}));

\item[(4)] $\displaystyle
-\frac{1}{n}\log m^{\xi_i}_x(Q_{n,\epsilon} \cap \P_0^{n-1}(x))< h_i+\epsilon$\qquad (by
(\ref{e-h})).
\end{itemize}

 Take $N_0$ such that $$
\Delta:=\{x\in B_\epsilon\colon n_0(x)\leq N_0\}$$
 has the positive measure.  By Lemma \ref{lem-2.4}(1) and Lemma \ref{lem-2.10}, there exist $c>0$ and $\Delta'\subset \Delta$ with $m(\Delta')>0$ such that for $x\in \Delta'$, there exists $n=n(x)\geq N_0$ such that

\begin{itemize}
\item[(5)]  $\displaystyle \frac{m^{\xi_{i+1}}_x (L\cap \Delta)}{m^{\xi_{i+1}}_x(L)}> c$, where
$$
L:=B^\pi(x,e^{n(\lambda_{i+1}+2\epsilon)});
$$
\item[(6)]
$\displaystyle
\frac{\log m^{\xi_i}_x\left(B^\pi(x,2e^{n(\lambda_{i+1}+2\epsilon)})\right)} {n(\lambda_{i+1}+2\epsilon)}> \overline{\delta}_{i}-\epsilon;$

\item[(7)] $\displaystyle\frac{\log (1/c)}{n}<\epsilon$.

\end{itemize}

Take $x\in \Delta'$ such that (1)--(7) are satisfied with $n=n(x)$.
Write $C=\xi_{i+1}(x)$ and $C'=\xi_i(x)$. Then by (5) and (1),
$$m_x^{\xi_{i+1}}(L\cap \Delta )\geq cm_x^{\xi_{i+1}}(L)\geq
ce^{n(\lambda_{i+1}+2\epsilon)(\overline{\delta}_{i+1}+\epsilon)}.$$
But for each $y\in L\cap \Delta$,  by (2),
$m_y^{\xi_{i+1}}(\P_0^{n-1}(y))< e^{-n(h_{i+1}-\epsilon)}$. It follows that
the number of distinct $\P_0^{n-1}$-atoms intersecting $C\cap L\cap
\Delta$ is larger than $$m_x^{\xi_{i+1}}(L\cap \Delta)
e^{n(h_{i+1}-\epsilon)}.$$ However each such a $\P_0^{n-1}$-atom, say
$\P_0^{n-1}(y)$,  intersects $C'\cap L\cap \Delta$. This  implies that $Q_{n,\epsilon} \cap C'\cap \P_0^{n-1}(y)$ is contained in $C'\cap
B^\pi(x,2e^{n(\lambda_{i+1}+2\epsilon)})$. To see this implication,
let $z\in \P_0^{n-1}(y)\cap C^\prime\cap L\cap \Delta$; since $z\in
L\cap \Delta$, we have $d(\pi z, \pi x)\leq
e^{n(\lambda_{i+1}+2\epsilon)}$ and thus
\begin{align*}
Q_{n,\epsilon}\cap C'\cap\P_0^{n-1}(y)&=Q_{n,\epsilon}\cap \xi_i(z)\cap \P_0^{n-1}(z)\\
&\subset  B^\pi(z,e^{n(\lambda_{i+1}+2\epsilon)})\quad (\mbox{by (3)})\\
&\subset B^\pi(x,2e^{n(\lambda_{i+1}+2\epsilon)}),
\end{align*}
so $Q_{n,\epsilon}\cap C'\cap\P_0^{n-1}(y)\subset C'\cap B^\pi(x,2e^{n(\lambda_{i+1}+2\epsilon)})$, as desired.
In the meantime,  by (4), $m^{\xi_i}_x(Q_{n,\epsilon}\cap \P_0^{n-1}(y))\geq e^{-n(h_i+\epsilon)}$.  (To see it, picking $w\in \P_0^{n-1}(y)\cap  C'\cap L\cap \Delta$, we have $\xi_i(x)=\xi_i(w)$ and thus $m^{\xi_i}_x(Q_{n,\epsilon}\cap \P_0^{ n-1}(y))=m_w^{\xi_i}(Q_{n,\epsilon}\cap \P_0^{n-1}(w))\geq e^{-n(h_i+\epsilon)}$.) Hence
\begin{eqnarray*}
m^{\xi_i}_{x}(B^\pi(x,2e^{n(\lambda_{i+1}+2\epsilon)})) &\geq &
 \#  \{ \mbox{$\P_0^{n-1}$-atoms intersecting } C'\cap L\cap\Delta\} \cdot e^{-n(h_i+\epsilon)}\\
 &\geq & m_x^{\xi_{i+1}}(L\cap \Delta)  e^{n(h_{i+1}-\epsilon)}e^{-n(h_i+\epsilon)}\\
&\geq & ce^{n(\lambda_{i+1}+2\epsilon)
(\overline{\delta}_{i+1}+\epsilon)}e^{n(h_{i+1}-\epsilon)}e^{-n(h_i+\epsilon)}.
\end{eqnarray*}
Combining the above inequality  with (6) yields
\begin{equation}
\label{e-eq1}
\begin{split}
\mbox{}&(\lambda_{i+1}+2\epsilon)(\overline{\delta}_{i}-\epsilon)\\
& \quad \geq (\lambda_{i+1}+2\epsilon)(\overline{\delta}_{i+1}+\epsilon)+\frac{\log c}{n}+h_{i+1}-h_{i}-2\epsilon\\
& \quad \geq (\lambda_{i+1}+2\epsilon)(\overline{\delta}_{i+1}+\epsilon)+h_{i+1}-h_{i}-3\epsilon.
\end{split}
\end{equation}
Taking $\epsilon\to 0$ yields $h_{i+1}-h_i\leq \lambda_{i+1} (\overline{\delta}_i-\overline{\delta}_{i+1})$, which leads to a contradiction with (\ref{e-ass}) (keep in mind that $\lambda_{i+1}<0$).

Next we assume that the scenario (b) occurs, that is, $\lambda_{i+1}(x)=-\infty$ and $\overline{\delta}_i(x)>\overline{\delta}_{i+1}(x)$  for all $x\in U$. In this case,  $i=s-1$, and thus by (C1), $\underline{\delta}_{i+1}(x)=\overline{\delta}_{i+1}(x)=0$ for all $x\in U$. So $\overline{\delta}_i(x)>0$ for all $x\in U$. Hence there exist  real numbers $h_i, h_{i+1}, \overline{\delta}_i$ with $
\overline{\delta}_i>0$, so that for any $\epsilon>0$, there exists   $B_\epsilon\subset U$ with $m(B_\epsilon)>0$ such that for $x\in B_\epsilon$,
$$|h_i(x)-h_i|<\epsilon/2, \quad |h_{i+1}(x)-h_{i+1}|<\epsilon/2, \quad \overline{\delta}_i(x)\geq \overline{\delta}_i-\epsilon/2.$$
Set $\lambda_{i+1}=(-1/\epsilon)-2\epsilon$ and  $\overline{\delta}_{i+1}=0$. Then an argument similar to that for the scenario (a) shows that the previous estimates (1)-(7) hold, and moreover, the inequality \eqref{e-eq1} still holds. Taking $\epsilon\to 0$ gives  $\overline{\delta}_i\leq  \overline{\delta}_{i+1}=0$, leads to a contradiction with $\overline{\delta}_i>0$.
\end{proof}

\bigskip

\begin{proof}[Proof of (C3)] Here  we give a proof by contradiction, following the lines of the proof of \cite[Theorem 2.11]{FengHu2009}, in which the arguments  were adapted from the original proof of \cite[Lemma 11.3.1]{LedrappierYoung1985}.
Assume that (C3) is not true. Then there exists $i\in \{-1,0,\ldots, s-1\}$
such that $
\underline{\delta}_{i+1}(x)+\vartheta_i(x)>\underline{\delta}_i(x) $
on a subset of $\Sigma'$ with positive measure. Hence there exist
$\beta>0$ and real numbers
$\underline{\delta}_i,\underline{\delta}_{i+1}, \vartheta_i$   such
that
\begin{equation}
\label{e-6.12}
\underline{\delta}_{i+1}+\vartheta_i>\underline{\delta}_i+\beta,
\end{equation}
and for any $\epsilon>0$, there exists $A_\epsilon\subset \Sigma'$ with $m(A_\epsilon)>0$ so that for $x\in A_\epsilon$,
\begin{equation}\label{e-as2}
|\underline{\delta}_i(x)-\underline{\delta}_i|<\epsilon/2, \quad
|\underline{\delta}_{i+1}(x)-\underline{\delta}_{i+1}|<\epsilon/2,
\quad |\vartheta_i(x)-\vartheta_{i}|<\epsilon/2.
\end{equation}

Let  $0<\epsilon<\beta/4$. Find $N_1$ and a set $A_\epsilon'\subset A_\epsilon$ with $m(A_\epsilon')>0$ such that
\begin{equation}
\label{e-6.2} m_x^{\xi_{i+1}}\left(B^\pi(x,2e^{-n})\right)\leq
e^{-n(\underline{\delta}_{i+1}-\epsilon)}\qquad \mbox{ for }\; x\in A_\epsilon' \mbox{  and }n>N_1.
\end{equation}
By Lemma  \ref{lem-2.4}(1) and Lemma \ref{lem-2.10}, we can find $c>0$ and   $A_\epsilon''\subset A_\epsilon'$ with $m(A_\epsilon'')>0$ and
$N_2>N_1$ such that for all $x\in A_\epsilon''$ and $n\geq N_2$,
$$
\frac{m_x^{\xi_i}(A_\epsilon'\cap B^\pi(x,e^{-n}))}{m_x^{\xi_i}(B^\pi(x,e^{-n}))}>c.
$$
For $x\in A_\epsilon''$ and $n\geq N_2$, we have
\begin{equation}
\label{e-6.3}
\begin{split}
m_x^{\xi_i}(B^\pi(x,e^{-n}))&\leq c^{-1}m_x^{\xi_i}(A_\epsilon'\cap B^\pi(x,e^{-n}))\\
&= c^{-1}\int m_y^{\xi_{i+1}}(A_\epsilon'\cap B^\pi(x,e^{-n}))\; dm^{\xi_i}_x(y)\\
&= c^{-1}\int_{{\Gamma_{i+1}}(x,e^{-n})} m_y^{\xi_{i+1}}(A_\epsilon'\cap B^\pi(x,e^{-n}))\; dm^{\xi_i}_x(y),\\
\end{split}
\end{equation}
where in the last equality, we use the fact that $y\in {\Gamma_{i+1}}(x,e^{-n})$,  if  $y\in \xi_i(x)$ and  $\xi_{i+1}(y)\cap A_\epsilon'\cap B^\pi(x,e^{-n})\neq \emptyset$.
To see this fact, let $y\in \xi_i(x)$ such that $\xi_{i+1}(y)\cap A_\epsilon'\cap B^\pi(x,e^{-n})\neq \emptyset$.
Take  $w\in \xi_{i+1}(y)\cap  A_\epsilon'\cap B^\pi(x,e^{-n})$. Then  $\pi w-\pi y\in V_{y}^{i+1}$,  $\|\pi w-\pi x\|\leq e^{-n}$ and $w^-=y^{-}=x^{-}$ which implies $V_{w}^{i+1}=V_{y}^{i+1}=V_{x}^{i+1}$.  Hence
$$
\mbox{dist}(\pi y+V_x^{i+1}, \pi x+V_{x}^{i+1})= \mbox{dist}(\pi w+V_x^{i+1}, \pi x+V_{x}^{i+1})\leq \|\pi w-\pi x\|\leq e^{-n},
$$
and thus $y\in \Gamma_{i+1}(x,e^{-n})$. This completes the proof of  the fact. In the above argument, since $\|\pi w-\pi x\|\leq e^{-n}$, we have
$A_\epsilon'\cap B^\pi(x,e^{-n}) \subset  B^\pi(w,2e^{-n})$ and thus
\begin{eqnarray*}
m^{\xi_{i+1}}_y(A_\epsilon'\cap B^\pi(x,e^{-n}))&=&m^{\xi_{i+1}}_w(A_\epsilon'\cap B^\pi(x,e^{-n}))\\
&\leq& m^{\xi_{i+1}}_w(B^\pi(w,2e^{-n}))\\
&\leq& e^{-n(\underline{\delta}_{i+1}-\epsilon)} \qquad \mbox{(by \eqref{e-6.2})}.
\end{eqnarray*}
Combining  the above inequality  with (\ref{e-6.3}) yields
$$
m_x^{\xi_i}(B^\pi(x,e^{-n})) \leq c^{-1}e^{-n(\underline{\delta}_{i+1}-\epsilon)}
m^{\xi_i}_x({\Gamma_{i+1}}(x,e^{-n}))\qquad (x\in A_\epsilon'',\; n\geq N_2).
$$
Letting $n\to \infty$, we obtain $\underline{\delta}_i(x)\geq
\underline{\delta}_{i+1}-\epsilon+\vartheta_i(x)$ for $x\in
A_\epsilon''$. Combining this with (\ref{e-as2}) yields
$$
\underline{\delta}_i\geq
\underline{\delta}_{i+1}+\vartheta_i-4\epsilon\geq
\underline{\delta}_{i+1}+\vartheta_i-\beta,
$$
which contradicts  \eqref{e-6.12}.
\end{proof}

So far we have proved Theorem \ref{thm-1.0}(i)  under the additional assumption \eqref{e-assump}. Now we consider the general case that the integer functions $s(x)$ and  $\dim V_x^i$, $1\leq i\leq  s(x)$,  may not be constant over $\Sigma'$. In such case, by  Theorem \ref{thm-2} there exists a finite Borel partition $$\Sigma'=\bigsqcup_{j=1}^k \Sigma_j$$ of $\Sigma'$ so that for each $j$,
$\Sigma_j$ is  $\sigma$-invariant,  and   $s(x)$ and  $\dim V_x^i$ are constant   restricted on  $\Sigma_j$.    Ignore those indices $j$ with $m(\Sigma_j)=0$. We define probability measures $m_j$ by $$m_j=\frac{m|_{\Sigma_j}}{m(\Sigma_j)}.$$
 Then  $m_j\in \M_\sigma(\Sigma_j)$. Since now \eqref{e-assump} holds for $m_j$ (in which $\Sigma'$ is replaced by $\Sigma_j$), we see that  \eqref{e-key1} holds when replacing $m$ by $m_j$. In particular, the local dimension $\dim_{\rm loc}(\pi_*(m_j), \pi x)$ exists for $m_j$-a.e.~$x\in \Sigma_j$.   Equivalently,
 \begin{equation}
 \label{e-end1}
\lim_{r\to 0}\frac{\log m(\Sigma_j\cap B^\pi(x,r))}{\log r} \quad \mbox{ exists for $m$-a.e.~$x\in \Sigma_j$}.
 \end{equation}
  By Lemma  \ref{lem-2.4}(1) and Lemma \ref{lem-2.10}, for $m$-a.e.~$x\in \Sigma_j$, the following
 $$
\lim_{r\to 0} \frac{m(\Sigma_j\cap B^\pi(x,r))}{m(B^\pi(x,r))}
 $$
 exists and takes  positive value. This together with \eqref{e-end1} yields that the local dimension $\dim_{\rm loc} ( \pi_*m,\pi x)$ exists for $m$-a.e.~$x\in \Sigma_j$. Since $j$ is arbitrarily taken,  $\dim_{\rm loc} (\pi_*m,\pi x)$ exists for  $m$-a.e.~$x\in \Sigma'$.  This completes the proof of Theorem \ref{thm-1.0}(i).
 \end{proof}

 \begin{proof}[Proof of Theorems \ref{thm-1.0}(ii) and  \ref{thm-1.1}]
Since now $m$ is assumed to be  ergodic,  the condition \eqref{e-assump} holds and the functions $\lambda_i(x)$,  $h_i(x)$  ($i=-1,\ldots, s$) considered in the proof of Theorem \ref{thm-1.0}(i) are all
constant, which we denote by $\lambda_i$, $h_i$ respectively.  The formula \eqref{ly-dim} just follows from \eqref{e-key1}.
\end{proof}

 \begin{proof}[Proof of Theorem \ref{thm-1.2}]
 It is based on the proof of Theorem \ref{thm-1.0}.  To see \eqref{e-con-dim}, let $i\in \{0,\ldots, s-1\}$. By \eqref{e-key11},  for $m$-a.e.~$x\in \Sigma'$,
\begin{equation*}
\dim_{\rm loc}\left(\pi_*\big(m_x^{\xi_i}\big), \pi x\right)=\delta_i=\sum_{k=i}^{s-1}\frac{h_{k+1}-h_{k}}{\lambda_{k+1}}.
\end{equation*}
Equivalently,   for  $m$-a.e.~$x\in \Sigma'$ and $m^{\xi_i}_x$-a.e.~$y\in \xi_i(x)$,
\begin{equation*}
\dim_{\rm loc}\left(\pi_*\big(m_x^{\xi_i}\big), \pi y\right)=\delta_i=\sum_{k=i}^{s-1}\frac{h_{k+1}-h_{k}}{\lambda_{k+1}}.
\end{equation*}
Hence for $m$-a.e.~$x\in \Sigma'$, $\pi_*\big(m_x^{\xi_i}\big)$ is exact dimensional with dimension given by \eqref{e-con-dim}.

Next we prove \eqref{e-con-proj-dim} and \eqref{e-proj-dim}. By \eqref{e-h'1},  for $m$-a.e.~$x\in \Sigma'$,  \begin{equation}
\label{e-h''}
\vartheta_i(x)= \vartheta_i:= \frac{h_{i+1}-h_{i}}{\lambda_{i+1}} \quad \mbox{ for } i=-1, 0, \ldots, s-1.
\end{equation}

Let $\Gamma_i(x,r)$ ($x\in \Sigma')$, $0\leq i\leq  s$,  be defined as in \eqref{e-5.4}.

Fix $j\in \{1,\ldots, s\}$. For $i=-1,0,\ldots, j$ and $x\in \Sigma'$, define
$$
\overline{\gamma}_{i, j}(x)=\limsup_{r\to 0}\frac{\log m^{\xi_i}_x({\Gamma_j}(x,r))}{\log r},\quad
\underline{\gamma}_{i,j}(x)=\liminf_{r\to 0}\frac{\log m^{\xi_i}_x({\Gamma_j}(x,r))}{\log r}.
$$
We claim that
\begin{equation}
\label{e-claim1}
\xi_i(x)\cap  {\Gamma_j}(x,r)= \xi_i(x)\cap g^{-1}(B(gx, r)),
\end{equation}
where $g:\; \xi_i(x)\to {(V_x^j)}^\perp$ is defined by $y\mapsto P_{{(V_x^j)}^\perp}(\pi y)$. To see this, let $y\in \xi_i(x)\cap  {\Gamma_j}(x,r)$. Then $\mbox{dist}
(\pi y+V_x^j, \pi x+ V_x^j)\leq r$, equivalently, $\| gy-gx\|\leq r$; hence $y\in g^{-1}(B(gx, r))$. This proves the direction $\xi_i(x)\cap  {\Gamma_j}(x,r)\subset \xi_i(x)\cap g^{-1}(B(gx, r))$. The other direction can be proved similarly. This completes the proof of \eqref{e-claim1}.

Now due to \eqref{e-claim1}, we have $m^{\xi_i}_x({\Gamma_j}(x,r))=m^{\xi_i}_x(g^{-1}(B(gx, r)))$,  and so
\begin{equation}
\label{e-fact}
\begin{split}
\overline{\gamma}_{i,j}(x)&=\overline{\dim}_{\rm loc}\left(\big(P_{{(V_x^j)}^\perp}\pi\big)_*\big(m^{\xi_i}_x\big), \;   P_{{(V_x^j)}^\perp} (\pi x)\right), \\
\underline{\gamma}_{i,j}(x)&=\underline{\dim}_{\rm loc}\left(\big(P_{{(V_x^j)}^\perp}\pi\big)_*\big(m^{\xi_i}_x\big), \; P_{{(V_x^j)}^\perp} (\pi x)\right).
\end{split}
\end{equation}

We claim that for $m$-a.e.~$x\in \Sigma'$, the following properties hold:
\begin{itemize}
\item[(D1)] $\overline{\gamma}_{j,j}(x)=\underline{\gamma}_{j,j}(x)=0$.
\item[(D2)]
 $h_{i}-h_{i+1}\geq -\lambda_{i+1} (\overline{\gamma}_{i,j}(x)-\overline{\gamma}_{i+1,j}(x))$ for  $i=-1, 0, \ldots, j-1$.
\item[(D3)]
 $\underline{\gamma}_{i+1,j}(x)+\vartheta_i\leq \underline{\gamma}_{i,j}(x)$ for  $i=-1,0,\ldots, j-1$.
\end{itemize}

Clearly  (D1)-(D3) together with  (\ref{e-h''}) force that for $m$-a.e.~$x\in \Sigma'$,
$$\underline{\gamma}_{i,j}(x)=\overline{\gamma}_{i,j}(x) \quad
\mbox{ for $i=j, j-1,\ldots,  0, -1$},$$
(we write the common value as  $\gamma_{i,j}(x)$), and furthermore
\begin{align}
\gamma_{-1,j}(x)&=\sum_{k=0}^{j-1} \vartheta_k=
\sum_{k=0}^{j-1} \frac{h_{k+1}-h_{k} }{\lambda_{k+1}} \quad \mbox{ and }   \label{e-key1*} \\
\gamma_{i,j}(x)&=\sum_{k=i}^{j-1} \frac{h_{k+1}-h_{k} }{\lambda_{k+1}} \quad \mbox{ for } i\in \{0,1,\ldots, j-1\}. \label{e-key1_*}
\end{align}
Now \eqref{e-proj-dim} just follows from \eqref{e-key1*} and the fact \eqref{e-fact}. To see \eqref{e-con-proj-dim}, let $i\in \{0,\ldots, j-1\}$. By
\eqref{e-key1_*} and \eqref{e-fact}, we have for $m$-a.e.~$x\in \Sigma'$ and $m_x^{\xi_i}$-a.e.~$y\in \xi_i(x)$,
$$
\dim_{\rm loc}\left( \big(P_{{(V_x^j)}^\perp}\pi\big)_*\big(m^{\xi_i}_x\big), \;   P_{{(V_x^j)}^\perp} (\pi y)\right)=\gamma_{i,j}(x)=\sum_{k=i}^{j-1} \frac{h_{k+1}-h_{k} }{\lambda_{k+1}},
 $$
where we use the fact that
$V_y^i=V_x^i$  for $y\in \xi_i(x)$, due to  $y\in \xi_0(x)$ (see Lemma \ref{lem-3.1}).  As a consequence, for $m$-a.e.~$x\in \Sigma'$,  $\big(P_{{(V_x^j)}^\perp}\pi\big)_*\big(m^{\xi_i}_x\big)$ is exact dimensional and \eqref{e-con-proj-dim} holds.  To complete the proof of  Theorem \ref{thm-1.2},
 in the following we prove (D1)-(D3) respectively.

\medskip

 By the definition of $\xi_j$, for  $x\in \Sigma'$ and $y\in \xi_j(x)$, we have
 $\pi y-\pi x\in V_x^j$ and thus $\pi y +V_x^j=\pi x +V_x^j$. It follows that $y\in  {\Gamma_j}(x,r)$. Hence  $\xi_j(x)\subset \Gamma_j(x,r)$ and thus $m^j_x({\Gamma_j}(x,r))=1$ for $x\in \Sigma'$ and any $r> 0$. Hence $\overline{\gamma}_{j,j}(x)=\underline{\gamma}_{j,j}(x)=0$ for all $x\in \Sigma'$.
This proves (D1).

The proofs of   (D2) and (D3) are almost identical  to that of (C2) and (C3), respectively. Indeed  we only need to modify the proofs  of (C2) and (C3) slightly. More precisely,  among other minor adjustments,  we  may simply  replace the terms $\delta_i$, $\delta_{i+1}$, $B^\pi(x, e^{n(\lambda_{i+1}+2\epsilon)})$, $B^\pi(x, 2e^{n(\lambda_{i+1}+2\epsilon)})$, $B^\pi(x, e^{-n})$ therein  by $\gamma_{i,j}$,  $\gamma_{i+1,j}$,
 ${\Gamma_j}(x, e^{n(\lambda_{i+1}+2\epsilon)})$,   ${\Gamma_j}(x, 2e^{n(\lambda_{i+1}+2\epsilon)})$, and  ${\Gamma_j}(x, e^{-n})$ respectively.
This completes the proof of Theorem \ref{thm-1.2}.
\end{proof}

As a corollary of Theorem \ref{thm-1.2}, we have
\begin{cor}
\label{cor-6.1}
Under the assumptions of Theorem \ref{thm-1.1}, for  $i\in \{0,\ldots, s-1\}$ and $m$-a.e.~$x\in \Sigma'$,
\begin{equation}
\label{e-theta_i}
\vartheta_{i}(x)=\lim_{r\to 0}\frac{\log m_{x}^{\xi_{i}}(\Gamma_{i+1}(x,r))}{\log r}=\frac{h_{i+1}-{h_{i}}}{\lambda_{i+1}}\leq k_{i+1}.
\end{equation}
\end{cor}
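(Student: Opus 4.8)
The statement of Corollary~\ref{cor-6.1} has three components: an identity expressing the transverse dimension $\vartheta_i(x)$ as a genuine limit (not merely a $\liminf$), its evaluation as $(h_{i+1}-h_i)/\lambda_{i+1}$, and the upper bound $\vartheta_i(x)\le k_{i+1}$. The plan is to extract the first two from the work already done in proving Theorem~\ref{thm-1.2}, and to obtain the third as a soft consequence of the geometry of the transverse slices together with the conditional-measure machinery.

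For the identity and the explicit value, I would argue as follows. Specialize \eqref{e-key1_*} in the proof of Theorem~\ref{thm-1.2} to $j=i+1$: then $\gamma_{i,i+1}(x)=\sum_{k=i}^{i}\frac{h_{k+1}-h_k}{\lambda_{k+1}}=\frac{h_{i+1}-h_i}{\lambda_{i+1}}$ for $m$-a.e.~$x\in\Sigma'$, and moreover that proof established $\underline{\gamma}_{i,i+1}(x)=\overline{\gamma}_{i,i+1}(x)=\gamma_{i,i+1}(x)$, i.e.~the relevant $\limsup$ and $\liminf$ agree. But when $j=i+1$ we have $\Gamma_j(x,r)=\Gamma_{i+1}(x,r)$, and by definition $\overline{\gamma}_{i,i+1}(x)$ and $\underline{\gamma}_{i,i+1}(x)$ are exactly the $\limsup$ and $\liminf$ of $\log m_x^{\xi_i}(\Gamma_{i+1}(x,r))/\log r$ as $r\to 0$. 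Hence this common value is a true limit, and it equals $\vartheta_i(x)$ since $\vartheta_i(x)=\liminf_{r\to0}\log m_x^{\xi_i}(\Gamma_{i+1}(x,r))/\log r=\underline{\gamma}_{i,i+1}(x)$. This yields the first two equalities in \eqref{e-theta_i} at once.

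For the bound $\vartheta_i(x)\le k_{i+1}$, the natural route is to observe that $m_x^{\xi_i}(\Gamma_{i+1}(x,r))=m_x^{\xi_i}(g^{-1}(B(gx,r)))$ where, as in \eqref{e-claim1}, $g(y)=P_{(V_x^{i+1})^\perp}(\pi y)$ maps $\xi_i(x)$ into the subspace $(V_x^{i+1})^\perp$; and on $\xi_i(x)$ one actually has $\pi y - \pi x\in V_x^i = V_x^{i+1}\oplus E_x^{i+1}$, so the image of $\xi_i(x)$ under $y\mapsto g(y)-g(x)$ lies in the $k_{i+1}$-dimensional subspace $P_{(V_x^{i+1})^\perp}(E_x^{i+1})$. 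Thus $m_x^{\xi_i}\circ\pi^{-1}\circ(P_{(V_x^{i+1})^\perp})^{-1}$ is supported (modulo a translation) on a $k_{i+1}$-dimensional Euclidean space, and by \eqref{e-fact} $\vartheta_i(x)=\underline{\gamma}_{i,i+1}(x)=\underline{\dim}_{\rm loc}$ of that measure at the relevant point. Since the lower local dimension of any Borel probability measure on $\R^{k_{i+1}}$ is at most $k_{i+1}$ $\mu$-a.e. (a standard fact, e.g.~\cite{Falconer1997}), we conclude $\vartheta_i(x)\le k_{i+1}$ for $m$-a.e.~$x$.

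The only genuine subtlety I anticipate is bookkeeping: making sure that $\vartheta_i$ as defined in \eqref{gamma} in Section~\ref{S-5} coincides with the quantity $\underline{\gamma}_{i,i+1}$ used in the proof of Theorem~\ref{thm-1.2} — they are literally the same $\liminf$, but one wants to state this cleanly — and keeping straight that $V^i_y = V^i_x$ for $y\in\xi_i(x)$ (which holds since $\xi_i(x)\subset\xi_0(x)$, by Lemma~\ref{lem-3.1}), so that the transverse slice really is flat with the correct dimension $k_{i+1}=\dim E^{i+1}_x$ independent of the base point within the atom. There is no new analytic difficulty here; the corollary is purely a repackaging of \eqref{e-key1_*}, \eqref{e-fact}, \eqref{e-claim1}, and elementary dimension theory of measures on Euclidean space.
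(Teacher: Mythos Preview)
Your proposal is correct and follows essentially the same route as the paper. Both arguments extract the limit and its value from the case $j=i+1$ of \eqref{e-key1_*} in the proof of Theorem~\ref{thm-1.2}, and both obtain the bound $\le k_{i+1}$ by noting that $m_x^{\xi_i}\circ\pi^{-1}\circ(P_{(V_x^{i+1})^\perp})^{-1}$ is supported on a $k_{i+1}$-dimensional affine subspace; the paper phrases this last step via Hausdorff dimension of an exact-dimensional measure, while you use the a.e.\ local-dimension bound, which is equivalent here.
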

\begin{proof}
Fix $i\in \{0,\ldots, s-1\}$. As is proved in Theorem \ref{thm-1.2},  for $m$-a.e.~$x\in \Sigma'$,
\begin{eqnarray*}
\lim_{r\to 0}\frac{\log m_{x}^{\xi_{i}}(\Gamma_{i+1}(x,r))}{\log r}=\gamma_{i,i+1}(x)
=\frac{h_{i+1}-{h_{i}}}{\lambda_{i+1}}.
\end{eqnarray*}

To see \eqref{e-theta_i} it remains to prove that $\frac{h_{i+1}-{h_{i}}}{\lambda_{i+1}}\leq k_{i+1}$.   By Theorem \ref{thm-1.2}, for $m$-a.e.~$x\in \Sigma'$, the measure $\eta_x:= \big(P_{{(V_x^{i+1})}^\perp}\pi\big)_*\big(m^{\xi_i}_x\big)$ is exact dimensional with dimension $\frac{h_{i+1}-{h_{i}}}{\lambda_{i+1}}$. However, $\eta_x$ is supported on the affine subspace $\pi x+ (V_x^{i}\ominus V_x^{i+1})$ of dimension $k_{i+1}$, where $V_x^{i}\ominus V_x^{i+1}$ stands for the orthogonal complement of $V_x^{i+1}$ in $V_x^{i}$. Hence $\dim_{\rm H}\eta_x\leq k_{i+1}$, and so,  $\frac{h_{i+1}-{h_{i}}}{\lambda_{i+1}}\leq k_{i+1}$.
\end{proof}
\begin{lem}
\label{lem-quasi}
\begin{itemize}
\item[(i)] Let $m\in \M_\sigma(\Sigma)$ be  quasi-Bernoulli. Then for $m$-a.e.~$x\in \Sigma$, $\pi_*\big(m_x^{\xi_0}\big)$ is strongly equivalent to $\pi_*m$.
\item[(ii)] Let $m\in \M_\sigma(\Sigma)$ be  sub-multiplicative. Then for $m$-a.e.~$x\in \Sigma$, $\pi_*\big(m_x^{\xi_0}\big)$ is absolutely continuous with respect to $\pi_*m$.
\end{itemize}
\end{lem}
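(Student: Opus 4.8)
The plan is to realize $\xi_0$ as the past/future splitting of the shift, compare $m$ with a product measure, and exploit that the coding map factors through the one-sided shift. Write $\Sigma=\Sigma^{-}\times\Sigma^{+}$ with $\Sigma^{-}=\Lambda^{\Z_{<0}}$ and $\Sigma^{+}=\Lambda^{\Z_{\ge 0}}$, let $P^{-}\colon\Sigma\to\Sigma^{-}$ and $P^{+}\colon\Sigma\to\Sigma^{+}$ be the coordinate projections, and set $\mu^{-}=m\circ(P^{-})^{-1}$, $\mu^{+}=m\circ(P^{+})^{-1}$. By \eqref{e-pi1.4} the value $\pi(x)$, together with the convergence of its defining series, depends only on $P^{+}(x)$; hence there is a map $\pi^{+}$ on $\Sigma^{+}$, well defined $\mu^{+}$-a.e.\ by Proposition \ref{pro-3.1}, with $\pi=\pi^{+}\circ P^{+}$, and therefore $m\circ\pi^{-1}=\mu^{+}\circ(\pi^{+})^{-1}$. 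Moreover, since $m(\Sigma')=1$, the partition $\xi_0$ agrees, up to an $m$-null set, with the measurable partition of $\Sigma$ into the fibers $\{\omega^{-}\}\times\Sigma^{+}$ of $P^{-}$, and $m^{\xi_0}_x$ is unaffected by this identification.

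First I would compare $m$ with $\mu^{-}\times\mu^{+}$. For a cylinder of $\Sigma$ cut out by coordinates $-k,\dots,\ell-1$, spelling a word $W^{-}$ on $-k,\dots,-1$ and a word $W^{+}$ on $0,\dots,\ell-1$, the $\sigma$-invariance of $m$ gives that its $m$-measure equals $m([W^{-}W^{+}]_{0})$ (with $W^{-}W^{+}$ the concatenation) while its $(\mu^{-}\times\mu^{+})$-measure equals $m([W^{-}]_{0})\,m([W^{+}]_{0})$. Quasi-Bernoullicity then yields $C^{-1}(\mu^{-}\times\mu^{+})\le m\le C(\mu^{-}\times\mu^{+})$ on all such cylinders, while sub-multiplicativity yields only the right-hand inequality. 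Since these cylinders generate an algebra whose generated $\sigma$-algebra is $\B(\Sigma)$ and both measures are Borel regular on the compact metrizable space $\Sigma$, the comparison passes to all Borel sets: $m$ is strongly equivalent to $\mu^{-}\times\mu^{+}$ in case (i), and $m\le C(\mu^{-}\times\mu^{+})$ in case (ii).

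It remains to transfer this to the conditional measures along $P^{-}$ (an ultrametric, hence Besicovitch, space). In case (i), Lemma \ref{lem-2.8} applied to $P^{-}$ gives that $m^{\xi_0}_x$ and $(\mu^{-}\times\mu^{+})^{\xi_0}_x$ are strongly equivalent for $m$-a.e.\ $x$; the latter equals $\delta_{x^{-}}\otimes\mu^{+}$ by the standard disintegration of a product measure, so pushing forward by $\pi=\pi^{+}\circ P^{+}$, which ignores the $\Sigma^{-}$-coordinate, shows that $m^{\xi_0}_x\circ\pi^{-1}$ is strongly equivalent to $\mu^{+}\circ(\pi^{+})^{-1}=m\circ\pi^{-1}$, proving (i). In case (ii), the $\Sigma^{-}$-marginals of $m$ and of $\mu^{-}\times\mu^{+}$ both equal $\mu^{-}$, so disintegrating the bounded density $dm/d(\mu^{-}\times\mu^{+})$ over $P^{-}$ identifies $m^{\xi_0}_x$, as a measure on $\{x^{-}\}\times\Sigma^{+}$, with $\phi(x^{-},\cdot)\,\mu^{+}$ for a function $\phi\le C$ (the normalizing integral being $1$ for $\mu^{-}$-a.e.\ $x^{-}$); in particular $m^{\xi_0}_x\ll\delta_{x^{-}}\otimes\mu^{+}$, and pushing forward by $\pi$ gives $m^{\xi_0}_x\circ\pi^{-1}\ll m\circ\pi^{-1}$, proving (ii). The genuinely delicate step is this last transfer in case (ii), since Lemma \ref{lem-2.8} is stated only for strong equivalence; it can alternatively be carried out by repeating the proof of that lemma via Lemma \ref{lem-2.4}(1) with ``strongly equivalent'' replaced by ``absolutely continuous''. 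The cylinder bookkeeping and the regularity extension in the first step are routine.
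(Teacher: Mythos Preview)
Your proof is correct and reaches the same conclusion, but the route differs from the paper's in an instructive way. The paper argues directly at the level of cylinders $[I]_0$: it invokes Lemma~\ref{lem-2.4}(1) for the map $\tau\colon x\mapsto x^{-}$ to obtain the martingale formula
\[
m_x^{\xi_0}([I]_0)=\lim_{n\to\infty}\frac{m([x_{-n}\ldots x_{-1}I]_0)}{m([x_{-n}\ldots x_{-1}]_0)},
\]
and then reads off the comparison $C^{-1}m([I]_0)\le m_x^{\xi_0}([I]_0)\le C m([I]_0)$ (respectively the one-sided inequality) straight from the quasi-Bernoulli (respectively sub-multiplicative) hypothesis. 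Your approach instead packages the same cylinder estimate as a global statement, namely that $m$ is strongly equivalent to (respectively dominated by) the product $\mu^{-}\times\mu^{+}$, and then pushes this through to the conditionals via Lemma~\ref{lem-2.8} or, in case~(ii), an explicit fiberwise Radon--Nikodym argument. Conceptually your version makes the product structure visible and isolates the passage to conditionals as a separate step; the paper's version is shorter and avoids the intermediate product measure entirely, applying the differentiation lemma once rather than hiding it inside Lemma~\ref{lem-2.8}. Both ultimately rest on the same density/differentiation input, so neither is materially more general.
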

\begin{proof} We first prove (i).
Since $m$ is quasi-Bernoulli, by definition there exists a positive constant $C$ such that
$$
C^{-1}m([I])m([J])\leq m([IJ])\leq C m([I])m([J])
$$
for all finite words $I, J$ over $\Lambda$. Below we show that for $m$-a.e.~$x\in \Sigma$,
\begin{equation}
\label{e-quasi}
C^{-1}m([I])\leq m_x^{\xi_0}([I])\leq C m([I])
\end{equation}
for all finite words $I$ over $\Lambda$. This is enough to conclude the strong equivalence between $\pi_*\big(m_x^{\xi_0}\big)$ and $\pi_*m$, since $\pi x$ only depends on $x^+:=(x_n)_{n=0}^\infty$.

To see \eqref{e-quasi}, note that the measurable partition $\xi_0$ is induced by the mapping $\tau:\; \Sigma \to \Sigma^{-}$, $x\mapsto x^{-}=(x_n)_{-\infty}^{-1}$. That is,
$\xi_0(x)=\{y\in \Sigma:\; \tau y=\tau x\}$ for every $x$.
  Applying Lemma \ref{lem-2.4}(1) to $\tau:\Sigma\to \Sigma^{-}$ yields that for $m$-a.e.~$x$,
 \begin{equation}
 \label{e-inequality}
 m^{\xi_0}_x([I])= {\bf E}_m(\chi_{[I]}|\tau^{-1}(\B(\Sigma^{-})))(x)=\lim_{n\to \infty}
 \frac{m([x_{-n}\ldots x_{-1}I])}{m( [x_{-n}\ldots x_{-1}])}
 \end{equation}
 for all  finite words $I$ over $\Lambda$. \eqref{e-quasi} is then obtained from the quasi-Bernoulli property of $m$.

 Next we prove (ii). Here $m$ is assumed to be sub-multiplicative and we only have the one-sided inequality  $m([IJ])\leq C m([I])m([J])$. However this is enough to derive from \eqref{e-inequality}
 that for $m$-a.e.~$x$, $m^{\xi_0}_x([I])\leq Cm([I])$ for all  finite words $I$ over $\Lambda$.  As a consequence, $ \pi_*\big(m_x^{\xi_0}\big)$ is absolutely continuous with respect to $\pi_*m$, with a uniformly bounded Radon-Nikodym derivative.
\end{proof}
\begin{proof}[Proof of Theorem \ref{cor-1.0}] We first prove  (i).  Fix $i\in \{1,\ldots, s-1\}$. By Theorem \ref{thm-1.2}, for $m$-a.e.~$x\in \Sigma'$,  $\pi_*\big(m_x^{\xi_i}\big)$ and $\big(P_{({V_x^i})^\perp}\pi\big)_*\big(m_x^{\xi_0}\big)$ are exact dimensional with
$$
\dim_{\rm H}\big(\pi_*\big(m_x^{\xi_i}\big)\big)=\sum_{k=i}^{s-1}\frac{h_{k+1}-h_k}{\lambda_{k+1}}
$$
and
$$
\dim_{\rm H} \left(\big(P_{({V_x^i})^\perp}\pi\big)_*\big(m_x^{\xi_0}\big) \right)=\sum_{k=0}^{i-1}\frac{h_{k+1}-h_k}{\lambda_{k+1}},
$$
 hence,
\begin{equation}
\label{e-sum}
\dim_{\rm H} \big(\pi_*\big(m_x^{\xi_i}\big)\big)+\dim_{\rm H}\left(\big(P_{({V_x^i})^\perp}\pi\big)_*\big(m_x^{\xi_0}\big)\right)=\dim_{\rm H}\left(\pi_*\big(m_x^{\xi_0}\big)\right).
\end{equation}

Next let $x\in \Sigma'$ and write $W=V_x^i$,  $\nu=m^{\xi_0}_x$, $\eta= \pi_*\nu$.  Notice that $\nu$ is supported on $\xi_0(x)$.  Consider the measurable partition $\zeta$ of $\R^d$ given by
$$
\zeta:=\{W+a:\; a\in W^{\perp}\}.
$$
Set $\pi^{-1}\zeta:=\{\xi_0(x)\cap \pi^{-1}(W+a):\; a\in W^{\perp}\}$. Then $\pi^{-1}\zeta$ is a measurable partition of $\xi_0(x)$. Let $\{\nu_y^{\pi^{-1}\zeta}\}_{y\in \xi_0(x)}$ be the system of conditional measures of $\nu$ associated with $\pi^{-1}\zeta$, and $\{\eta_z^\zeta\}_{z\in \R^d}$ the system of conditional measures of $\eta$ associated with $\zeta$. Write $\eta_{W,z}:=\eta_z^\zeta$.  By the uniqueness of conditional measures, we have for $\nu$-a.e.~$y$, \begin{equation}
\label{e-cond}
\pi_*\big(\nu_y^{\pi^{-1}\zeta}\big)=\eta_{W, \pi y}.
\end{equation}
Notice also that for $y\in \xi_0(x)$, the atom $(\pi^{-1}\zeta)(y)$ is nothing but $\xi_i(y)$.  Hence we have $\nu_y^{\pi^{-1}\zeta}=m^{\xi_i}_y$ for $m$-a.e.~$x$ and $m_x^{\xi_0}$-a.e.~$y$. This combining with \eqref{e-cond} gives
\begin{equation}
\label{e-comp}
 \pi_*\big(m^{\xi_i}_x\big)= \big( \pi_*\big(m_x^{\xi_0}\big)\big)_{V_x^i, \pi x}
\end{equation}
for $m$-a.e.~$x$. Plugging the above equality into  \eqref{e-sum}, we see that $\pi_*\big(m^{\xi_0}_x\big)$ satisfies dimension conservation along $V_x^i$. This proves (i).

 Now we turn to the proof of (ii). Suppose that $m$ is quasi-Bernoulli. By Lemma \ref{lem-quasi}(i), for $m$-a.e.~$x\in \Sigma'$, $\pi_*\big(m_x^{\xi_0}\big)$ is strongly equivalent to $\mu=\pi_*m$; as a consequence,
  $\big(P_{({V_x^i})^\perp}\pi\big)_*\big(m^{\xi_0}_x\big)$ is strongly equivalent to $\big(P_{({V_x^i})^\perp}\big)_*\mu$.
 It follows that  for $m$-a.e.~$x\in \Sigma'$,  $\big(P_{({V_x^i})^\perp}\big)_*\mu$ is exact dimensional with dimension $\sum_{k=0}^{i-1} \frac{h_{k+1}-h_k}{\lambda_{k+1}}$. Equivalently, for  $\big(\Pi_i\big)_*m$-a.e.~$W$,  $\big(P_{W^\perp}\big)_*\mu$ is exact dimensional with dimension $\sum_{k=0}^{i-1} \frac{h_{k+1}-h_k}{\lambda_{k+1}}$.

 Again since  $\pi_*\big(m_x^{\xi_0}\big)$ is strongly equivalent to $\mu$ for $m$-a.e.~$x$,  applying Lemma \ref{lem-2.8} to the orthogonal projection
 $P_{(V^i_x)^\perp}:\; \R^d\to (V^i_x)^\perp$, we see that $m$-a.e.~$x$, $\mu_{V_x^i, \pi x}$ is equivalent to $\big(\pi_*\big(m_x^{\xi_0}\big)\big)_{V_x^i, \pi x}=\pi_*\big(m^{\xi_i}_x\big)$, and so $\mu_{V_x^i, \pi x}$ is exact dimensional with dimension $\sum_{k=i}^{s-1} \frac{h_{k+1}-h_k}{\lambda_{k+1}}$. Equivalently, for  $\big(\Pi_i\big)_*m$-a.e.~$W$ and $\mu$-a.e.~$z$, $\mu_{W, z}$ is exact dimensional with dimension $\sum_{k=i}^{s-1} \frac{h_{k+1}-h_k}{\lambda_{k+1}}$. Recall that we have proved that for  $\big(\Pi_i\big)_*m$-a.e.~$W$,
 $\big(P_{W^\perp}\big)_*\mu$ is exact dimensional with dimension $\sum_{k=0}^{i-1} \frac{h_{k+1}-h_k}{\lambda_{k+1}}$.  This is enough to conclude (ii).

 Finally, we prove (iii). Suppose that $m$ is sub-multiplicative. By Lemma \ref{lem-quasi}(ii), for $m$-a.e.~$x\in \Sigma'$, $\pi_*\big(m_x^{\xi_0}\big)$ is absolutely continuous with respect to $\mu$. Hence there exists $H\subset \Sigma'$ with full $m$-measure such that for any $x\in H$,  there exists a Borel set $F_x\subset \R^d$ with positive $\mu$-measure  such that    $(\pi_*\big(m_x^{\xi_0}\big))_{F_x}$ is strongly equivalent to $\mu_{F_x}$, where $\nu_A$ stands for the probability measure defined by $\displaystyle\nu_A(\cdot)={\nu(A\cap \cdot)}/{\nu (A)}$. As is proved in part (ii), when $m$ is quasi-Bernoulli, we can take $F_x= \Sigma'$.

 Now fix $x\in H$ and $i\in \{1,\ldots, s-1\}$. Set $W=V^i_x$ and write for convenience \begin{align*}
 \eta:=\pi_*\big(m_x^{\xi_0}\big),\quad
  \eta':=(\pi_*\big(m_x^{\xi_0}\big))_{F_x},\quad
   \mu':=\mu_{F_x}.
 \end{align*}
 Applying Lemma \ref{lem-2.9} to the projection $P_{W^\perp}:\R^d\to \R^d$ and using the Borel density lemma, we see that for $\mu$-a.e.~$z\in F_x$ (equivalently for $\eta$-a.e.~$z\in F_x$),
 \begin{equation}
 \label{e-ff1}
 \begin{split}
 \dim_{\rm loc}((\eta')_{W,z}, z) &= \dim_{\rm loc}(\eta_{W,z}, z), \\
 \dim_{\rm loc}((\mu')_{W,z}, z) &= \dim_{\rm loc}(\mu_{W,z}, z), \\
 \dim_{\rm loc}\big(\big(P_{W^\perp}\big)_*\eta', P_{W^\perp}(z)\big) &= \dim_{\rm loc}\big(\big(P_{W^\perp}\big)_*\eta, P_{W^\perp}(z)\big),\\
\dim_{\rm loc}\big(\big(P_{W^\perp}\big)_*\mu', P_{W^\perp}(z)\big) &= \dim_{\rm loc}\big( \big(P_{W^\perp}\big)_*\mu, P_{W^\perp}(z)\big).
 \end{split}
 \end{equation}
 Since $\eta'$ and $\mu'$ are strongly equivalent, by Lemma \ref{lem-2.8}, for $\mu$-a.e.~$z\in F_x$,
 \begin{align*}
 \dim_{\rm loc}((\eta')_{W,z}, z) &= \dim_{\rm loc}((\mu')_{W,z}, z), \\
 \dim_{\rm loc}\big(\big(P_{W^\perp}\big)_*\eta', P_{W^\perp}(z)\big) &= \dim_{\rm loc}\big( \big(P_{W^\perp}\big)_*\mu', P_{W^\perp}(z)\big).
 \end{align*}
 Combining the above equalities  with \eqref{e-ff1} yields that for $\mu$-a.e.~$z\in F_x$,
 \begin{align*}
 \dim_{\rm loc}(\mu_{W,z}, z) &= \dim_{\rm loc}(\eta_{W,z}, z), \\
 \dim_{\rm loc}\big( \big(P_{W^\perp}\big)_*\mu, P_{W^\perp}(z)\big) &= \dim_{\rm loc}\big( \big(P_{W^\perp}\big)_*\eta, P_{W^\perp}(z)\big).
 \end{align*}
 Now (iii) follows from (i). This completes the proof of the theorem.
\end{proof}

\begin{proof}[Proof of Theorem \ref{thm-1.7'}] Here we only give a sketched proof. It is based on \cite[Theorem 2.11]{FengHu2009} and its proof.

 Since the linear parts $M_j$ of ${\mathcal S}$ commute, $\R^d$ can be  decomposed into the direct sum $T_1\oplus \cdots \oplus T_\ell$ of some subspaces  with dimensions $q_1,\ldots, q_\ell$,  so that
 for each pair $(j,p)\in \Lambda\times\{1,\ldots, \ell\}$, $M_jT_p\subset T_p$ and $M_j$ is ``weakly conformal'' on $T_p$ in the sense that there exists $a_{j,p}\geq  0$ so that
  $\lim_{n\to \infty}\|M_j^nv\|^{1/n}=a_{j,p}$ for  $v\in T_{p}\backslash\{0\}$. Hence under a suitable coordinate change, ${\mathcal S}$ can be written as the direct product of some ``weakly conformal'' affine IFSs ${\mathcal S_1}$, \ldots,  ${\mathcal S_\ell}$ on $\R^{q_1}$,\ldots, $\R^{q_\ell}$ (cf. \cite[Definition 2.10]{FengHu2009}).

   Set $\overline{\lambda}_p=\sum_{j\in \Lambda} m([j]) \log a_{j,p}$ for $p=1,\ldots, \ell$. Permutating ${\mathcal  S_j}$'s if necessary,  we may assume that
  $$
  \overline{\lambda}_1\geq \cdots \geq \overline{\lambda}_\ell.
  $$
For $p\in \{1,\ldots, \ell\}$ and let $\tau_p$ be the orthogonal projection from $\R^d$ to $Y_p:=\R^{q_1}\times \cdots \times \R^{q_p}$, and let $m^{\zeta_p}_x$ be the conditional measure of $m$ associated with the measurable partition $\{\pi^{-1}\circ \tau_p^{-1}(y):\ y\in  Y_p\}$ of $\Sigma$. It is implicitly proved in \cite[Theorem 2.11]{FengHu2009} that there exist $h_m(\sigma)=\overline{h}_0\geq \overline{h}_1\geq \cdots\geq \overline{h}_\ell\geq 0$ such that
for $m$-a.e.~$x\in \Sigma$ and $p\in \{1,\ldots, \ell-1\}$, the measure $\pi_*\big(m^{\zeta_p}_x\big)$ is exact dimensional with dimension $\sum_{j=p}^{\ell-1}\frac{\overline{h}_{j+1}-\overline{h}_j}{\overline{\lambda}_{j+1}}$, and moreover,
$\mu=\pi_*m$ is exact dimensional with dimension $\sum_{j=0}^{\ell-1}\frac{\overline{h}_{j+1}-\overline{h}_j}{\overline{\lambda}_{j+1}}$. (We remark that this is only proved in \cite{FengHu2009} in  the case when ${\mathcal S}$ is invertible and contracting. But it can be extended to the general case like Theorem \ref{cor-1.0}.)  Applying this result to the IFS ${\mathcal S}_1\times\cdots\times {\mathcal S}_{p}$ gives that $(\tau_p)_*\mu$ is exact dimensional with dimension  $\sum_{j=0}^{p-1}\frac{\overline{h}_{j+1}-\overline{h}_j}{\overline{\lambda}_{j+1}}$.

Set $\mu=\pi_*m$. Let $\{\mu_{Y_p^\perp,z}\}$ denote the system of conditional measures of $\mu$ associated with the measurable partition $\{\tau_p^{-1}(y):\; y\in Y_p\}$ of $\R^d$. Similar to the proof of \eqref{e-comp},  we can show that for $m$-a.e.~$x\in \Sigma$, $\mu_{Y_p^\perp, \pi x}=\pi_*\big(m^{\zeta_p}_x\big)$.  It follows that $\mu$ is    dimension conserving with respect to the projection $\tau_p$. Moreover,  $\mu_{Y_p^\perp, z}$ is exact dimensional for $\mu$-a.e~$z$.

Now let $1\leq p_1<\cdots<p_{s'}=\ell$ be those integers so that $$\overline{\lambda}_1=\cdots=\overline{\lambda}_{p_1}>\overline{\lambda}_{p_1+1}=\cdots=\overline{\lambda}_{p_2}>\cdots>
\overline{\lambda}_{p_{s'-1}+1}=\cdots=\overline{\lambda}_{p_{s'}}.$$
It is readily checked that $s=s'$,   $\lambda_i=\overline{\lambda}_{p_i}$  and $V_x^i=W_i:=Y_{p_i}^\perp$  for $1\leq i\leq s$ and $m$-a.e~$x$. In particular, $P_{(W_i)^\perp}=\tau_{p_i}$ for $i=1,\ldots, s-1$. Hence $\mu$ is  dimension conserving with respect to the projections $P_{(W_i)^\perp}$, $i=1,\ldots, s-1$.
\end{proof}

\begin{rem}
 \label{rem-6.3}
 {\rm The proof of Theorem \ref{thm-1.7'} implies the following result:  Let ${\mathcal S}=\{S_j(x)=r_jx+a_j\}_{j\in \Lambda}$ be a self-similar IFS on $\R^d$ with $r_j>0$, average contracting with respect to an ergodic $m\in {\mathcal M}_\sigma(\Sigma)$. Then for any proper subspace $W$ of $\R^d$, $\pi_*m$ is dimension conserving with respect to $P_W$.      This generalizes the result in \cite{FalconerJin2014, Furstenberg2008}. To see it, let $p=\dim W$ and let $v_1,\ldots, v_d$ be an orthonormal basis of $\R^d$ such that ${\rm span}(v_1,\ldots, v_p)=W$.  Then one can check that ${\mathcal S}$ can be written as the product ${\mathcal S}_1 \times\cdots  \times {\mathcal S}_d$ of some one-dimensional IFSs on $X_1,\ldots, X_d$, where $X_i={\rm span}(v_i)$, and moreover $\overline{\lambda}_1=\cdots = \overline{\lambda}_d$.  Now the desired dimension conservation property follows from the proof of Theorem \ref{thm-1.7'}.
}
\end{rem}

\section{Lyapunov dimension}
\label{S-7}

Throughout this section, let $m$ be an ergodic $\sigma$-invariant measure on $\Sigma$ and  ${\bf M}=(M_j)_{j\in \Lambda}$ be a tuple of $d\times d$ real matrices satisfying
 $$\lambda({\bf M},m):=\lim_{n\to \infty}\frac{1}{n}\int \log \|M_{x_0}\cdots M_{x_{n-1}}\| \; dm(x)<0.$$
Let $\mathcal S=\{S_{j}(x)=M_jx+a_j\}_{j\in \Lambda}$ be an affine IFS on $\R^d$. Let $\{(\lambda_i, k_i)\}_{1\leq i\leq s}$ be the Lyapunov spectrum of ${\bf M}$ with respect to $(\Sigma, \sigma^{-1}, m)$. Set
$$
L_0=0 \quad \mbox{ and }\quad  L_i=-\sum_{\ell=1}^{i} \lambda_\ell k_\ell\;\mbox{ for }\; i=1,\ldots, s.
$$
Clearly $L_0<L_1<\cdots<L_s$.
Following \cite{JordanPollicottSimon2007}, we give the following.

\begin{de}
\label{de-LY}
The Lyapunov dimension of $m$ with respect to ${\bf M}$, denoted as $\dim_{\rm LY} (m, {\bf M})$,  is defined to be
$$
\left\{
\begin{array}{ll}
\displaystyle \left(\sum_{\ell=0}^{j-1}k_\ell\right)+\frac{ h_m(\sigma)-L_{j-1}}{(-\lambda_{j})} & \mbox{ if } L_{j-1}\leq  h_m(\sigma) < L_j
\mbox{ for some }j\in \{1,\ldots, s\},\\
&\\
\displaystyle \frac{d \;h_m(\sigma)}{L_s} & \mbox{ if } h_m(\sigma) \geq  L_s.
\end{array}
\right.
$$
\end{de}

Let $\pi$ be the coding map associated with ${\mathcal S}$. Recall that $h_i$, $0\leq i\leq s$, are the conditional entropies of $m$ defined  in  \eqref{e-hi1}, and $h_0=h_m(\sigma)$. The following result says that the Lyapunov dimension of $m$ is always an upper bound for the Hausdorff dimension of $\pi_*m$. This result was first proved in \cite{JordanPollicottSimon2007} under a stronger  assumption that $\|M_j\|<1$ for all $j$.
\begin{pro}
\label{pro-7.2}
 $\dim_{\rm H}  \pi_*m \leq \min\{d,  \dim_{\rm LY}(m, {\bf M})\}$. Moreover, the equality holds if and only if one of the following holds:
 \begin{itemize}
 \item[(1)] $h_m(\sigma) \geq L_s$, and  $h_i=h_m(\sigma)-L_i$ for all $i\in \{1,\ldots, s\}$.
 \item[(2)] $h_m(\sigma)\in [L_{j-1},  L_j)$ for some $j\in \{1,\ldots, s\}$, and
 $$
 h_i=\left\{
 \begin{array}{ll}
 h_m(\sigma)-L_i & \mbox{ if }1\leq i\leq j-1,\\
 0 & \mbox{ if }j\leq i\leq s.

 \end{array}
 \right.
 $$
 \end{itemize}
\end{pro}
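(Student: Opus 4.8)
The plan is to deduce the upper bound from the dimension formula \eqref{ly-dim} together with the monotonicity/concavity information we have about the sequence $(h_i)$, and then to track exactly when each of the intermediate inequalities is tight. The starting point is Theorem \ref{thm-1.1}, which gives
$$
\dim_{\rm H} m\circ \pi^{-1}=\sum_{i=0}^{s-1}\frac{h_{i+1}-h_i}{\lambda_{i+1}},
$$
with $h_0=h_m(\sigma)$. Since each $\lambda_{i+1}<0$, it is convenient to rewrite this as $\sum_{i=0}^{s-1}\frac{h_i-h_{i+1}}{-\lambda_{i+1}}$. The key structural facts I would first record are: (a) $h_0\ge h_1\ge\cdots\ge h_s\ge 0$, which follows from $h_i=\E_m(\I_m(\P\mid\widehat{\xi_i})\mid\I)$ together with $\widehat{\xi_0}\supseteq\widehat{\xi_1}\supseteq\cdots\supseteq\widehat{\xi_s}$ (coarser conditioning $\sigma$-algebra gives larger conditional information, hence larger entropy), and nonnegativity from Lemma \ref{lem-par}(v); and (b) the ``speed bound'' $0\le h_i-h_{i+1}\le -\lambda_{i+1}k_{i+1}$ for each $i$, equivalently $\vartheta_i(x)=\frac{h_{i+1}-h_i}{\lambda_{i+1}}\le k_{i+1}$, which is exactly Corollary \ref{cor-6.1}. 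Fact (b) is the crucial ingredient: it says that, going from level $i$ to level $i+1$, the entropy can drop by at most $-\lambda_{i+1}k_{i+1}=L_{i+1}-L_i$.

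Given these, the upper bound is a ``greedy fill'' argument. We must distribute the total ``entropy budget'' $h_0=h_m(\sigma)$ across the $s$ slots, where slot $i+1$ can absorb at most $L_{i+1}-L_i$ of it and contributes $k_{i+1}$ to the dimension when saturated and $\frac{h_i-h_{i+1}}{-\lambda_{i+1}}$ in general; since $-\lambda_1<-\lambda_2<\cdots$, dimension is maximized by putting as much entropy as possible into the earliest slots. Concretely: if $h_m(\sigma)\ge L_s$, then the best case fills every slot completely, using $L_s$ of the budget and giving dimension $k_1+\cdots+k_s=d$; but in fact $\sum\frac{h_i-h_{i+1}}{-\lambda_{i+1}}\le\sum k_{i+1}=d$ directly from (b), so $\dim_{\rm H}m\circ\pi^{-1}\le d$ with equality iff every slot is saturated, i.e. $h_i-h_{i+1}=L_{i+1}-L_i$ for all $i$, which telescopes to $h_i=h_m(\sigma)-L_i$ — case (1). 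If $h_m(\sigma)\in[L_{j-1},L_j)$, I would argue by a direct estimate: let $t_i:=h_i-h_{i+1}\ge 0$ with $\sum_{i=0}^{s-1}t_i=h_m(\sigma)$ (using $h_s\ge 0$; note this needs $h_s\ge0$ but also one should allow $\sum t_i\le h_m(\sigma)$ if $h_s>0$, handled below) and $t_i\le L_{i+1}-L_i$. Then $\sum\frac{t_i}{-\lambda_{i+1}}$ is a linear functional maximized, under the box constraints $0\le t_i\le L_{i+1}-L_i$ and $\sum t_i=h_m(\sigma)$, by saturating slots $1,\dots,j-1$ (total $L_{j-1}$) and putting the remaining $h_m(\sigma)-L_{j-1}$ into slot $j$; the value is $\sum_{\ell=1}^{j-1}k_\ell+\frac{h_m(\sigma)-L_{j-1}}{-\lambda_j}$, which is precisely $\dim_{\rm LY}(m,{\bf M})$ (with the index shift $L_\ell=\sum_{p\le\ell}(-\lambda_p)k_p$ and $\sum_{\ell=1}^{j-1}k_\ell=\sum_{\ell=0}^{j-1}k_\ell$ by the convention $k_0=0$). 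Equality in this optimization forces $t_i=L_{i+1}-L_i$ for $i<j$, $t_j=h_m(\sigma)-L_{j-1}$, and $t_i=0$ for $i>j$ (here using $-\lambda_i>-\lambda_j$ strictly to get a unique maximizer among slots past $j$, and $-\lambda_i<-\lambda_j$ to force full saturation before $j$), which telescopes to the formula for $h_i$ in case (2). I should also check $h_m(\sigma)\le d\cdot(-\lambda_s)/1$... rather, that when $h_m(\sigma)\ge L_s$ the stated formula $d\,h_m(\sigma)/L_s$ is $\ge d$ iff $h_m(\sigma)\ge L_s$, consistent with the $\min$.

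One wrinkle to be careful about: the constraint I actually have is $\sum_{i=0}^{s-1}t_i=h_0-h_s\le h_0$, not necessarily $=h_0$, because $h_s$ may be positive. If $h_s>0$ this only makes $\sum\frac{t_i}{-\lambda_{i+1}}$ smaller, so the upper bound still holds; but then equality in the final formula forces $h_s=0$, which is already part of what cases (1)/(2) assert (in case (2), $h_j=\cdots=h_s=0$; in case (1), $h_s=h_m(\sigma)-L_s\ge0$, with equality to the $\min$ value $d$ possible only when... actually in case (1) $h_s$ need not be $0$). I expect the main obstacle to be bookkeeping the equality analysis cleanly across the two regimes and the boundary cases $h_m(\sigma)=L_{j-1}$, and making sure the index conventions ($k_0=0$, $L_0=0$, $\lambda_0:=\lambda_1$) line up so that the linear-programming optimum matches Definition \ref{de-LY} on the nose; the analytic content is entirely in Corollary \ref{cor-6.1} and the monotonicity of $(h_i)$, both already available.
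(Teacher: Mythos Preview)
Your approach is correct and essentially identical to the paper's: both reduce to the same linear program (the paper parametrizes by $x_i$ with $h_m(\sigma)=x_0\ge\cdots\ge x_s\ge 0$ and the speed bound from Corollary~\ref{cor-6.1}, you use the equivalent increments $t_i=x_i-x_{i+1}$), then solve greedily to identify the unique maximizer in each regime. One minor slip: the inclusions should read $\widehat{\xi_0}\subseteq\widehat{\xi_1}\subseteq\cdots\subseteq\widehat{\xi_s}$, since the partitions $\xi_i$ become \emph{finer} as $V_x^i$ shrinks, but your conclusion $h_0\ge h_1\ge\cdots\ge h_s$ is correct.
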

\begin{proof}
Since $\lambda({\bf M}, m)<0$, the IFS $\mathcal S$ is average contracting with repect to $m$. By Theorem \ref{thm-1.1},
$\dim_{\rm H}  \pi_*m=\sum_{i=0}^{s-1} \frac{h_{i+1}-h_i}{\lambda_{i+1}}$. Recall that $$0>\lambda({\bf M}, m)=\lambda_1>\cdots>\lambda_s\geq -\infty,$$ and
$$h_m(\sigma)=h_0\geq h_1\geq \cdots \geq h_s\geq 0.$$ Moreover by Corollary \ref{cor-6.1},    $h_{i}-h_{i+1}\leq (-\lambda_{i+1})k_{i+1}$ for each $i$. Hence
 $\dim_{\rm H}  \pi_*m$ is bounded above by
$$
\Delta:=\max\left\{\sum_{i=0}^{s-1}\frac{x_{i+1}-x_i}{\lambda_{i+1}}:\; h_m(\sigma)=x_0\geq  \cdots \geq x_s\geq 0, \;
\frac{x_{i+1}-x_{i}}{\lambda_{i+1}}\leq k_{i+1} \mbox{ for all } i \right\}.
$$

Now it is readily checked that the following hold:
(a) if $h_m(\sigma) \geq L_s$, then $\Delta=d$ and the maximum in defining $\Delta$ is attained uniquely at  $(x_0, x_1,\ldots, x_s)$ where $x_i=h_m(\sigma)-L_i$ for $0\leq i\leq s$;
(b) if $h_m(\sigma)\in [L_{j-1},  L_j)$ for some $j\in \{1,\ldots, s\}$, then
$$\Delta=\left(\sum_{\ell=0}^{j-1}k_\ell\right)+\frac{ h_m(\sigma)-L_{j-1}}{(-\lambda_{j})},
$$
and the maximum is attained uniquely at $(x_1,\ldots, x_s)$ where $x_i=h_m(\sigma)-L_i$ for $i\leq j-1$ and $0$ for $i\geq j$. As a consequence, the results of the proposition hold.
\end{proof}

\begin{rem}
\label{rem-q1}
By Proposition \ref{pro-7.2}, if $\dim_{\rm H} \pi_*m= \min\{d,  \dim_{\rm LY}(m, {\bf M})\}$, then
$$
\sum_{ \ell=1}^{j}\frac{h_\ell-h_{\ell-1}}{\lambda_\ell}=\min\{k_1+\cdots +k_j, \dim_{\rm H}  \pi_*m\} \quad \mbox{ for }j=1,\ldots, s.
$$
This result was partially proved in \cite[Corollay 2.7]{BaranyKaenmaki2015}.
\end{rem}

\begin{pro}
\label{pro-7.3}
Suppose that ${\mathcal S}$ is contracting and satisfies the strong separation condition. Then
the following statements hold.
\begin{itemize}
\item [(i)] $h_s=0$,  $h_m(\sigma)<L_s$ and  $\dim_{\rm LY}(m,{\bf M})<d$.
\item[(ii)] Let  $j$ be the unique element in $\{1,\ldots, s\}$ so that $L_{j-1}\leq h_m(\sigma)<L_j$. Then $\dim_{\rm H}  \pi_*m= \dim_{\rm LY} (m, {\bf M})$ if and only if
\begin{equation}
\label{e-equiv}
\sum_{ \ell=1}^{j-1}\frac{h_\ell-h_{\ell-1}}{\lambda_\ell}=d_{j-1}, \qquad \sum_{\ell=j+1}^s\frac{h_\ell-h_{\ell-1}}{\lambda_\ell}=0,
\end{equation}
where $d_0:=0$ and $d_i:=k_1+\cdots+k_i$ for $1\leq i\leq s$.
\end{itemize}
\end{pro}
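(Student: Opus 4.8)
The plan is to derive both parts from the general dimension formula of Theorem \ref{thm-1.1}, the bounds $h_{i-1}-h_i\le(-\lambda_i)k_i$ from Corollary \ref{cor-6.1}, and one extra ingredient coming from the strong separation condition: the fibres of $\pi$ are $m$-a.e.\ singletons, equivalently $h_s=H_m(\P\,|\,\widehat{\xi_s})$ should be related to $h_m(\sigma)$ via the positivity of entropy. First I would establish (i). Under the strong separation condition the IFS is contracting and $\pi$ is injective (or at least $m$-a.e.\ injective: distinct cylinders have disjoint images), so $m\circ\pi^{-1}$ carries the same entropy as $m$ and the conditional measures $m_x^{\xi_s}$ are Dirac masses at $x$; by the corollary after Lemma \ref{lem-3.5} this forces $h_s=0$. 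To see $h_m(\sigma)<L_s$: since $\mathcal S$ is contracting each $\|M_j\|<1$, so $\lambda_i<0$ for all $i$, hence $L_s=\sum_\ell(-\lambda_\ell)k_\ell>0$ is finite, and $\lambda_s\neq-\infty$. Summing the inequalities $h_{\ell-1}-h_\ell\le(-\lambda_\ell)k_\ell$ over $\ell=1,\dots,s$ and using $h_0=h_m(\sigma)$, $h_s=0$ gives $h_m(\sigma)\le L_s$. Equality would force $h_{\ell-1}-h_\ell=(-\lambda_\ell)k_\ell$ for every $\ell$, in particular $h_{s-1}-h_s=(-\lambda_s)k_s$, i.e.\ the transverse dimension $\vartheta_{s-1}$ equals $k_s$, meaning $m^{\xi_{s-1}}_x\circ\pi^{-1}$ is (up to the projection $P_{(V_x^s)^\perp}=\mathrm{Id}$ on a $k_s$-dimensional complement) of full dimension in an affine subspace; combined with $h_s=0$ this makes the conditional slice measure along $V_x^{s-1}$ absolutely continuous with respect to Lebesgue on a $k_s$-plane, contradicting the atomicity forced by strong separation unless $\#\Lambda$ is constrained — more cleanly, if $h_m(\sigma)=L_s$ then the maximiser in Proposition \ref{pro-7.2}(1) is forced and $\dim_{\rm H}m\circ\pi^{-1}=d$, so $m\circ\pi^{-1}$ would be exact dimensional of dimension $d$ while supported on a set of positive finite Lebesgue measure that is the attractor of a strongly separated contracting IFS — this is impossible since such an attractor has $\mathcal L^d$-measure zero (the measure of $K$ equals the sum over $j$ of $|\det M_j|\,\mathcal L^d(K)$ over disjoint pieces, forcing $\sum_j|\det M_j|\le 1$ strictly if the union is proper, which it is). Hence $h_m(\sigma)<L_s$, and then $\dim_{\rm LY}(m,{\bf M})$ is given by the first branch of Definition \ref{de-LY} and is $<d$ by the strict inequality.

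Next, part (ii). Let $j$ be as stated, so $L_{j-1}\le h_m(\sigma)<L_j$. By Theorem \ref{thm-1.1},
\begin{equation*}
\dim_{\rm H}m\circ\pi^{-1}=\sum_{\ell=1}^{s}\frac{h_\ell-h_{\ell-1}}{\lambda_\ell},
\end{equation*}
and by Proposition \ref{pro-7.2} this equals $\dim_{\rm LY}(m,{\bf M})=d_{j-1}+\frac{h_m(\sigma)-L_{j-1}}{-\lambda_j}$ if and only if the constrained maximum $\Delta$ is attained at the true values $(h_1,\dots,h_s)$, whose unique maximiser was identified in the proof of Proposition \ref{pro-7.2} as $x_i=h_m(\sigma)-L_i$ for $i\le j-1$ and $x_i=0$ for $i\ge j$. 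I would translate "$(h_i)$ equals this maximiser'' into the two displayed identities \eqref{e-equiv}. For $\ell\le j-1$, $h_\ell=h_m(\sigma)-L_\ell$ is equivalent to $h_{\ell-1}-h_\ell=(-\lambda_\ell)k_\ell$, i.e.\ $\frac{h_\ell-h_{\ell-1}}{\lambda_\ell}=k_\ell$; summing over $\ell=1,\dots,j-1$ gives exactly $\sum_{\ell=1}^{j-1}\frac{h_\ell-h_{\ell-1}}{\lambda_\ell}=d_{j-1}$. For $\ell\ge j+1$, the maximiser has $x_{\ell-1}=x_\ell=0$ (using $h_m(\sigma)<L_j$ so that already $x_{j-1}\ge x_j$ forces $x_j=h_m(\sigma)-L_{j-1}<(-\lambda_j)k_j$ is admissible and then $x_{j+1}=\dots=0$), hence $h_\ell-h_{\ell-1}=0$ for all such $\ell$, giving $\sum_{\ell=j+1}^s\frac{h_\ell-h_{\ell-1}}{\lambda_\ell}=0$. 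Conversely, if \eqref{e-equiv} holds then, because $h_s=0$ by part (i) and $h_0=h_m(\sigma)$, the telescoping sum pins down $h_j$ as well ($h_{j-1}=h_m(\sigma)-L_{j-1}$ from the first relation and $h_j=h_{j+1}=\dots=h_s=0$ from the second), so $\dim_{\rm H}m\circ\pi^{-1}=\sum_{\ell=1}^{j-1}k_\ell+\frac{h_{j-1}-h_{j}}{-\lambda_j}\cdot(-1)\cdot(-1)=d_{j-1}+\frac{h_m(\sigma)-L_{j-1}}{-\lambda_j}=\dim_{\rm LY}(m,{\bf M})$; here I must double-check the sign bookkeeping since $\lambda_j<0$.

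The main obstacle I anticipate is the rigorous exclusion of the equality $h_m(\sigma)=L_s$ in part (i): the soft argument via Proposition \ref{pro-7.2}(1) reduces it to the statement that a strongly separated contracting affine IFS cannot have an attractor (or stationary measure) of Hausdorff dimension exactly $d$ in a way compatible with exact dimensionality, and one must argue this cleanly — the cleanest route is that exact dimensionality of $m\circ\pi^{-1}$ with value $d$ would make $\dim_{\rm H}$ and $\dim_{\rm P}$ both equal $d$, while strong separation gives $\mathcal L^d(K)=\sum_j\mathcal L^d(S_j(K))=\big(\sum_j|\det M_j|\big)\mathcal L^d(K)$ with the $S_j(K)$ disjoint and $\sum_j|\det M_j|\le\big(\max_j\|M_j\|\big)^d\cdot\#\Lambda$; if this forces $\mathcal L^d(K)=0$ we are done, and otherwise one invokes that an exact-dimensional measure of full dimension $d$ in $\mathbb R^d$ is absolutely continuous only under extra hypotheses — so instead I would argue directly that $h_m(\sigma)=L_s$ combined with $h_s=0$ contradicts Corollary \ref{cor-6.1} together with the fact, from Theorem \ref{thm-1.2}, that $m^{\xi_{s-1}}_x\circ\pi^{-1}$ would then be exact dimensional of dimension $k_s$ supported on a $k_s$-plane and hence equivalent to Lebesgue there, which is incompatible with $h_s=0$ (the $\xi_s$-fibres being points) by a density/differentiation argument. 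I would present this last step carefully as it is the one place where the strong separation hypothesis must really be used beyond $h_s=0$.
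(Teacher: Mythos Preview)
Your outline for part (ii) is essentially correct and matches the paper: using $h_s=0$ and the termwise bounds $0\le h_{\ell-1}-h_\ell\le(-\lambda_\ell)k_\ell$ from Corollary \ref{cor-6.1}, the two identities in \eqref{e-equiv} are equivalent to $h_\ell=h_m(\sigma)-L_\ell$ for $\ell\le j-1$ and $h_\ell=0$ for $\ell\ge j$, which by Proposition \ref{pro-7.2}(2) is exactly the equality condition.

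For part (i), two points. First, a minor one: contracting (i.e.\ $\|M_j\|<1$) does \emph{not} force $\lambda_s\neq-\infty$; some $M_j$ may well be singular. The paper simply observes that if $\lambda_s=-\infty$ then $L_s=\infty$ and the inequality is trivial, and then treats the case $\lambda_s>-\infty$.

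Second, and this is the genuine gap you correctly anticipated: none of your proposed routes to the \emph{strict} inequality $h_m(\sigma)<L_s$ goes through. Your attempt via $K$ fails because $K=\bigcup_j S_j(K)$ is \emph{not} a proper union, so if $\mathcal L^d(K)>0$ you only get $\sum_j|\det M_j|=1$, and if $\mathcal L^d(K)=0$ this says nothing about $h_m(\sigma)$ versus $L_s$ (a set can have Hausdorff dimension $d$ and zero Lebesgue measure). Your alternative via ``exact dimension $d$ forces absolute continuity'' is not true in general, and the density/differentiation sketch at the end is too vague to rescue it. The paper argues quite differently. Strong separation lets one choose $\delta>0$ so small that the images $S_j(K_\delta)$ of the closed $\delta$-neighbourhood $K_\delta$ are pairwise disjoint and contained in the \emph{interior} of $K_\delta$; since $\mathcal L^d(K_\delta)>0$ and the union is now genuinely proper, one obtains $\rho:=\sum_j|\det M_j|<1$. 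Then, combining the Shannon--McMillan--Breiman theorem with the a.e.\ limit $\lim_n\tfrac1n\log|\det M_{x_0\cdots x_{n-1}}|=-L_s$, a counting estimate on cylinders $I$ with $|\det M_I|\ge e^{-nL_s-n\epsilon}$ and $m([I]_0)\le e^{-nh_m(\sigma)+n\epsilon}$ yields
\[
\rho^n=\sum_{I\in\Lambda^n}|\det M_I|\ge e^{-n(L_s-h_m(\sigma)+2\epsilon)}\cdot(1-o(1)),
\]
hence $h_m(\sigma)\le L_s+\log\rho<L_s$. The point you were missing is that the determinant bound $\rho<1$ must be fed back into an entropy--volume comparison, not into a dimension-versus-Lebesgue-measure dichotomy.
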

\begin{proof}
(i) We first claim that $h_s=0$.  Since $\mathcal S$ satisfies the strong separation condition,  $\xi_s(x)=\{x\}$ for each $x\in \Sigma'$. Thus $\widehat \xi_s=\B(\Sigma')$ and hence $h_s=H_m(\P|\widehat{\xi_s})=0$.

Next we prove that $h_m(\sigma)<L_s$. Clearly this is true if $L_s=\infty$ (equivalently, if $\lambda_s=-\infty$). Below we assume that $\lambda_s>-\infty$.

    Let $K$ denote the self-affine set generated by ${\mathcal S}$.  For $\delta>0$ let $K_\delta$ be the closed $\delta$-neighborhood of $K$, i.e. $K_\delta=\{z:\; d(z, K)\leq \delta\}$. Since $\mathcal S$ satisfies the strong separation condition, we can pick a small $\delta$ such that $S_i(K_\delta)$ ($i\in \Lambda$) are disjoint subsets of the interior of $K_\delta$ and hence ${\mathcal L}^d(K_\delta)>\sum_{i\in \Lambda} {\mathcal L}^d(S_i(K_\delta))$. It follows that $\rho:=\sum_{i\in \Lambda} |\det(M_i)|<1$.

   Since $m$ is ergodic $\sigma$-invariant, by \cite[Lemma 3.2]{FengShmerkin2014} and the Shannon-McMillan-Breiman theorem, for $m$-a.e.~$x\in \Sigma$,
   \begin{equation}
   \label{e-det}
   \lim_{n\to \infty} \frac{\log |\det(M_{x_0\ldots x_{n-1}})|}{n}=-L_s, \quad \lim_{n\to \infty} \frac{\log m([x_0\ldots x_{n-1}])} {n}=-h_m(\sigma).
   \end{equation}

For $\epsilon>0$ and $n\in \N$, let $\Lambda_{n,\epsilon}$ denote the set of words $I$ of length $n$ over the alphabet $\Lambda$ such that
$$|\det(M_I)|\geq e^{-nL_s-n\epsilon},\quad  m([I]) \leq e^{-nh_m (\sigma)+n\epsilon}.$$
By \eqref{e-det}, $\lim_{n\to \infty} \sum_{I\in \Lambda_{n,\epsilon}} m([I])=1$. Notice that
\begin{align*}
\rho^n &=\sum_{I\in \Lambda^n}|\det(M_I)|\geq \sum_{I\in \Lambda_{n,\epsilon}} |\det(M_I)|\\
&\geq \sum_{I\in \Lambda_{n,\epsilon}}  e^{-nL_s-n\epsilon} \frac{ m([I]) }{e^{-nh_m (\sigma)+n\epsilon}}\\
&=e^{-n(L_s-h_m(\sigma)+2\epsilon)}\cdot \left( \sum_{I\in \Lambda_{n,\epsilon}}  m([I]) \right).
\end{align*}
Letting $n\to \infty$ and $\epsilon\to 0$, we obtain the desired inequality $h_m(\sigma)\leq L_s+\log \rho<L_s$.  Now the inequality $\dim_{\rm LY}(m, {\bf M})<d$ follows directly from Definition \ref{de-LY}. This proves (i).

Finally we prove (ii).  Since $h_s=0$ and $0\leq h_{\ell-1}-h_\ell\leq (-\lambda_\ell) k_\ell$ for each $\ell$ by Corollary \ref{cor-6.1}, we see that \eqref{e-equiv} holds if and only if  $h_{\ell-1}-h_\ell=(-\lambda_\ell) k_\ell$ for $1\leq \ell\leq j-1$ and $h_{\ell}=0$ for $j\leq \ell\leq s$. By Proposition \ref{pro-7.2},
this is equivalent to that $\dim_{\rm H} \pi_*m=\dim_{\rm LY}(m, {\bf M})$.
\end{proof}

\begin{rem}
\label{rem-6.4}
{\rm
Theorem \ref{cor-1.0} (resp.~Theorem \ref{thm-1.7'}) can be applied to estimate the dimension of slices and projections of certain self-affine sets. To see it, let $K$ a self-affine sets generated by a contracting affine IFS   $\{S_j=M_jx+a_j\}_{j\in \Lambda}$ on $\R^d$. Suppose that there exists an ergodic $m\in \mathcal M_\sigma(\Sigma)$ so that
\begin{equation}
\label{e-as1}
\dim_{\rm H} \pi_*m=\dim_{\rm H}K.
\end{equation}
 Follow the notation in Theorem \ref{cor-1.0} and assume $s\geq 2$.  Since the slicing measures $(\pi_*(m^{\xi_0}_x) )_{V_x^i,y}$ are supported on the slices $K\cap (V_x^i+y)$,    by using Theorem \ref{cor-1.0}(i) and a general inequality in Theorem 2.10.25 of Federer \cite{Federer1969}, we obtain that for $i\in \{1,\ldots, s-1\}$ and $m$-a.e.~$x$,
$$
\dim_{\rm H} K\cap (V_x^i+y)=\sum_{\ell=i}^{s-1}\frac{h_{\ell+1}-h_\ell}{\lambda_{\ell+1}}\quad \mbox{ for
$\big(P_{(V_x^i)^\perp}\pi\big)_*\big(m^{\xi_0}_x\big)$-a.e.~$y\in (V_x^i)^\perp$}
$$
and
\begin{equation}
\label{e-r1}
\dim_{\rm H}\left\{y\in P_{(V_x^i)^\perp}(K):\; \dim_{\rm H} K\cap (V_x^i+y)=\sum_{\ell=i}^{s-1}\frac{h_{\ell+1}-h_\ell}{\lambda_{\ell+1}} \right\}=\sum_{\ell=0}^{i-1}\frac{h_{\ell+1}-h_\ell}{\lambda_{\ell+1}}.
\end{equation}
If in addition to the assumption \eqref{e-as1}, we further assume that
$$\dim_{\rm H} \pi_*m=\dim_{\rm LY}(m, {\bf M}),$$
then
\begin{equation}
\label{e-r2}
\dim_{\rm H} P_{(V_x^i)^\perp}(K)=\min\{\dim (V_x^i)^\perp, \dim_{\rm H}K\} \quad \mbox{ for $m$-a.e.~$x$}.
\end{equation}
Indeed by Remark \ref{rem-q1}, the sum in the right-hand side of \eqref{e-r1} is equal to
$$\min\{\dim(V_x^i)^\perp, \dim_{\rm H}  \pi_*m\},$$ and hence equal to $\min\{\dim(V_x^i)^\perp, \dim_{\rm H} K\}$.  Now \eqref{e-r2} follows from \eqref{e-r1}.
}

\end{rem}

\section{Semi-continuity of entropies and dimensions}
\label{S-8}

In this section, we prove Theorems \ref{thm-1.3}-\ref{thm-1.4}.
Set
\begin{equation}
\label{e-fx}
f(x)=\sum_{n=1}^\infty\|M_{x_0}\cdots M_{x_{n-1}}\|\quad \mbox{ for $x\in \Sigma$}.
\end{equation}

\begin{lem}
\label{lem-7.1}
Let $\eta$ be a Borel probability measure on $\Sigma$ with  $\eta(\{f=\infty\})=0$. Then $\big(\pi_{{\bf a}}\big)_*\eta$ depends continuously on ${\bf a}$, in the sense that   $\big(\pi_{{\bf a}_n}\big)_*\eta$ converges to $\big(\pi_{{\bf a}}\big)_*\eta$ weakly when ${\bf a}_n$ converges to ${\bf a}$.
\end{lem}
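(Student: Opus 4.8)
The plan is to show that $\pi_{\ba}$ depends continuously on $\ba$ in a suitable sense and then invoke a standard continuity-of-pushforward argument. Recall from \eqref{e-pi1.4} that for $x=(x_n)\in\Sigma$ with $f(x)<\infty$,
\begin{equation*}
\pi_{\ba}(x)=\sum_{n=0}^\infty M_{x_0}\cdots M_{x_{n-1}}a_{x_n},
\end{equation*}
where the convention is that the $n=0$ term is $a_{x_0}$ (i.e. $M_{x_0}\cdots M_{x_{-1}}$ is the identity). First I would observe that for fixed $x$ with $f(x)<\infty$, the map $\ba\mapsto\pi_{\ba}(x)$ is \emph{continuous}, indeed affine in $\ba$: writing $\|M_{x_0}\cdots M_{x_{n-1}}\|\le c_n(x)$ with $\sum_n c_n(x)=f(x)<\infty$, we get
\begin{equation*}
\|\pi_{\ba}(x)-\pi_{\ba'}(x)\|\le\Big(\sum_{n=0}^\infty\|M_{x_0}\cdots M_{x_{n-1}}\|\Big)\max_{j\in\Lambda}\|a_j-a_j'\|=f(x)\,\|\ba-\ba'\|_\infty,
\end{equation*}
so in particular $\pi_{\ba_n}(x)\to\pi_{\ba}(x)$ pointwise on the full-measure set $\{f<\infty\}$ whenever $\ba_n\to\ba$.

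Next I would upgrade pointwise convergence of the maps to weak convergence of the pushforward measures. The clean route is: to test weak convergence it suffices to check that $\int g\,d(\eta\circ\pi_{\ba_n}^{-1})\to\int g\,d(\eta\circ\pi_{\ba}^{-1})$ for every bounded continuous $g:\R^d\to\R$, i.e. that $\int g(\pi_{\ba_n}(x))\,d\eta(x)\to\int g(\pi_{\ba}(x))\,d\eta(x)$. Since $\eta(\{f=\infty\})=0$, the integrand $g\circ\pi_{\ba_n}$ converges $\eta$-a.e.\ to $g\circ\pi_{\ba}$ (by the pointwise convergence above together with continuity of $g$), and it is uniformly bounded by $\sup|g|$; the dominated convergence theorem then gives the conclusion. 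This is exactly the statement that weak convergence of measures is preserved under $\eta$-a.e.\ convergent sequences of measurable maps (a Portmanteau-type observation), and I would just spell out the two lines above rather than quote it.

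The only genuine point requiring care — and the main (mild) obstacle — is the well-definedness and measurability: one must know $\pi_{\ba_n}(x)$ and $\pi_{\ba}(x)$ are simultaneously defined $\eta$-a.e.\ and that the relevant functions are $\eta$-measurable. But the domain of definition of $\pi_{\ba}$ depends on $\ba$ only through the \emph{matrices}, not the translations: $\pi_{\ba}(x)$ is defined precisely when $f(x)<\infty$ (the series converges absolutely there, for every choice of $\ba$), and $\{f<\infty\}$ has full $\eta$-measure by hypothesis. Measurability of $x\mapsto\pi_{\ba}(x)$ on $\{f<\infty\}$ follows since it is a pointwise limit of the continuous (hence Borel) partial sums $x\mapsto\sum_{n=0}^{N}M_{x_0}\cdots M_{x_{n-1}}a_{x_n}$. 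With these remarks in place the argument is complete; no appeal to the Lyapunov/Oseledets machinery of the earlier sections is needed, only the elementary estimate via $f$.
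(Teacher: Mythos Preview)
Your proof is correct. Both you and the paper begin with the same key Lipschitz estimate
\[
\|\pi_{\ba}(x)-\pi_{\ba'}(x)\|\le f(x)\,\|\ba-\ba'\|
\]
on $\{f<\infty\}$, but you then diverge in how weak convergence is verified. You test against bounded continuous functions and invoke dominated convergence directly, which is the shortest and most natural route. The paper instead invokes the Portmanteau criterion in the form $\limsup_n\nu_n(F)\le\nu(F)$ for compact $F$, and then carries out an $\epsilon$-argument by truncating to the sets $A_N=\{f<N\}$ (where the Lipschitz constant is uniformly bounded), choosing an $r$-neighborhood of $F$, and controlling the mass outside $A_N$. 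Your approach buys brevity and avoids the auxiliary truncation entirely; the paper's approach is more hands-on but yields nothing extra here. Either argument is perfectly adequate for this lemma.
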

\begin{proof}
 For $x\in \Sigma$ with  $f(x)<\infty$,   $\pi_{{\bf a}}(x)$ is well-defined for every ${\bf a}\in \R^{d|\Lambda|}$ and moreover,
\begin{equation}
\label{e-pif}
\|\pi_{{\bf a}}(x)-\pi_{{\bf b}}(x)\|\leq f(x)\|{\bf a}-{\bf b}\|.
\end{equation}
For $N\in \N$, set $A_N:=\{x: f(x)<N\}$. Since $\eta(\{f=\infty\})=0$, it follows that $\eta(A_N)\to 1$ as $N\to \infty$.

Let  $({\bf a}_n)\subset \R^{d|\Lambda|}$ so that $\lim_{n\to \infty}{\bf a}_n={\bf a}$.  For convenience, write $\nu_n= \big(\pi_{{\bf a}_n}\big)_*\eta$ and $\nu= \big(\pi_{{\bf a}}\big)_*\eta$. To show that $\nu_n$  converges weakly to $\nu$,  by the Portmanteau theorem, it suffices to show that  $\limsup_{n\to \infty} \nu_n(F)\leq  \nu(F)$  for any compact set $F\subset \R^d$.

Now fix a compact set $F\subset \R^d$. Let $\epsilon>0$. Take a small $r>0$ so that $\nu(V_r(F))\leq \nu(F)+\epsilon$, where $V_r(F)$ stands for the $r$-neighborhood of $F$. Take a large $N$ so that $\eta(\Sigma \setminus A_N)<\epsilon$. Pick $n_0$ so that $\|{\bf a}_n-{\bf a}\|<r/N$ when $n\geq n_0$.

By \eqref{e-pif},  for $x\in A_N$ and $n\geq n_0$ we have  $\|\pi_{{\bf a_n}}(x)-\pi_{{\bf a}}(x)\|\leq N\|{\bf a_n}-{\bf a}\|<r$. Hence
$A_N\cap \pi_{{\bf a}_n}^{-1}(F)\subset A_N\cap \pi_{{\bf a}}^{-1}(V_r(F))$ for $n\geq n_0$. It follows that for $n\geq n_0$,
\begin{align*}
\nu_n(F)&=\eta(\pi_{{\bf a}_n}^{-1}(F))\\
&\leq \eta(\Sigma\setminus A_N)+ \eta(A_N\cap \pi_{{\bf a}_n}^{-1}(F))\\
 &\leq \epsilon+ \eta(A_N\cap \pi_{{\bf a}}^{-1}(V_r(F)))\\
& \leq \epsilon+ \nu(V_r(F))\\
&\leq \nu(F)+2\epsilon.
\end{align*}
 Hence  $\limsup_{n\to \infty} \nu_n(F)\leq  \nu(F)+2\epsilon$. Letting $\epsilon\to 0$ gives $\limsup_{n\to \infty} \nu_n(F)\leq  \nu(F)$, as desired.
\end{proof}
\begin{proof}[Proof of Theorem \ref{thm-1.3}] We first prove  part (1) of the theorem. This is done by extending an idea of Rapaport \cite[Lemma 8]{Rapaport2015}.

 It is implicitly proved in Proposition \ref{pro-3.1} that $m(\{f=\infty\})=0$, where $f$ is defined as in \eqref{e-fx}.
Let $i\in \{1,\ldots, s\}$ and write $\xi_{i,{\bf a}}$ for  $\xi_i$ so as to emphasize its dependence on  ${\bf a}$.  Since
$$
0=m(\{f=\infty\})=\int m^{\xi_{i,{\bf a}}}_x (\{f=\infty\}) d m(x),
$$
the set $\Delta_{\ba}:=\left\{x\in \Sigma': \; m^{\xi_{i,{\bf a}}}_x (\{f=\infty\})=0\right\}$ has full $m$-measure.

 Noticing that $\xi_0$ is independent of ${\bf a}$, and  $\xi_{i,{\bf a}}$ is a refinement of $\xi_0$ (i.e. any set in $\xi_{i,{\bf a}}$ is a subset of an element in $\xi_0$), we have
\begin{align*}
h_{i, {\bf a}}&=H_m(\P|\xi_{i,{\bf a}})\\
&=\int -\log m_x^{\xi_{i,{\bf a}}}(\P(x))\;dm(x)\\
&=\int \int -\log m_y^{\xi_{i,{\bf a}}}(\P(y))\;dm_x^{\xi_0}(y)\; dm(x)\\
&=\int H_{m_x^{\xi_0}}(\P|\xi_{i,{\bf a}})\; dm(x).
\end{align*}

Fix ${\bf a}_0\in \R^{d|\Lambda|}$. In what follows we show that $h_{i, \ba}$ is upper semi-continuous in $\ba$ at ${\bf a}_0$. Since $\Delta_{{\bf a}_0}$ has full $m$-measure,
$h_{i, \ba}=\int_{\Delta_{{\bf a}_0}} H_{m_x^{\xi_0}}(\P|\xi_{i,{\bf a}})\; dm(x)$. Hence it  is sufficient to show that ${\bf a}\mapsto H_{m_x^{\xi_0}}(\P|\xi_{i,{\bf a}})$ is upper semi-continuous at ${\bf a}_0$ for every $x\in \Delta_{{\bf a}_0}$.  For this purpose, fix $x\in \Delta_{{\bf a}_0}$ and write $C=\xi_0(x)$, $W=V_x^i$ and $m_C=m_x^{\xi_0}$. Then by the definition of $\xi_{i,{\bf a}}$,  $$H_{m_x^{\xi_0}}(\P|\xi_{i,{\bf a}})=H_{m_C}(\P|\pi_{\bf a}^{-1}\circ P_{W^\perp}^{-1}(\B(W^\perp))).$$
 Following the proof  of \cite[Lemma 8.5]{Walters1982} or \cite[Lemma 8]{Rapaport2015}  with minor changes, we can construct a sequence $(\beta_n)$ of finite Borel partitions of $W^\perp$ such that (i) $\sigma(\beta_n)\uparrow \B(W^{\perp})$ and (ii) $m_C\circ \pi^{-1}_{{\bf a}_0}(P_{W^\perp}^{-1}(\partial B))=0$ for any $B\in \bigcup_n \beta_n$. Since $\sigma(\beta_n)\uparrow \B(W^{\perp})$,
\begin{align*}
H_{m_C}(\P|\pi_{\bf a}^{-1}\circ P_{W^\perp}^{-1}(\B(W^{\perp}))&=\lim_{n\to \infty}H_{m_C}(\P|\pi_{\bf a}^{-1}\circ P_{W^\perp}^{-1}(\sigma(\beta_n))\\
&=\lim_{n\to \infty}\left[ \sum_{A\in \P}\sum_{B\in \beta_n}  u\left((m_C|_{A})\circ \pi_{\bf a}^{-1} (P_{W^\perp}^{-1}(B))\right) \right.\\
&\qquad\qquad \left. -\sum_{B\in \beta_n} u\left(m_C\circ \pi_{\bf a}^{-1} (P_{W^\perp}^{-1}(B))\right) \right],
\end{align*}
where $u(z):=-z\log z$  and $m_C|_{A}(E)=m_C(A\cap E)$. Since $x\in \Delta_{{\bf a}_0}$,   $m_C(\{f=\infty\})=0$. By Lemma \ref{lem-7.1},  the measures $(\pi_{\bf a})_*(m_C)$ and $(\pi_{\bf a})_*(m_C|_{A})$ ($A\in \P$) depend continuously on ${\bf a}$; and so do $ \big(P_{W^{\perp}}\pi_{\bf a}\big)_*(m_C)$ and $\big(P_{W^{\perp}}\pi_{\bf a}\big)_*(m_C|_{A})$.
 For $A\in \P$ and  $B\in \bigcup_n\beta_n$, since $m_C\circ \pi^{-1}_{{\bf a}_0}(P_{W^\perp}^{-1}(\partial B))=0$, we have also $(m_C|_A)\circ \pi^{-1}_{{\bf a}_0}(P_{W^\perp}^{-1}(\partial B))=0$;
 it follows that, as functions of ${\bf a}$,   $u\left(m_C\circ \pi_{\bf a}^{-1} (P_{W^\perp}^{-1}(B))\right)$ and $u\left((m_C|_{A})\circ \pi_{\bf a}^{-1} (P_{W^\perp}^{-1}(B))\right)$ ($A\in \P$) are continuous at ${\bf a}_0$, and so is $H_{m_C}(\P|\pi_{\bf a}^{-1}\circ P_{W^\perp}^{-1}(\sigma(\beta_n))$. Hence
${\bf a}\mapsto H_{m_C}(\P|\pi_{\bf a}^{-1}\circ P_{W^\perp}^{-1}(\B(W^{\perp}))$ is upper semi-continuous at ${\bf a}_0$, as desired. This proves the upper semi-continuity of $h_{i,\ba}$.

Next we prove the lower semi-continuity of the mapping ${\bf a}\mapsto \dim_{{\rm H}}( (\pi_{\bf a})_*m)$.
By Theorem \ref{thm-1.1}, we have
\begin{equation}
\label{e-uppc}
\dim_{\rm H}( (\pi_{\bf a})_*m)=\sum_{i=0}^{s} t_i h_{i, {\bf a}},
\end{equation}
where $t_0=-\frac{1}{\lambda_1}$ and $t_i=
\frac{1}{\lambda_i}- \frac{1} {\lambda_{i+1}} $
for $i=1,\ldots, s$, with convention $\lambda_{s+1}:=-\infty$. Notice that $t_0>0$,   $t_i\leq 0$ for $1\leq i\leq s$ and moreover, $h_{0, {\bf a}}\equiv h_\sigma(m)$.  By part (1), $h_{1, {\bf a}}, \ldots, h_{s,  {\bf a}}$ are upper semi-continuous in ${\bf a}$.  Hence by \eqref{e-uppc},  $\dim_{\rm H}( (\pi_{\bf a})_*m)$ is lower semi-continuous in ${\bf a}$.
\end{proof}

\begin{rem}
\label{rem-8.1}
{\rm 
Theorem \ref{thm-1.3} can be further extended. For  given $m$ and  ${\bf M}=(M_j)_{j\in \Lambda}$, let $\mathcal S_{{\bf r}, {\bf a}}$ denote the IFS $\{r_jM_jx+a_j\}_{j\in \Lambda}$ where ${\bf r}=(r_j)_{j\in \Lambda}\in (\R\backslash \{0\})^\Lambda$ so that $S_{{\bf r}, {\bf a}}$ is average contracting with respect to $m$.     Notice that the Oseledets subspaces with respect to $m$ and $(r_jM_j)_{j\in \Lambda}$ are independent of ${\bf r}$. A slight modification of the above proof  establishes the upper semi-continuity of $({\bf r}, {\bf a})\mapsto h_{i,{\bf r}, {\bf a}}$ and the lower semi-continuity of
$({\bf r}, {\bf a})\mapsto  \dim_{\rm H}( (\pi_{{\bf r}, {\bf a}})_*m)$.

Similarly, for  given $m$ let $\mathcal S_{{\bf r}, {\bf O}, {\bf a}}$ denote the IFS $\{r_jO_jx+a_j\}_{j\in \Lambda}$ of similitudes, where ${\bf r}=(r_j)_{j\in \Lambda}\in (\R\backslash \{0\})^\Lambda$, ${\bf O}=(O_j)_{j\in \Lambda}\in O(d)^\Lambda$, ${\bf a}=(a_j)_{j\in \Lambda}\in \R^{d|\Lambda|}$ so that $S_{{\bf r}, {\bf O},{\bf a}}$ is average contracting with respect to $m$. Then the mapping $({\bf r}, {\bf O}, {\bf a})\mapsto  \dim_{\rm H}( (\pi_{{\bf r}, {\bf O}, {\bf a}})_*m)$ is lower semi-continuous.
}
\end{rem}
\begin{proof}[Proof of Theorem \ref{cor-1.7}]
We first prove (i). Let $m$ be an ergodic $\sigma$-invariant measure $m$ on $\Sigma$.
   For $n\in \N$, set
$$
\Omega_n:=\left\{\ba \in \R^{d|\Lambda|}:\; \dim_{\rm H} ((\pi_{\bf a})_*m)\leq \min(d, \dim_{\rm LY} (m, {\bf M}))-\frac{1}{n}\right\}.
$$
Since $\dim_{\rm H} ( (\pi_{\bf a})_*m)$ is lower semi-continuous in ${\bf a}$ by Theorem \ref{thm-1.3}, $\Omega_n$ is closed for each $n$. Meanwhile, it was proved in \cite{JordanPollicottSimon2007} that $\dim_{\rm H} ( (\pi_{\bf a})_*m)=\min(d, \dim_{\rm LY} (m, {\bf M}))$ for $\mathcal L^{d|\Lambda|}$-a.e.~$\ba$. Hence for each $n$,  $\Omega_n$ is a closed set of zero Lebesgue measure, so it is nowhere dense. This is enough to conclude (i).

Next we prove (ii).
It was shown by K\"aenm\"aki \cite{Kaenmaki2004} that there exists an ergodic $\sigma$-invariant measure $\eta$ on $\Sigma$ such that
$\dim_{\rm LY} (\eta, {\bf M})=\dim_{\rm AFF} ({\bf M})$.  Fix such $\eta$. Note that for each $\ba$,
$$\dim_{\rm H} ((\pi_{\bf a})_*\eta)\leq \dim_{\rm H} K({\bf M}, \ba)\leq \min(d, \dim_{\rm AFF} ({\bf M})).$$
It implies that
\begin{align*}
&\left\{\ba\in \R^{d|\Lambda|}:\; \dim_{\rm H} K({\bf M}, \ba)\neq \min(d, \dim_{\rm AFF}({\bf M}))\right\}\\
&\mbox{}\quad\subset \left\{\ba\in \R^{d|\Lambda|}:\; \dim_{\rm H} ((\pi_{\bf a})_*\eta)\neq \min(d, \dim_{\rm LY} (\eta, {\bf M}))\right\}.
\end{align*}
Now (ii) follows from (i).
\end{proof}

To prove Theorem \ref{thm-1.4} we need the following.

\begin{lem}[{\cite[Lemma 22]{Rapaport2015}}]
\label{lem-rap}
Let $\mu$ be a probability Borel measure on $\R^d$ and $1\leq k< d$. Then the following statements hold.
\begin{itemize}
\item[(i)] If $\dim_{\rm H}\mu\leq k$ then for $0<t\leq  \dim_{\rm H}\mu$,
$$
\dim_{\rm H} \{W\in G(d, k):\; \dim_{\rm H} ((P_W)_*\mu) <t\}\leq k(d-k-1)+t.
$$
\item[(ii)]
 If $\dim_{\rm H}\mu\geq k$ then for $\dim_{\rm H}\mu-k(d-k)<t\leq  k$,
$$
\dim_{\rm H} \{W\in G(d, k):\; \dim_{\rm H} ((P_W)_*\mu)<t\}\leq k(d-k)+t-\dim_{\rm H}\mu.
$$
\end{itemize}
 \end{lem}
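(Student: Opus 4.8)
This is precisely the classical Kaufman--Mattila--Falconer (and Peres--Schlag) estimate on exceptional sets for orthogonal projections of a measure, so the plan is to run the standard potential-theoretic and Fourier-analytic arguments, testing against a Frostman measure on a hypothetical ``too large'' exceptional set inside $G(d,k)$. Write $P_{W,*}\mu=\mu\circ(P_W)^{-1}$ and $E_t=\{W\in G(d,k):\dim_{\rm H}P_{W,*}\mu<t\}$; standard measurability facts (Mattila) make $E_t$ a Borel, at worst analytic, subset of $G(d,k)$, so Frostman's lemma applies: if $\dim_{\rm H}E_t>\sigma$ there is a nonzero Radon measure $\nu$ with $\operatorname{supp}\nu\subseteq E_t$ and $\nu(B(W,r))\le r^\sigma$. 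I would also use three soft facts: (a) $I_s(\eta):=\iint|x-y|^{-s}\,d\eta\,d\eta<\infty$ implies $\dim_{\rm H}\eta\ge s$, and on $\R^n$ one has $I_s(\eta)\asymp\int|\widehat\eta(\xi)|^2|\xi|^{s-n}\,d\xi$ for $0<s<n$ and compactly supported $\eta$; (b) if $\dim_{\rm H}\mu>s'$ then, applying Egorov to $\underline\dim_{\rm loc}(\mu,\cdot)$, there are Borel sets $A_n\uparrow$ with $\mu(\bigcup_nA_n)=1$ and $I_s(\mu|_{A_n})<\infty$ for every $n$ and every $s<s'$; (c) since $\mu|_{A_n}\uparrow\mu$ and $\dim_{\rm H}$ of a measure is an infimum over positive-measure sets, the pushforwards satisfy $\dim_{\rm H}P_{W,*}\mu=\inf_n\dim_{\rm H}P_{W,*}(\mu|_{A_n})$, hence $E_t=\bigcup_nE_t(\mu|_{A_n})$ and $\dim_{\rm H}E_t=\sup_n\dim_{\rm H}E_t(\mu|_{A_n})$. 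Step (c) reduces both parts to compactly supported measures with finite $s$-energy in the appropriate range, and also shows it suffices to treat $t<\dim_{\rm H}\mu$, the endpoint following by letting $t\uparrow$.

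\emph{Part (i)} ($\dim_{\rm H}\mu\le k$). Fix $t<\dim_{\rm H}\mu$ and set $p=\dim G(d-1,k)=k(d-k-1)$. The geometric input is that for $v\neq0$ the ``annihilator'' subvariety $\{W:v\perp W\}\cong G(d-1,k)$ has dimension $p$, and $\{W:|P_Wv|\le\rho|v|\}$ lies in a $C\rho$-neighbourhood of it; hence for a $\sigma$-Frostman $\nu$, $\nu(\{W:|P_Wv|\le\rho|v|\})\lesssim\rho^{\sigma-p}$, which by layer-cake integration upgrades the classical identity $\int_{G(d,k)}|P_Wv|^{-s}\,d\gamma_{d,k}(W)=c\,|v|^{-s}$ (valid for $s<k$) to the weighted bound $\int_{G(d,k)}|P_Wv|^{-s}\,d\nu(W)\le C_s|v|^{-s}$ whenever $\sigma>s+p$; here $s<t\le\dim_{\rm H}\mu\le k$, so $s<k$ is automatic. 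Applying this with $\mu$ replaced by $\mu|_{A_n}$ and using Fubini gives $\int I_s(P_{W,*}(\mu|_{A_n}))\,d\nu(W)\le C_s\,I_s(\mu|_{A_n})<\infty$, so $\dim_{\rm H}P_{W,*}(\mu|_{A_n})\ge s$ for $\nu$-a.e.\ $W$; letting $s\uparrow t$ through a countable set yields $\dim_{\rm H}P_{W,*}(\mu|_{A_n})\ge t$ $\nu$-a.e., i.e.\ $\nu(E_t(\mu|_{A_n}))=0$, contradicting $\operatorname{supp}\nu\subseteq E_t(\mu|_{A_n})$ once $\sigma>p+t$. Thus $\dim_{\rm H}E_t(\mu|_{A_n})\le k(d-k-1)+t$ for all $n$, and step (c) finishes.

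\emph{Part (ii)} ($\dim_{\rm H}\mu\ge k$). Here the physical-space estimate is not sharp and one argues on the Fourier side (the case $\dim_{\rm H}\mu=k$ being already part (i) with equality, which gives exactly $k(d-k-1)+k=k(d-k)+t-\dim_{\rm H}\mu$ at $t=k$ and the analogous value for $t<k$). Assume $\dim_{\rm H}\mu>k$, fix $t$ in the stated range, and fix $s$ with $k<s<\dim_{\rm H}\mu$; work with $f=\mu|_{A_n}$, so $I_s(f)<\infty$. Since the Fourier transform of $P_{W,*}f$ is $\widehat f$ restricted to $W$, one has $\int_W|\widehat f(\zeta)|^2|\zeta|^{t-k}\,d\zeta\asymp I_t(P_{W,*}f)$ for $t<k$ (and $=\|P_{W,*}f\|_2^2$ for $t=k$, forcing absolute continuity and $\dim_{\rm H}=k$). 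The relevant geometric input is now that $\{W:\widehat\xi\in W\}\cong G(d-1,k-1)$ has dimension $(k-1)(d-k)$ and codimension $d-k$ in $G(d,k)$; this turns the invariant-measure identity $\int_{G(d,k)}\!\int_W|\widehat f(\zeta)|^2|\zeta|^{t-k}\,d\zeta\,d\gamma_{d,k}(W)=c\int_{\R^d}|\widehat f(\xi)|^2|\xi|^{t-d}\,d\xi$ (valid for $t>0$) into the $\nu$-weighted inequality $\int_{G(d,k)}\!\int_W|\widehat f(\zeta)|^2|\zeta|^{t-k}\,d\zeta\,d\nu(W)\lesssim\int_{\R^d}|\widehat f(\xi)|^2|\xi|^{t-k}\min\{1,\,|\xi|^{-(\sigma-(k-1)(d-k))}\}\,d\xi$, obtained by replacing the $\gamma_{d,k}$-measure $|\xi|^{-(d-k)}$ of the slab $\{W:\mathrm{dist}(\widehat\xi,W)\lesssim|\xi|^{-1}\}$ by its Frostman bound $|\xi|^{-(\sigma-(k-1)(d-k))}$. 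For large $|\xi|$ the integrand exponent is $t-k-\sigma+(k-1)(d-k)$, which is $\le s-d$ exactly when $\sigma\ge t+k(d-k)-s$; since $I_s(f)<\infty$, the right-hand side is then finite, so $\int_W|\widehat f(\zeta)|^2|\zeta|^{t-k}\,d\zeta<\infty$ for $\nu$-a.e.\ $W$, i.e.\ $\dim_{\rm H}P_{W,*}f\ge t$ $\nu$-a.e., contradicting $\operatorname{supp}\nu\subseteq E_t(f)$ once $\sigma>t+k(d-k)-s$. Hence $\dim_{\rm H}E_t(f)\le k(d-k)+t-s$; letting $s\uparrow\dim_{\rm H}\mu$ (running the $A_n$-exhaustion for each $s'\uparrow\dim_{\rm H}\mu$) gives $\dim_{\rm H}E_t\le k(d-k)+t-\dim_{\rm H}\mu$.

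The hard part is establishing the two $\nu$-weighted averaging estimates with the correct exponents, i.e.\ upgrading the rotation-invariant integral-geometric identities to inequalities valid with an arbitrary $\sigma$-Frostman measure $\nu$ on $G(d,k)$. Concretely this requires the Frostman-neighbourhood bounds $\nu(\{W:|P_Wv|\le\rho|v|\})\lesssim\rho^{\,\sigma-k(d-k-1)}$ and $\nu(\{W:\mathrm{dist}(\widehat\xi,W)\le\rho\})\lesssim\rho^{\,\sigma-(k-1)(d-k)}$ for the two sub-Grassmannians (proved by covering them by the right number of $\rho$-balls), plus the careful dyadic bookkeeping of exponents that produces exactly the thresholds $k(d-k-1)+t$ and $k(d-k)+t-\dim_{\rm H}\mu$. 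The remaining technical points---measurability of $W\mapsto\dim_{\rm H}P_{W,*}\mu$ (so Frostman's lemma applies to $E_t$), the reduction to compactly supported finite-energy measures in (c), and the convergence of all the Fourier integrals once one has restricted to $\mu|_{A_n}$---are routine.
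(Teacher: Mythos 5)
The paper does not prove this lemma: it cites Rapaport [Lemma 22] directly, so there is no in-paper proof to compare against. Your reconstruction is the standard Kaufman--Mattila--Falconer potential-theoretic argument, which is indeed the route Rapaport (and before him Mattila 1975 and Falconer 1982) take: Frostman-ize a hypothetical too-large exceptional set and show $\nu$-almost every projection has finite $s$-energy. Your exponent bookkeeping checks out — in particular $(d-k)+(k-1)(d-k)=k(d-k)$ yields precisely the threshold $\sigma\ge t+k(d-k)-s$ in part (ii), and the two sub-Grassmannian dimensions $k(d-k-1)=\dim G(d-1,k)$ and $(k-1)(d-k)=\dim G(d-1,k-1)$ are the right ones. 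For part (i) the Tonelli step is genuinely immediate since everything is nonnegative and lives in physical space. For part (ii), the $\nu$-weighted Fourier-side averaging you correctly flag as the hard part requires more than ``substituting the Frostman slab bound'': the pushforward of $\nu\times\mathrm{Leb}_W$ from the tautological bundle $\{(W,\zeta):\zeta\in W\}$ to $\R^d$ need not be Lebesgue-absolutely continuous (e.g.\ if $\nu$ is atomic), so one first mollifies — using compact support of $f$ to write $|\widehat f(\zeta)|^2\lesssim\int|\widehat f(\xi)|^2\,|\widehat\psi(\zeta-\xi)|\,d\xi$ for a rapidly decaying $\widehat\psi$ — then applies Fubini to the thickened kernel and sums over dyadic distances via $\nu(\{W:\mathrm{dist}(\xi,W)\le\rho\})\lesssim\min\{1,(\rho/|\xi|)^{\sigma-(k-1)(d-k)}\}$. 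With that mollification written out, your sketch is sound and follows essentially the same approach as the cited source.
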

\begin{proof}[Proof of Theorem \ref{thm-1.4}]  The proof is  mainly adapted from \cite{Rapaport2015}. For the convenience of the reader, we include  the details.
 Write $\mu= \pi_*m$.  Since ${\mathcal S}$ satisfies the strong separation condition, by Proposition \ref{pro-7.3} we have  $h_s=0$, $h_m(\sigma)<\sum_{\ell=1}^s (-\lambda_\ell) k_\ell$ and  $\dim_{\rm LY}(m, {\bf M})<d$.  Let $i$ be the unique element in $\{1,\ldots, s\}$ so that $d_{i-1}  \leq \dim_{\rm LY} (m, {\bf M})<d_i$.
(Recall that $d_0=0$ and $d_j=k_1+\cdots+k_j$ for $j\geq 1$.) By Definition~\ref{de-LY}, we have $h_m(\sigma)\in [L_{i-1}, L_i)$ where $L_0:=0$ and $L_j:=-\sum_{\ell=1}^{j}\lambda_\ell k_\ell$ for $j\geq 1$. Below we prove the equality $\dim_{\rm H}\mu=\dim_{\rm LY} (m, {\bf M})$ under the assumption that one of the scenarios (a), (b), (c) occurs.

 We first consider the scenario  (a). In this case, $s=1$ and by Theorem \ref{thm-1.1}, $$\dim_{\rm H} \mu=\frac{h_1-h_0}{\lambda_1}=-\frac{h_m(\sigma)}{\lambda_1}= \dim_{\rm LY}(m, {\bf M}).$$

Next we consider the scenario (b). In this case, $i=s$ and so $h_m(\sigma)\in [L_{s-1}, L_s)$.
To show that $\dim_{\rm H}\pi_*m=\dim_{\rm LY}(m, {\bf M})$, it suffices to show that
 \begin{equation}
 \label{e-LYdim}
  h_{s-1}= h_m(\sigma)+k_1\lambda_1+\cdots +k_{s-1}\lambda_{s-1}.
 \end{equation}
Indeed if \eqref{e-LYdim} holds, then $h_0-h_{s-1}=\sum_{\ell=1}^{s-1} (-\lambda_\ell)k_\ell$, which forces that $h_{\ell-1}-h_\ell=(-\lambda_\ell)k_\ell$ for  $1\leq \ell\leq s-1$ (recalling that $h_{\ell-1}-h_\ell\leq (-\lambda_\ell)k_\ell$ for all $1\leq \ell\leq s$ by Corollary \ref{cor-6.1}); hence
$$\sum_{\ell=1}^{s-1}\frac{h_\ell-h_{\ell-1}}{\lambda_\ell}=d_{s-1},$$
so  \eqref{e-equiv} holds for $j=s$,  then by Proposition \ref{pro-7.3},  we obtain that $\dim_{\rm H}\mu=\dim_{\rm LY}(m, {\bf M})$.

 To show \eqref{e-LYdim} we first prove that  \begin{equation}
 \label{e-LYdim'}
  h_{s-1}\geq h_m(\sigma)+k_1\lambda_1+\cdots +k_{s-1}\lambda_{s-1}.
 \end{equation}   To see this, replacing  ${\mathcal S}$ by one of its iterations if necessary, we may assume that $\|M_j\|<1/2$ for all $j\in \Lambda$. By Theorem 1.9 in \cite{JordanPollicottSimon2007}, for ${\mathcal L}^{d|\Lambda|}$-a.e.~$\ba\in \R^{d|\Lambda|}$, $$\dim_{\rm H}  ((\pi_{\ba})_*m)=\dim_{\rm LY}(m, {\bf M}).$$  Hence by Proposition \ref{pro-7.2},
for ${\mathcal L}^{d|\Lambda|}$-a.e.~$\ba\in \R^{d|\Lambda|}$,  $$h_{s-1,\ba}=h_m(\sigma)+k_1\lambda_1+\cdots +k_{s-1}\lambda_{s-1},$$
 here and in the next sentence, we write $h_{s-1,\ba}=h_{s-1}$ to indicate its dependence on $\ba$.
Since $h_{s-1,\ba}$ is upper semi-continuous in {\ba} by Theorem \ref{thm-1.3}, it follows that  $h_{s-1,\ba}\geq h_m(\sigma)+k_1\lambda_1+\cdots +k_{s-1}\lambda_{s-1}$ for all $\ba\in \R^{d|\Lambda|}$. This proves \eqref{e-LYdim'}.

Now suppose on the contrary that  \eqref{e-LYdim} does not hold. Then by \eqref{e-LYdim'}, there exists $\delta>0$ such that  $h_{s-1}= h_m(\sigma)+k_1\lambda_1+\cdots +k_{s-1}\lambda_{s-1}+\delta$. By Theorem \ref{cor-1.0} (iii), for $(\Pi_{s-1})_*m$-a.e.~$W\in G(d, d-d_{s-1})$,
\begin{align*}
\dim_{\rm H}  ((P_{W^\perp})_*\mu)&\leq \sum_{\ell=1}^{s-1} \frac{h_\ell-h_{\ell-1}}{\lambda_{\ell}}\\
&=\dim_{\rm H}\mu-\frac{h_s-h_{s-1}}{\lambda_s}\\
&=\dim_{\rm H}\mu-\frac{h_{s-1}}{(-\lambda_s)}\\
&= \dim_{\rm H}\mu +d_{s-1}-\dim_{\rm LY}(m, {\bf M})-\delta/(-\lambda_s),
\end{align*}
where in the last equality, we use the fact that $$\dim_{\rm LY}(m, {\bf M})=d_{s-1}+\frac{h_0-L_{s-1}}{(-\lambda_s)}=d_{s-1}+\frac{h_{s-1}-\delta}{(-\lambda_s)}.$$
Let ${\bf Y}$ denote the set of $W\in G(d, d-d_{s-1})$ such that $$\dim_{\rm H}  ((P_{W^\perp})_*\mu)\leq \dim_{\rm H}\mu +d_{s-1}-\dim_{\rm LY}(m, {\bf M})-{\delta}/{(-\lambda_s)}.$$ Then $m\circ (\Pi_{s-1})^{-1}({\bf Y})=1$,  so by    \eqref{e-condLY},
\begin{equation}
\label{e-lower}
\begin{split}\dim_{\rm H} {\bf Y}&\geq \dim_{\rm H}^*( (\Pi_{s-1})_*m)\\
&\geq d_{s-1}(d-d_{s-1})+d_{s-1}-\dim_{\rm LY}(m, {\bf M}).
\end{split}
\end{equation}
On the other hand,  we can get an upper bound estimate for $\dim_{\rm H} {\bf Y}$ by using Lemma \ref{lem-rap}. Indeed,  if $\dim_{\rm H}\mu\leq d_{s-1}$,  then by Lemma \ref{lem-rap}(i) applied to $k=d_{s-1}$ and $t=\dim_{\rm H}\mu +d_{s-1}-\dim_{\rm LY}(m, {\bf M})-{\delta}/{(-\lambda_s)}$, we see that
   \begin{align*}
\dim_{\rm H} {\bf Y}&\leq d_{s-1}(d-d_{s-1})+\dim_{\rm H}\mu-\dim_{\rm LY}(m, {\bf M})-{\delta}/{(-\lambda_s)}\\
&\leq d_{s-1}(d-d_{s-1})+d_{s-1}-\dim_{\rm LY}(m, {\bf M})-{\delta}/{(-\lambda_s)};
\end{align*}
Conversely if $\dim_{\rm H}\mu>d_{s-1}$, then by Lemma \ref{lem-rap}(ii) applied to $k=d_{s-1}$ and $t=\dim_{\rm H}\mu +d_{s-1}-\dim_{\rm LY}(m, {\bf M})-{\delta}/{(-\lambda_s)}$, we get the same upper bound for $\dim_{\rm H} {\bf Y}$, which  contradicts with \eqref{e-lower}. This proves \eqref{e-LYdim}.

Finally we consider the scenario (c).  In this case, $h_m(\sigma)\in [L_{i-1}, L_i)$. Clearly the assumptions \eqref{e-al2}-\eqref{e-al1} imply that $$d_{i-1}\leq \dim_{\rm H}\mu \leq \dim_{\rm LY}(m, {\bf M}) \leq d_i.$$
 To prove $\dim_{\rm H}\mu=\dim_{\rm LY}(m, {\bf M})$,  by Proposition \ref{pro-7.3} it suffices to prove that
$\sum_{\ell=1}^{i-1} \frac{h_\ell-h_{\ell-1}}{\lambda_\ell}=d_{i-1}$ and $\sum_{\ell=i+1}^s \frac{h_\ell-h_{\ell-1}}{\lambda_\ell}=0$.
As $d_0=0$, the first equality holds automatically when $i=1$.

Now we first prove that $\sum_{\ell=1}^{i-1} \frac{h_\ell-h_{\ell-1}}{\lambda_\ell}=d_{i-1}$.  To avoid triviality, we assume that $i\geq 2$.  For $n\in \N$, let ${\bf X}_n$ denote the set of $W\in G(d, d-d_{i-1})$ so that $\dim_{\rm H} ((P_{W^\perp})_*\mu)  < d_{i-1}-{1}/{n}$.  By Lemma \ref{lem-rap}(ii) applied to $k=d_{i-1}$ and $t=d_{i-1}-1/n$,
\begin{equation*}
\begin{split}
\dim_{\rm H} {\bf X}_n &\leq d_{i-1}(d-d_{i-1})+d_{i-1}-({1}/{n})-\dim_{\rm H}\mu\\
&< \dim_{\rm H}^*((\Pi_{i-1})_*m)\qquad(\mbox{by \eqref{e-al1}}).
\end{split}
\end{equation*}
It follows that $m\circ (\Pi_{i-1})^{-1}(X_n)<1$ and hence $\dim_{\rm H}((P_{W^\perp})_*\mu)> d_{i-1}- {1}/{n}$ on a set of positive $(\Pi_{i-1})_*m$-measure. However by Theorem \ref{cor-1.0}(iii),
\begin{equation}
\label{e-915}
\dim_{\rm H} ((P_{W^\perp})_*\mu)\leq \sum_{\ell=1}^{i-1} \frac{h_\ell-h_{\ell-1}}{\lambda_\ell} \quad \mbox{  for
$(\Pi_{i-1})_*m$-a.e.~$W$}.
\end{equation}
It follows that $\sum_{\ell=1}^{i-1} \frac{h_\ell-h_{\ell-1}}{\lambda_\ell}\geq d_{i-1}-{1}/{n}$.  As $n$ is arbitrary, we obtain that $\sum_{\ell=1}^{i-1} \frac{h_\ell-h_{\ell-1}}{\lambda_\ell}\geq d_{i-1}$. Since $h_{\ell-1}-h_\ell\leq (-\lambda_\ell)k_\ell$ for each $\ell$ by Corollary \ref{cor-6.1}, we have $\sum_{\ell=1}^{i-1} \frac{h_\ell-h_{\ell-1}}{\lambda_\ell}= d_{i-1}$, as desired.

Next we prove that  $\sum_{\ell=i+1}^{s} \frac{h_\ell-h_{\ell-1}}{\lambda_\ell}= 0$. For $n\in \N$, let ${\bf Z}_n$ denote the set of $W\in G(d, d-d_{i})$ so that $\dim_{\rm H}((P_{W^\perp})_*\mu) < \dim_{\rm H}\mu- {1}/{n}$. By Lemma \ref{lem-rap}(i) applied to $k=d_i$ and $t=\dim_{\rm H}\mu-{1}/{n}$,
\begin{equation*}
\begin{split}
\dim_{\rm H} {\bf Z}_n &\leq d_{i}(d-d_{i})-d_{i}+\dim_{\rm H}\mu-({1}/{n})\\
&\leq d_{i}(d-d_{i})-d_{i}+ \dim_{\rm LY}(m, {\bf M})-({1}/{n})\\
&< \dim_{\rm H}^*((\Pi_{i})_*m)\qquad(\mbox{by \eqref{e-al2}}).
\end{split}
\end{equation*}
Hence $m\circ (\Pi_{i})^{-1}({\bf Z}_n)<1$ and so  $\dim_{\rm H} ((P_{W^\perp})_*\mu)> \dim_{\rm H}\mu- {1}/{n}$ on a set of positive $(\Pi_{i})_*m$-measure. This combining with  \eqref{e-915} (in which we replace $i-1$ by $i$) yields that $\sum_{\ell=1}^{i} \frac{h_\ell-h_{\ell-1}}{\lambda_\ell}\geq \dim_{\rm H}\mu-{1}/{n}$.  Letting $n\to \infty$ gives  $\sum_{\ell=1}^{i} \frac{h_\ell-h_{\ell-1}}{\lambda_\ell}\geq \dim_{\rm H}\mu$,  which, together with \eqref{ly-dim},  implies that   $\sum_{\ell=i+1}^{s} \frac{h_\ell-h_{\ell-1}}{\lambda_\ell}= 0$.  This completes the proof of the theorem.
\end{proof}
\medskip

{\noindent \bf  Acknowledgements}.
The author is indebted to Julien Barral, Xiong Jin, Fran\c{c}ois Ledrappier and Ariel Rapaport  for some helpful comments, and to Yufeng Wu for catching many typos.  He  thanks the anonymous referees  for many  suggestions that led to the improvement of the
paper.
This research was partially supported by a HKRGC
GRF grant  and the Direct Grant for Research in CUHK.


\begin{thebibliography}{10}

\bibitem{Baranski2007}
K.~Bara\'{n}ski.
\newblock Hausdorff dimension of the limit sets of some planar geometric
  constructions.
\newblock {\em Adv. Math.}, 210(1):215--245, 2007.

\bibitem{Barany2015}
B.~B\'ar\'any.
\newblock On the {L}edrappier-{Y}oung formula for self-affine measures.
\newblock {\em Math. Proc. Cambridge Philos. Soc.}, 159(3):405--432, 2015.

\bibitem{BaranyHochmanRapaport2017}
B.~B\'{a}r\'{a}ny, M.~Hochman, and A.~Rapaport.
\newblock Hausdorff dimension of planar self-affine sets and measures.
\newblock {\em Invent. Math.},  216(3):601--659, 2019.


\bibitem{BaranyKaenmaki2015}
B.~B\'{a}r\'{a}ny and A.~K\"{a}enm\"{a}ki.
\newblock Ledrappier-{Y}oung formula and exact dimensionality of self-affine
  measures.
\newblock {\em Adv. Math.}, 318:88--129, 2017.

\bibitem{BKK2018}
B.~B\'{a}r\'{a}ny, A.~K\"{a}enm\"{a}ki, and H.~Koivusalo.
\newblock Dimension of self-affine sets for fixed translation vectors.
\newblock {\em J. Lond. Math. Soc. (2)}, 98(1):223--252, 2018.

\bibitem{BaranyRams2018}
B.~B\'{a}r\'{a}ny and M.~Rams.
\newblock Dimension maximizing measures for self-affine systems.
\newblock {\em Trans. Amer. Math. Soc.}, 370(1):553--576, 2018.

\bibitem{BaranyRamsSimon2016}
B.~B\'{a}r\'{a}ny, M.~Rams, and K.~Simon.
\newblock On the dimension of self-affine sets and measures with overlaps.
\newblock {\em Proc. Amer. Math. Soc.}, 144(10):4427--4440, 2016.

\bibitem{BRS2018}
B.~B\'ar\'any, M.~Rams, and K.~Simon.
\newblock Dimension of the repeller for a piecewise expanding affine map.
\newblock {\em preprint, arXiv:1803.03788}, 2018.

\bibitem{BarralFeng2013}
J.~Barral and D.-J. Feng.
\newblock Multifractal formalism for almost all self-affine measures.
\newblock {\em Comm. Math. Phys.}, 318(2):473--504, 2013.

\bibitem{BarreiraPesinSchmeling1999}
L.~Barreira, Y.~Pesin, and J.~Schmeling.
\newblock Dimension and product structure of hyperbolic measures.
\newblock {\em Ann. of Math. (2)}, 149(3):755--783, 1999.

\bibitem{Bedford1984}
T.~Bedford.
\newblock Crinkly curves, markov partitions and box dimensions in self-similar
  sets.
\newblock PhD Thesis, The University of Warwick, 1984.

\bibitem{Bedford1991}
T.~Bedford.
\newblock Applications of dynamical systems theory to fractals---a study of
  cookie-cutter {C}antor sets.
\newblock In {\em Fractal geometry and analysis ({M}ontreal, {PQ}, 1989)},
  volume 346 of {\em NATO Adv. Sci. Inst. Ser. C Math. Phys. Sci.}, pages
  1--44. Kluwer Acad. Publ., Dordrecht, 1991.

\bibitem{BochiMorris2018}
J.~Bochi and I.~D. Morris.
\newblock Equilibrium states of generalised singular value potentials and
  applications to affine iterated function systems.
\newblock {\em Geom. Funct. Anal.}, 28(4):995--1028, 2018.



\bibitem{BougerolPicard1992}
P.~Bougerol and N.~Picard.
\newblock Strict stationarity of generalized autoregressive processes.
\newblock {\em Ann. Probab.}, 20(4):1714--1730, 1992.

\bibitem{Bowen1975}
R.~Bowen.
\newblock {\em  Equilibrium states and the ergodic theory of Anosov diffeomorphisms. }
\newblock Lecture Notes in Mathematics, Vol. 470. Springer-Verlag, Berlin-New York, 1975.

\bibitem{Brandt1986}
A.~Brandt.
\newblock The stochastic equation {$Y_{n+1}=A_nY_n+B_n$} with stationary
  coefficients.
\newblock {\em Adv. in Appl. Probab.}, 18(1):211--220, 1986.

\bibitem{CaoFengHuang2008}
Y.-L. Cao, D.-J. Feng, and W.~Huang.
\newblock The thermodynamic formalism for sub-additive potentials.
\newblock {\em Discrete Contin. Dyn. Syst.}, 20(3):639--657, 2008.

\bibitem{DasSimmons2017}
T.~Das and D.~Simmons.
\newblock The {H}ausdorff and dynamical dimensions of self-affine sponges: a
  dimension gap result.
\newblock {\em Invent. Math.}, 210(1):85--134, 2017.

\bibitem{EckmannRuelle1985}
J.-P. Eckmann and D.~Ruelle.
\newblock Ergodic theory of chaos and strange attractors.
\newblock {\em Rev. Modern Phys.}, 57(3, part 1):617--656, 1985.

\bibitem{EinsiedlerWard2011}
M.~Einsiedler and T.~Ward.
\newblock {\em Ergodic theory with a view towards number theory}, volume 259 of
  {\em Graduate Texts in Mathematics}.
\newblock Springer-Verlag London, Ltd., London, 2011.

\bibitem{Falconer1988}
K.~J. Falconer.
\newblock The {H}ausdorff dimension of self-affine fractals.
\newblock {\em Math. Proc. Cambridge Philos. Soc.}, 103(2):339--350, 1988.

\bibitem{Falconer1997}
K.~J. Falconer.
\newblock {\em Techniques in fractal geometry}.
\newblock John Wiley \& Sons, Ltd., Chichester, 1997.

\bibitem{Falconer2003}
K.~J. Falconer.
\newblock {\em Fractal geometry--Mathematical foundations and applications}.
\newblock John Wiley \& Sons, Inc., Hoboken, NJ, second edition, 2003.

\bibitem{Falconer2013}
K.~J. Falconer.
\newblock Dimensions of self-affine sets: a survey.
\newblock In {\em Further developments in fractals and related fields}, Trends
  Math., pages 115--134. Birkh\"{a}user/Springer, New York, 2013.

\bibitem{FalconerJin2014}
K.~J. Falconer and X.~Jin.
\newblock Exact dimensionality and projections of random self-similar measures
  and sets.
\newblock {\em J. Lond. Math. Soc. (2)}, 90(2):388--412, 2014.

\bibitem{FalconerKempton2018}
K.~J. Falconer and T.~Kempton.
\newblock Planar self-affine sets with equal {H}ausdorff, box and affinity
  dimensions.
\newblock {\em Ergodic Theory Dynam. Systems}, 38(4):1369--1388, 2018.

\bibitem{FanLauRao2002}
A.-H. Fan, K.-S. Lau, and H.~Rao.
\newblock Relationships between different dimensions of a measure.
\newblock {\em Monatsh. Math.}, 135(3):191--201, 2002.

\bibitem{Federer1969}
H.~Federer.
\newblock {\em Geometric measure theory}.
\newblock Springer-Verlag New York Inc., New York, 1969.

\bibitem{FengHu2009}
D.-J. Feng and H.~Hu.
\newblock Dimension theory of iterated function systems.
\newblock {\em Comm. Pure Appl. Math.}, 62(11):1435--1500, 2009.

\bibitem{FengKaenmaki2011}
D.-J. Feng and A.~K\"aenm\"aki.
\newblock Equilibrium states of the pressure function for products of matrices.
\newblock {\em Discrete Contin. Dyn. Syst.}, 30(3):699--708, 2011.

\bibitem{FengShmerkin2014}
D.-J. Feng and P.~Shmerkin.
\newblock Non-conformal repellers and the continuity of pressure for matrix
  cocycles.
\newblock {\em Geom. Funct. Anal.}, 24(4):1101--1128, 2014.

\bibitem{FraserJordanJurga2017}
J.~Fraser, T.~Jordan, and N.~Jurga.
\newblock Dimensions of equilibrium measures on a class of planar self-affine
  sets.
 \newblock {\em J. Fractal Geom.},  7(1):87-111,  2020.

\bibitem{FroylandLloydQuas2010}
G.~Froyland, S.~Lloyd, and A.~Quas.
\newblock Coherent structures and isolated spectrum for {P}erron-{F}robenius
  cocycles.
\newblock {\em Ergodic Theory Dynam. Systems}, 30(3):729--756, 2010.

\bibitem{Furstenberg2008}
H.~Furstenberg.
\newblock Ergodic fractal measures and dimension conservation.
\newblock {\em Ergodic Theory Dynam. Systems}, 28(2):405--422, 2008.

\bibitem{FurstenbergKesten1960}
H.~Furstenberg and H.~Kesten.
\newblock Products of random matrices.
\newblock {\em Ann. Math. Statist.}, 31:457--469, 1960.

\bibitem{GatzourasLalley1992}
D.~Gatzouras and S.~P. Lalley.
\newblock Hausdorff and box dimensions of certain self-affine fractals.
\newblock {\em Indiana Univ. Math. J.}, 41(2):533--568, 1992.

\bibitem{HochmanRapaport2019}
M.~Hochman and A.~Rapaport.
\newblock Hausdorff dimension of planar self-affine sets and measures with overlaps.
\newblock {\em preprint, arXiv:1904.09812}, 2019.

\bibitem{HochmanShmerkin2012}
M.~Hochman and P.~Shmerkin.
\newblock Local entropy averages and projections of fractal measures.
\newblock {\em Ann. of Math. (2)}, 175(3):1001--1059, 2012.

\bibitem{HochmanSolomyak2017}
M.~Hochman and B.~Solomyak.
\newblock On the dimension of {F}urstenberg measure for {$SL_2(\Bbb R)$} random
  matrix products.
\newblock {\em Invent. Math.}, 210(3):815--875, 2017.

\bibitem{Hutchinson1981}
J.~E. Hutchinson.
\newblock Fractals and self-similarity.
\newblock {\em Indiana Univ. Math. J.}, 30(5):713--747, 1981.

\bibitem{Jordan2011}
T.~Jordan.
\newblock Unpublished note.
\newblock 2011.

\bibitem{JordanPollicottSimon2007}
T.~Jordan, M.~Pollicott, and K.~Simon.
\newblock Hausdorff dimension for randomly perturbed self affine attractors.
\newblock {\em Comm. Math. Phys.}, 270(2):519--544, 2007.

\bibitem{Kaenmaki2004}
A.~K\"aenm\"aki.
\newblock On natural invariant measures on generalised iterated function
  systems.
\newblock {\em Ann. Acad. Sci. Fenn. Math.}, 29(2):419--458, 2004.

\bibitem{KenyonPeres1996}
R.~Kenyon and Y.~Peres.
\newblock Measures of full dimension on affine-invariant sets.
\newblock {\em Ergodic Theory Dynam. Systems}, 16(2):307--323, 1996.

\bibitem{Kingman1968}
J.~F.~C. Kingman.
\newblock The ergodic theory of subadditive stochastic processes.
\newblock {\em J. Roy. Statist. Soc. Ser. B}, 30:499--510, 1968.

\bibitem{LedrappierYoung1985}
F.~Ledrappier and L.-S. Young.
\newblock The metric entropy of diffeomorphisms. {I}. {C}haracterization of
  measures satisfying {P}esin's entropy formula. {II}. {R}elations between
  entropy, exponents and dimension.
\newblock {\em Ann. of Math. (2)}, 122(3):509--539; 540--574, 1985.

\bibitem{LiuQian1995}
P.-D.~Liu and  Q.~Min.
\newblock {\em Smooth ergodic theory of random dynamical systems.}
\newblock Lecture Notes in Mathematics, 1606. Springer-Verlag, Berlin, 1995.

\bibitem{Mane1987}
R.~Ma\~n\'e.
\newblock {\em Ergodic theory and differentiable dynamics}.
\newblock Springer-Verlag, Berlin, 1987.

\bibitem{Maker1940}
P.~T. Maker.
\newblock The ergodic theorem for a sequence of functions.
\newblock {\em Duke Math. J.}, 6:27--30, 1940.

\bibitem{Mattila1995}
P.~Mattila.
\newblock {\em Geometry of sets and measures in {E}uclidean spaces}.
\newblock Cambridge University Press, Cambridge, 1995.

\bibitem{McMullen1984}
C.~McMullen.
\newblock The {H}ausdorff dimension of general {S}ierpi\'nski carpets.
\newblock {\em Nagoya Math. J.}, 96:1--9, 1984.

\bibitem{MihailescuUrbanski2016}
E.~Mihailescu and M.~Urba\'{n}ski.
\newblock Random countable iterated function systems with overlaps and
  applications.
\newblock {\em Adv. Math.}, 298:726--758, 2016.

\bibitem{Miller2008} B.~Miller.
\newblock The existence of measures of a given cocycle. {I}. {A}tomless, ergodic $\sigma$-finite measures.
\newblock {\em Ergodic Theory Dynam. Systems},  28(5):1599--1613, 2008.

\bibitem{MorrisShmerkin2018}
I.~Morris and P.~Shmerkin.
\newblock On equality of {H}ausdorff and affinity dimensions, via self-affine
  measures on positive subsystems.
\newblock {\em Trans. Amer. Math. Soc.}, 371:1547--1582, 2019.

\bibitem{Oseledets1968}
V.~I. Oseledec.
\newblock A multiplicative ergodic theorem. {L}iapunov characteristic numbers
  for dynamical systems.
\newblock {\em Trans. Moscow. Math. Soc.}, 19:197--231, 1968.

\bibitem{Parry1969}
W.~Parry.
\newblock {\em Entropy and generators in ergodic theory}.
\newblock W. A. Benjamin, Inc., New York-Amsterdam, 1969.

\bibitem{Parry1981}
W.~Parry.
\newblock {\em Topics in ergodic theory}.
\newblock Cambridge University Press, Cambridge-New York, 1981.

\bibitem{Patzschke1997}
N.~Patzschke.
\newblock Self-conformal multifractal measures.
\newblock {\em Adv. in Appl. Math.}, 19(4):486--513, 1997.

\bibitem{PeresSolomyak2000}
Y.~Peres and B.~Solomyak.
\newblock Existence of {$L^q$} dimensions and entropy dimension for
  self-conformal measures.
\newblock {\em Indiana Univ. Math. J.}, 49(4):1603--1621, 2000.

\bibitem{Petersen1983}
K.~Petersen.
\newblock {\em Ergodic theory}.
\newblock Cambridge University Press, Cambridge, 1983.

\bibitem{PrzytyckiUrbanski1989}
F.~Przytycki and M.~Urba\'{n}ski.
\newblock On the {H}ausdorff dimension of some fractal sets.
\newblock {\em Studia Math.}, 93(2):155--186, 1989.


\bibitem{QianXie2008}
M.~Qian and J.-S. Xie.
\newblock Entropy formula for endomorphisms: relations between entropy,
  exponents and dimension.
\newblock {\em Discrete Contin. Dyn. Syst.}, 21(2):367--392, 2008.

\bibitem{Rapaport2017}
A.~Rapaport.
\newblock A self-similar measure with dense rotations, singular projections and discrete slices.
\newblock {\em Adv. Math.} 321:529--546, 2017.

\bibitem{Rapaport2015}
A.~Rapaport.
\newblock On self-affine measures with equal {H}ausdorff and {L}yapunov
  dimensions.
\newblock {\em Trans. Amer. Math. Soc.}, 370(7):4759--4783, 2018.




\bibitem{Rohlin1949}
V.~A. Rohlin.
\newblock On the fundamental ideas of measure theory.
\newblock {\em Amer. Math. Soc. Translation}, 1952(71):55, 1952.

\bibitem{Rossi2014}
E.~Rossi.
\newblock Local dimensions of measures on infinitely generated self-affine
  sets.
\newblock {\em J. Math. Anal. Appl.}, 413(2):1030--1039, 2014.

\bibitem{Rudin1987}
W.~Rudin.
\newblock {\em Real and complex analysis}.
\newblock McGraw-Hill Book Co., New York, third edition, 1987.

\bibitem{SagliettiShmerkinSolomyak2018}
S.~Saglietti, P.~Shmerkin, and B.~Solomyak.
\newblock Absolute continuity of non-homogeneous self-similar measures.
\newblock {\em Adv. Math.}, 335:60--110, 2018.

\bibitem{Shu2010}
L.~Shu.
\newblock Dimension theory for invariant measures of endomorphisms.
\newblock {\em Comm. Math. Phys.}, 298(1):65--99, 2010.

\bibitem{Solomyak1998}
B.~Solomyak.
\newblock Measure and dimension for some fractal families.
\newblock {\em Math. Proc. Cambridge Philos. Soc.}, 124(3):531--546, 1998.

\bibitem{Walters1982}
P.~Walters.
\newblock {\em An introduction to ergodic theory}.
\newblock Springer-Verlag, New York-Berlin, 1982.

\bibitem{Young1982}
L.~S. Young.
\newblock Dimension, entropy and {L}yapunov exponents.
\newblock {\em Ergodic Theory Dynam. Systems}, 2(1):109--124, 1982.

\end{thebibliography}
\end{document}